\newtheorem{thm}{Theorem}
\newtheorem{lem}[thm]{Lemma}
\newtheorem{prop}[thm]{Proposition}
\newtheorem{defi}[thm]{Definition}
\newtheorem{cor}[thm]{Corollary}
\newtheorem{ex}[thm]{Example}
\newtheorem{rem}[thm]{Remark}
\newtheorem{conj}[thm]{Conjecture}
\newcommand{\rhom}{\mathrm{RHom}}
\renewcommand{\k}{\mathbf{k}}
\newcommand{\K}{\mathbb{K}}
\newcommand{\m}{\mathfrak{m}}
\newcommand{\n}{\mathfrak{n}}
\newcommand{\e}{\mathfrak{e}}
\renewcommand{\t}{\mathfrak{t}}
\newcommand{\A}{\mathscr{A}}
\newcommand{\Abar}{\overline{\mathscr{A}}}
\newcommand{\Cbar}{\overline{\mathscr{C}}}
\newcommand{\Cobar}{\Omega}
\renewcommand{\Bar}{\mathrm{B}}
\newcommand{\B}{\mathscr{B}}
\renewcommand{\C}{\mathscr{C}}
\renewcommand{\D}{\mathscr{D}}
\newcommand{\F}{\mathcal{F}}
\newcommand{\I}{\mathscr{I}}
\renewcommand{\Z}{\mathbb{Z}}
\renewcommand{\R}{\mathbf{R}}
\newcommand{\Cc}{\mathbf{C}}
\newcommand{\id}{\mathbf{1}}
\renewcommand{\co}{\mathrm{co}}
\newcommand{\fl}{\mathrm{fi}}
\newcommand{\sy}{\mathrm{sy}}
\newcommand{\pb}{\mathrm{pb}}
\newcommand{\ind}{\operatorname{index}}
\renewcommand{\parallel}{{\mkern3mu\vphantom{\perp}\vrule depth 0pt\mkern2mu\vrule depth
0pt\mkern3mu}}
\title{Duality between Lagrangian and Legendrian invariants}
\author{Tobias Ekholm}
\address{Uppsala University, Box 480, 751 06 Uppsala, Sweden\newline
	\indent Insitute Mittag-Leffler, Aurav 17, 182 60 Djursholm, Sweden}
\email{tobias.ekholm@math.uu.se}
\author{Yank\i\ Lekili}
\address{Imperial College London, South Kensington, London, UK}
\email{y.lekili@imperial.ac.uk}
\begin{document}

\begin{abstract}
Consider a pair $(X,L)$, of a Weinstein manifold $X$ with an exact Lagrangian submanifold $L$, with
    ideal contact boundary $(Y,\Lambda)$, where $Y$ is a contact manifold and $\Lambda\subset Y$ is
    a Legendrian submanifold. We introduce the Chekanov-Eliashberg DG-algebra, $CE^{\ast}(\Lambda)$,
    with coefficients in chains of the based loop space of $\Lambda$ and study its relation to the
    Floer cohomology $CF^{\ast}(L)$ of $L$.  Using the augmentation induced by $L$,
    $CE^{\ast}(\Lambda)$ can be expressed as the Adams cobar
    construction $\Omega$ applied to a Legendrian coalgebra, $LC_{\ast}(\Lambda)$. We define a twisting cochain: 
\[ 
    \t\colon LC_{\ast}(\Lambda) \to \Bar (CF^*(L))^\# 
\] 
via holomorphic curve counts,
where $\Bar$ denotes the bar construction and $\#$ the graded linear dual. We show under simply-connectedness assumptions that the corresponding Koszul complex is acyclic which then implies that $CE^*(\Lambda)$ and $CF^{\ast}(L)$ are Koszul dual. In particular, $\t$ induces a quasi-isomorphism between $CE^*(\Lambda)$ and the cobar of the Floer homology of $L$, $\Omega CF_*(L)$. 

This generalizes the classical Koszul duality result between $C^*(L)$ and $C_{-*}(\Omega L)$ for $L$ a simply-connected manifold, where $\Omega L$ is the based loop space of $L$, and provides the geometric ingredient explaining the computations given in \cite{EtLe} in the case when $X$ is a plumbing of cotangent bundles of 2-spheres (where an additional weight grading ensured Koszulity of $\t$).  

We use the duality result to show that under certain connectivity and locally finiteness
    assumptions, $CE^*(\Lambda)$ is quasi-isomorphic to $C_{-*}(\Omega L)$ for any Lagrangian filling $L$ of $\Lambda$.    
    
Our constructions have interpretations in terms of wrapped Floer cohomology after versions of
    Lagrangian handle attachments. In particular, we outline a proof that $CE^{\ast}(\Lambda)$ is
    quasi-isomorphic to the wrapped Floer cohomology of a fiber disk $C$ in the Weinstein domain
    obtained by attaching $T^{\ast}(\Lambda\times[0,\infty))$ to $X$ along $\Lambda$ (or, in the
    terminology of \cite{sylvan} the wrapped Floer cohomology of $C$ in $X$ with wrapping stopped by $\Lambda$). Along the way, we give a definition of wrapped Floer cohomology via holomorphic buildings that avoids the use of Hamiltonian perturbations which might be of independent interest.

\end{abstract}
\maketitle

\section{Introduction}
In this introduction we first give an overview of our results. The overview starts with a review of
well known counterparts of our constructions in algebraic topology. We then introduce our Legendrian
and Lagrangian invariants in Sections \ref{ssec:partwrap} and \ref{ssec:infinteswrap}, respectively,
and discuss the connection between them and applications thereof in Section \ref{ssec:connections}. Among these the most central role is played by the \emph{Chekanov-Eliashberg algebra with based loop space coefficients} denoted as $CE^*$. As we show, any other invariant can be obtained from $CE^*$ by algebraic manipulation. Finally, in Section \ref{ssec:Hopflink} we give detailed calculations of the invariants introduced in the simple yet illustrative example of the Legendrian Hopf link filled by two Lagrangian disks intersecting transversely in one point.
  
The starting point for our study is a construction in classical topology. 
Consider the following augmented DG-algebras over a field $\mathbb{K}$ associated to a based, connected, topological space $(M,\mathrm{pt})$: 
\[ C^*(M) \to \K , \ \ C_{-*}(\Omega M) \to \K, \]
where $C^*(M)$ is the singular cochain complex equipped with the cup-product and $C_{-*}(\Omega M)$
is the singular chain complex of the based (Moore) loop space of $M$ equipped with the Pontryagin product. (We use
cohomologically graded complexes throughout the paper so that all differentials increase the grading by 1.) In the case of
singular cohomology, the inclusion $i\colon \mathrm{pt} \to M$  gives the augmentation $i^*\colon C^*(M) \to C^*(\mathrm{pt}) = \K$ and
in the case of the based loop space, the augmentation is given by the trivial local system $\pi_1(M,\mathrm{pt}) \to \K$. 

If $M$ is of finite-type (for example, a finite CW-complex), then it is well known that one can
recover the augmented DG-algebra $C^*(M)$ from the augmented DG-algebra $C_{-*}(\Omega M)$ by
the Eilenberg-Moore equivalence:
\[ C^*(M) \simeq \mathrm{RHom}_{C_{-*}(\Omega M)} (\K, \K). \] 
In the other direction, if $M$ is \emph{simply connected}, then the Adams construction gives a quasi-isomorphism 
\[ C_{-*}(\Omega M) \simeq \mathrm{RHom}_{C^*(M)} (\K,\K), \]
and in this case $C^*(M)$ and $C_{-*}(\Omega M)$ are
said to be \emph{Koszul dual} DG-algebras. 
Koszul duality is sometimes abbreviated and simply called
\emph{duality}. 
For more general $M$, using the method of acyclic models, Brown \cite{brown} constructed a twisting cochain
\[ \t \colon C_{-*}(M) \to C_{-*}(\Omega M). \]
This is a degree 1 map that induces a DG-algebra map $\Omega C_{-*}(M) \to C_{-*}(\Omega M)$.
where $\Omega C_{-*}(M)$ is the cobar construction applied to chains on $M$ (see Section \ref{barcobarsec}). By definition, t$\t$ is a
quasi-isomorphism when duality holds, and this can be detected by an associated \emph{Koszul complex}, which
is acyclic if and only if duality holds. In the general case, $\Omega C_{-*}(M)$ is a certain completion of $C_{-*}(\Omega M)$ and consequently $C_{-*}(\Omega M)$ is a more refined invariant of $M$ than $\Omega C_{-*}(M)$. 

In this paper, we pursue this idea in the context of invariants
associated to Lagrangian and Legendrian submanifolds. Here the role played by simple connectivity in the above discussion has two natural counterparts, one corresponds to a generalized notion of simply-connectedness for intersecting Lagrangian submanifolds and the other is the usual notion of simply-connectedness for Legendrian submanifolds. 

We start with the geometric data of a Liouville domain $X$ with convex boundary $Y$ and an exact Lagrangian submanifold $L \subset X$ with Legendrian boundary $\Lambda \subset Y$. We assume that $c_{1}(X)=0$, that the Maslov class of $L$ vanishes (for grading purposes) and that $L$ is relatively spin (to orient certain moduli spaces of holomorphic disks). 
Assume that $L$ is subdivided into embedded components intersecting transversely $L = \bigcup_{v \in \Gamma} L_v$ and that $\Lambda$ is subdivided into connected components $\Lambda =
\bigsqcup_{v \in \Gamma} \Lambda_v$. To avoid notational complications, we take both parametrized by the same finite set $\Gamma$ and assume that the boundary of $L_v$ is $\Lambda_v$. We use a base field $\mathbb{K}$ and define the semi-simple 
ring
\[ 
\k = \bigoplus_{v \in \Gamma} \mathbb{K} e_v, 
\]  
generated by mutually orthogonal idempotents $e_v$. Also, we fix a partition 
\[ 
\Gamma = \Gamma^{+} \cup \Gamma^{-} 
\]
into two disjoint sets, and choose a base-point $p_v \in \Lambda_v$ for each $v \in \Gamma^{+}$. 

For simplicity, let us restrict, in this introduction, to the following situation: 
\begin{itemize}
    \item $X$ is a subcritical Liouville domain, 
    \item If $v \in \Gamma^{-}$ then the corresponding
Legendrian $\Lambda_v$ is an embedded \emph{sphere}. 
\end{itemize} 
From a technical point of view, these restrictions are unnecessary. We make them in order to facilitate the explanation of our constructions from the perspective of Legendrian surgery. (Note that, the topology of $\Lambda_v$ is unrestricted when $v \in \Gamma^{+}$.) 

We write $X_{\Lambda}$ for the completion of the Liouville sector obtained from $X$ by attaching critical Weinstein
handles along $\Lambda_v$ for each $v \in \Gamma^-$ and cotangent cones $T^*(\Lambda_v
\times [0,\infty))$ along $\Lambda_v$ for each $v \in
\Gamma^+$. If $\Gamma^+= \varnothing$, $X_\Lambda$ is an ordinary Liouville manifold. In this case Gromov compactness is ensured by convexity of the boundary. When $\Gamma^+ \neq \varnothing$, we also have part of the boundary that can be identified with a neighborhood of the zero section in the cotangent bundle $\bigcup_{v\in\Gamma^{+}} T^{\ast}(\Lambda_{v}\times [T,\infty))$, for some $T>0$. This is a geometrically bounded manifold, hence Gromov compactness still holds \cite{Gromov}, and holomorphic curve theory is well-behaved. 

In $X_\Lambda$, for $v\in \Gamma^{-}$, there is a closed exact Lagrangian submanifold $S_v = L_v \cup D_v$, the union of the Lagrangian $L_v$
in $X$ and the \emph{Lagrangian core disk} $D_v$ of the Weinstein handle attached to $\Lambda_v$, and for $v \in \Gamma^+$, there is a non-compact Lagrangian obtained by attaching the cylindrical boundary
$\Lambda_v \times [0,\infty)$ to $L_v$ for $v \in \Gamma^+$, which we will still denote by $L_v$, by
abuse of notation, even when we view them now in $X_\Lambda$. Dually, for each $v \in \Gamma^-$, we obtain (non-compact) exact Lagrangian disks $C_v$, the \emph{Lagrangian cocore disks} of the Weinstein handles attached to $\Lambda_v$ on $X$, and for each $v \in
\Gamma^+$, we construct \emph{dual} Lagrangians disks $C_v$ intersecting $L_v$ once and asymptotic to a Legendrian meridian of $L_v$ (these can be constructed
as the cotangent fiber at the point $(p_{v},t)$, $t>0$, in $T^*(\Lambda_v \times [0,\infty)) \subset X_\Lambda$, where $p_v$ is the base points on $\Lambda_v$). 

The invariants that we will construct are associated to the union of Lagrangian submanifolds 
\[ 
L_\Lambda := \bigcup_{v \in \Gamma^{+}} L_v \ \cup \ \bigcup_{v \in \Gamma^{-}} S_v  \quad\text{\ \ \ and\ \ \   }\quad
C_\Lambda := \bigcup_{v \in \Gamma} C_v. 
\] 
The Lagrangian $L_\Lambda$ will be referred to as a \emph{Lagrangian skeleton} of $X_\Lambda$, it
is a union of Lagrangian submanifolds which intersect transversely. The dual Lagrangian
$C_\Lambda$ is the union of Lagrangian disks which can be locally identified with cotangent fibers to irreducible components of $L_\Lambda$. 

We will study two algebraic invariants associated to $(X_\Lambda, L_\Lambda, C_\Lambda)$. The first
one is the \emph{Legendrian $A_\infty$-algebra}, $LA^*$. It corresponds to the endomorphism algebra of
$L_\Lambda$ considered in the infinitesimal Fukaya category of $X_{\Lambda}$ (Thm. \ref{ainfiso}). The second one is
the \emph{Chekanov-Eliashberg DG-algebra}, $CE^*$. It corresponds to the endomorphism algebra of $C_\Lambda$ considered in the partially wrapped Fukaya category of $X_{\Lambda}$ (Thm. \ref{l:BEEdiagonal}). However, 
we will take the pre-surgery perspective as in \cite{BEE} and construct all these invariants by
studying Legendrian invariants of $\Lambda\subset X$ rather than Floer cohomology in $X_\Lambda$. From this perspective, the case $\Gamma^+ \neq \varnothing$ is a new construction, that generalizes the theory from \cite{BEE} in a way analogous to how partially wrapped Fukaya categories \cite{sylvan} generalizes wrapped Fukaya categories \cite{AbouzSeidel}. 

The invariants $LA^*$ and $CE^*$ come equipped with canonical augmentations to the semi-simple ring
$\k$ and it is easy to see by construction that $LA^*$ is determined by $CE^*$ via the equivalence:
\[ LA^* \simeq \mathrm{RHom}_{CE^*} (\k, \k). \]
The duality which would recover $CE^{\ast}$ from $LA^{\ast}$ holds in the ``simply-connected'' case (see Section \ref{algkoszul}). In the topological case discussed above, this is analogous to the
simply-connectedness assumption on $M$, which makes the augmented algebras $C^*(M)$ and
$C_{-*}(\Omega M)$ Koszul dual. In fact, the topological case is a special case of our study for
the Weinstein manifold $T^*M$, with the Lagrangian skeleton $L_\Lambda = M$ given by the
0-section, and the dual Lagrangian $C_\Lambda$ given by a cotangent fiber $T^*_{p} M$. This is because the wrapped Floer cohomology complex of a cotangent fiber is quasi-isomorphic to $C_{-*}(\Omega M)$ by \cite{abloop} and the Floer cohomology complex of the zero section is quasi-isomorphic to $C^*(M)$ (\cite{fukayaoh}) as augmented $A_\infty$ algebras. 

We next sketch the definition of our version of the Chekanov-Eliashberg DG-algebra without any assumption of simply-connectedness (See Section \ref{maininvariants} for details). This is the DG-algebra over $\k$ called
$CE^*$ above. Its underlying $\k$-bimodule is the unital $\k$-algebra generated by Reeb chords between components of
$\Lambda$ and chains in $C_{-*}(\Omega_{p_v} \Lambda_v)$ for $v \in \Gamma^+$. (This is the crucial distinctions between $\Gamma^+$ and $\Gamma^-$.)

We use the cubical chain complex (cf. \cite{serre}) $C_{-*}(\Omega_{p_v} \Lambda_v)$ for $v \in \Gamma^{+}$, see Section \ref{Coefficients} for a discussion of other possible choices of chain models, 
to express $CE^*$ as a free algebra over $\k$ generated by Reeb chords $c$ and generators 
of $C_{-*}(\Omega_{p_v} \Lambda_v)$ for $v \in \Gamma^{+}$. The differential on $CE^*$ is determined by its action on generators. On a generator 
of $C_{-*}(\Omega_{p_v} \Lambda_v)$ we simply apply the usual differential.
On a generator $c_0$ which is a Reeb chord, the
differential is determined by moduli spaces of holomorphic disks in the symplectization $\mathbb{R}
\times Y$ which asymptotically converge to $c_0$ on the positive end and chords $c_1, \ldots, c_i$
at the negative end as follows.
\begin{figure}[h!]
    \centering
    \begin{tikzpicture}[scale=1]
    \tikzset{->-/.style={decoration={ markings,
                mark=at position #1 with {\arrow{>}}},postaction={decorate}}}
    \draw [blue, thick=1.5] (2,3) to[in=90,out=270] (-0.5,0);
    \draw [blue, thick=1.5] (0,0) to[in=90,out=90] (0.5,0);         
    \draw [red, thick=1.5] (1,0) to[in=90,out=90] (1.5,0);         
    \draw [blue, thick=1.5] (2,0) to[in=90,out=90] (2.5,0);         
    \draw [blue, thick=1.5] (3,0) to[in=90,out=90] (3.5,0);         
    \draw [red, thick=1.5] (4,0) to[in=90,out=90] (4.5,0);         
    \draw [red, thick=1.5] (2.5,3) to[in=90,out=270] (5,0);
    \draw [black, thick=1, ->-=.5] (0,0) to (-0.5,0); 
            \draw [black, thick=1, ->-=.5] (1,0) to (0.5,0); 
    \draw [black, thick=1, ->-=.5] (2,0) to (1.5,0); 
    \draw [black, thick=1, ->-=.5] (3,0) to (2.5,0); 
    \draw [black, thick=1, ->-=.5] (4,0) to (3.5,0); 
    \draw [black, thick=1, ->-=.5] (5,0) to (4.5,0); 
    \draw [black, thick=1, ->-=.5] (2.5,3) to (2,3); 

        \node at (2.3, 3.3) {\footnotesize{$c_0$}};
    \node at (-0.25,-0.3) {\footnotesize{$c_1$}}  ;
    \node at (0.75,-0.3) {\footnotesize{$c_2$}}  ;
\node at (1.75,-0.3) {\footnotesize{$c_3$}}  ;
\node at (2.75,-0.3) {\footnotesize{$c_4$}}  ;
\node at (3.75,-0.3) {\footnotesize{$c_5$}}  ;
\node at (4.75,-0.3) {\footnotesize{$c_6$}}  ;

        \node at (1,2) {\footnotesize{$\sigma_0$}}; 
        \node at (0.25, 0.3) {\footnotesize{$\sigma_1$}};
 \node at (2.25, 0.3) {\footnotesize{$\sigma_2$}};
 \node at (3.25, 0.3) {\footnotesize{$\sigma_3$}};
    \end{tikzpicture}
    \caption{The differential in $CE^*$, the word $\sigma_0 c_1 \sigma_1 c_2 c_3 \sigma_2 c_4
    \sigma_3 c_5 c_6$ appears in $dc_0$.}
    \label{diff}
\end{figure}
Consider the moduli space of $J$-holomorphic maps  $u\colon D \to \mathbb{R} \times Y$, where $D$ is a disk with $k+1$ boundary
punctures $z_j \in \partial D = S^1$ that are mutually distinct and $(z_0,z_1,\ldots z_k)$ respects
the counter-clockwise cyclic order of
    $S^1$, and $u$ sends the boundary component $(z_{j-1},z_{j})$  of $S^1\setminus\{z_0,\ldots,
        z_k \}$ to $\mathbb{R} \times \Lambda$, and is asymptotic to $c_j$ near the puncture at
        $z_j$ for $j=1,\ldots, {k}$ and to $c_0$ near the puncture at $z_0$ (as usual these disks may be anchored in $X$). The moduli space,
        which is naturally a stratified space with manifold strata that carries a fundamental chain, comes with evaluation maps to $\Omega_{p_v}
        \Lambda_v$ for $v \in \Gamma^{+}$. The image of the fundamental chain determines a word in our chain model of
        $C_{-*}(\Omega_{p_v} \Lambda_v)$. Reading these together with the Reeb chords in order gives
        the differential of $c_0$. 
        
We remark that loop space coefficients were used in the
        context of Lagrangian Floer cohomology before \cite{barcor,Fuk}. See also \cite{abloop},
        \cite{CL} for uses of high-dimensional moduli spaces in Floer theory.

While $CE^*$ with loop space coefficients is a powerful invariant, it is in general hard to compute
as it involves high-dimensional moduli spaces of disks. As mentioned above, duality in the Legendrian $\Lambda$ will also play a role. More precisely, we define
another DG-algebra $CE^{\ast}_{\parallel}$ related to $CE^*$ via a Morse theoretic version of Adams
cobar construction whose definition involves taking parallel copies of $\Lambda$ but uses only
0-dimensional moduli spaces (see Section \ref{ssec:parallelcopies}). In fact, we prove that the two
DG-algebras are quasi-isomorphic when $\Lambda_{v}$ for $v \in \Gamma^{+}$ are simply-connected. 

\begin{thm}\label{t:parallelintro} 
	There exists a DG-algebra map 
    \[ CE^* \to CE^*_{\parallel} \]
    which is a quasi-isomorphism when $\Lambda_v$ is simply-connected for all $v \in \Gamma^{+}$.
\end{thm}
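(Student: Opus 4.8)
The plan is to build the map $CE^\ast\to CE^\ast_\parallel$ directly on generators, and then to detect the quasi-isomorphism by a filtration argument that isolates, on the coefficient side, the classical Adams cobar equivalence.

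\emph{Construction of the map.} Recall from Section~\ref{ssec:parallelcopies} that $CE^\ast_\parallel$ is the free $\k$-algebra generated by Reeb chords among several parallel push-offs of $\Lambda$: the ``long'' chords coming from the Reeb chords of $\Lambda$ (running between the extreme push-offs of each $\Lambda_v$), together with, for $v\in\Gamma^+$, ``short'' chords which are critical points of an auxiliary Morse function $f_v$ on $\Lambda_v$ and run between consecutive push-offs; its differential counts only rigid configurations of holomorphic disks together with gradient flow trees of the $f_v$. In particular, for $v\in\Gamma^+$ the subalgebra over $e_v$ generated by the short chords is a Morse-theoretic model $\Omega^{\mathrm{Morse}}C_{-\ast}(\Lambda_v)$ of the Adams cobar construction on the chains of $\Lambda_v$, and there is a tautological DG-algebra comparison
\[
\phi_v\colon C_{-\ast}(\Omega_{p_v}\Lambda_v)\longrightarrow \Omega^{\mathrm{Morse}}C_{-\ast}(\Lambda_v),
\]
given geometrically by cutting a generic cubical family of based loops at its transverse intersections with the parallel push-offs and reading off the ordered word of gradient pieces produced; this is a chain map because an interior face of a cube contributes a cobar ``internal'' term while a collision of the loop family with a push-off contributes a cobar ``coproduct'' term, and it intertwines concatenation of loops with multiplication in the cobar. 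Since $CE^\ast$ is free over $\k$ on the Reeb chords of $\Lambda$ and on the cubical chains of the $\Omega_{p_v}\Lambda_v$, one now defines a $\k$-algebra map $\Phi\colon CE^\ast\to CE^\ast_\parallel$ by sending a Reeb chord to the corresponding long chord and a cubical chain $\sigma\in C_{-\ast}(\Omega_{p_v}\Lambda_v)$ to $\phi_v(\sigma)$.

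\emph{$\Phi$ is a chain map.} On the loop-space generators this is the chain-map property of the $\phi_v$. On a Reeb chord $c_0$ the identity $\Phi(dc_0)=d(\Phi c_0)$ is proved by an SFT-type degeneration: introducing the parallel push-offs one at a time — equivalently, stretching the almost complex structure along a contact hypersurface enclosing $\mathbb{R}\times\Lambda_v$ for $v\in\Gamma^+$ — a rigid configuration contributing to $d_{CE^\ast_\parallel}(c_0)$ converges to a disk of the type counted in $d_{CE^\ast}(c_0)$ whose boundary loops have been resolved into gradient flow trees, and conversely the image under the $\phi_v$ of the fundamental chain of each positive-dimensional moduli space occurring in $d_{CE^\ast}(c_0)$ is, by construction of the cubical fundamental chains and of the push-off perturbations, precisely the corresponding sum of rigid configurations in $d_{CE^\ast_\parallel}(c_0)$.

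\emph{Quasi-isomorphism when the $\Lambda_v$, $v\in\Gamma^+$, are simply connected.} Filter both DG-algebras by symplectic action: the cubical and Morse-cobar coefficient differentials and the gradient-tree terms are filtration-preserving (``zero energy''), while each holomorphic disk strictly raises the filtration. On the associated graded the differential is the internal coefficient differential tensored with the identity on Reeb chords, and $\mathrm{gr}(\Phi)$ is, on the $e_v$-part, a tensor power of $\phi_v$ and the identity elsewhere. For simply connected $\Lambda_v$ the cobar of $C_{-\ast}(\Lambda_v)$ computes $C_{-\ast}(\Omega_{p_v}\Lambda_v)$ and both sides have $H^0=\K$, so $\phi_v$ is a quasi-isomorphism by the classical Adams cobar theorem (cf.\ \cite{brown}); hence $\mathrm{gr}(\Phi)$ is a quasi-isomorphism. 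Since the action filtration is exhaustive and takes values in a discrete set in each cohomological degree by Gromov compactness in the geometrically bounded $X_\Lambda$, a standard spectral sequence comparison promotes this to a quasi-isomorphism $CE^\ast\to CE^\ast_\parallel$.

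\emph{Main obstacle.} Steps one and three are essentially formal once the classical Adams equivalence is in hand; the real work is the chain-map property on Reeb chord generators — arranging the parallel-copy degeneration so that the positive-dimensional moduli spaces of disks in $\mathbb{R}\times Y$, with their loop-space evaluation maps and cubical fundamental chains, match coherently and with correct signs against the rigid disk-and-gradient-tree configurations defining $d_{CE^\ast_\parallel}$, while controlling the new short mixed chords and the broken limits that appear. This is the technical analogue, in the present loop-space-coefficient setting, of the comparison underlying Legendrian surgery in \cite{BEE}.
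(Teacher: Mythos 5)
Your overall strategy coincides with the paper's: define the map to be the identity on Reeb chords and a flow-tree resolution $\phi_v$ on loop-space chains, prove the chain-map property on Reeb chord generators via the correspondence between rigid parallel-copy disks and disks with partial flow trees attached (the paper's Theorem \ref{t:quantumflowtree}), and then filter by Reeb chord action so that the quasi-isomorphism reduces, on the associated graded, to the statement that each $\phi_v$ is a quasi-isomorphism. Up to this point your outline matches Section \ref{CEsimplyconnected} step for step.

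The gap is in the last reduction. You assert that $\phi_v\colon C_{-\ast}(\Omega_{p_v}\Lambda_v)\to\Omega^{\mathrm{Morse}}C_{-\ast}(\Lambda_v)$ is a quasi-isomorphism ``by the classical Adams cobar theorem'' because both sides are abstractly quasi-isomorphic and connected. That does not suffice: knowing that the two complexes have isomorphic homology says nothing about whether \emph{this particular geometrically defined map} induces an isomorphism, and $\phi_v$ is not Adams' map (it even goes in the opposite direction to the Adams--Hilton comparison). This is precisely what the paper's Theorem \ref{thm:Adams} is for, and its proof is not formal: one extends $\phi_v$ to a map $\hat\phi$ from chains on the based path space to the twisted tensor product $\Omega CM_{-\ast}(Q)\otimes^{\t}CM_{-\ast}(Q)$, checks that this Koszul complex is acyclic (using the unique flow tree from each critical point to the pair consisting of itself and the co-unit), and then applies Zeeman's comparison theorem to the resulting map of path--loop fibration spectral sequences. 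You should either reproduce this argument or supply another proof that your specific $\phi_v$ is an equivalence. Two further points you should address for completeness: the construction of $\phi_v$ requires the shifting Morse functions to have a single minimum and no index-one critical points (otherwise the Morse cobar model is not $1$-reduced and the comparison of cobar constructions runs into completion problems), which forces the stabilization device of Remarks \ref{r:dim4} and \ref{r:dim4iso} when $\dim\Lambda_v\le 3$; and your boundary analysis for the chain-map property of $\phi_v$ should treat configurations where two flow trees are attached at the same point as \emph{interior} points of the moduli space (the slit widths add), not as boundary contributions.
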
 

Theorem \ref{t:parallelintro} is restated and proved as Theorem \ref{t:parallel=loops} in Section \ref{ssec:parallelcopies}.

\subsection{Partially wrapped Fukaya categories by surgery}\label{ssec:partwrap} 
Let $\Lambda=\bigsqcup_{v\in\Gamma}\Lambda_{v}$ be a Legendrian submanifold and $\Gamma=\Gamma^{+}\cup\Gamma^{-}$ as above. Furthermore, we use the notation above for co-core disks and write $CE^{\ast}=CE^{\ast}(\Lambda)$.  An important result that is implicit in \cite[Remark 5.9]{BEE} is the following: 

\begin{thm}   \label{srgry}  
    Suppose $\Gamma = \Gamma^{-}$, then there exists a surgery map defined via a holomorphic disk count that gives an 
	$A_\infty$-quasi isomorphism
    between the wrapped Floer cochain complex    
    $CW^* := \bigoplus_{v,w \in \Gamma^{-} } CW^*(C_v, C_w)$
    and the Legendrian DG-algebra $CE^*$.
\end{thm}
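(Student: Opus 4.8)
The plan is to derive this as a reformulation of the Legendrian surgery formula of \cite{BEE}, adapted to the conventions above and promoted from a quasi-isomorphism of differential graded algebras to an $A_\infty$-quasi-isomorphism. Throughout, $\Gamma=\Gamma^-$, so $X_\Lambda$ is an honest Liouville manifold obtained by attaching a critical Weinstein handle $H_v$ along each $\Lambda_v\subset\partial X$; the cocore $C_v$ is the cocore disk of $H_v$, meeting the core disk $D_v$ transversely once and disjoint from $X$, from $D_w$ and from $C_w$ for $w\neq v$; and $CW^\ast=\bigoplus_{v,w}CW^\ast(C_v,C_w)$ is computed from a telescope of admissible Hamiltonians with continuation maps, so that its $A_\infty$-structure is well defined up to quasi-isomorphism. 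For bookkeeping I would fix a Liouville form that is cylindrical on a collar of $\partial X$ in $X_\Lambda$ and standard in each $H_v$, Hamiltonians that are $C^2$-small on $X$ and linear in the Reeb coordinate near the contact boundary of $X_\Lambda$, and an almost complex structure that is cylindrical on that collar.

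The first step is to match generators. Analysing the Reeb dynamics of the surgered contact boundary, one shows that below any fixed action bound, and for the cofinal Hamiltonians above, the generators of $CW^\ast(C_v,C_w)$ are in bijection with the Reeb chords of $\Lambda$ from $\Lambda_v$ to $\Lambda_w$, together with one extra generator when $v=w$ coming from the interior minimum of a Morse function on the disk $C_v$; gradings correspond. This defines the surgery map on underlying $\k$-bimodules, sending the minimum of $C_v$ to the idempotent $e_v$ and each Reeb chord to itself.

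The heart of the proof is to match the $A_\infty$-operations $\mu_d$ of $CW^\ast$ with the differential and product of $CE^\ast$ (recall that since $\Gamma^+=\varnothing$ there are no loop-space coefficients, so $CE^\ast$ carries only $\mu_1$ and $\mu_2$). Given a rigid holomorphic disk in $X_\Lambda$ contributing to some $\mu_d$ of $CW^\ast$, I would stretch the neck along the copy of $Y=\partial X$ that separates $X$ from the handles and the cylindrical end. By SFT compactness the limiting building breaks into: holomorphic buildings over the completion of $X$, which are absent when $X$ is subcritical and in general are handled exactly as in \cite[\S5]{BEE}; a holomorphic disk in the symplectization $\mathbb{R}\times Y$ with boundary on $\mathbb{R}\times\Lambda$ and Reeb-chord asymptotics, which is precisely a disk counted in the definition of $d_{CE}$ or the $CE^\ast$-product; and ``cap'' pieces with boundary on the cocores inside the handles, which one shows are rigid, unique, and contribute $\pm1$, using exactness together with the explicit product geometry of the Weinstein handle. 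A matching gluing statement recovers from each such $\mathbb{R}\times Y$-disk a unique rigid disk in $X_\Lambda$. Transversality is achieved by a generic perturbation within the adapted class, and signs are compared via the coherent orientation schemes on the two sides. This promotes the surgery map to an $A_\infty$-morphism $\Phi\colon CW^\ast\to CE^\ast$ whose linear term $\Phi_1$ is the bijection of the previous step.

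Finally, to see $\Phi$ is a quasi-isomorphism, filter $CW^\ast$ and $CE^\ast$ by action, essentially the length of Reeb chords. Because holomorphic disks have positive area, both differentials strictly decrease this filtration, so on the associated graded both vanish and $\Phi_1$ induces an isomorphism; hence $\Phi_1$ is a quasi-isomorphism, and an $A_\infty$-morphism whose linear term is a quasi-isomorphism is an $A_\infty$-quasi-isomorphism. The main obstacle is the compactness-and-gluing analysis of the third step: showing that no broken configuration with several-punctured pieces trapped in the handles, or with non-trivial components over the completion of $X$, is ever rigid, and matching orientations; this is the technical core of \cite{BEE}, and the only genuinely new point here is organising the resulting disk counts into an $A_\infty$-morphism rather than a DGA-morphism, which is routine once the moduli spaces are understood. (One could instead deduce the statement from generation of the wrapped Fukaya category by cocores, but the direct disk count is more elementary and is what is meant by the ``surgery map''.)
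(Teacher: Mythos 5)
The decisive error is in your first step, the matching of generators. You assert that the generators of $CW^{\ast}(C_v,C_w)$ are in bijection with the Reeb chords of $\Lambda$ from $\Lambda_v$ to $\Lambda_w$ (plus one idempotent when $v=w$), and that the surgery map sends ``each Reeb chord to itself''. This cannot be right, because $CE^{\ast}$ is the \emph{free} algebra over $\k$ on the Reeb chords of $\Lambda$: as a $\k$-bimodule it is spanned by all composable words and is in general infinite-dimensional even when $\Lambda$ has a single chord. (For the standard unknot $\Lambda\subset S^{2n-1}$, $CE^{\ast}$ has one generator $c$, while $HW^{\ast}(C)\cong H_{-\ast}(\Omega S^{n})$ is infinite-dimensional, with one generator for each word $c^{k}$.) The correct statement --- the key combinatorial content of the surgery formula, recorded as Lemma \ref{l:BEEchords=words} --- is that for sufficiently small handles and below any fixed action bound the chords of the cocores correspond to composable \emph{words} of Reeb chords of $\Lambda$: after surgery a single Reeb trajectory re-enters the handle region repeatedly, picking up one $\Lambda$-chord per passage. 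With your identification, $\Phi_1$ lands in the sub-bimodule of word length at most one and has no chance of being surjective on homology, so your final filtration argument collapses.

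The error propagates into the last two steps. First, the product of $CE^{\ast}$ is free concatenation of words, not a count of disks in $\R\times Y$; the mechanism by which $\m_2$ of $CW^{\ast}$ is intertwined with concatenation is breaking of disks at the core--cocore intersection points $C_v\cap D_v$, which your ``cap pieces'' gesture at but do not organize into the product. Second, even with the correct generator identification, the action-filtration argument does not reduce to ``both differentials vanish on the associated graded'': one must show that the leading-order (diagonal) entry of $\Phi_1$ --- the signed count of rigid disks interpolating between a single cocore chord and its corresponding \emph{word} of $\Lambda$-chords --- equals $\pm 1$. This is a genuine disk count, obtained in the paper by an inductive gluing argument starting from the explicit one-chord disks and gluing at the intersection points $C\cap L$, and it is precisely the technical core that your proposal omits.
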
 

We prove Theorem \ref{srgry} in Appendix
\ref{ssec:CWBEE} following \cite{BEE}, referring to \cite{Esurgerycurves} for the necessary technical results omitted there. Appendix \ref{sec:CWnoHam}, also contains a
construction of wrapped Floer $A_\infty$-algebras that uses only purely holomorphic disks (without Hamiltonian perturbation), and a proof that this agrees with the more standard version
defined in \cite{AbouzSeidel} which uses Hamiltonian perturbations. 

One of the main guiding principles for the results in this paper is that Theorem \ref{srgry} remains
true when $\Gamma^{+}$ is non-empty, provided the Lagrangians
$C_v$ are considered as objects of the \emph{partially wrapped} Fukaya category of
$X_\Lambda$, where the non-capped Legendrians $\Lambda_v$ for $v \in \Gamma^{+}$ serve as
\emph{stops} (cf. \cite{sylvan}). The full proof of this result when $\Gamma^+$ is non-empty can be reduced to the standard surgery result, Theorem \ref{srgry}, and will appear elsewhere. Here we give an outline of a somewhat different and more topological proof, see Appendix \ref{ssec:CWpwrap}. 
We will use the geometric intuition provided by this viewpoint and our constructions of Legendrian invariants provide a rigorous ``working definition'' of
$CE^*$ even in the case that $\Gamma^{+}$ is non-empty, and also a starting point for the study of
``partially wrapped Fukaya categories'' via Legendrian surgery (extending the scope of \cite{BEE} considerably). For future reference, we state this result as a conjecture:

\begin{conj}\label{conjintro} 
	There exists a surgery map defined via moduli spaces of holomorphic disks which gives an $A_\infty$-quasi isomorphism between the partially wrapped Floer
    cochain complex $CW^* := \bigoplus_{v,w \in \Gamma} CW^*(C_v,C_w)$ and the DG-algebra 
    $CE^*$. 
\end{conj}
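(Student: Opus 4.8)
We now sketch the approach to Conjecture~\ref{conjintro}, as promised in Appendix~\ref{ssec:CWpwrap}.

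The plan is to run the Bourgeois--Ekholm--Eliashberg surgery argument behind Theorem~\ref{srgry}, enlarged to accommodate the cotangent cones $T^{*}(\Lambda_{v}\times[0,\infty))$ attached along the components $v\in\Gamma^{+}$. There are three steps: first, construct the surgery map $\Phi\colon CW^{*}\to CE^{*}$ by counting holomorphic disks in $X_{\Lambda}$ with boundary on $C_{\Lambda}$ that are forced, under neck-stretching along the attaching hypersurfaces of the handles and cones, to decompose into pieces living in the symplectization $\mathbb{R}\times Y$ with boundary on $\mathbb{R}\times\Lambda$; second, show that $\Phi$ respects the $A_{\infty}$-structures; third, show that $\Phi$ is a quasi-isomorphism by an action filtration that reduces the statement to local models near the Lagrangian skeleton $L_{\Lambda}$. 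I would set up the partially wrapped Fukaya category of $X_{\Lambda}$ with stops the non-capped Legendrians $\Lambda_{v}$, $v\in\Gamma^{+}$, using the purely-holomorphic model of the wrapped $A_{\infty}$-algebra from Appendix~\ref{sec:CWnoHam} so that the comparison with holomorphic-disk counts in the symplectization is direct and avoids Hamiltonian perturbations.

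For the construction of $\Phi$, neck-stretching along the contact-type hypersurfaces separating $X$ from the handles and cones, an SFT-type compactness argument (as in \cite{BEE}, \cite{Gromov}) shows that a rigid disk contributing to $CW^{*}$ limits to a disk in $X$ with boundary on $L_{\Lambda}$ (which carries no enumerative information for generic $C_{v}$) together with a collection of disks in $\mathbb{R}\times Y$ with boundary on $\mathbb{R}\times\Lambda$ and positive Reeb-chord asymptotics. A piece entering a $\Gamma^{-}$ handle is a rigid cocore-bounded disk as in \cite{BEE}; a piece entering a $\Gamma^{+}$ cone is, after the local identification of $T^{*}(\Lambda_{v}\times[0,\infty))$ with a neighborhood of the zero section, a disk whose boundary traces a based loop in $\Lambda_{v}$, and the induced evaluation into $\Omega_{p_{v}}\Lambda_{v}$ produces exactly the cubical-chain generators entering the definition of $CE^{*}$ in Figure~\ref{diff}. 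Reading the Reeb chords and the loop-space chains in boundary order defines $\Phi$, which on underlying $\k$-bimodules identifies $CW^{*}(C_{v},C_{w})$ with the span of Reeb chords between the corresponding components when $v$ or $w$ lies in $\Gamma^{-}$, and with $C_{-*}(\Omega_{p_{v}}\Lambda_{v})$ when $v=w\in\Gamma^{+}$.

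That $\Phi$ is an $A_{\infty}$-morphism, and then a quasi-isomorphism, follows by examining the codimension-one boundary of the one-dimensional moduli spaces obtained after neck-stretching: breakings at a contact hypersurface reproduce compositions of the $\Phi$-components with the $CE^{*}$-differential and products; breakings at a finite-length Reeb chord or at a boundary face of a cubical chain reproduce the wrapped $A_{\infty}$-operations; and the one new type of stratum relative to \cite{BEE}, in which a disk-piece inside a $\Gamma^{+}$ cone degenerates into a concatenation of based loops, is matched precisely by the internal differential of $C_{-*}(\Omega_{p_{v}}\Lambda_{v})$ appearing in $d_{CE^{*}}$. For the quasi-isomorphism one filters both sides by a suitable action (symplectic area / Reeb-chord length, loop-space chains contributing negligibly), under which $\Phi$ is filtered and localizes to a neighborhood of $L_{\Lambda}$: near a $\Gamma^{-}$ component the associated graded is the cotangent-bundle computation of \cite{BEE}, and near a $\Gamma^{+}$ component it is the statement that the $\Lambda_{v}$-stopped wrapped Floer cohomology of a cotangent fiber of $T^{*}(\Lambda_{v}\times[0,\infty))$ is $C_{-*}(\Omega_{p_{v}}\Lambda_{v})$ with the Pontryagin product, i.e.\ the Abbondandolo--Schwarz and Abouzaid identification of the wrapped Floer cohomology of a cotangent fiber with chains on the based loop space. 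A comparison of the resulting spectral sequences then upgrades $\Phi$ to a quasi-isomorphism.

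The main obstacle is the third step in the presence of stops, together with the transversality and compactness package for the high-dimensional moduli spaces carrying the loop-space evaluation maps: one must show that these spaces are manifolds-with-corners (or at least carry fundamental chains) compatibly with the cubical model, that neck-stretching creates no extra ends escaping into the non-compact cones, and that an Abouzaid-type generation argument survives with $\Lambda_{v}$-stopped rather than full wrapping. A secondary subtlety is arranging the chain-level models on both sides so that the comparison is an honest $A_{\infty}$-morphism from the genuinely $A_{\infty}$ wrapped side to the DG-algebra $CE^{*}$, rather than merely a zig-zag of quasi-isomorphisms.
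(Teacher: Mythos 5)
Your first two steps --- the construction of the surgery map by counting holomorphic disks whose boundary arcs evaluate into the based loop spaces, and the verification of the $A_\infty$-relations from the codimension-one boundary strata of one-dimensional moduli spaces --- follow the same route as the paper's outline in Appendix \ref{ssec:CWpwrap} (Theorem \ref{t:newsurgeryAinfty}), the only cosmetic difference being that the paper works directly with disks in the cobordism carrying two distinguished punctures at the intersection points $C_v\cap L_v$ rather than with neck-stretched limits. The genuine divergence, and the gap, is in the quasi-isomorphism step. You propose to filter, localize near a $\Gamma^{+}$ component, and quote the Abbondandolo--Schwarz/Abouzaid identification of the wrapped Floer cohomology of a cotangent fiber with chains on the based loop space. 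That theorem concerns full wrapping in $T^{*}M$ for $M$ closed; what your associated graded actually requires is the $\Lambda_v$-stopped wrapped Floer cohomology of a fiber of $T^{*}(\Lambda_v\times[0,\infty))$, which is not covered by those results and is precisely the local instance of the conjecture you are proving --- as written the argument is circular. A related inaccuracy: the underlying $\k$-bimodule of $CW^{*}$ is not ``Reeb chords, or $C_{-*}(\Omega_{p_v}\Lambda_v)$ when $v=w\in\Gamma^{+}$''; by Lemma \ref{l:newchords=words} its generators are the mixed geodesic--Reeb-chord words $\gamma_1c_1\gamma_2\cdots c_m\gamma_m$, matching the alternating words of loop-space chains and Reeb chords that freely generate $CE^{*}$.

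The paper avoids the circularity by proving the local statement directly: it identifies Reeb chords of the fiber $C^{\tau}$ with geodesic--Reeb-chord words for the metric $dt^{2}+f(t)g$ on $\Lambda\times[0,\infty)$, replaces $C_{-*}(\Omega_{p_v}\Lambda_v)$ by Milnor's finite-dimensional broken-geodesic model so that loop-space energy and Reeb-chord action combine into a single filtration, and then shows by induction over small action windows --- using auxiliary cotangent fibers $F_{c^{\pm}}$ at the chord endpoints and the relations \eqref{eq:morefibersprod} --- that $\Psi_1$ is triangular with $\pm1$ on the diagonal. To salvage your local-to-global scheme you would need an independent proof of the stopped cotangent-fiber computation, which in effect reproduces this triangularity argument; alternatively you should adopt it directly as the third step.
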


While writing this paper, we learned that Z. Sylvan independently considered a similar conjecture \cite{syltalk} in relation with his theory of partially wrapped Fukaya categories \cite{sylvan}.

\subsection{Augmentations and infinitesimal Fukaya categories}\label{ssec:infinteswrap} 
We keep the notation above and now consider an exact Lagrangian filling $L$ in $X$ of $\Lambda$. Such a filling gives an augmentation 
\[ 
\epsilon_{L} \colon CE^*  \to \k.  
\]
For chords on components $\Lambda_{v}$, $v\in\Gamma^{-}$ this is well known and given by a count of
rigid disks with one positive puncture and boundary on $L_{v}$. 

For components $\Lambda_{v}$, $v\in\Gamma^{+}$ we define an augmentation using the same disks. More formally, we define a chain map 
\[ 
\beta_{L} \colon CE^*  \to \oplus_{v \in \Gamma^+} C_{-\ast}(\Omega L_v),  
\]
which acts on chains in $\oplus_{v \in \Gamma^+} C_{-\ast}(\Omega\Lambda_v)$ by the inclusion and on Reeb chords $c$ as the chain of loops carried by the moduli spaces of holomorphic disks with boundary on $L_v$ (for each $v$) and a positive puncture at $c$. The augmentation $\epsilon_{L}$ is then this map followed by the augmentation on $\oplus_{v \in \Gamma^+} C_{-\ast}(\Omega L_v)\to\k$ that takes higher dimensional chains to zero and takes any loop in $L_{v}$ to $e_{v}$.

This allows us to write 
\[ CE^* = \Omega LC_* \]
for an $A_\infty$-coalgebra $LC_*=LC_{\ast}(\Lambda)$ that we call the \emph{Legendrian $A_\infty$-coalgebra} (which depends on $\epsilon_L$). Here $\Omega$ is the Adams cobar
construction. Writing $LA^{\ast}:= (LC_*)^{\#}$ for the $A_{\infty}$ algebra which is the linear dual of $LC_{\ast}$, the following result recovers the Floer cochain complex of $L$ in $X_\Lambda$.
\begin{thm} \label{generation} There exists an $A_\infty$-quasi isomorphism between the Floer cochain complex
    in the infinitesimal Fukaya category of $X_\Lambda$,
    $CF^* := CF^*(L_\Lambda)$ 
and the $A_\infty$-algebra $LA^*$. 
\end{thm}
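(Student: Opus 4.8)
The plan is to identify $CF^{\ast}(L_{\Lambda})$ with $\rhom_{CE^{\ast}}(\k,\k)$ by a direct geometric comparison of the two cochain complexes, and then to invoke the purely algebraic statement that $\rhom_{\Omega LC^{\ast}}(\k,\k)\cong (LC_{\ast})^{\#}=LA^{\ast}$, which is the standard bar/cobar duality for Adams' cobar construction of a (co)augmented $A_{\infty}$ coalgebra. The second step is formal and requires no holomorphic curves: given any augmented DG- (or $A_{\infty}$-) algebra of the form $\Omega C$ with $C$ a coaugmented coalgebra, the Koszul-dual algebra $\rhom_{\Omega C}(\k,\k)$ is computed by the cobar-bar resolution and is quasi-isomorphic to $C^{\#}$; one only has to check that our grading conventions (cohomological, differential of degree $+1$) and the semisimple coefficient ring $\k=\bigoplus_{v}\K e_{v}$ cause no trouble, which they do not since $\k$ is separable over $\K$.

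So the heart of the argument is the first step: constructing an $A_{\infty}$ quasi-isomorphism $CF^{\ast}(L_{\Lambda})\xrightarrow{\ \sim\ }\rhom_{CE^{\ast}}(\k,\k)$. First I would set up, following the pre-surgery philosophy of \cite{BEE} already used elsewhere in the paper, the linearized Legendrian contact homology complex of $\Lambda$ with respect to the augmentation $\epsilon_{L}$, together with its product structure coming from rigid disks with two positive punctures; this is exactly the $A_{\infty}$ algebra $LA^{\ast}=(LC_{\ast})^{\#}$, by definition of $LC_{\ast}$. Next I would recall that the Floer complex $CF^{\ast}(L_{\Lambda})$ in the infinitesimal (i.e. non-wrapped, small-perturbation) Fukaya category of $X_{\Lambda}$ is, by a standard neck-stretching/SFT-type degeneration along the contact boundary $(Y,\Lambda)$, computed by Morse–Bott data on $L_{\Lambda}$ together with broken configurations whose negative ends limit to Reeb chords of $\Lambda$ capped by the filling disks on $L$; when $v\in\Gamma^{-}$ the capping disk of the handle contributes as in \cite{BEE}, and when $v\in\Gamma^{+}$ the extra cotangent directions $T^{\ast}(\Lambda_{v}\times[0,\infty))$ contribute precisely the based-loop-space chains, whose augmentation to $\K$ (trivial local system) is the one entering $\epsilon_{L}$. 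Identifying the output of this degeneration with the linearized complex of $\Lambda$, and matching the $A_{\infty}$ operations on both sides disk-by-disk, gives the desired quasi-isomorphism $CF^{\ast}(L_{\Lambda})\simeq LA^{\ast}$. Finally, I would combine this with the tautological equivalence $LA^{\ast}\simeq\rhom_{CE^{\ast}}(\k,\k)$ from Section \ref{algkoszul} (which is the algebraic fact quoted above applied to $CE^{\ast}=\Omega LC_{\ast}$), obtaining the statement.

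The main obstacle is the geometric identification in the first step, specifically the SFT-type compactness and gluing argument that expresses $CF^{\ast}(L_{\Lambda})$, an invariant living in the surgered manifold $X_{\Lambda}$, in terms of Reeb-chord data of $\Lambda\subset Y$ before surgery. One must show that the relevant moduli spaces of holomorphic strips on $L_{\Lambda}$ in $X_{\Lambda}$ degenerate, under stretching the neck along $Y$, into a main component in $X$ (or the symplectization) carrying the Reeb-chord asymptotics together with rigid disks on the filling $L$ and, for $v\in\Gamma^{+}$, family-of-loops data in the cotangent cone; transversality for these high-dimensional Morse–Bott moduli spaces and the bijectivity of the gluing map are the technical crux. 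This is precisely the kind of analysis underpinning Theorem \ref{srgry} (whose full proof is deferred to the appendices), so I would either reduce to that theorem via the wrapped-vs-infinitesimal duality $LA^{\ast}\simeq\rhom_{CW^{\ast}}(\k,\k)$, or run the parallel Morse–Bott argument directly; in the write-up I expect to invoke the appendix constructions and concentrate on checking that the $A_{\infty}$ structure maps, and not merely the underlying complexes, are matched.
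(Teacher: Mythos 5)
Your overall architecture (a geometric identification of $CF^{\ast}$ with $LA^{\ast}$, followed by the formal bar--cobar identity $\rhom_{\Omega LC_{\ast}}(\k,\k)\cong(\Bar\Omega LC_{\ast})^{\#}\cong LC_{\ast}^{\#}=LA^{\ast}$) is the same as the paper's, and the algebraic half is correct as you state it. The gap is in the geometric half, in two respects. First, a ``disk-by-disk'' matching of the $A_{\infty}$ operations cannot be what happens: the two complexes do not have the same generators. $CF^{\ast}(L_{\Lambda})$ is generated by intersection points of the parallel copies of $L_{\Lambda}$ (critical points of the shifting Morse functions together with the transverse intersections $L_{v}\cap L_{w}$), while $LA^{\ast}$ is generated by duals of Reeb chords of $\Lambda$ (and, for $v\in\Gamma^{+}$, loop-space or Morse generators of $\Lambda_{v}$). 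So the comparison is necessarily a nontrivial chain map between different complexes, and you still owe an argument that it is a quasi-isomorphism.

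Second --- and this is the missing idea --- you never identify the mechanism that forces that chain map to be a quasi-isomorphism. In the paper this is Theorem \ref{ainfiso}: the map $\e=(\e_{i})_{i\ge 1}$ is defined \emph{before surgery}, by counting disks in $X$ with one positive Reeb-chord puncture and several punctures at intersection points of the parallel copies of $L$ (the dual of the twisting cochain $\t'$), and $\e_{1}$ is shown to be a quasi-isomorphism by identifying it with the linear dual of the connecting homomorphism of the low/high-energy exact sequence $0\to CW^{\ast}_{0}(L)\to CW^{\ast}(L)\to CW^{\ast}_{+}(L)\to 0$; that connecting map is an isomorphism precisely because $HW^{\ast}(L)=0$, which holds since $X$ is subcritical. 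Your proposal nowhere invokes the vanishing of wrapped Floer cohomology (or any substitute for it), so even granting all the SFT compactness and gluing you describe, the argument does not close. Your fallback --- reducing to Theorem \ref{srgry} together with a generation statement $CF^{\ast}\simeq\rhom_{CW^{\ast}}(\k,\k)$ in the surgered manifold --- trades the problem for statements that are at least as hard and, when $\Gamma^{+}\neq\varnothing$, only conjectural in this paper. Note also that the paper deliberately avoids neck-stretching in $X_{\Lambda}$: everything is set up pre-surgery so that only the classically transverse moduli spaces of Section \ref{sec:mdlispaces} are needed, whereas your degeneration in the surgered manifold reintroduces the transversality issues the paper is organized to sidestep.
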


Note that, by the general properties of bar-cobar constructions (see Section \ref{barcobarsec}), the
algebra $\mathrm{RHom}_{\Omega LC_*} (\k, \k )$ is quasi-isomorphic to the graded
$\k$-dual of the bar construction on the algebra $\Omega LC_*$, which can be computed as 
\begin{equation}\label{eq:barcobarandAug}
(\Bar \Cobar LC_*)^\#\cong (LC_*)^\# = LA^*.
\end{equation}

\begin{rem}\label{r:Aug_pm}
In case $\Gamma^{+}$ is empty, the $A_\infty$-algebra $LA^*$ is obtained from
the construction in \cite{CEKSW} and \cite{BC}, known as $\mathrm{Aug}_-$ category, by adding a copy of $\k$ making it unital. 

In case $\Gamma^{-}$ is empty, the $A_\infty$-algebra $LA^*\approx (\Bar CE^{\ast})^{\#}$, see \eqref{eq:barcobarandAug}, is the endomorphism algebra of $\Lambda$ with the augmentation $\epsilon_{L}$ in the $\mathrm{Aug}_+$ category of \cite{NRSSZ}. In the setting of microlocal sheaves, a related result was obtained by Nadler \cite[Theorem 1.6]{nadler}.
\end{rem}

\subsection{Duality between partially wrapped and infinitesimal Fukaya categories}\label{ssec:connections} 

We study duality in the setting of the two categories described above: the partially wrapped Fukaya category and the infinitesimal Fukaya category of
$X_\Lambda$ (after surgery), or equivalently, the augmented DG-algebra $\Omega LC_*$ and the
augmented $A_\infty$-algebra $LA^*$ (before surgery). 

As we have seen in Theorem \ref{generation}, the augmented
DG-algebra $\Omega LC_*$ determines the augmented (unital) $A_\infty$-algebra
$CF^*$. Now, a natural question is to what extent the quasi-isomorphism type of the $A_\infty$-algebra
$CF^*$ determines the quasi-isomorphism type of the augmented Legendrian DG-algebra $\Omega LC_*$. 

We emphasize here the phrase ``quasi-isomorphism type'': even though it is possible
to construct chain models of the $A_\infty$-algebra  $LA^*$ (which is $A_\infty$-quasi isomorphic to
$CF^*$) and the DG-algebra $\Omega LC_*$ by counting ``the same'' holomorphic disks interpreted in
different ways, the two algebras are considered with respect to different
equivalence relations, and the resulting equivalence classes can be very different. In particular, it is \emph{not} generally true that $\mathfrak{f}\colon \C \to \mathscr{D}$ being a
quasi-isomorphism of $A_\infty$-coalgebras implies that $\Omega
\mathfrak{f}\colon \Omega \C \to \Omega \mathscr{D}$ is a quasi-isomorphism. 

We will study this question by (geometrically) constructing a \emph{twisting cochain} (see
Section \ref{twisting}): 
\[ \t \colon LC_* \to (\Bar CF^*)^\#,  \] 
where $\Bar$ stands for the bar construction and $\#$ is the graded $\k$-dual.
This twisting cochain induces a map of DG-algebras:
\[ \Omega LC_* \to \mathrm{RHom}_{CF^*}( \k, \k), \]
which is a quasi-isomorphism if and only if $\t$ is a \emph{Koszul} twisting cochain. For
example, we will prove the following result. 

\begin{thm} Suppose that $LC_*$ is a locally finite, simply-connected $\k$-bimodule, then
    the natural map $\Omega LC_* \to \mathrm{RHom}_{CF^*}(\k, \k)$ is a quasi-isomorphism.
\end{thm}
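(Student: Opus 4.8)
The plan is to reduce the statement to a purely algebraic fact about twisting cochains and their Koszul complexes, and then to verify the acyclicity of the relevant Koszul complex under the stated hypotheses by a spectral sequence/filtration argument. First I would recall the general formalism (to be set up in Section~\ref{twisting}): a twisting cochain $\t\colon LC_* \to (\Bar CF^*)^\#$ is equivalent to a pair of $A_\infty$ actions making $\k$ into a bimodule over the pair $(\Omega LC_*, CF^*)$, and to each such $\t$ one associates a two-sided Koszul complex $K(\t) = \Omega LC_* \otimes_{\t} \k \otimes_{\t} CF^*$ (or the appropriate one-sided version), with the property that the induced DG-algebra map $\Omega LC_* \to \rhom_{CF^*}(\k,\k)$ is a quasi-isomorphism precisely when $K(\t)$ is acyclic; this is the standard bar-cobar/Koszul duality dictionary, and I would cite the algebraic groundwork laid earlier in the paper for it. So the theorem becomes: under local finiteness and simple-connectedness of $LC_*$, the Koszul complex $K(\t)$ is acyclic.

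Next I would exploit the geometric origin of $\t$. By Theorem~\ref{generation}, $CF^*$ is $A_\infty$ quasi-isomorphic to $LA^* = (LC_*)^\#$, and the twisting cochain $\t$ is the canonical one: on $LC_*$ it is built from the same holomorphic disk counts that define $CE^* = \Omega LC_*$ and the augmentation $\epsilon_L$. The key structural input is that, because $LC_*$ is a \emph{locally finite, simply-connected} $\k$-bimodule, the coalgebra $LC_*$ is (after passing to a minimal model) \emph{coaugmented and coradically filtered} with the reduced part $\overline{LC}_*$ concentrated in degrees $\le -1$ in the relevant (homological) convention — equivalently, the Reeb chord generators and loop-space chains that generate it carry a length/action filtration whose associated graded pieces are finite-dimensional and nonzero only in a range that makes the cobar construction $\Omega LC_*$ connected. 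This is exactly the analogue of the simple-connectedness of $M$ in the classical Adams/Eilenberg--Moore story: it guarantees that the bar filtration on $K(\t)$ is exhaustive, bounded below in each total degree, and complete, so that the associated spectral sequence converges.

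Then I would run the comparison. The associated graded of $K(\t)$ with respect to the coradical (length) filtration computes the Koszul complex of the \emph{formal/quadratic-dual} pair determined by $H^*(LC_*)$ and $H^*(LA^*)$, and there the acyclicity is the classical statement that $\Bar \Cobar$ (equivalently $\Cobar \Bar$) is a resolution — i.e.\ the bar-cobar adjunction counit/unit is a quasi-isomorphism — which holds for any coaugmented coalgebra that is conilpotent, a condition guaranteed by local finiteness plus the degree bound from simple-connectedness. Concretely, on each filtration layer the differential reduces to the internal bar differential plus the duality pairing $LC_* \otimes (LC_*)^\# \to \k$, and the acyclicity of this "Koszul-type" complex is a direct computation (it is the tensor product over $\k$ of the standard two-sided bar resolution with its dual). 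Convergence of the spectral sequence, justified by the completeness/exhaustiveness established in the previous step, then upgrades this to acyclicity of $K(\t)$ itself, which is the desired conclusion.

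The main obstacle I expect is \emph{not} the algebra of the associated graded — that is classical — but rather establishing the correct finiteness and boundedness properties of the geometrically defined $LC_*$ that make the filtration argument legitimate: one must show that the length/action filtration on $\Omega LC_*$ arising from the word-length in Reeb chords and loop-space chains is exhaustive and complete on $K(\t)$, that each graded piece is locally finite (so that graded-linear duality commutes with homology), and that simple-connectedness of the $\Lambda_v$ for $v\in\Gamma^+$ (together with the hypothesis on $LC_*$) really does force the reduced coalgebra into the degree range where conilpotency and convergence hold. Handling the non-compact/loop-space-coefficient contributions carefully — ensuring Gromov compactness gives genuinely finite-dimensional fundamental chains at each filtration level, as discussed in the setup of $X_\Lambda$ — is where the real work lies; once that bookkeeping is in place, the rest is the standard Koszul-duality spectral sequence.
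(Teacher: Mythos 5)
Your proposal is correct and follows essentially the same route as the paper: the geometric input is the quasi-isomorphism $CF^*\simeq LA^*=(LC_*)^\#$ (Theorem \ref{generation}, proved in the body as Theorem \ref{ainfiso} using $HW^*(L)=0$), and the algebraic step is the standard Koszul duality for locally finite, simply-connected (co)algebras. The paper packages that algebraic step by dualizing the induced quasi-isomorphism of bar constructions and invoking Lemma \ref{dualbar} as in Theorem \ref{doubledual}, rather than by verifying acyclicity of the Koszul complex through a word-length spectral sequence, but these are equivalent formulations of the same argument (the paper itself notes the equivalence, and the convergence issues you flag are exactly what the hypotheses on $LC_*$ are there to guarantee).
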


This is an instance of \emph{Koszul duality} between the $A_\infty$-algebras $\Omega LC_*$ and
$CF^*$. It has many useful implications. For example, it implies an isomorphism between Hochschild cohomologies: 
\[
HH^*(\Omega LC_*, \Omega LC_*) \cong HH^*(CF^*, CF^*).
\]
When $\Gamma^+ = \varnothing$, an isomorphism defined via a surgery map \cite{BEE} was described between \emph{symplectic cohomology}, $SH^{\ast}=SH^*(X_{\Lambda})$, and the Hochschild cohomology $HH^*(\Omega LC_*, \Omega LC_*)$. Therefore, when duality holds (i.e. $\t$ induces an isomorphism), we obtain a more economical way of computing $SH^*$. 

In the case of cotangent bundles $T^{\ast}M$ of simply-connected manifolds $M$, this recovers a classical result due to
Jones \cite{jones}, which gives:
\[ H_{n-*}(\mathcal{L} M) \cong HH^*(C_{-*}(\Omega M), C_{-*}(\Omega M)) \cong HH^*(CF^*(M),
CF^*(M)), \] 
where $M$ is a simply-connected manifold of dimension $n$ and $\mathcal{L} M$ denotes the free loop space of $M$.

In Section \ref{ssec:applications}, we give several concrete examples where the duality holds beyond the
case of cotangent bundles. For example, the duality holds for plumbings of simply-connected cotangent bundles
according to an arbitrary plumbing tree, see Theorem \ref{plumbs}. 

In another direction, combining duality and Floer cohomology with local coefficients, we establish the following result for relatively spin exact Lagrangian fillings $L\subset X$ with vanishing Maslov class of a Legendrian submanifold $\Lambda\subset Y$. 

\begin{thm}
	Let $\Gamma=\Gamma^{-}$ and assume that $SH^*(X)=0$ and that $\Lambda$ is simply-connected. If $CE^{\ast}(\Lambda)$ is supported in degrees $<0$, then $L$ is simply connected. Moreover, if $\Lambda$ is a sphere then $CE^{\ast}(\Lambda)$ is isomorphic to $C_{-\ast}(\Omega \overline{L})$, where $\overline{L}=L\cup_{\Lambda} D$, for a disk $D$ with boundary $\partial D=\Lambda$.
\end{thm}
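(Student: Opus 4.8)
The plan is to derive both assertions from a single geometric identification: that under the standing hypotheses ($\Gamma=\Gamma^{-}$, $SH^{\ast}(X)=0$, $\Lambda$ a simply‑connected sphere) the algebra $CE^{\ast}(\Lambda)$ is $A_\infty$‑quasi‑isomorphic to the Pontryagin chain algebra $C_{-\ast}(\Omega\overline{L})$ of the based loop space of the closed Lagrangian $\overline{L}:=L\cup_{\Lambda}D\subset X_{\Lambda}$. I would begin with the routine topological preliminaries. Gluing the relatively spin $L$ (with vanishing Maslov class) to the spin disk $D$ along the spin sphere $\Lambda$, $\overline{L}$ is a closed relatively spin exact Lagrangian in $X_{\Lambda}$ with vanishing Maslov class, so $CF^{\ast}(\overline{L})$ is defined and, by the PSS isomorphism, quasi‑isomorphic as an $A_\infty$‑algebra to $C^{\ast}(\overline{L};\k)$. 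Also, since $\Lambda$ is a simply‑connected closed manifold we have $\dim\Lambda\ge 2$, so van Kampen gives $\pi_{1}(\overline{L})\cong\pi_{1}(L)$.

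For the geometric identification, Theorem~\ref{srgry} (with $\Gamma=\Gamma^{-}$) gives $CE^{\ast}(\Lambda)\simeq CW^{\ast}(C_{v},C_{v})$, where $C_{v}$ is the cocore of the handle attached along $\Lambda$; up to a conical Hamiltonian isotopy $C_{v}$ is a cotangent fibre of $\overline{L}$, contained in a Weinstein neighbourhood $T^{\ast}\overline{L}$ and meeting $\overline{L}$ transversally once. Because $SH^{\ast}(X)=0$, the wrapped category $\mathcal{W}(X)$ is trivial, so via the Viterbo restriction $\mathcal{W}(X_{\Lambda})\to\mathcal{W}(X)$ and generation of wrapped categories by cocores, $C_{v}$ split‑generates $\mathcal{W}(X_{\Lambda})$, and the compact object $\overline{L}$ — with $\hom(\overline{L},C_{v})\simeq\k$ — corresponds to the augmentation module of $CE^{\ast}=\Omega LC_{\ast}$ cut out by $\epsilon_{L}$; Theorem~\ref{generation} then reads $\rhom_{CE^{\ast}}(\k,\k)\simeq CF^{\ast}(\overline{L})\simeq C^{\ast}(\overline{L})$. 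To pin $CE^{\ast}(\Lambda)$ down on the nose I would run this backwards: since $C_{v}$ is a cotangent fibre, its wrapped differential — computed in $T^{\ast}\overline{L}$, with $SH^{\ast}(X)=0$ ruling out holomorphic contributions escaping a neighbourhood of $\overline{L}$ — is the Abbondandolo–Schwarz differential, which records loops in $\overline{L}$; carried out in the cubical chain model (for which the cobar construction computes $C_{-\ast}(\Omega\overline{L})$ even when $\overline{L}$ is not simply connected) this yields $CE^{\ast}(\Lambda)\simeq CW^{\ast}(C_{v},C_{v})\simeq C_{-\ast}(\Omega\overline{L})$. This is exactly the ``Moreover'' clause.

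Part~1 is then immediate: the degree‑zero cohomology with its induced (Pontryagin) product is $H^{0}(CE^{\ast}(\Lambda))\cong H_{0}(\Omega_{p}\overline{L};\K)=\K[\pi_{1}(\overline{L},p)]$, so if $CE^{\ast}(\Lambda)$ is supported in strictly negative degrees, i.e. $H^{0}(CE^{\ast}(\Lambda))=\k$, then $\K[\pi_{1}(\overline{L})]=\K$, forcing $\pi_{1}(\overline{L})=1$ and hence $\pi_{1}(L)=1$; as $L$ is connected this means $L$ is simply connected. Alternatively, in the spirit of the paper's duality, the hypothesis ``$CE^{\ast}$ supported in degrees $<0$'' forces $LC_{\ast}$ to be a locally finite, simply‑connected $\k$‑bimodule (read off degrees from $CE^{\ast}=\Omega LC_{\ast}=T_{\k}(s^{-1}\overline{LC_{\ast}})$, or use the word‑length spectral sequence), so the Koszul duality theorem gives $CE^{\ast}\simeq\rhom_{C^{\ast}(\overline{L})}(\k,\k)$; comparing this with $CE^{\ast}\simeq C_{-\ast}(\Omega\overline{L})$ and using that the Eilenberg–Moore comparison $\rhom_{C^{\ast}(\overline{L})}(\k,\k)\simeq C_{-\ast}(\Omega\overline{L})$ holds only when $\overline{L}$ is simply connected (the two sides already differ in $H^{0}$ otherwise) again yields $\pi_{1}(\overline{L})=1$.

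I expect the main obstacle to be precisely the identification $CW^{\ast}(C_{v},C_{v})\simeq C_{-\ast}(\Omega\overline{L})$ with the honest loop‑space algebra, rather than merely a completion of it. The subtlety is that the cobar construction is not a quasi‑isomorphism invariant of non‑simply‑connected coalgebras, so the purely algebraic output of duality, $\Omega LC_{\ast}\simeq\rhom_{C^{\ast}(\overline{L})}(\k,\k)$, only sees the bar–cobar (pro‑unipotent) completion of the loop‑space algebra; recovering $CE^{\ast}(\Lambda)$ itself requires the geometric localisation supplied by $SH^{\ast}(X)=0$ together with a chain‑level (cubical) loop‑space model and the corresponding Abbondandolo–Schwarz argument transplanted into $X_{\Lambda}$. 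When $\overline{L}$ happens to be simply connected the two descriptions coincide, which is the consistency that makes the hypothesis of Part~1 tenable and is ultimately why that hypothesis is equivalent to simple connectivity of $\overline{L}$.
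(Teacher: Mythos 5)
Your argument is circular in a way that hides the real difficulty. You derive Part~1 from the ``Moreover'' clause, but your proof of the ``Moreover'' clause uses neither the hypothesis that $CE^{\ast}(\Lambda)$ is supported in degrees $<0$ nor any knowledge that $\overline{L}$ is simply connected --- and that is precisely where the gap sits. The step ``$SH^{\ast}(X)=0$ ruling out holomorphic contributions escaping a neighbourhood of $\overline{L}$, so the wrapped differential of $C_{v}$ is the Abbondandolo--Schwarz differential computed in $T^{\ast}\overline{L}$'' is not justified: vanishing of symplectic cohomology is a homological statement and does not localize curves; $X_{\Lambda}$ is not a cotangent bundle, the ideal boundary of $C_{v}$ has Reeb chords that do not correspond to geodesics of $\overline{L}$, and curves between them contribute to the wrapped differential. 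In the paper the identification $CE^{\ast}(\Lambda)\simeq C_{-\ast}(\Omega\overline{L})$ is exactly the statement of Corollary~\ref{cor:CE=loopspace}, whose proof goes through Theorem~\ref{mainthm} and Adams' cobar equivalence and therefore \emph{needs} $\overline{L}$ simply connected; without that, one only gets the completed statement of Theorem~\ref{comple}, i.e.\ $\widehat{CE}^{\ast}_{\parallel}\simeq\widehat{\Omega}(C_{\ast}(\overline{L}))$, whose $H^{0}$ is the unipotent completion of $\K[\pi_{1}(\overline{L})]$ and hence cannot detect $\pi_{1}$ (the $7_2$ example of Section~\ref{lenny} shows the completion map can even fail to be injective). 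Your fallback Koszul-duality argument has the same defect: the hypothesis does make $LC_{\ast}$ simply connected as a $\k$-bimodule, so Theorem~\ref{mainthm} gives $CE^{\ast}\simeq\Omega(\k\oplus CF_{\ast}(L))$, but the right-hand side only sees the homology of $\overline{L}$ (a homology sphere with perfect $\pi_{1}$ would give $H^{0}=\k$), so $H^{0}(CE^{\ast})=\k$ cannot force $\pi_{1}(\overline{L})=1$ by algebra alone.

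What the paper actually does for Part~1 is a different, genuinely geometric argument that you would need to supply: it considers wrapped Floer cohomology $HW^{\ast}_{\pi_{1}}(L)$ with coefficients in $\Z[\pi_{1}(L)]$, which vanishes as a module over $SH^{\ast}(X)=0$, and computes it by a complex $C^{\ast}\oplus M_{-\ast}$ built from the universal cover $\tilde{L}$ (with $M_{-\ast}$ the Morse complex of a function on $\tilde{L}$ with a single maximum and no index $n-1$ critical points). The degree hypothesis enters crucially here: since there are no degree-$0$ Reeb chords the augmentation is trivial, the high-energy differential counts honest strips in the symplectization whose boundaries lie in $\R\times\Lambda$, and simple connectivity of $\Lambda$ forces these to have trivial $\pi_{1}(L)$-monodromy; a rank count in degree $-n$ then shows $HW^{-n}_{\pi_{1}}(L)$ has a generator for each nontrivial element of $\pi_{1}(L)$, so vanishing forces $\pi_{1}(L)=1$. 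Only after that is the ``Moreover'' clause obtained from Corollary~\ref{cor:CE=loopspace}. So the logical order must be reversed relative to your proposal, and the $\Z[\pi_{1}]$-coefficient argument is the missing idea.
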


In general, duality between $\Omega LC_{\ast}$ and $CF^{\ast}$ does not hold (as can be seen for example by looking at
cotangent bundles of non-simply connected manifolds or, letting $\Lambda$ be the
standard Legendrian trefoil knot in $S^3$ filled by a punctured torus). However, there are cases
when duality holds even if $LC_*$ is not simply-connected, for instance because of the existence of
an auxiliary weight grading, see \cite{EtLe}, or for an example in the 1-dimensional case see
\cite{LePol}. It is a very interesting open question to find a geometric characterization of when duality holds.

\begin{rem}
Constructions of Legendrian and Lagrangian holomorphic curve invariants require the use of perturbations to achieve transversely cut out moduli spaces. 
For our main invariant $CE^{\ast}$, all moduli spaces used can be shown to be transverse by classical techniques (see Theorem \ref{thm:mdlitv}) except for the rigid holomorphic planes in $X_\Lambda$ with a single positive end that are used to anchor the disks in the terminology of \cite{BEE}.  These are also relevant for defining the wrapped Floer cochain complex $CW^*$ without Hamiltonian perturbations and for constructing the surgery map. In all cases, there is a distinguished boundary puncture in the main disk that determines a asymptotic markers on the split off planes. Taking this marker into account removes symmetries of the planes and a specific perturbation scheme for transversality of the resulting moduli spaces was constructed in \cite{Esurgerycurves}. We will use that perturbation scheme here, see Section \ref{ssec:perturbationsanchor} for details. 

%
%

\end{rem}

\subsection{An example -- the Hopf Link}\label{ssec:Hopflink}

In this section, we study the example of the Hopf link in order to illustrate our results in a simple and computable example. Some of the algebraic constructions used here are
explained in detail only later, see Section \ref{algebra}. 

\begin{figure}[h!]
    \centering
    \begin{tikzpicture}[scale=1]

    \tikzset{->-/.style={decoration={ markings,
                mark=at position #1 with {\arrow{>}}},postaction={decorate}}}
            
\draw [blue, thick=1.5, ->-=.7] (-1.5,1) to[in=90,out=190] (-2.3,0);
    \draw [blue, thick=1.5] (-1.5,1) to[in=135,out=10] (-0.3,0.3);         
    \draw [blue, thick=1.5]  (-1.5,-1)to[in=270,out=170] (-2.3,0);
    \draw [blue, thick=1.5] (-1.5,-1)to[in=225,out=350] (-0.3,-0.3)  ;
    
    \draw [blue, thick=1.5] (1.5,1) to[in=45,out=170] (0.3,0.3);         
    
    \draw [blue, thick=1.5]  (1.5,-1)to[in=270,out=10] (2.3,0);
    \draw [blue, thick=1.5] (1.5,1) to[in=140,out=350] (1.9,0.8);
    \draw [blue, thick=1.5] (2.05,0.65) to[in=90,out=320] (2.3,0);

    \draw [blue, thick=1.5] (1.5,-1)to[in=315,out=190] (0.3,-0.3)  ;

    \draw [blue, thick=1.5] (-0.3,0.3) to (0.3,-0.3);
    \draw [blue, thick=1.5] (-0.3,-0.3) to (-0.1,-0.1);
    \draw [blue, thick=1.5] (0.1,0.1) to (0.3,0.3);

    \draw [red, thick=1.5] (2.5,1) to[in=90,out=190] (1.7,0);

    \draw [red, thick=1.5] (2.5,-1) to[in=320,out=170] (2.1,-0.8);
    \draw [red, thick=1.5] (1.95,-0.65) to[in=270,out=140] (1.7,0);

    \draw [red, thick=1.5] (2.5,1) to[in=135,out=10] (3.7,0.3);         
    \draw [red, thick=1.5] (2.5,-1)to[in=225,out=350] (3.7,-0.3)  ;
    
    \draw [red, thick=1.5] (5.5,1) to[in=90,out=350] (6.3,0);
    \draw [red, thick=1.5] (5.5,1) to[in=45,out=170] (4.3,0.3);         
    \draw [red, thick=1.5, ->-=.7]  (6.3,0)to[in=10,out=270] (5.5,-1);
    \draw [red, thick=1.5] (5.5,-1)to[in=315,out=190] (4.3,-0.3)  ;

    \draw [red, thick=1.5] (3.7,0.3) to (4.3,-0.3);
    \draw [red, thick=1.5] (3.7,-0.3) to (3.9,-0.1);
    \draw [red, thick=1.5] (4.1,0.1) to (4.3,0.3);

    \node at (0,0.3) {\footnotesize{$c_1$}}; 
        \node at (2,1) {\footnotesize{$c_{21}$}}; 
        \node at (2,-1) {\footnotesize{$c_{12}$}}; 
    \node at (4,0.3) {\footnotesize{$c_2$}}; 

    \end{tikzpicture}
    \caption{Hopf link when both $\Gamma^{+}$ and $\Gamma^{-}$ are
    non-empty, the blue component lies in $\Gamma^{+}$ and the red in $\Gamma^{-}$.}
    \label{hopf}
\end{figure}

Let $\Lambda \subset S^3$ be the standard Legendrian Hopf link. We work over $\k =
\mathbb{K} e_1 \oplus \mathbb{K} e_2$ and with the Lagrangian filling $L$ given by two disks in $D^4$ that intersect transversely in a single point.
We choose the partition $\Lambda = \Lambda^+ \cup \Lambda^-$. This means that after attaching a Weinstein 2-handle to $\Lambda^{-}$ and $T^{\ast}(S^{1}\times[0,\infty))$ to $\Lambda^{+}$, we obtain the symplectic manifold $X_{\Lambda}$ with Lagrangian skeleton 
\[ L_\Lambda = S^2 \cup T_{\mathrm{pt}}^* S^2 \subset T^{\ast} S^{2}, \]
or in the terminology of \cite{sylvan}, $X_{\Lambda}$ is $T^{\ast}S^{2}$ with wrapping stopped by a Legendrian fiber sphere.   
The DG-algebra $CE^{\ast}=CE^*(\Lambda)$ of $\Lambda$ has coefficients in 
\[C_{-*}(\Omega \Lambda^+) e_1 \oplus \mathbb{K} e_2 \cong \K[t,t^{-1}] e_1 \oplus \K e_2. \]
A free model for $\K[t,t^{-1}]$ is given by the tensor algebra $\K\langle s_1,
t_1, k_1, l_1, u_1 \rangle$ with $|s_1|=|t_1|=0$, $|k_1|=|l_1|=-1$, $|u_1|=-2$ and the differential  
\begin{align*}
    dk_1 &= e_1 - s_1 t_1 \\
    dl_1 &= e_1 - t_1 s_1 \\
    du_1 &= k_1 s_1 - s_1 l_1 
\end{align*}
The natural map $\K\langle s_1,t_1,k_1,l_1,u_1\rangle \to \K[t,t^{-1}]$ sending $t_1 \to t$ and $s_1 \to t^{-1}$ is a quasi-isomorphism. The subscripts indicate that as $\k$-module generators
$s_1, t_1, k_1, l_1, u_1$ are annihilated by the idempotent $e_2$. 

Next, incorporating the Reeb chords, with notation as in Figure \ref{hopf}, we get the free algebra 
\[  \k \langle c_{12} , c_{21} , c_{1}, c_{2},
s_1, t_1, k_1, l_1, u_1 \rangle \] 
with gradings\[|u_1|=-2, |c_{1}|=|c_{2}| = |k_1|= |l_1|=  -1,\ |c_{12}|=|c_{21}|=|s_1|=|t_1|=0  \] and differential:
\begin{align*} 
    dc_1 &= e_1 + s_1 + c_{12}c_{21}, \\
    dc_2 &= c_{21} c_{12}, \\
    dk_1 &= e_1 - s_1 t_1, \\
    dl_1 &= e_1 - t_1 s_1, \\
    du_1 &= k_1 s_1 - s_1 l_1. 
\end{align*}
The only augmentation to $\k$ is given by $\epsilon(s_1) = \epsilon(t_1) = -e_1$, and
$\epsilon(c_1) = \epsilon(c_2) = \epsilon(c_{12})
=\epsilon(c_{21})=\epsilon(k_1)= \epsilon(l_1)= \epsilon(u_1) = 0$. After change of
variables, $s_1 \to s_1 - e_1$ and $t_1 \to t_1 - e_1$, we obtain the free algebra 
\[ \k \langle c_{12} , c_{21} , c_{1}, c_{2},
s_1, t_1, k_1, l_1, u_1 \rangle \] 
with non-zero differential on generators: 
\begin{equation}
\begin{aligned} 
    dc_1 &= s_1 + c_{12}c_{21}, \\
    dc_2 &= c_{21} c_{12}, \\
    dk_1 &= s_1 +t_1 - s_1 t_1, \\
    dl_1 &= s_1 +t_1  - t_1 s_1, \\
    du_1 &= l_1 -k_1 + k_1 s_1 - s_1 l_1. 
\end{aligned}
\end{equation}
On the other hand, we can compute the Floer cochains $CF^{\ast}=CF^{\ast}(L_{\Lambda})$ of $L_\Lambda$ as 
\[ CF^* = \k \oplus \mathbb{K} a_{12} \oplus \K a_{21} \oplus \K a_2, \
|a_{2}|=2,\ |a_{12}|=|a_{21}|=1 \] 
The cohomology level
computation follows easily from the geometric picture and general properties of Floer cohomology:  $L_\Lambda$ is a union of a disk $D^2$ and
a sphere $S^2$ that intersect transversely in one point and we have
\begin{align*}
    HF^*(D^2,D^2) &= \K e_1 \\
    HF^*(S^2,S^2) &= \K e_2 \oplus \K a_2 \\
    HF^*(D^2,S^2) &= \K a_{12} \\
    HF^*(S^2,D^2) &= \K a_{21} 
\end{align*}
The only non-trivial product that does not involve idempotents is given by $\m_2(a_{12},a_{21}) = a_2$. For degree reasons, the only possible non-trivial higher products are:
\[ \m_{2k} (a_{12},a_{21}, \ldots, a_{12},a_{21}) = ca_2  \text{\ for some  } k >1 \text{ and } c
\in \K.\]
It turns out that one can take $c=0$ for all $k>1$. Indeed, assuming that the $A_\infty$-structure
is strictly unital (which can be arranged up to quasi-isomorphism), consider the $A_\infty$-relation that involves the term
\[ \m_2( m_{2k}(a_{12},a_{21},\ldots,a_{12},a_{21}), e_2). \]
By induction on $k>1$, this term has to vanish, which implies
$m_{2k}(a_{12},a_{21},\ldots,a_{12},a_{21})$ has to vanish for all $k>1$. Let us confirm this by
using the quasi-isomorphism:
\[ CF^*  \cong \mathrm{RHom}_{CE^*}(\k,\k).\] 
We introduce the counital $A_\infty$-coalgebra
\[ LC_* = \k \oplus \mathbb{K} c_{12} \oplus \K
c_{21} \oplus \K c_{1} \oplus \K c_{2} \oplus \K s_1 \oplus \K t_1 \oplus \K k_1 \oplus \K l_1
\oplus \K u_1 \] 
with $|u_1|=-3, |c_{1}|=|c_{2}| = |k_1|= |l_1|=  -2,\ |c_{12}|=|c_{21}|=|s_1|=|t_1|=-1$ 
for which $\Delta_{i}=0$ except for $i=1$ or 2 where there are the following non-zero terms:  
\begin{align*} 
    \Delta_1 (c_1) &= s_1, \\
    \Delta_1 (k_1) &= s_1 + t_1, \\
    \Delta_1 (l_1) &= s_1 + t_1, \\
    \Delta_1 (u_1) &= l_1 - k_1.
\end{align*}
Write $\Delta_2 (x) = 1 \otimes_\k x + x \otimes_\k 1 +
\overline{\Delta}_2(x)$. Then
\begin{align*}
    \overline{\Delta}_2(c_1) &= c_{12} c_{21}, \\
    \overline{\Delta}_2(c_{2}) &= c_{21} c_{12}, \\ 
    \overline{\Delta}_2(k_1) &= - s_1 t_1, \\ 
    \overline{\Delta}_2(l_1) &= - t_1 s_1, \\
    \overline{\Delta}_2(u_1) &=  k_1 s_1 - s_1 l_1,
\end{align*} 
where the $A_\infty$ coalgebra operations on $LC_* $ is defined so that $\Omega LC_*$ is isomorphic to $CE^*$. Thus, $\mathrm{RHom}_{CE^*}(\k,\k)$ can be computed as the graded dual of $LC_*$ which is the $A_\infty$-algebra: 
\[ LA^*= \k \oplus \K c_{12}^\vee \oplus \K 
c_{21}^\vee \oplus \K c_{1}^\vee \oplus \K c_{2}^\vee \oplus \K s_1^\vee \oplus \K t_1^\vee \oplus
\K k_1^\vee \oplus \K l_1^\vee \oplus \K u_1^\vee,  \]
with gradings  
\[ 
|u_1^\vee|=3, |c_{1}^\vee|=|c_{2}^\vee| = |k_1^\vee|=|l_1^\vee| =  2,\
|c_{12}^\vee|=|c_{21}^\vee|=|s_1^\vee|=|t_1^\vee|=1, 
\]
where $c^{\vee}$ is the linear dual of the generator $c$ of $LC_{\ast}$.
The differential is  \[ \m_1(s_1^\vee) = c_1^\vee + k_1^\vee + l_1^\vee,
\m_1(t_1^\vee)=k_1^\vee + l_1^\vee, \m_1 (k_1^\vee) = -u_1, \m_1(l_1^\vee) = u_1, \] and the
products that do not involve
idempotents are  
\begin{align*} &\m_2(c_{12}^\vee,c_{21}^\vee) = c_2^\vee, \m_2(c_{21}^\vee, c_{12}^\vee) = c_1^\vee ,
    \m_2(t_1^\vee,s_1^\vee) = -k_1^\vee,\\ &\m_2(s_1^\vee,t_1^\vee) = -l_1^\vee,  \m_2(k_1^\vee,s_1^\vee) =
u_1^\vee, \m_2(s_1^\vee,l_1^\vee) = -u_1^\vee.   
\end{align*} 

All the higher
products vanish. 
We claim that that this $A_\infty$-algebra is quasi-isomorphic to the algebra
\[  \k \oplus \mathbb{K} a_{12} \oplus \K a_{21} \oplus \K a_2, \
|a_{2}|=2,\ |a_{12}|=|a_{21}|=1  \]
with the only non-trivial product (not involving idempotents) given by \[ \m_2(a_{12},a_{21}) =a_2.\]  
Indeed, it is easy to show that the map defined by:
\[ c^\vee_{12} \to a_{12}, c_{21}^\vee \to a_{21}, c_2^\vee \to a_2 , \text{\ and \ } c_1^\vee, s_1^\vee,
t_1^\vee, k_1^\vee, l_1^\vee, u_1^\vee\to 0 \]
is a DG-algebra (hence also $A_\infty$-algebra) map, which induces an isomorphism at the level of
cohomology. 

Dually, we can construct a $DG$-algebra map 
\[ CE^* \to \mathrm{RHom}_{CF^*}(\k,\k). \]
The Floer cochain complex $CF^*$ has a unique augmentation given by projection to $\k$ and we compute 
\[ \mathrm{RHom}_{CF^*}(\k,\k) \cong \Cobar CF_*, \]
where $CF_*$ is the coalgebra dual to $CF^{\ast}$. This is the free coalgebra
\[  \k \langle a^\vee_{12} , a^\vee_{21} , a^\vee_{2} \rangle \] 
with $|a_{12}^\vee|=|a_{21}^\vee|=0$ and $|a^\vee_{2}|=-1$ and the only non-trivial differential not involving counits is
\[ \Delta_{2}(a^\vee_{2}) = a_{21}^\vee a_{12}^\vee. \] 
We have a twisting cochain 
\[ \t\colon LC_* \to \Omega CF_* \]
given by  \begin{align*}
    \t (c_2) &= a^\vee_2,\ \t (c_{12}) = a^\vee_{12},\ \t (c_{21}) = a^\vee_{21},\\
    \t(c_1) &=0,\ \t (s_1) = -a_{12}^\vee a_{21}^\vee ,\ \t (t_1) = a_{12}^\vee a_{21}^\vee,\ \\  \t
    (l_1) &= \t(k_1)=  a_{12}^\vee
    a_2^\vee a_{21}^\vee, \t(u_1) = a_{12}^\vee a_2^\vee a_2^\vee a_{21}^\vee.
\end{align*}
This means that $\t$ satisfies the following equations:
\begin{align*} 
    d\t(c_1) &= \t(s_1) + \t(c_{12}) \t(c_{21}), \\
    d\t(c_2) &= \t(c_{21}) \t(c_{12}), \\
    d\t(k_1) &= \t(s_1) + \t(t_1) - \t(s_1) \t(t_1), \\
    d\t(l_1) &= \t(s_1) + \t(t_1) - \t(t_1) \t(s_1), \\
    d\t(u_1) &= \t(l_1) -\t(k_1) + \t(k_1) \t(s_1) - \t(s_1) \t(l_1).  
\end{align*}
Hence, it induces a DG-algebra map 
\[  \Omega LC_*  \to \Omega  CF_*. \]
We have not checked whether this is a quasi-isomorphism, or equivalently whether $\t$ is a
\emph{Koszul} twisting cochain. Note, however, the DG-algebra map $\Omega CF_* \to \Omega LC_* $ defined by
\[ a_2^\vee \to c_2, a_{12}^\vee \to c_{12}, a_{21}^\vee  \to c_{21}, \]
shows that $\t$ is a retraction, and $\Omega CF_* $ is a retract of $\Omega LC_*$.

\vspace{.5cm}

\noindent
{\bf Acknowledgments.} T.E. is supported by the Knut and Alice Wallenberg Foundation and by the
Swedish Research Council. Y.L. is supported in part by the Royal Society (URF) and the NSF grant
DMS-1509141. Both authors would like to thank the Mittag-Leffler institute for hospitality and
excellent working conditions. We also thank Lenny Ng for providing the example in Section
\ref{lenny}, and Zack Sylvan and Paolo Ghiggini for helpful comments.

\section{Algebraic preliminary}
\label{algebra}

In this section, we review the homological algebra we use in our study of various invariants
associated to Legendrian submanifolds and their Lagrangian fillings. Most of this material is well
established, see \cite{L-H} and also
\cite{Keller}, \cite{proute}, \cite{Posit}, \cite{herscovich}, \cite{LV}, \cite{LPWZ}. Note though that our sign conventions follow \cite{seidelbook}, see Remark \ref{rem:Seidelsign}. 

\subsection{$A_\infty$-algebras and $A_\infty$-coalgebras}

In this section we will discuss the basic algebraic objects we use. These are modules over a ground ring $\k$ of the following form.  
Fix a coefficient field $\mathbb{K}$ (of arbitrary characteristic) and let $\k$ be a semi-simple ring of the form:
\[ 
\k = \oplus_{v\in\Gamma} \,\mathbb{K} e_v 
\]
where $e_v^2= e_v$ and $e_v e_w =0$ for $v \neq w$, and where the index set $\Gamma$ is finite.

We will use $\Z$-graded $\k$-bimodules. If $M= \bigoplus_i M^{i}$ is such a module then we call $M$ \emph{connected} if $M^0 \cong \k$ and
either $M^i =0$ for all $i>0$, or $M^{i}=0$ for all $i<0$, and we call $M$ \emph{simply-connected} if, in addition, in the former case $M^{-1}=0$ and in the latter $M^{1}=0$. Further, we say that $M$ is \emph{locally finite} if each $M^i$ is finitely generated as a $\k$-bimodule.  

We have the usual shifting and tensor product operations on modules. If $M = \bigoplus_{i} M^{i} $ is a graded $\k$-bimodule and $s$ is an integer then we let the corresponding shifted module $M[s]=\bigoplus_{i}M[s]^{i}$ be the module with graded components  
\[ 
M[s]^{i} = M^{i+s}. 
\]
If $N=\bigoplus_{i} N^{i}$ is another graded $\k$-bimodule then
$M\otimes_{\k}N=\bigoplus_{k}\left(M\otimes_{\k}N\right)^{k}$ is naturally a graded $\k$-bimodule with
\[ \left(M\otimes_{\k}N\right)^{k}=\bigoplus_{i+j=k}M^{i}\otimes_{\k} N^{j}.
\]
For iterated tensor products we write 
\[  
M^{\otimes_{\k}r}=\underbrace{M\otimes_{\k}\ldots \otimes_{\k} M}_{r}.
\]

Our modules will often have further structure as $\Z$-graded $A_\infty$-algebras and $A_\infty$-coalgebras over $\k$, see Sections \ref{ssec:Ainftyalg} and \ref{ssec:Ainftycoalg}. The modules are then in particular chain complexes with a differential and we will use \emph{cohomological} grading throughout. That is, the differential \emph{increases} the grading by 1. For
example, if $L$ is a topological space then its cohomology complex $C^*(L)$ is supported in
non-negative grading, while the homology complex $C_{-*}(L)$ is supported in non-positive degrees.
To be consistent with this, we denote the grading as a subscript (resp. superscript) when the underlying chain
complex has a coalgebra (resp. algebra) structure.

\subsubsection{$A_{\infty}$-algebras}\label{ssec:Ainftyalg} An \emph{$A_\infty$-algebra} over $\k$
is a $\Z$-graded $\k$-module $\A$ with a collection of grading preserving $\k$-linear maps \[ \m_i
\colon \A^{\otimes_\k i } \to \A [2-i],  \] for all integers $i \geq 1$, that satisfies the
$A_\infty$-relations: \begin{equation} \label{ainf} \sum_{i,j} (-1)^{|a_1|+\ldots + |a_j|-j}
\m_{d-i+1}\left(a_d,\ldots, a_{j+i+1},\m_i(a_{j+i},\ldots, a_{j+1}), a_j,\ldots, a_1\right) = 0,
\end{equation} for all $d$. 

\begin{rem}\label{rem:Seidelsign}
	We follow the sign conventions of \cite{seidelbook}. Note that
	even though, $\m_i$ is written on the left of $(a_{j+i},\ldots, a_{j+1})$, the sign
	convention is so that $\m_i$ acts from the right. To be consistent, we will insist that all our
	operators act on the right independently of how they are written. This convention and the usual
	Koszul sign exchange rule applied with respect to the shifted grading $\mathscr{A}[1]$ determine the signs that appear in our formulas. 
\end{rem}

A \emph{DG-algebra} over $\k$ is an $A_\infty$-algebra $\A$ such that $\m_i =0$ for $i \geq 3$. In this
case, we call the first two operations the \emph{differential} and the \emph{product}, respectively, and use the following adjustments to obtain an (ordinary) differential graded algebra: 
\begin{equation} \label{left} 
    d a = (-1)^{|a|} \m_1(a) \quad \text{ and } \quad a_2 a_1 = (-1)^{|a_1|} \m_2(a_2,a_1). 
\end{equation}
In particular, the product is then associative and the graded Leibniz rule for $d$ holds:
\begin{equation} \label{Leib} 
    d(a_2 a_1) = (d a_2) a_1 + (-1)^{|a_2|} a_2 (da_1). 
\end{equation}
An \emph{$A_{\infty}$-map} $\e\colon\A\to\B$ between $A_{\infty}$-algebras $\A$ and $\B$ over $\k$, with operations $\m_i$ and $\n_{i}$, $i\ge 1$, respectively, is a collection of $\k$-linear grading preserving maps 
\[ 
\e_i \colon \A^{\otimes_\k i } \to \B [1-i],  
\]
$i\ge 1$, that satisfies the relations
\begin{align*} 
\sum_{i,j} (-1)^{|a_1|+\ldots + |a_j|-j} 
\e_{d-i+1}\left(a_d,\ldots, a_{j+i+1},\m_i(a_{j+i},\ldots, a_{j+1}), a_j,\ldots, a_1\right) \ & =  \\ 
\sum_{\begin{smallmatrix}
1\le j\le d, \\
0<i_{1}<i_{2}<\dots<i_{j}<d
\end{smallmatrix}}  
\n_{j}\left(\e_{d-i_{j}}(a_d,\ldots,a_{d-i_{j}}),\ldots,\e_{i_2-i_1}(a_{i_2},\ldots,a_{i_1+1}), \e_{i_1}(a_{i_1},\ldots, a_{1})\right). 
\end{align*}

An $A_\infty$-map $\e \colon \A \to \B$ is called an \emph{$A_\infty$-quasi isomorphism} if the map on
cohomology $H^*(\A) \to H^*(\B)$ induced by $\e^1$ is an isomorphism.

We say that an $A_{\infty}$-algebra $\A$ is \emph{strictly unital} if there is
an element $1_\A \in\A$ such that $\m_1(1_\A)=0$, $\m_2(1_\A,a)=\m_{2}(a,1_\A)=a$ for any $a\in\A$, and such that
$\m_{i}$, $i > 2$ annihilates any monomial containing $1_\A$ as a factor. Any $A_\infty$-algebra $\A$ which has a cohomological unit,i.e. a cocycle representing the identity element in $H^*(\A)$, is quasi-isomorphic to a strictly unital $A_\infty$-algebra (\cite[Section 7.2]{Posit}).

An \emph{augmentation} of a strictly unital $A_\infty$-algebra is an $A_\infty$-map $\epsilon \colon \A \to \k$,
where $\k$ is considered as a strictly unital $A_\infty$-algebra in degree 0 with trivial
differential and higher $A_\infty$-products, and such
that $\epsilon_1 (1_\A)=1_\k$, $\epsilon_i$ for $i>1$ annihilates any monomial containing $1_\A$.
An augmentation is called $\emph{strict}$ if $\epsilon_{i}=0$ for $i>1$. The category of
augmented, strictly unital $A_\infty$-algebras is equivalent to the category of strictly
augmented, strictly unital $A_\infty$-algebras (see \cite[Section 7.2]{Posit}). 

\subsubsection{$A_{\infty}$-coalgebras}\label{ssec:Ainftycoalg}
An \emph{$A_\infty$-coalgebra} $\C$ over $\k$ is a $\Z$-graded $\k$-module with a collection of
$\k$-linear grading preserving maps:
\[ 
\Delta_i\colon \C \to \C^{\otimes_\k i}[2-i],
\]
for all integers $i \geq 1$, with the following properties. The maps satisfy the co-$A_\infty$-relations:
\begin{equation} \label{coainf} 
\sum_{i=1}^d \sum_{j=0}^{d-i} ( \id^{\otimes_\k (d-i-j)} \otimes_\k \Delta_i \otimes_\k
\id^{\otimes_\k\, j} ) \Delta_{d-i+1} = 0, 
\end{equation}
where,  
\begin{align*} 
    \id^{\otimes_\k (d-i-j)}&\otimes_\k \Delta_i\otimes_\k \id^{\otimes_\k j}\,(c_{d-i+1},\ldots, c_{1}) =\\
    &(-1)^{|c_{1}| +\ldots+ |c_{j}|-j} (c_{d-i+1} , \ldots, c_{j+2})\otimes_\k \Delta_i(c_{j+1}) \otimes_\k (c_{j} ,
    \ldots , c_{1})\\
    & \in \C^{\otimes_\k (d-i-j)} \otimes_\k \C^{\otimes_\k i} \otimes_\k
    \C^{\otimes_\k j}. 
\end{align*}
Furthermore, the degree 1 map
\[ 
\C[-1] \to \prod_{i=1}^\infty \C[-1]^{\otimes_\k i}, 
\]
with $i^{\rm th}$ component equal to $\Delta_i$, factorizes through the natural inclusion
\[ 
\bigoplus_{i=1}^\infty \C[-1]^{\otimes_\k i} \to \prod_{i=1}^\infty \C[-1]^{\otimes_\k i}, 
\]
of the direct sum into the direct product.    
    
A \emph{DG-coalgebra} over $\k$ is an $A_\infty$-coalgebra such that $\Delta_i=0$ for $i\geq 3$.
In this case, we call the first two operations the \emph{differential} and the \emph{coproduct},
respectively, and use the following adjustments to obtain an (ordinary) differential
graded coalgebra:
\begin{equation} \label{coleft}  
    \theta c = (-1)^{|c|} \Delta_1(c) \quad \text{ and } \quad \Delta(c) = 
    \sum (-1)^{|c_{(2)}|} c_{(1)} \otimes_\k c_{(2)}, 
\end{equation}
where we write $\Delta_2(c) = \sum c_{(1)} \otimes_\k c_{(2)}$. 

In particular, the coproduct is coassociative (i.e. $(\Delta \otimes_\k \id)\circ \Delta = (\id
\otimes_\k \Delta) \circ \Delta$) and the graded co-Leibniz rule holds:
\begin{equation} \label{coLeib}
\Delta \theta(c) = \sum (-1)^{|c_{(1)}|} c_{(1)} \otimes_{\k} \theta(c_{(2)}) + \theta(c_{(1)})
\otimes_\k c_{(2)}. 
\end{equation}

An \emph{$A_{\infty}$-comap} $\mathfrak{f} \colon\C\to\D$ between $A_{\infty}$-coalgebras $\C$ and $\D$ over
$\k$, with operations $\Delta_i$ and $\Theta_{i}$, $i\ge 1$, respectively, is a collection of $\k$-linear grading preserving maps 
\[ 
    \mathfrak{f}_i \colon \C \to \D^{\otimes_\k i}[1-i], 
\]
$i\ge 1$, that satisfies the relations:
\begin{align*} 
    \sum_{i=1}^{d} \sum_{j=0}^{d-i} \left(\id^{\otimes_\k (d-i-j)} \otimes_\k \Theta_i \otimes_\k
\id^{\otimes_\k j} \right) \mathfrak{f}_{d-i+1}  \ &= \\ 
\sum_{\begin{smallmatrix}
1\le j\le d, \\
0<i_{1}<i_{2}<\dots<i_{j}<d
\end{smallmatrix}}  
\left( \mathfrak{f}_{d-i_j} \otimes_\k \cdots \otimes_\k \mathfrak{f}_{i_2-i_1} \otimes_\k
\mathfrak{f}_{i_1} \right) \Delta_j.
\end{align*}
where, 
\begin{align*} 
    \id^{\otimes_\k (d-i-j)}\otimes_\k &\Theta_i\otimes_\k \id^{\otimes_\k j}\,(d_{d-i+1},\ldots, d_{1}) =\\
    &(-1)^{|d_{1}|+\ldots+|d_{j}|-j} (d_{d-i+1} , \ldots, d_{j+2})\otimes_\k \Theta_i(d_{j+1}) \otimes_\k (d_{j} ,
    \ldots , d_{1})\\
    & \in \D^{\otimes_\k (d-i-j)} \otimes_\k \D^{\otimes_\k i} \otimes_\k
    \D^{\otimes_\k j}. 
\end{align*}

Furthermore, the degree 0 map 
\[ \C[-1] \to \prod_{i=1}^\infty \D[-1]^{\otimes_\k i} \]
with $i^{th}$ component equal to $\mathfrak{f}_i$, factorizes through the natural inclusion:
\begin{equation} \label{factor} 
\bigoplus_{i=1}^\infty \D[-1]^{\otimes_\k i} \to \prod_{i=1}^\infty \D[-1]^{\otimes_\k i}, 
\end{equation}
of the direct sum into the direct product.    

An $A_\infty$-comap $\mathfrak{f} \colon \C \to \D$ is called an \emph{$A_\infty$-quasi isomorphism} if the map on
cohomology $H^*(\C) \to H^*(\D)$ induced by $\mathfrak{f}^1$ is an isomorphism.

We say that an $A_\infty$-coalgebra is \emph{strictly counital} if there exists a $\k$-linear map
$\epsilon\colon \C \to \k$ such that $(\epsilon \otimes \id) \Delta_2 = (\id
\otimes \epsilon) \Delta_2 = \id$ and $(\id^{\otimes_\k (i-j)} \otimes_\k \epsilon \otimes_\k
\id^{\otimes_\k\, j-1}) \Delta_i =0$ for all $i \neq 2$ and $j$. 
 Any $A_\infty$-coalgebra $\C$ which has a cohomological counit i.e a cocycle representing the counit in $H^*(\C)$, is quasi-isomorphic to a strictly counital $A_\infty$-coalgebra (see \cite[Section 7.5]{Posit}).

A \emph{coaugmentation} of a strictly counital $A_\infty$-coalgebra $\C$ is an $A_\infty$-comap $\eta \colon\k \to
\C$, where $\k$ is considered as a vector space in degree 0 with the trivial
$A_\infty$-coalgebra structure, such that $\epsilon \eta_1 = 1_\k$ and 
$(\id^{\otimes_\k (i-j)} \otimes_\k \epsilon \otimes_\k \id^{\otimes_\k\, j-1}) \eta_i =0$ for all
$i>1$ and $j$. The coaugmentation is called \emph{strict} if $\eta_i=0$ for $i \geq 2$. 

A DG-coalgebra $\C$ is called \emph{conilpotent} (also called
\emph{cocomplete})  if for any $c\in \C$,
there exists an $n \geq 2$ such that $c$ is in the kernel of the iterated comultiplication map defined recursively by $\Delta^{(2)}=\Delta$ and 
$\Delta^{(n)}=(\id^{\otimes_\k (n-2)} \otimes_\k \Delta) \circ \Delta^{(n-1)}$, for $n>2$.  When considering coaugmented DG-coalgebras, conilpotency is enforced only on the coaugmentation ideal $\mathrm{coker}(\eta)$.

\subsubsection{Graded dual}\label{ssec:gradeddual}
We next discuss the graded dual of a graded $\k$-module. Since we are working with bimodules over the ring $\k$, there are two $\k$-linear duals (cf. \cite{bgs}). 

If $\A$ is a graded $\k$-bimodule, $\A=\bigoplus_{i}\A_{i}$, then the $\emph{graded duals}$
$\A^{\#}=\bigoplus_{i}(\A^{\#})_{i}$ and $^{\#}\!\A =\bigoplus_{i}(^{\#}\!\A)_{i}$ are defined as
follows. The graded components $(\A^{\#})_{i}$ of $\A^{\#}$ are left $\k$-module maps:
\[ 
\mathrm{hom}_{\k-}(\A_{-i},\k),
\] 
and the $\k$-bimodule structure on $\A^{\#}$ is given as follows: if $e_v,e_w\in\k$, $a\in(\A^{\#})_{i}$, and $c\in \A_{-i}$, then
\begin{equation} \label{leftkbimodule} 
(e_v \cdot a \cdot e_w) (c) = a(ce_v ) e_w. 
\end{equation}
The graded components $(^{\#}\!\A)_{i}$ of $^\#\!\A$ in degree $i$ are right $\k$-module maps,
which we write as:
\[ 
\mathrm{hom}_{-\k}(\A_{-i},\k),
\] 
and the $\k$-bimodule structure is given by: 
if $e_v,e_w\in\k$, $a\in(^{\#}\!\A)_{i}$, and $c\in \A_{-i}$, then
\begin{equation} \label{rightkbimodule}
(e_v \cdot a \cdot e_w) (c) = e_v a(e_w c). 
\end{equation}
Both canonical maps $\A \to \!^\#(\A^\#)$ and $\A \to (^\#\!\A)^\#$ are $\k$-bimodule maps, which are isomorphisms if $\A$ is locally finite.

If $V_1,V_2,\ldots, V_n$ are $\k$-bimodules, there is a natural map 
\[V_n^\# \otimes_\k  V_{n-1}^\# \otimes_\k \cdots \otimes_\k V_1^\# \to (V_1 \otimes_\k V_2 \otimes_\k \cdots \otimes_\k V_n)^\# \]
given  by:
\begin{equation}\label{composition} 
(a_n \otimes a_{n-1} \otimes \ldots \otimes a_1) (c_1 \otimes c_2 \otimes \ldots\otimes c_n ) :=
    a_1(c_1 a_2(c_2 \ldots a_n(c_n) \ldots)).  
\end{equation}
Similarly, there is a natural map 
\[^\# V_n \otimes_\k\!^\# V_{n-1} \otimes_\k \cdots \otimes_\k \!^\# V_1 \to \!^\#(V_1 \otimes_\k V_2
\otimes_\k \cdots \otimes_\k V_n), \]
given  by:
\begin{equation}\label{composition2} 
    (a_n \otimes a_{n-1} \otimes \ldots \otimes a_1) (c_1 \otimes c_2 \otimes \ldots\otimes c_n ) := a_n (\ldots a_2(a_1(c_1)c_2)\ldots c_n).  
\end{equation}

These give the graded duals $\C^\#$ and $\!^\#\C$ of a coaugmented $A_\infty$-coalgebra $\C$ the structure of augmented $A_\infty$-algebras with structure maps defined by:
\begin{equation}\label{dualainf} \m_i(a_i,\ldots,a_1) (c) := (-1)^{|c|} (a_i \otimes \ldots \otimes a_1)
\Delta_i (c). \end{equation}
Note that to get a non-zero product, we must have $|\m_i(a_i,\ldots,a_1)|=|c|$, hence the sign $(-1)^{|c|}$ equals the sign $(-1)^{|a_1|+\ldots+|a_i|-i}$.

In general, there is no natural way of equipping the graded dual of an augmented $A_\infty$-algebra with an $A_\infty$-coalgebra structure. However, if the grading on $\A$ is locally finite (i.e.
$\A_i$ are finitely generated as $\k$-bimodules), then it follows that
\begin{align*}
    \A^\# \otimes_{\k} \A^\# \otimes_\k \cdots \otimes_\k \A^{\#} &\cong (\A         \otimes_\k \A
    \otimes_\k
\cdots \otimes_\k \A)^\#, \\
    \!^\#\A\! \otimes_{\k} \!^\#\!\A \otimes_\k \cdots \otimes_\k \!^{\#}\!\A &\cong \!^\# (\A         \otimes_\k \A \otimes_\k \cdots \otimes_\k \A).
\end{align*}  
Using these isomorphisms, the graded duals $\A^\#$ and $^\#\!\A$ of an augmented
        $A_\infty$-algebra $\A$ with locally finite grading can be naturally equipped with the structure of a coaugmented $A_\infty$-coalgebras by using the formulae
        \[ \Delta_i(c)(a_i \otimes_\k \cdots \otimes_\k a_1) = (-1)^{|c|} c(\m_i(a_i,\ldots, a_1))\] 
\subsubsection{Twisting cochains}
\label{twisting} 

Let $(\mathscr{C}, \Delta_{\bullet})$ be an $A_\infty$-coalgebra and let $(\mathscr{A},
\m_1, \m_2)$ be a DG-algebra. A \emph{twisting cochain} is a $\k$-linear map $\mathfrak{t}\colon
\mathscr{C} \to \mathscr{A}$ of degree 1 that satisfies:
\begin{equation}\label{eq:twcochain}
    \m_1 \circ \mathfrak{t} - \mathfrak{t} \circ \Delta_1 + \sum_{d \geq 2} (-1)^d \m_2^{(d)}
\circ \mathfrak{t}^{\otimes_\k d} \circ \Delta_d = 0, 
\end{equation}
where $\m_2^{(2)} := \m_2$, and $\m_2^{(d)} := \m_2\circ
(\mathrm{Id}_\mathscr{A} \otimes_\k \m_2^{(d-1)})$. Note that for
$c \in \mathscr{C}$, $\Delta_i(c) \neq 0$ for only finitely many $i$, and hence the potentially infinite sum in \eqref{eq:twcochain} is actually finite when it acts on $c$.

If the coalgebra $\C$ is coaugmented by $\eta\colon \k \to \C$ and the algebra $\A$ is augmented
$\epsilon\colon \A \to \k$, we require in addition that its  twisting cochains $\t$ are compatible in the sense that  \begin{equation}
\t \circ \eta = \epsilon \circ \t =0.
\end{equation}
We denote the set of twisting cochains from $\C$ to $\A$ by $\mathrm{Tw}(\C,\A)$.

Let $\t\in\mathrm{Tw}(\C,\A)$ be a twisting cochain. Consider the twisted tensor product $\A
\otimes^\mathfrak{t}_\k \C$ as a chain complex with 
differential $d^{\mathfrak{t}}\colon \A\otimes^\mathfrak{t}_\k \C\to\A\otimes^\mathfrak{t}_\k \C$ defined as follows,
\begin{equation}\label{koszulcomplex} 
d^\mathfrak{t} = \m_1 \otimes_\k \mathrm{Id}_\C + \mathrm{Id}_\A \otimes_\k \Delta_1 +
\sum_{d \geq 2} (\m_2^{(d)} \otimes \mathrm{Id}_\C) \circ (\mathrm{Id}_\A \otimes_\k
\mathfrak{t}^{\otimes_{\k} d-1}
\otimes_\k \mathrm{Id}_\C ) \circ (\mathrm{Id}_\A \otimes_\k \Delta_{d}). 
\end{equation}
Here the differential squares to zero, $d^\mathfrak{t} \circ d^\mathfrak{t}=0$, since
$\mathfrak{t}$ satisfies \eqref{eq:twcochain}. This complex is the \emph{Koszul complex}
associated with $\t$. It is called acyclic if the projection to $\k$ is a quasi-isomorphism. 

Note that one also has an analogous complex of the form $\C
\otimes_{\k}^{\t} \A$.  

The $\mathbb{K}$-vector space of $\k$-bimodule morphisms $\hom_{\k-\k}(\C,\A)$ carries an $A_\infty$-algebra
structure with operations $\mathfrak{n}_d$, $d\ge 1$ given by
\[ \mathfrak{n}_1 (t) = \m_1 \circ t + (-1)^{|t|} t \circ \Delta_1 \]
and 
\[ 
\mathfrak{n}_d (t_d, t_{d-1}, \ldots, t_1) = (-1)^{d(|t_d|+\ldots+|t_1|)} \m_2^{(d)} \circ (t_d \otimes t_{d-1} \otimes
\ldots \otimes t_1) \circ \Delta_d ,  \ \ d\geq 2. 
\] 
where the composition $(t_d \otimes t_{d-1} \otimes \ldots \otimes t_1) \circ \Delta_d$ is defined
component-wise. Thus, if $\Delta_d(c)= c_d \otimes \ldots \otimes c_1$, then 
\[ (t_d \otimes_\k t_{d-1} \otimes_\k \ldots, t_1)\Delta_d(c) = (-1)^\dagger \t_d(c_d)\otimes_\k
\t_{d-1}(c_{d-1}) \ldots  \otimes_\k t_1(c_1), \] 
where $\dagger = \sum_{j=2}^d \sum_{i=1}^{j-1} |c_i||t_j|$.
In this setting, a twisting cochain $\mathfrak{t}\colon \C \to \A$ corresponds to a solution of the Maurer-Cartan equation:
\begin{equation}\label{eq:MCeq} 
\sum_{i \geq 1} \mathfrak{n}_i (\mathfrak{t}, \mathfrak{t}, \ldots, \mathfrak{t}) =0. 
\end{equation} 
(As before, this sum is effectively finite since for any $c \in \C$, $\Delta_i(c)\neq 0$ only for finitely many $i$.). 

In particular, a twisting cochain $\mathfrak{t}\colon \mathscr{C} \to \mathscr{A}$ defines a twisted $A_\infty$-structure on $\hom_\k (\C, \A)$, with operations $\mathfrak{n}^{\mathfrak{t}}_{d}$ given by
\[ 
\n^\mathfrak{t}_d (t_d,t_{d-1},\ldots, t_1) = \sum_{l_i \geq 0} \n_{d+l_0+l_1+\ldots+l_d}
(\overbrace{\mathfrak{t},\ldots, \mathfrak{t}}^{l_d},
t_d, \overbrace{\mathfrak{t},\ldots, \mathfrak{t}}^{l_{d-1}}, t_{d-1},\;\ldots\;, 
t_1,\overbrace{\mathfrak{t},\ldots, \mathfrak{t}}^{l_0}).  
\]
We will denote this twisted $A_\infty$-structure $\hom^\mathfrak{t}_{\k}(\C,\A)$.

There are direct analogs of the above construction if we instead consider a DG-coalgebra $(\mathscr{C}, \Delta_1, \Delta_2)$ and an $A_\infty$-algebra $\mathscr{A}$ with operations $\m_i$. 
The module $\hom_{\k-\k} (\mathscr{C},\mathscr{A})$ has the structure of an $A_\infty$-algebra with operations $\mathfrak{n}_d$ given by:
\[ 
\mathfrak{n}_1 (t) = \m_1 \circ t + (-1)^{|t|} t \circ \Delta_1 
\]
and 
\[ 
\mathfrak{n}_d (t_d, t_{d-1}, \ldots, t_1) = \m_d \circ (t_d \otimes t_{d-1} \otimes
\ldots \otimes t_1) \circ \Delta_2^{(d)} ,  \ \ d\geq 2. 
\] 
To make sense of the twisting cochain equation \eqref{eq:MCeq}, one needs to make additional assumptions to ensure the convergence of the infinite sum. This holds, for example, if $\C$ is conilpotent.

We remark that if both $\C$ and $\A$ are $A_\infty$-(co)algebras, then defining a twisting
cochain is a more complicated matter (cf. \cite[Introduction]{proute}). We will not need this here.

\subsection{Bar-cobar duality for $A_\infty$-(co)algebras}
In this section we first introduce the bar and cobar constructions and then discuss basic relations between them.

\subsubsection{Bar and cobar constructions} 
\label{barcobarsec}
Let $(\A, \{\m_j\}_{j \geq 1})$ be a strictly unital $A_\infty$-algebra with a strict
augmentation $\epsilon\colon \A \to \k$. Define the \emph{augmentation ideal} $\Abar =\mathrm{ker}
(\epsilon)$. Note that if we are given a non-unital $A_\infty$-algebra $\Abar$, then we can turn it into a strictly unital
$A_\infty$-algebra $\A := \k \oplus \Abar$ with an augmentation given by projection to $\k$. 

We next recall the construction of the (reduced) \emph{bar construction} $\Bar \A$. For any augmented $A_\infty$-algebra $\A$, $\Bar \A$ is a coaugmented conilpotent DG-coalgebra. As a coaugmented coalgebra $\Bar \A$ is defined as
\[ 
\Bar \A = \k \oplus \Abar [1] \oplus \Abar [1]^{\otimes_\k 2} \oplus \ldots,  
\]
where $[1]$ denotes the downwards shift by 1. We write a typical monomial using Eilenberg and
Maclane's notation: \[ [a_d|a_{d-1}|\ldots |a_1] = sa_d \otimes_\k sa_{d-1} \otimes_\k \ldots
\otimes_\k sa_1, \]
where for $a \in \Abar$, $sa \in \Abar[1]$ denotes the corresponding element in $\Abar[1]$ with degree shifted down by 1. 

The differential $b\colon \Bar \A \to \Bar \A$ is defined to vanish on $\k\subset\A$, $b_{|\k}=0$, and as follows on monomials:
\[ 
b([a_d|a_{d-1}|\ldots|a_1]) = \sum_{i,j} (-1)^{|a_1|+\ldots+|a_j|-j}
[a_d|\ldots|a_{j+i+1}|\m_i(a_{j+i},\ldots,a_{j+1})|a_{j}|\ldots|a_1], 
\]

The coproduct $\Delta_2 \colon \Bar \A \to \Bar \A \otimes_\k \Bar \A $ is defined by
\[ 
\Delta_2 ([a_d|a_{d-1}|\ldots |a_1]) = \sum_{i=0}^d (-1)^{|a_i|+\ldots+|a_1|-i}  [a_d|a_{d-1}|\ldots|a_{i+1}] \otimes_\k [a_{i}|
    a_{i-1}|\ldots | a_1]. 
\]
The slightly unusual sign $(-1)^{|a_i|+\ldots+|a_1|-i}$ appears as a consequence of the following two facts:
\begin{itemize}
\item[$(\mathrm{i})$]
    The equation $b^2=0$ is equivalent to the $A_\infty$-relations \eqref{ainf} for
$(\m_i)_{i\geq 1}$, 
\item[$(\mathrm{ii})$]
The pair $(b,\Delta_2)$ satisfies the co-$A_\infty$-relations
\eqref{coainf}. 
\end{itemize}
Redefining $(b,\Delta_2)$ to $(\theta, \Delta) $
using \eqref{coleft} removes the sign in $\Delta_2$, and $(\theta, \Delta)$ becomes a (usual)
coassociative DG-coalgebra, where the co-Leibniz rule \eqref{coLeib} holds. The coaugmentation $\eta\colon \k \to \Bar \A$ is defined by letting $\eta_{1}$ be the inclusion of $\k$ and $\eta_{i}=0$ for $i>0$.

There is an increasing, exhaustive, and bounded below (hence, complete Hausdorff) filtration on the complex  $\Bar \A$:
\[ 
\k = \F^{0}\Bar\A \subset \F^1 \Bar\A \subset \cdots \subset \Bar \A, 
\]
where
\[ 
\F^p \Bar \A := \k \oplus \Abar[1] \oplus \cdots \oplus \Abar[1]^{\otimes_\k p}.  
\]
This induces the \emph{word-length spectral sequence} with
\[ E_1^{p,q} = H^{p+q} (\F^{p} \Bar \A / \F^{p-1} \Bar \A) \]
converging strongly to
\[ E_\infty^{p,q} = \F^{p} H^{p+q} (\Bar \A) / \F^{p-1} H^{p+q} (\Bar \A) \]
by the classical convergence theorem (\cite[Theorem 5.5.1]{weibel}). It can be proved using this spectral sequence that if an $A_\infty$-map $\e \colon \A \to \B$ is a
quasi-isomorphism then the naturally induced DG-coalgebra map $\Bar\e \colon \Bar \A \to \Bar \B$ is a quasi-isomorphism, see \cite[Proposition 2.2.3]{LV}. 

There is a universal twisting cochain $\t_\A \colon \Bar \A \to \A$ which is non-zero only on
$\Abar[1] \subset \Bar \A$ and is given by the inclusion map $\Abar[1] \to \A$. The twisting cochain $\t_\A$ gives rise to a free $\A$-bimodule resolution of $\A$ obtained as a
twisted tensor product
\[ \A \otimes_{\k}^{\t_\A} \Bar \A \otimes_{\k}^{\t_\A} \A, \]
with the differential $d$ given by the following formula:
\begin{align} \label{bimodulediff} 
	d&=\m_1 \otimes_\k \mathrm{Id}_{\Bar \A} \otimes_\k \mathrm{Id}_{\A}  + \mathrm{Id}_{\A}
    \otimes_\k b \otimes_\k \mathrm{Id}_\A +  \mathrm{Id}_\A \otimes_\k \mathrm{Id}_{\Bar \A}
    \otimes_\k \m_1 \\\notag
    &+ \left(\sum_{d \geq 2 }
(\m_d \otimes_{\k} \mathrm{Id}_{\Bar \A}) \circ (\mathrm{Id}_\A \otimes_\k \t^{\otimes_\k {d-1}} \otimes_\k \mathrm{Id}_{\Bar
\A})  \circ
(\mathrm{Id}_{\A} \otimes_\k
    \Delta_2^{(d)}) \right) \otimes_{k} \mathrm{Id}_\A \\\notag 
    &+ \mathrm{Id}_\A \otimes_k \left(\sum_{d \geq 2 }
    (\mathrm{Id}_{\Bar \A} \otimes_\k \m_d) \circ (\mathrm{Id}_{\Bar \A} \otimes_\k \t^{\otimes_\k {d-1}} \otimes_\k \mathrm{Id}_{\A})  \circ
    (\Delta_2^{(d)} \otimes_\k \mathrm{Id}_{\A} ) \right).
\end{align}

This can be used to compute Hochschild homology and cohomology of $\A$ with coefficients in
an $\A$-bimodule $\mathscr{M}$. 

Consider instead a strictly counital $A_\infty$-coalgebra $\C$ with operations $\Delta_i$ and
with a strict coaugmentation $\eta\colon \k \to \C$. Let $\Cbar = \mathrm{coker}(\eta)$ be the coaugmentation ideal. We next recall the \emph{cobar construction} which associates a DG-algebra $\Omega \C$ to $\C$.  As an augmented algebra $\Omega \C$ is:
    \begin{align} \label{cobarconstr} \Omega \C = \k \oplus \Cbar [-1] \oplus \Cbar [-1]^{\otimes_\k 2} \oplus \ldots .\end{align}
As before, we write a typical monomial as \[ [c_d|c_{d-1}|\ldots |c_1] = s^{-1} c_d \otimes_\k
s^{-1} c_{d-1} \otimes_\k \ldots
\otimes_\k s^{-1} c_1, \]
where for $c \in \Cbar$, $s^{-1} c \in \Cbar[-1]$ denotes the corresponding element in $\Cbar[-1]$ with degree shifted up by 1. The differential $\m_{1}$ on $\Omega \C$ vanishes on $\k$, $\m_{1}|_{\k}=0$, and acts as follows on monomials:
\[ \m_1 ([c_m|\ldots |c_1]) = \sum_{i,j} (-1)^{|c_{1}|+\ldots+|c_i|-i}
    [c_m|\ldots|c_{i+2}|\Delta_j(c_{i+1})|c_{i}|\ldots |c_1]. \]
Here, by abuse of notation, we write $\Delta_j$ for the induced coproduct $\Cbar[-1] \to \Cbar[-1]^{\otimes j}$.

    The product $\m_{2} \colon \Omega \C \otimes \Omega \C \to \Omega \C$ is given by:
    \[ \m_2 ([c_m|\ldots |c_{i+1}], [c_{i}|\ldots|c_1]) =
    (-1)^{|c_{1}|+\ldots |c_i|-i} [c_m|\ldots|c_{i+1}|c_{i}|\ldots|c_1] \]
The slightly unusual sign $(-1)^{|c_{1}|+\ldots+|c_i|-i}$ appears as a consequence of the following two facts:
\begin{itemize}
\item[$(\mathrm{i})$]	
The equation $\m_1^2=0$ is equivalent to co-$A_\infty$-relations \eqref{coainf} for
$(\Delta_j)_{j\geq 1}$. 
\item[$(\mathrm{ii})$]
The pair $(\m_1,\m_2)$ satisfies the $A_\infty$-relations \eqref{ainf}. 
\end{itemize} 
Redefining $(\m_1,\m_2)$ to $(d, \cdot) $
using \eqref{left} removes the sign in $\m_2$, and $(d, \cdot)$ becomes a (usual) associative
DG-algebra, where Leibniz rule \eqref{Leib} holds. The augmentation $\epsilon\colon \Omega \C \to \k$ is given by letting $\epsilon_1$ to be the projection
to $\k$ and $\epsilon_i =0$ for $i>0$. 

There is a decreasing, exhaustive, bounded above filtration on the complex     $\Omega \C$:
    \[ \Omega \C = \F^{0}\Omega \C \supset \F^1 \Omega \C \supset \ldots,\]
given by
\[ \F^p \Omega \C := \Cbar[-1]^{\otimes_\k p} \oplus \Cbar[-1]^{\otimes_\k (p+1) }\oplus \ldots. 
\]
This gives the \emph{word length spectral sequence} with
\[ E_1^{p,q} = H^{p+q} (\F^{p} \Omega \C / \F^{p+1} \Omega \C). \]
Unlike the case of the word length filtration on the bar construction, for the cobar construction, in general, convergence may fail. Thus, we introduce
completions. We define the \emph{completed cobar construction} to be: 
\[ \widehat{\Omega} \C = \varprojlim_s (\Omega \C)/ (\F^{s} \Omega \C) \]
The length filtration on $\Omega \C$ induces a filtration $\widehat{\F}$ on $\widehat{\Omega}
\C$ defined by:
\[ \widehat{\F}^p \widehat{\Omega} \C  = \varprojlim_s (\F^{p} \Omega \C) / (\F^{s} \Omega \C) \]
which is decreasing, exhaustive, bounded above and complete Hausdorff. The spectral sequence associated to the filtration $\widehat{\F}$ on $\widehat{\Omega}
\C$
is isomorphic to the length spectral sequence associated with the filtration $\widehat{\F}$ on
$\Omega \C$ and converges conditionally to $H^*(\widehat{\Omega} \C)$ (see \cite[Theorem
9.2]{boardman}). It converges strongly to $H^*(\widehat{\Omega} \C)$ if the spectral sequence is
regular, i.e., only finitely many of the differentials $d_{r}^{p,q}$ are nonzero for each $p$ and $q$
(see \cite[Theorem 7.1]{boardman}). This holds, for example, if $\Omega \C$ is locally finite.

We say that $\Omega\C$ is complete if the natural map $\Omega \C \to \widehat{\Omega}
\C$ is a quasi-isomorphism. For example, it is easy to see that this is the case if $\C$ is locally
finite and simply-connected. 

If $\mathfrak{f}\colon \C \to \D$ is an $A_\infty$-comap which is a quasi-isomorphism of $A_\infty$-coalgebras and if $\Omega\C$ and $\D$ are complete then $\Omega \mathfrak{f}$ is a quasi-isomorphism. (This follows from \cite[Theorem 7.4]{EM}, see also \cite[Theorem 5.5.11]{weibel}).
The completeness assumptions are necessary and are related to the completeness of the
word length filtration. A counter-example when the completeness assumptions are dropped can be found in \cite[Section 2.4.1]{LV}. 

There is a universal twisting cochain $\t^\C\colon \C \to \Omega \C$ given by the composition of
canonical projection $\C \to \Cbar[-1]$ and the canonical inclusion $\Cbar[-1] \to \Omega \C$.

\subsubsection{Bar-cobar adjunction}\label{ssec:barcobaradj}
Suppose that $\C$ is a coaugmented $A_\infty$-coalgebra and $\A$ is an augmented DG-algebra, then we have a canonical
bijection 
\begin{equation}\label{bij1}  
\hom_{\mathrm{DG}} (\Omega \C, \A) \to \mathrm{Tw}(\C,\A),
\end{equation} 
given by $\phi \mapsto \phi \circ \t^\C$.
Similarly, if $\C$ is a coaugmented conilpotent DG-coalgebra and $\A$ is an augmented $A_\infty$-algebra, then we have a canonical
bijection 
\begin{equation}\label{bij2} 
\hom_{\mathrm{coDG}} (\C, \Bar \A) \to \mathrm{Tw}(\C,\A), 
\end{equation}
given by $\phi \mapsto \t_\A \circ \phi$ (see \cite[Lemme 3.17]{proute}).

Therefore, when $\C$ is a coaugmented conilpotent DG-coalgebra, and $\A$ is an augmented DG-algebra, we have the bar-cobar adjunction: 
\[ \hom_{\mathrm{DG}}( \Omega \C, \A) \cong \hom_{\mathrm{coDG}}(\C, \Bar \A). \]

Moreover, the natural DG-maps
 \begin{equation} \label{barcobar} \Omega \Bar \A \to \A \ \ , \ \ \C \to \Bar \Omega \C \end{equation}
     are quasi-isomorphisms for any DG-algebra $\A$ and conilpotent DG-coalgebra $\C$
     (see \cite[Section 6.10]{Posit}). It is also
 true that for any $A_\infty$-algebra $\A$, the $A_\infty$-algebra map \[ \A \to \Omega \Bar \A\] given
 by the adjunction map $\Bar \A \to \Bar \Omega \Bar \A$ is an
 $A_\infty$-quasi-isomorphism (see
 \cite[Lemme 2.3.4.3]{L-H}). Note that any $A_\infty$-quasi-isomorphism is invertible up to
 homotopy (\cite[Corollary 1.4]{seidelbook}).

Similarly, for any $A_\infty$-coalgebra $\C$, the $A_\infty$-comap 
 \[ \Bar \Cobar \C \to \C \]
given by the adjunction map $\Omega \Bar \Omega \C \to \Omega \C$ is an $A_\infty$-quasi isomorphism. 

However, an $A_\infty$-quasi isomorphism for a general $A_\infty$-coalgebra is not usually a convenient notion,
since as we remarked above a quasi-isomorphism of $A_\infty$-coalgebras between $\C$ and $\C'$ does not
necessarily induce a quasi-isomorphism of DG-algebras $\Cobar \C$ and $\Cobar \C'$.

For this reason, one considers the category of \emph{conilpotent} $A_\infty$-coalgebras. Let $\C$ be
a coaugmented $A_\infty$-coalgebra generated over $\k$ by variables $(c_i)_{i\in I}$ with $I$ some countable index set, such that there exists a total ordering 
    \[ c_{\sigma(1)} < c_{\sigma(2)} < \ldots \]
    where $\sigma\colon I \to I$ is a bijection. This produces an increasing  filtration 
    \[ \mathcal{F}^0 =\k \subset \mathcal{F}^1 \subset \cdots \subset \Omega \C \] 
    by setting $\mathcal{F}^p = \k \langle c_{\sigma(1)},\ldots, c_{\sigma(p)} \rangle$.
    Suppose that the structure maps $(\Delta_i)_{i\geq 1}$ are compatible with this filtration, in
    the sense that, $\Delta_i(c_{\sigma(p)}) \subset \mathcal{F}^{p-1}$ for all $i$ and $p$. Then we
    call $\C$ a conilpotent $A_\infty$-coalgebra. (More generally, a homotopy retract of such
    $A_\infty$-coalgebras are called conilpotent, cf. \cite[Sections 6.10 and 9]{Posit}. This notion is called \emph{finite type} in \cite{Kontsevich}.). 
    Given two such $A_\infty$-coalgebras $\C$
    and $\C'$, one considers filtered $A_\infty$-comaps between them. 
In the case of a conilpotent DG-coalgebra $\C$ there exists an increasing filtration on $\Omega \C$
given by the subalgebras $\mathrm{Ker}(\Delta^{(n)})$ that plays the same role (see \cite[Lemme
1.3.2.3]{L-H}). 

We next state the following elementary lemma for later convenience.

\begin{lem} \label{dualbar} Let $\A$ be an augmented $A_\infty$-algebra, such that the $\k$-bimodule
    structure on $\A$ and $\Bar \A$ are locally finite, then there are quasi-isomorphisms of
    augmented DG-algebras: \begin{align*} \Omega(\A^\#) \to (\Bar \A)^\#  \text{\ \ \ and\ \ \ }
        \Omega(^\#\A) \to \!^\#(\Bar \A). 
    \end{align*}  \end{lem}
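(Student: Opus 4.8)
The plan is to exhibit the maps explicitly and then verify that they are quasi-isomorphisms by a spectral sequence (word-length) argument. For the first map, recall that $\A^\#$ is a coaugmented $A_\infty$-coalgebra (by local finiteness of $\A$, as discussed in Section \ref{ssec:gradeddual}) whose structure maps $\Delta_i$ are, up to the sign $(-1)^{|c|}$ in \eqref{dualainf}, the linear duals of the $\m_i$. Thus $\Omega(\A^\#)$ is the free algebra on $(\Abar^\#)[-1]=(\Abar[1])^\#$ and, term by term, $\Omega(\A^\#)^{\otimes_\k p}$ is canonically identified with $(\Abar[1]^{\otimes_\k p})^\#$ using the pairing \eqref{composition} (here the local finiteness of $\Bar\A$, equivalently of each $\Abar[1]^{\otimes_\k p}$, is what makes this an isomorphism rather than merely a map). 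So there is a canonical $\k$-bimodule isomorphism $\Omega(\A^\#)\cong (\Bar\A)^\#$ on the level of underlying modules. The content is that this isomorphism intertwines the cobar differential $\m_1$ on $\Omega(\A^\#)$ with the transpose of the bar differential $b$ on $\Bar\A$, and the cobar product $\m_2$ with the transpose of the coproduct $\Delta_2$ on $\Bar\A$. Both statements are formal consequences of the fact that, under the pairing \eqref{composition}, the dual of the internal operation $\m_i$ applied in the $(j+1)$-st slot is exactly the operation $\Delta_i$ dualized in that slot, together with a careful bookkeeping of the Koszul signs; the signs in the bar differential and coproduct in Section \ref{barcobarsec} were in fact arranged precisely so that this comes out cleanly. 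So in this locally finite situation the first map is actually an \emph{isomorphism} of augmented DG-algebras, not merely a quasi-isomorphism, and $\Omega(^\#\A)\to\!^\#(\Bar\A)$ is treated identically using \eqref{composition2} and \eqref{rightkbimodule} in place of \eqref{composition} and \eqref{leftkbimodule}.

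The one point requiring care is that an isomorphism of the underlying graded modules need not be an isomorphism of complexes unless one checks the differentials agree on all of $\Omega(\A^\#)$, i.e. on arbitrarily long words; here there is no convergence issue because for a fixed monomial $[a_d|\ldots|a_1]\in\Bar\A$ only finitely many terms of $b$ are nonzero, so dualizing is unproblematic slot by slot. If one prefers not to claim an outright isomorphism (for instance if the augmentation of $\A$ is not strict, so that $\Bar\A$ is defined only after replacing $\A$ by a quasi-isomorphic strictly augmented model), one instead argues as follows: the word-length filtration $\F^p\Bar\A$ dualizes to the (decreasing) word-length filtration $\F^p\Omega(\A^\#)$, the induced map on associated graded pieces is the identity map $((\Abar[1])^{\otimes_\k p})^\#\to(\Abar^\#[-1])^{\otimes_\k p}$ (an isomorphism by local finiteness), and since both spectral sequences converge strongly — the bar side always converges by \cite[Theorem 5.5.1]{weibel}, and the cobar side converges because local finiteness makes the spectral sequence regular in the sense of Section \ref{barcobarsec} — the map of spectral sequences being an isomorphism on $E_1$ forces a quasi-isomorphism of total complexes.

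The main obstacle, such as it is, is purely a sign-chasing matter: one must confirm that the sign $(-1)^{|c|}$ built into \eqref{dualainf} together with the signs $(-1)^{|a_1|+\cdots+|a_j|-j}$ in the bar differential and $(-1)^{|a_i|+\cdots+|a_1|-i}$ in the bar coproduct conspire, after passing through the pairings \eqref{composition}/\eqref{composition2} and applying the Koszul rule, to reproduce exactly the signs $(-1)^{|c_1|+\cdots+|c_i|-i}$ in the cobar differential and product of Section \ref{barcobarsec}. I would organize this by first treating the differentials (checking that the transpose of $b$ restricted to a single application of $\m_i$ is the dualized $\Delta_i$ inserted in the correct slot with the correct sign) and then the products (which is easier, since $\Delta_2$ on $\Bar\A$ is just deconcatenation and its transpose under \eqref{composition} is concatenation, matching $\m_2$ on the cobar side). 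Everything else — compatibility with augmentations/coaugmentations, and the statement for $^\#(-)$ — is then automatic by the same computation read with the other one-sided dual.
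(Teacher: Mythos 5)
The paper states Lemma \ref{dualbar} without proof (it is introduced as an ``elementary lemma''), so there is no argument of the authors' to compare against; your proposal supplies the standard argument and it is correct. Two points deserve to be made explicit, since both are places where the hypothesis on $\Bar\A$ (rather than just on $\A$) is silently doing work. First, before one can even write $\Omega(\A^\#)$ one must check that $\A^\#$ is an $A_\infty$-coalgebra in the sense of Section \ref{ssec:Ainftycoalg}, i.e.\ that for each fixed $\alpha\in\A^\#$ only finitely many of the $\Delta_i(\alpha)=\pm\,\alpha\circ\m_i$ are nonzero; this is exactly the statement that in each internal degree only finitely many word-length components of $\Bar\A$ are nonzero, which is what local finiteness of $\Bar\A$ gives (local finiteness of $\A$ alone would not suffice). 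Second, for the module identification you should note that $(\Bar\A)^\#$ is a priori the graded dual of a direct sum over word length, hence a product; again it is local finiteness of $\Bar\A$ that collapses this product to the direct sum $\bigoplus_p\bigl(\Abar[1]^{\otimes_\k p}\bigr)^\#$, after which the slot-by-slot identification with $\bigl(\Abar[1]^\#\bigr)^{\otimes_\k p}$ via \eqref{composition} (resp.\ \eqref{composition2}) proceeds as you describe. With those two remarks in place, your conclusion that the canonical map is in fact an isomorphism of augmented DG-algebras in the strictly augmented, locally finite case (and a quasi-isomorphism in general by the word-length comparison, where the filtration is finite in each degree so convergence is automatic) is correct, and the sign verification is indeed only the bookkeeping forced by \eqref{dualainf} and the conventions of Section \ref{barcobarsec}.
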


Note that the assumption is satisfied when $\A$ is locally finite and simply-connected. We shall briefly consider the
case when $\A$ is only assumed to be locally finite and connected, then we have:
\begin{lem} \label{dualbar0} Let $\A = \bigoplus_{i} \A^{i}$ be a connected, locally finite
    $\k$-bimodule equipped with an augmented $A_\infty$-algebra structure, then there are maps of
    DG-algebras
    \[ \Omega(\A^\#) \to (\Bar \A)^\# \text{\ \ \ and \ \ \ }\Omega(^\#\!\A) \to \!^\#(\Bar \A),
    \]  
which become quasi-isomorphisms  after completion:
    \[ \widehat{\Omega}(\A^\#) \to (\Bar \A)^\# \text{\ \ \ and \ \ \ } \widehat{\Omega}(^\#\!\A) \to \!^\#(\Bar \A).  \] 
\end{lem}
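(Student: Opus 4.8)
The plan is to reduce the connected case to the locally finite, simply-connected situation covered by Lemma \ref{dualbar}, and then to invoke the completed word-length spectral sequence discussed in Section \ref{barcobarsec}. First I would construct the DG-algebra maps explicitly. By \eqref{composition} (resp. \eqref{composition2}) there are natural $\k$-bimodule maps $(\A^{\#})^{\otimes_\k r} \to (\A^{\otimes_\k r})^{\#}$, and since $\A$ is locally finite these are isomorphisms in each fixed degree even though $\A$ itself need not be locally finite as a graded module; connectedness guarantees that $\A^{\otimes_\k r}$ contributes to the degree-$n$ part of $\Bar \A$ only for finitely many $r$ (the $r$ tensor factors each sit in degrees of a fixed sign, so their degrees add up monotonically), so the obstruction to dualizing the bar differential term by term disappears. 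Concretely, $(\Bar\A)^{\#}$ in degree $n$ is $\prod_r \bigl((\Abar[1]^{\otimes_\k r})^{\#}\bigr)^n$, but for each $n$ only finitely many $r$ contribute, so this product is a finite direct sum; dualizing the coproduct $\Delta_2$ of $\Bar\A$ yields the product, and dualizing the differential $b$ yields a differential, which is exactly the cobar differential $\m_1$ on $\Omega(\A^{\#})$ composed with the canonical map from $\Omega(\A^{\#})$ into this product-completion. This gives the DG-algebra map $\Omega(\A^{\#}) \to (\Bar\A)^{\#}$, and the image lands in the completed cobar $\widehat{\Omega}(\A^{\#})$; the same argument with left/right duals gives $\Omega(^{\#}\!\A) \to {}^{\#}(\Bar\A)$.

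Next I would identify $(\Bar\A)^{\#}$ with $\widehat{\Omega}(\A^{\#})$ as DG-algebras. The word-length filtration $\F^{p}\Bar\A$ is increasing, exhaustive and bounded below, so its $\k$-linear dual carries the decreasing, complete Hausdorff filtration $\F^{p}\bigl((\Bar\A)^{\#}\bigr) = \bigl(\Bar\A / \F^{p-1}\Bar\A\bigr)^{\#}$; matching this against the filtration $\widehat{\F}$ on $\widehat{\Omega}(\A^{\#})$ from Section \ref{barcobarsec}, the associated graded pieces agree term by term because each $\F^{p}\Bar\A/\F^{p-1}\Bar\A = \Abar[1]^{\otimes_\k p}$ is locally finite (being a finite tensor power of the locally finite, connected module $\A$), so its dual is $\Abar[1]^{\#\,\otimes_\k p}$ with no completion needed at finite word length. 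Since both filtrations are complete Hausdorff and the map is a filtered map inducing an isomorphism on associated graded, it is an isomorphism; in particular the composite $\widehat{\Omega}(\A^{\#}) \to (\Bar\A)^{\#}$ is a quasi-isomorphism (indeed an isomorphism of complexes), which is the second displayed assertion. The first displayed assertion, that the uncompleted map $\Omega(\A^{\#}) \to (\Bar\A)^{\#}$ is a DG-algebra map, then follows since it factors as $\Omega(\A^{\#}) \to \widehat{\Omega}(\A^{\#}) \xrightarrow{\ \sim\ } (\Bar\A)^{\#}$.

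The main obstacle I expect is bookkeeping around the two one-sided duals $\#$ and $^{\#}$ over the noncommutative ground ring $\k$: one must check that the maps \eqref{composition}, \eqref{composition2} are compatible with the bar coproduct/differential and with the cobar product/differential including all the Koszul signs from \eqref{dualainf}, \eqref{left}, \eqref{coleft}, and that the sign $(-1)^{|c|}$ in \eqref{dualainf} is precisely what is needed to turn the dualized $\Delta_i$ of $\Bar\A$ into the cobar $\m_1$. A secondary point requiring care is the precise reason the relevant products in $(\Bar\A)^{\#}$ are actually finite sums in each degree: this is where connectedness (all of $\A$ in one sign of degree, with $\A^0 \cong \k$) is used in an essential way, ruling out infinitely many word lengths contributing to a fixed total degree, and it is exactly the failure of this in the merely-connected-but-not-simply-connected case that forces the completion on the cobar side rather than on the bar-dual side.
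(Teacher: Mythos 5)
Your second paragraph is essentially the right argument, and it is the one I would expect here: dualize the word-length filtration of $\Bar \A$, use local finiteness of $\A$ together with the one-sided degree support coming from connectedness to identify each associated graded piece $(\Abar[1]^{\otimes_{\k}p})^{\#}$ with $((\Abar[1])^{\#})^{\otimes_{\k}p}$, and conclude that $(\Bar \A)^{\#}\cong\prod_{p}((\Abar[1])^{\#})^{\otimes_{\k}p}=\widehat{\Omega}(\A^{\#})$ compatibly with differentials and products; the uncompleted map is then just the inclusion of the cobar into its completion followed by this identification. (The paper states the lemma without proof, so there is nothing to compare against.)

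However, your first paragraph contains a claim that is false precisely in the case the lemma is designed for, and that contradicts both the statement of the lemma and your own closing sentence. You assert that connectedness forces only finitely many word lengths $r$ to contribute to a fixed degree of $\Bar \A$, so that $(\Bar \A)^{\#}$ is a finite direct sum in each degree. Take $\A$ connected but not simply-connected with $\A^{i}=0$ for $i<0$ and $\A^{1}\neq 0$ --- this is exactly the case arising in Proposition \ref{almostdualbar}, where $\A=\C^{\#}$ for $\C$ concentrated in degrees $\le 0$ with $\C^{-1}\neq 0$. Then $\Abar$ sits in degrees $\ge 1$, so $\Abar[1]$ sits in degrees $\ge 0$ with $\Abar[1]^{0}=\A^{1}\neq 0$, and $\Abar[1]^{\otimes_{\k}r}$ has a nonzero degree-zero part for \emph{every} $r$. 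Hence infinitely many word lengths contribute to each degree, $\Bar \A$ is not locally finite, and $(\Bar \A)^{\#}$ is an honest product over $r$; this is the whole reason the completion appears in the statement. Your ``degrees add up monotonically'' argument requires the factors to sit in strictly positive degrees after the shift, i.e.\ it requires simple-connectedness; what connectedness actually buys is only that each \emph{individual} piece $\Abar[1]^{\otimes_{\k}p}$ is locally finite, which is what your second paragraph correctly uses. If your finiteness claim were true, $(\Bar \A)^{\#}$ would coincide with the uncompleted $\Omega(\A^{\#})$, the first map would already be an isomorphism, and the second half of the lemma would be vacuous --- which you yourself deny at the end. Excise that claim (and the stray remark that ``$\A$ itself need not be locally finite'' --- it is, by hypothesis; you presumably meant $\Bar \A$), and the proof given in your second paragraph stands.
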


\subsection{Koszul duality} 
    \label{algkoszul} 

Suppose $\C$ is a coaugmented conilpotent $A_\infty$-coalgebra and $\A$ is an augmented DG-algebra, via the bijection \eqref{bij1}, any twisting cochain $\t \in \mathrm{Tw}(\C,\A)$ is of the form $\t = \phi \circ
    \t^\C$ for some unique $\phi \in \hom_{\mathrm{DG}}(\Omega\C,\A)$. 
    Similarly, if $\C$ is a coaugmented conilpotent DG-coalgebra and $\A$ is an augmented $A_\infty$-algebra, any twisting cochain $\t \in \mathrm{Tw}(\C,\A)$ is of the form $\t = \t_\A \circ \phi$
    for some $\phi \in \hom_{\mathrm{coDG}}(\C, \Bar \A)$. 

\begin{defi}\label{ktc}    
In either case above we call $\t$ a \emph{Koszul twisting cochain} if $\phi$ is a quasi-isomorphism, and we denote the set of Koszul twisting cochains by $\mathrm{Kos}(\C,\A)$. 
\end{defi}

The terminology of Koszul twisting cochains is taken from \cite{LV}. They are also called $\emph{acyclic twisting cochains}$ in other sources (cf. \cite{L-H}, \cite{Posit}). This terminology is due to the well known fact that, under various locally finiteness assumptions, a twisting cochain $\t$ is Koszul if and only if the Koszul complex (\ref{koszulcomplex}) associated to $\t$ is acyclic (cf. \cite[Appendix A]{Posit}).

Informally, if $\t \in \mathrm{Kos}(\C, \A)$ then, depending on whether we write
    $\t=\phi\circ\t^{\C}$ or $\t=\t_{\A}\circ\phi$, either $\A$ can be used in place of $\Omega \C$
    or $\C$ can be used in place of $\Bar \A$
in various resolutions. This, in turn, may lead to smaller complexes to compute with. For example,
    one can compute Hochschild homology and cohomology of $\A$ and $\Omega \C$ using the
    $\A$-bimodule resolution of $\A$ given by the complex:
    \[ \A \otimes_{\k}^{\t} \C \otimes_{\k}^{\t} \A \] 
    with the differential as in \eqref{bimodulediff}, see \cite{herscovich}.

Suppose that $\A$ is an $A_\infty$-algebra with an augmentation
$\epsilon\colon \A\to \k$. The augmentation $\epsilon$ makes $\k$ into a left $\A$-module or equivalently a right $\A^{op}$-module. 

\begin{defi} The \emph{Koszul dual} of an augmented $A_\infty$-algebra $\A$ is the DG-algebra of left $\A$-module maps     from $\k$ to
    itself:
        \[ E(\A) := \rhom_{\A} (\k,\k). \]
    \end{defi}

Recall that for a unital $A_\infty$-algebra $\A$ over a field $\mathbb{K}$ (or a semisimple ring such
    as $\k$) any $A_\infty$-module is both $h$-projective and $h$-injective, that is, if $M$ is an
    $A_\infty$-module over $\A$ and $N$ is an acyclic $A_\infty$-module over $\A$, then the
    complexes $\rhom_{\A}(M,N)$ and $\rhom_{\A}(N,M)$ are acyclic \cite[Lemma 1.16]{seidelbook}. Hence, the DG-algebra
    $\rhom_{\A}(\k,\k)$ can be computed as the $A_\infty$-module homomorphisms from     $\k$ to
    itself. (More generally, this holds if $\A$ is $h$-projective as a complex of $\k$-
    modules, which implies that $\k$ is $h$-projective as an $A_\infty$-module over $\A$.) Therefore, we have the following.

\begin{prop} If $\A$ (resp. $\A^{op}$) is an augmented unital $A_\infty$-algebra then
    \[ \rhom_{\A}(\k,\k) \cong
    (\Bar\A)^\# \ \ \ (\text{resp.} \ \  \!^\#(\Bar \A) \  ). \]    
\end{prop}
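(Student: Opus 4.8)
The plan is to identify $\rhom_{\A}(\k,\k)$ with $(\Bar \A)^\#$ by exhibiting an explicit $A_\infty$-module resolution of $\k$ over $\A$ and dualizing. First I would recall that, by the remark just before the statement, since $\k$ is $h$-projective as an $A_\infty$-module over the unital $A_\infty$-algebra $\A$, the derived hom $\rhom_{\A}(\k,\k)$ is computed by the honest complex of $A_\infty$-module morphisms $\hom_{\A}(\k,\k)$, with no fibrant/cofibrant replacement needed. So the task reduces to identifying this morphism complex with $(\Bar\A)^\#$ as a DG-algebra.

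The key step is to use the universal twisting cochain $\t_\A\colon \Bar \A \to \A$ and the associated free $\A$-bimodule resolution $\A \otimes_\k^{\t_\A} \Bar\A \otimes_\k^{\t_\A} \A$ of $\A$ with the differential \eqref{bimodulediff}. Tensoring the right $\A$ factor with $\k$ over $\A$ (via the augmentation) produces a free resolution $\A \otimes_\k^{\t_\A} \Bar \A$ of $\k$ as a left $\A$-module, with induced differential obtained from \eqref{bimodulediff} by collapsing the last $\m_1$ term and dropping the last line. Then $\hom_{\A}(\k,\k)$ is computed as $\hom_{\A}(\A \otimes_\k^{\t_\A}\Bar\A,\,\k)$; since this is a free left $\A$-module generated by $\Bar\A$, an $\A$-linear map out of it is determined by its restriction to $\{1\}\otimes_\k \Bar\A$, giving the underlying $\k$-bimodule identification with $\hom_{\k-\k}(\Bar\A,\k)=(\Bar\A)^\#$ (using the $\k$-bimodule convention \eqref{leftkbimodule}, consistent with $\k$ being a \emph{left} $\A$-module as the statement's parenthetical indicates). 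I would then check that the differential induced on $(\Bar\A)^\#$ is exactly the dual of the bar differential $b$, and that composition of $\A$-module morphisms corresponds under this identification to the product on $(\Bar\A)^\#$ coming from dualizing the coproduct $\Delta_2$ of $\Bar\A$ via \eqref{composition}; this is where the local finiteness is implicitly not needed because we are dualizing a coalgebra (so $\hom$ out of a tensor product always maps \emph{into} the dual, giving a genuine algebra structure on $(\Bar\A)^\#$ with no completion issues). For the $\A^{op}$ case one runs the same argument tensoring on the left, landing in $^\#(\Bar\A)$ with the bimodule structure \eqref{rightkbimodule} and the product from \eqref{composition2}.

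The main obstacle I expect is bookkeeping of signs: verifying that the differential on $\hom_\A(\A\otimes_\k^{\t_\A}\Bar\A,\k)$ transported from \eqref{bimodulediff} agrees on the nose (signs included) with the dual of $b$ under the Eilenberg--MacLane shift conventions, and likewise that composition matches the dualized coproduct with the sign in \eqref{dualainf}. These are routine but delicate given the shift-by-$1$ conventions and the ``operators act on the right'' rule of Remark~\ref{rem:Seidelsign}; the conceptual content is already contained in the bar-cobar formalism of Section~\ref{barcobarsec}, so I would present the sign verification tersely and emphasize instead the structural point that the resolution $\A\otimes_\k^{\t_\A}\Bar\A$ is a model for $\k$ computing $\rhom_\A(\k,\k)$.
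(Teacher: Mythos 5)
Your proposal is correct and follows essentially the same route as the paper: both identify $\rhom_{\A}(\k,\k)$ by replacing $\k$ with the twisted tensor product resolution $\A\otimes_{\k}^{\t_\A}\Bar\A$ and then observing that $\A$-linear maps out of this free module are exactly $\k$-bimodule maps $\Bar\A\to\k$, i.e.\ $(\Bar\A)^{\#}$ (the paper phrases this as the hom-tensor adjunction $\rhom_{\A}(\A\otimes_\k\Bar\A,\k)\cong\rhom_{\k}(\Bar\A,\k)$ plus $h$-projectivity of $\Bar\A$ over $\k$). Your additional remarks on deriving the resolution from the bimodule resolution, on the induced differential and product, and on why no completion is needed are consistent elaborations of the same argument.
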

\begin{proof} Recall that $\A \otimes_\k \Bar\A$ is quasi-isomorphic to $\k$ as an $\A$-module. Hence,
        by the hom-tensor adjunction, we have $\rhom_{\A} (\A \otimes_\k \Bar\A , \k) \cong
        \rhom_{\k} (\Bar\A,
            \k )$. Since $\A$ is $h$-projective as a complex of $\k$-modules, so is $\Bar\A$, hence
            the latter
                is computed by $(\Bar\A)^\#$.
            \end{proof}
In this model of $E(\A)$, the $\k$-bimodule structure on $E(\A)$ can be seen as by \eqref{leftkbimodule} since $\k$ is viewed as a left $\k$-module induced from its structure as a left $\A$-module. If instead, we have an augmentation of $\A^{op}$, then we view $\k$ as a right $\A$-module, and the $\k$-bimodule structure on $\rhom_{\A}(\k,\k)$ would be given by \eqref{rightkbimodule}. 

The cohomology of $E(\A)$ is a graded algebra:     \[ \mathrm{Ext}_{\A}(\k,\k) := H^*          (\rhom_{\A}(\k,\k)) \cong
H^*((\Bar\A)^\#). \]
Dually, we also have the derived tensor product $\k \widehat{\otimes}_\A \k$, which can be computed
by the complex $\Bar\A$. The cohomology is a graded coalgebra:
\[ \mathrm{Tor}_{\A}(\k,\k) := H^*(\k \widehat{\otimes}_\A \k) \cong H^*(\Bar\A). \]
In particular, note that if $\k$ is a field, we have that $\mathrm{Ext}_{\A}(\k,\k) \cong
(\mathrm{Tor}_{\A}(\k,\k))^\#$ by the universal coefficient theorem.

\begin{rem} If $\A$ is a {\bf commutative} algebra (or more generally an $E_2$-algebra), then $\mathrm{Tor}_{\A}(\k,   \k)$ also has a graded
algebra structure defined via:
\[ \mathrm{Tor}_{\A}(\k,\k) \otimes \mathrm{Tor}_{\A}(\k, \k) \to
\mathrm{Tor}_{\A \otimes \A }(\k \otimes \k, \k \otimes \k) \to                     \mathrm{Tor}_{\A}(\k,\k) \]
induced by the algebra map $\A \otimes \A \to \A$ (which exists since $\A$ is       commutative). This should not be confused with the natural coalgebra structure      above.
\end{rem}

Note that $\A$ itself can be viewed as a left $\A$-module and the map
$\epsilon \colon \A \to \k$ is a map of left $\A$-modules, hence it induces a map of left $E(\A)^{op}$-modules:
\[ \tilde{\epsilon}\colon \rhom_{\A}(\k,\k)^{op} \to \rhom_{\A}(\A,\k), \]
which can in turn be viewed as an augmentation of $E(\A)^{op} = \rhom_{\A}(\k, \k)^{op}$ since
$\rhom_{\A}(\A,\k)$ can again be
identified with $\k$ as it is the Yoneda image of $\k$ as an $\A$-module. Hence, $\k$ can be viewed
as a right $E(\A)$-module.
\begin{defi}
The double-dual of
$\A$ is defined to be $E(E(\A)):=\rhom_{E(\A)}(\k,\k)$.
\end{defi}

There is a natural map from $\A$ to its double-dual:
\[ \Phi\colon \A \to \rhom_{E(\A)}(\k,\k) \]
defined via viewing the right $E(\A)$-module $\k$ as $\rhom_{\A}(\A,\k)$ and acting on the left by $\A
\cong\rhom_{\A}(\A,\A)$.

\begin{defi} We say that $\A$ and $E(\A)$ are Koszul dual if $\Phi\colon \A \to \rhom_{E(\A)}(\k,\k)$ is a quasi-isomorphism.
\end{defi}

One standard situation when Koszul duality holds is the following:

\begin{thm} \label{doubledual} Suppose $\C = \bigoplus_{i \leq 0} \C^i$ is a locally finite, simply-connected $\k$-bimodule
    equipped with an $A_\infty$-coalgebra structure and the coaugmentation $\k \cong \C^0 \to \C$.
    Let $\A = \Omega \C$, which is an augmented connected DG-algebra. Then $E(\A) \cong \C^\#$ and $\A$ and $\C^\#$ are Koszul dual. In other words, the
    natural morphism 
    \[ \Omega \C \to  \mathrm{RHom}_{E(\A)}(\k,\k) \]
is a quasi-isomorphism.
\end{thm}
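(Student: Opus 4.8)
The plan is to derive Theorem~\ref{doubledual} from the bar--cobar formalism of Sections~\ref{barcobarsec}--\ref{ssec:barcobaradj} together with Lemma~\ref{dualbar}, the substantive work being the finiteness bookkeeping and, at the very end, the identification of the comparison map coming out of the formalism with the canonical double-dual map $\Phi$. First I would record the finiteness statements. Since $\C$ is non-positively graded, locally finite and simply-connected, its coaugmentation ideal $\Cbar$ is concentrated in degrees $\le -2$; a grading count then shows that the iterated reduced coproducts eventually vanish, so $\C$ is conilpotent and $\A=\Omega\C$ is a well-defined \emph{connected}, locally finite, augmented DG-algebra, and a second grading count shows $\Bar\A$ is again locally finite. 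Dually, $\C^{\#}$ is a connected, simply-connected, locally finite augmented $A_\infty$-algebra and $\Bar(\C^{\#})$ is locally finite. These are precisely the hypotheses under which the Proposition of Section~\ref{algkoszul} computes $\rhom_{\A}(\k,\k)$ as $(\Bar\A)^{\#}$ (and $\rhom_{\A^{op}}(\k,\k)$ as ${}^{\#}(\Bar\A)$), and under which Lemma~\ref{dualbar} applies.

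Next I would prove $E(\A)\cong\C^{\#}$. Since $\k$ is a left $\A$-module via the augmentation, that Proposition gives $E(\A)=\rhom_{\A}(\k,\k)\cong(\Bar\A)^{\#}=(\Bar\Omega\C)^{\#}$. The bar--cobar counit $\Bar\Omega\C\to\C$ is an $A_\infty$-quasi-isomorphism of $A_\infty$-coalgebras (Section~\ref{ssec:barcobaradj}); both sides being locally finite, applying the graded dual $(-)^{\#}$ of Section~\ref{ssec:gradeddual} turns it into an $A_\infty$-quasi-isomorphism of augmented $A_\infty$-algebras $\C^{\#}\to(\Bar\Omega\C)^{\#}\cong E(\A)$, which is the first assertion. (Equivalently, in the language of Section~\ref{twisting}: the universal twisting cochain $\t^{\C}\colon\C\to\Omega\C=\A$ is Koszul in the sense of Definition~\ref{ktc}, as it corresponds to $\mathrm{id}_{\Omega\C}$ under \eqref{bij1}; hence by local finiteness the one-sided Koszul complex $\A\otimes^{\t^{\C}}_{\k}\C$ of \eqref{koszulcomplex} is a free resolution of $\k$ as a left $\A$-module, and applying $\hom_{\A}(-,\k)$ to it identifies $\rhom_{\A}(\k,\k)$ with $\C^{\#}$ carrying the $A_\infty$-structure dual to the coalgebra structure of $\C$ --- all twisting terms of \eqref{koszulcomplex} drop out after this dualization because of the compatibility $\epsilon\circ\t^{\C}=0$, leaving $\Delta_1$ and hence exactly $\m_1$ on $\C^{\#}$.)

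Then I would compute the double dual and match it with $\Phi$. By the convention of Section~\ref{algkoszul}, $\k$ is a \emph{right} $E(\A)$-module, so $\rhom_{E(\A)}(\k,\k)\cong{}^{\#}(\Bar E(\A))\cong{}^{\#}(\Bar\,\C^{\#})$ using $E(\A)\cong\C^{\#}$ from the previous step. Lemma~\ref{dualbar} applied to the $A_\infty$-algebra $\C^{\#}$ provides a DG-algebra quasi-isomorphism $\Omega({}^{\#}(\C^{\#}))\to{}^{\#}(\Bar\,\C^{\#})$, and ${}^{\#}(\C^{\#})\cong\C$ by local finiteness; composing, $\Omega\C\to\rhom_{E(\A)}(\k,\k)$ is a quasi-isomorphism, so $\A$ and $\rhom_{E(\A)}(\k,\k)$ are at least abstractly quasi-isomorphic. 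To see this is homotopic to $\Phi$, note that $\Bar\A\cong{}^{\#}E(\A)$ is locally finite, so Lemma~\ref{dualbar} applied to $E(\A)$ yields a quasi-isomorphism $\Omega\Bar\A\cong\Omega({}^{\#}E(\A))\to{}^{\#}(\Bar E(\A))\cong\rhom_{E(\A)}(\k,\k)$; by naturality of the (co)bar adjunction units and counits, of the graded-dual functors, and of the identification of $\rhom_{-}(\k,\k)$ via bar resolutions, this map and the composite $\Omega\Bar\A\xrightarrow{\sim}\A\xrightarrow{\Phi}\rhom_{E(\A)}(\k,\k)$ agree up to chain homotopy, where $\Omega\Bar\A\to\A$ is the bar--cobar quasi-isomorphism \eqref{barcobar}. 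Since that map is a quasi-isomorphism, so is $\Phi$, i.e.\ $\A$ and $\C^{\#}$ are Koszul dual.

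I expect the final matching to be the real obstacle. That $\A$ and $\rhom_{E(\A)}(\k,\k)$ are \emph{abstractly} quasi-isomorphic is immediate from bar--cobar duality and graded linear duality; the content is that this equivalence is realized by the particular canonical map $\Phi$ --- equivalently, under the finiteness hypotheses (cf.\ \cite[Appendix~A]{Posit}), that the Koszul complex \eqref{koszulcomplex} attached to $\t^{\C}$ is acyclic \emph{compatibly} with the bimodule structures built into the definition of $\Phi$. Carrying this out requires keeping track of all the adjunction and graded-duality isomorphisms and their interaction with left versus right module structures and opposite algebras; it is this bookkeeping, rather than any single new homological input, for which I would rely on the systematic treatments in \cite{L-H} and \cite{Posit}.
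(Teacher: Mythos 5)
Your proposal follows essentially the same route as the paper's proof: identify $E(\A)\cong(\Bar\Omega\C)^{\#}\cong\C^{\#}$ via the bar--cobar counit, then compute $\rhom_{E(\A)}(\k,\k)\cong{}^{\#}(\Bar\,\C^{\#})\cong\Omega\C$ by Lemma~\ref{dualbar} and ${}^{\#}(\C^{\#})\cong\C$. The only difference is that you spell out the finiteness bookkeeping and worry about identifying the resulting equivalence with the canonical map $\Phi$ --- a point the paper's proof silently elides --- and your naturality sketch for that last step is the right way to close it.
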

\begin{proof} First, observe that indeed $E(\A) \cong (\Bar \A)^\# \cong (\Bar \Omega \C)^\# \cong \C^\#$ by \eqref{barcobar} and because $\mathrm{Hom}_{\k-}(-,\k)$ preserves quasi-isomorphisms. Next, we have that 
    \[ \mathrm{RHom}_{E(\A)}(\k,\k) \cong \!^\# (\Bar (\C^\#)) \cong \Omega \C \]
    where we applied Lemma \ref{dualbar} to $\C^\#$ and used the fact that $^\#( \C^\#) \cong \C$ since $\C$ is locally finite. 
\end{proof}

Rather than making the grading assumptions on $\C$ as in Thm. \ref{doubledual}, which guarantee that $B\C^\#$ is locally finite. One can directly assume that the grading on the cohomology $H^*(\Omega \C)$ is locally finite. This assumption is harder to check in practice but Koszul duality still holds under this assumption which one can prove by combining the above argument with the homological perturbation lemma (see for example Thm. 2.8 of \cite{KY}). 

In the case that $\C = \bigoplus_{i \leq 0} \C^i$ is a locally finite, connected (but not
simply-connected) $\k$-bimodule,
Lemma \ref{dualbar} no longer applies. We instead use Lemma $\ref{dualbar0}$ to 
deduce the following weaker duality result. 
\begin{prop}\label{almostdualbar} 
	Let $\C = \bigoplus_{i \leq 0} \C^i$ be a connected, locally finite $\k$-bimodule,
    equipped with an $A_\infty$-coalgebra structure and coaugmentation $\k \cong \C^0 \to \C$, and $\A = \Omega \C = \k \oplus
    \bigoplus_{j \geq1} (\overline{\C}[-1])^{\otimes_{\k} j}$, which is an augmented
    DG-algebra where augmentation is given by projection to $\k$.
    Then $E(\A) \cong \C^\#$ and there is a quasi-isomorphism 
\[\widehat{\Omega}{\C} \to  \mathrm{RHom}_{E(\A)}(\k,\k). \]
\end{prop}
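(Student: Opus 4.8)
The plan is to mimic the proof of Theorem \ref{doubledual}, replacing the uncompleted cobar construction by its completion wherever the word-length spectral sequence for $\Omega$ fails to converge. First I would establish the identification $E(\A) \cong \C^\#$ exactly as before: since $\A = \Omega\C$ is augmented, the Proposition preceding Theorem \ref{doubledual} gives $E(\A) = \rhom_\A(\k,\k) \cong (\Bar\A)^\#$, and the bar--cobar quasi-isomorphism $\C \to \Bar\Omega\C$ of \eqref{barcobar} dualizes to a quasi-isomorphism $(\Bar\Omega\C)^\# \to \C^\#$. Here one must check that $\Bar\Omega\C$ is locally finite so that graded-dualizing is exact on it and preserves quasi-isomorphisms; this follows from $\C$ being connected and locally finite, since then $\overline{\C}[-1]$ is locally finite and concentrated in degrees $\le -1$, so each word-length piece contributes only finitely many generators in each total degree. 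Thus $E(\A)$ carries an augmented DG-algebra structure quasi-isomorphic to $\C^\#$, and $\C^\#$ is a connected, locally finite $\k$-bimodule concentrated in non-negative degrees.

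Next I would compute $\mathrm{RHom}_{E(\A)}(\k,\k)$. By the same Proposition applied to $E(\A)$ (using that $E(\A)^{op}$ carries the relevant augmentation), $\mathrm{RHom}_{E(\A)}(\k,\k) \cong \!^\#(\Bar E(\A))$, and since $E(\A) \simeq \C^\#$ as augmented $A_\infty$-algebras this is quasi-isomorphic to $\!^\#(\Bar(\C^\#))$. Now comes the point where the simply-connected case and the merely connected case diverge: in Theorem \ref{doubledual} one invokes Lemma \ref{dualbar} to get $\Omega(^\#(\C^\#)) \to \!^\#(\Bar(\C^\#))$ a quasi-isomorphism, but under the weaker connectedness hypothesis only Lemma \ref{dualbar0} is available, which gives instead that $\widehat{\Omega}(^\#(\C^\#)) \to \!^\#(\Bar(\C^\#))$ is a quasi-isomorphism. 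Since $\C$ is locally finite, $^\#(\C^\#) \cong \C$ canonically, so this yields $\widehat{\Omega}\C \xrightarrow{\ \simeq\ } \!^\#(\Bar(\C^\#)) \simeq \mathrm{RHom}_{E(\A)}(\k,\k)$, which is the asserted quasi-isomorphism. I would then check that the composite agrees with the natural double-dual map $\Phi$ of Definition preceding the statement, restricted along the completion map $\Omega\C \to \widehat{\Omega}\C$; this is a diagram-chase using the universal twisting cochains $\t^\C$ and $\t_{E(\A)}$ and the naturality of the bar--cobar adjunction, essentially identical to the corresponding verification in Theorem \ref{doubledual}.

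The main obstacle is the convergence/completeness bookkeeping in applying Lemma \ref{dualbar0}: one must verify that $\C^\#$ (equivalently $E(\A)$, after replacing it by the quasi-isomorphic model $\C^\#$) genuinely satisfies the hypotheses of that lemma — namely that it is a connected, locally finite $\k$-bimodule equipped with an augmented $A_\infty$-algebra structure — and, more subtly, that passing from $E(\A)$ to the quasi-isomorphic $\C^\#$ does not disturb the conclusion. For the latter I would note that $\Bar$ sends $A_\infty$-quasi-isomorphisms of augmented $A_\infty$-algebras to quasi-isomorphisms of DG-coalgebras (the word-length spectral sequence argument recalled in Section \ref{barcobarsec}), hence $\Bar E(\A) \simeq \Bar(\C^\#)$, and $\!^\#$ of this is a quasi-isomorphism because both sides are degreewise finite-dimensional (local finiteness again). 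The one genuine difference from the simply-connected situation is that we cannot expect $\Omega\C$ itself to be complete, so the target is unavoidably $\widehat{\Omega}\C$ rather than $\Omega\C$; a counterexample of the type cited from \cite[Section 2.4.1]{LV} shows this completion is essential and not a mere artifact of the proof.
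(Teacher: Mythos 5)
Your proposal follows the paper's (implicit) proof exactly: the paper deduces this proposition from the argument of Theorem \ref{doubledual} by substituting Lemma \ref{dualbar0} for Lemma \ref{dualbar}, which is precisely what you do, including the identifications $E(\A)\cong(\Bar\Omega\C)^\#\cong\C^\#$ and $^\#(\C^\#)\cong\C$. One small slip in your justification: when $\C$ is only connected, $\overline{\C}[-1]$ sits in degrees $\le 0$ (not $\le -1$), so $\Omega\C$ and $\Bar\Omega\C$ need not be locally finite --- but this is harmless, since the graded dual over the semisimple ring $\k$ is exact and therefore takes the quasi-isomorphism $\C\to\Bar\Omega\C$ to a quasi-isomorphism $(\Bar\Omega\C)^\#\to\C^\#$ without any finiteness hypothesis.
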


Note that in Proposition \ref{almostdualbar}, $\A = \Omega \C$ is not connected, and may admit other
augmentations $\epsilon \colon \A \to \k$ than that induced by the cobar construction.
Such augmentations will be considered below. For example, suppose that $\C \cong \k \oplus
\overline{\C}$ is a coaugmented $A_\infty$-coalgebra such that
$\overline{\C} =  \mathbb{K} \langle c
|  c \in \mathcal{R} \rangle$ is generated by elements $c$ from an indexing set $\mathcal{R}$ and that $\epsilon\colon \Omega \C \to \k$ is an
augmentation, which is induced by a map $\C \to \k$ since $\Omega \C$ is free. Now we can consider
the coaugmented $A_\infty$-coalgebra $\C^\epsilon = \k \oplus \overline{\C}^\epsilon$
such that 
\[ \overline{\C}^{\epsilon} = \mathbb{K} \langle c - \epsilon (c)1_\k | c \in \mathcal{R} \rangle \] 
Then $\Omega \C$ and $\Omega \C^{\epsilon}$ are quasi-isomorphic as non-augmented DG-algebras, and the augmentation
on $\Omega \C^{\epsilon}$ induced by the cobar construction coincides with the given
augmentation $\epsilon$ on $\Omega$. 

\begin{rem} \label{locfinrem} When $\C$ is not simply-connected, the proof of duality fails precisely because
    $\Bar\C^\#$ is not locally finite. Nevertheless, the duality result can still be proved in certain cases where an extra
    \emph{weight} grading (internal degree, or Adams degree) is available (see \cite{LPWZ}, \cite[Appendix A.2]{Posit},
    \cite{herscovich}). We will not study this situation systematically in this paper, but it is  important as it extends the range of applicability of Koszul duality theory.
    In the setting of Chekanov-Eliashberg DG-algebras, such a situation was considered in \cite{EtLe}. 
\end{rem}

\section{Legendrian (co)algebra}
\label{maininvariants}

In this section we introduce our Legendrian invariants. We start with discussing a model for loop space coefficients in Section \ref{Coefficients}. In Section \ref{ssec:Leginv} we define the Chekanov-Eliashberg algebra with loop space coefficients using moduli spaces of disks of all dimensions, and in Section \ref{ssec:parallelcopies} we give a more computable version which uses only rigid disks and which carries the same information if the Legendrian submanifold is simply connected.

\subsection{Coefficients} 
\label{Coefficients} 

Before defining our Legendrian invariants, we describe
chain models for their coefficients $C_{-*}(\Omega_{p_v} \Lambda_v)$ for $v \in \Gamma^+$. (Notation is as above, $\Lambda_{v}$ is a $+$ decorated connected component of the Legendrian $\Lambda$.)  We work over a field $\mathbb{K}$. 

Let $\Omega_{p_v} \Lambda_v$ denote the topological monoid of Moore loops based at
$p_v$, where the monoid structure comes from concatenation of loops (see \cite{AH}).
Write $C_{-*}(\Omega_{p_v} \Lambda_v)$ for the cubical chain complex (graded
cohomologically). Since $\Omega_{p_v} \Lambda_v$ is a topological monoid, the complex
$C_{-*}(\Omega_{p_v} \Lambda_v)$ becomes a DG-algebra using the natural product map $\times$ on cubical chains, where the DG-algebra product is given as follows:
\[ 
C_{-*}(\Omega_{p_v} \Lambda_v) \otimes C_{-*}(\Omega_{p_v} \Lambda_v)
\xrightarrow{\times} C_{-*}(\Omega_{p_v} \Lambda_v \times \Omega_{p_v} \Lambda_v)
\xrightarrow{\circ} C_{-*}(\Omega_{p_v} \Lambda_v). 
\]

We point out that the $\times$-map
\[ \mathrm{\times}\colon C_{-*}(\Omega_{p_v} \Lambda_v) \otimes C_{-*}(\Omega_{p_v} \Lambda_v)
\to C_{-*}(\Omega_{p_v} \Lambda_v \times \Omega_{p_v} \Lambda_v), \]
when both sides are equipped with the Pontryagin product, is a DG-algebra map.

In what follows, we shall also make use of an inverse to the $\times$, known as the
Serre diagonal \cite{serre} and the cubical analogue of the Alexander-Whitney map,
\begin{equation} \label{serrediag} \eta\colon  C_{-*}(\Omega_{p_v} \Lambda_v \times \Omega_{p_w} \Lambda_w)
\to C_{-*}(\Omega_{p_v} \Lambda_v) \otimes C_{-*}(\Omega_{p_w} \Lambda_w).\end{equation} 
To define this map consider the $n$-cube $I^{n}$ with coordinates $(x_{1},\dots,x_{n})$. For an
ordered $j$-element subset $J\subset\{1,2,\dots,n\}$, $J=(i_{1},\dots,i_{j})$, $i_{1}<\dots<i_{j}$
and for $\epsilon\in \{0,1 \}$ let $\iota^{\epsilon}_{J}\colon I^{j}\to I^{n}$ be the map given in
coordinates $y= (y_1,\ldots, y_j)$ by
\[ 
x_{i_{r}}(\iota_{J}(y))=y_{r},\quad x_{m}(\iota_{J}(y))=\epsilon \text{ if } m\notin J.
\] 
Consider a cubical chain $(\sigma,\tau) \colon I^n \to  \Omega_{p_v} \Lambda_v \times \Omega_{p_w} \Lambda_w$.  If $J$ is an ordered subset of $\{1,\dots,n\}$ let $J'$ denote its complement ordered in the natural way. Define $\eta$ by
\[ 
\eta(\sigma,\tau) = \sum_{J} (-1)^{JJ'}(\sigma\circ\iota_{J}^{0})  \otimes
(\tau\circ \iota_{J'}^{1}),
\]
where the sum ranges over all ordered subsets $J$ and $(-1)^{JJ'}$ is the sign of the permutation $JJ'$. 
This is a strictly associative chain map inducing a quasi-isomorphism. Note also that there are obvious extensions of $\eta$ to several products of loop spaces.

As the cubical chain complex $C_{-*}(\Omega_{p_v}\Lambda_v)$ is very large it is not the most effective complex for computation. We next discuss smaller models. Starting with a 0-reduced simplicial set $X$ with geometric realization
$|X|=\Lambda_v$, an explicit economical model for $C_{-*}(\Omega_{p_v} \Lambda_v)$ is obtained
by taking normalized chains on the Kan loop group $GX$ \cite{Kan}. We will not say much about this, but point out that $GX$ is a free simplicial group such that its geometric realization $|GX|$ is
homotopy equivalent to $\Omega |X|$ (cf. \cite[Corollary 5.11]{GJ}). Hence, by the monoidal Dold-Kan
correspondence (cf. \cite{SS}) the normalized
chains on $GX$ gives a (weakly) equivalent model of $C_{-*}(\Omega_{p_v} \Lambda_v)$. (Another similar construction is sketched in \cite{Kontsevich} and leads to a free model.)

Alternatively, one can work with CW complexes. We start with the simply connected case: for a
$1$-reduced (unique 0-cell and no 1-cells) CW-structure on $\Lambda_v$ the Adams-Hilton construction
\cite{AH} gives a free DG-algebra model for $C_{-*}(\Omega_{p_v} \Lambda_v)$ as follows. Denote the
$k$-cells of $\Lambda_{v}$ by $e_k^i$, $k\geq 2$ and $i=1,\ldots,m_{k}$. The Adams-Hilton
construction gives a CW monoid with a single 0-cell and generating cells $\overline{e}_k^i$ in
dimension $k-1$ that is quasi-isomorphic to $\Omega_{p_v}(\Lambda_v)$ as a monoid (cf. \cite{CM}). This gives a DG-algebra structure on the free algebra:
\[ 
A(\Lambda_v) := \mathbb{K} \langle \overline{e}_2^1,\ldots,\overline{e}_{2}^{m_{2}},
\overline{e}_3^1,\ldots,\overline{e}_3^{m_{3}}, \ldots,
\ldots  \rangle , \ \ |\overline{e}_k^i| = 1-k. 
\] 
and a DG-algebra map 
\[ A(\Lambda_v) \xrightarrow{\Psi} C_{-*}(\Omega_{p_v}\Lambda_v) \]  
which is a quasi-isomorphism. The differential $d$ on $A(\Lambda_{v})$ is generally not explicit. It  is defined recursively as follows. For every 2-cell $e_{2}^{i}$, we have $d(\overline{e}_{2}^{i}) = 0$. In general, assume
that $d_{k-1}$ and $\Psi_{k-1}$ have been defined on
the $k$-skeleton $\Lambda_v^{(k)}$ of $\Lambda_{v}$, then for each $(k+1)$-cell $e$, with attaching map $f\colon S^{k} \to
\Lambda_v^{(k)}$, define $d_{k} \overline{e} = c$ so that $(\Psi_{k-1})(c) = (\Omega f)_* (\xi)$
where $\xi$ a generator of $H_{k-1}(\Omega S^{k})$, and $\Psi_{k}(\overline{e})$ to be the $k$-chain
of loops in $e$ (which then depends on earlier choices along the boundary of $\overline{e}$).
We remark that $A(\Lambda_v)$ can be identified isomorphically with $\Omega C_*^{\text{\tiny
CW}}(\Lambda_v)$ for a suitable
$A_\infty$-coalgebra structure on the cellular chain complex $C_*^{\text{\tiny CW}}(\Lambda)$.

This construction can be generalized to the
non-simply-connected case as follows. (See \cite{holstein, holstein2}, a generalization was given earlier in \cite{FT}, however that paper contains an error.) Begin with a $0$-reduced
CW-structure on $\Lambda_v$. Denote the $k$-cells by $e_{k}^{i}$ for
$i=1,\ldots, m_{k}$. For each $k$-cell $e^k_i$ with $k\geq 2$, we have a free
variable in degree $1-k$ which we again denote by $\overline{e}_k^i$. For each $1$-cell $e_1^j$, $j=1,\dots,m_{1}$ we have two variables $t_j$ and $t_j^{-1}$ in degree 0 such that $t_j t^{-1}_j = 1 = t^{-1}_j t_j$. Thus, the underlying
algebra is the ``almost free'' algebra of the form
\[ A(\Lambda_v) := \mathbb{K} \langle t_1^{\pm 1},\ldots, t_{m_{1}}^{\pm 1},
\overline{e}_2^1,\ldots,\overline{e}_2^{m_{2}},\overline{e}_{3}^{1},\ldots,\overline{e}_{3}^{m_{3}}, \ldots\rangle.
\] 

This presentation is often more efficient than the presentation one gets from the Kan loop group
construction using a simplicial set presentation of $\Lambda_v$. However, the
differential in the Adams-Hilton model is not easy to describe explicitly. 
Note that we have 
\[
d(t_j) = d(t_j^{-1}) =0, 
\] 
for degree reasons. For every 2-cell $e_2^{i}$, we have 
\[
d(\overline{e}_2^{i}) = 1 -c_i 
\] 
where $c_i \in \mathbb{K} \langle t^{\pm 1}_j |\;j=1,\dots,m_{1}\rangle$
represents the class of the attaching map of $e_2^{i}$. The differential on higher dimensional cells is generally harder to compute and is exactly as in the simply-connected case discussed above.  

Augmentations $\epsilon\colon A(\Lambda_v) \to \mathbb{K}$ correspond to solutions of the equations
\[
\begin{cases} 
\epsilon(t_j) \epsilon(t^{-1}_j) = 1, & j=1,\dots,m_{1},\\   
\epsilon(d \overline{e}_{2}^i)= 0, & i=1,\dots,m_{2}.
\end{cases}
\]
Since $\K \langle t_j^{\pm 1} ,  j=1,\dots, m_1 |\;  d\overline{e}_{2}^{i},  i=1,\ldots,m_{2} \rangle $ is a presentation of 
the fundamental group algebra $\K[\pi_1(\Lambda_v,p_v)]$, augmentations correspond exactly to local systems $\pi_1(\Lambda_v, p_v) \to \K$.

We will use the cubical chain complex $C_{-*}(\Omega_{p_v} \Lambda_v)$ to define Legendrian invariants below. Cubical chains work uniformly for all spaces $\Lambda_v$ and
are convenient for showing that the fundamental classes of moduli spaces of pseudoholomorphic disks $\mathcal{M}^{\sy}$, via evaluation maps, take values in the chain complex. 
The Legendrian invariants can also be studied using any of the smaller models discussed above. It is however important to note that in the non-simply connected case, we only have either weak equivalence in the homotopy category of
DG-algebras or Morita equivalence \cite{holstein, holstein2} of these models and the cubical chain complex $C_{-*}(\Omega_{p_v} \Lambda_v)$.

In the case that $\Lambda_v$ is simply-connected we can use a DG-algebra map 
\[ 
\Phi\colon C_{-*}(\Omega_{p_v} \Lambda_v) \to A(\Lambda_v) 
\] 
that goes in the opposite direction to the
Adams-Hilton map to pass to a more economical quasi-isomorphic model.
Such a homotopy equivalence $\Phi$ is constructed in two steps: first, in
\cite{Neisendorfer} using Eilenberg-Moore methods, a DG-algebra quasi-isomorphism:

\begin{equation}\label{eq:adamswithref} 
C_{-*}(\Omega_{p_v} \Lambda_v) \to \Omega C_*(\Lambda_v)
\end{equation} 
is constructed where in both instances $C_*$ refers to the normalised singular chains. Second, using
the standard $A_\infty$-coalgebra quasi-isomorphism between the DG-coalgebra of singular chains $C_*(\Lambda_v)$ and
the $A_\infty$-coalgebra $C_{*}^{\text{\tiny CW}}(\Lambda_v)$ of normalised cellular chains, one obtains a DG-algebra quasi-isomorphism
\[ 
\Omega C_*(\Lambda_v) \to \Omega C_*^{\text{\tiny CW}}(\Lambda_v) = A(\Lambda_v),
\] 
since we assumed that the complexes $C_*$ and $C_*^{\text{\tiny CW}}$ are simply-connected. (In Section \ref{CEsimplyconnected},
we also give a more geometric construction of a DG-algebra quasi-isomorphism $\Phi$ corresponding to \eqref{eq:adamswithref} landing in Morse chains, using Morse flow
trees.)

Similarly, if $\Lambda_v$ is homotopy equivalent to an Eilenberg-Maclane space $\mathrm{K}(\pi_1,1)$, then the singular chains can be replaced with the group algebra $\K[\pi_1]$: there exists a
quasi-isomorphism of DG-algebras 
\[ 
C_{-*}(\Omega_{p_v} \Lambda_v) \to \K [\pi_1] 
\] 
given by
sending a $0$-chain to its homology class, and sending all higher dimensional chains to 0. Note that this DG-algebra
map exists for any space $\Lambda_v$, but is a quasi-isomorphism only in the case that $\Lambda_v$ is
homotopy equivalent to $\mathrm{K}(\pi_1,1)$.

It is often convenient to use a cofibrant (or free) replacement for $\K[\pi_1]$. For example, 
if $\Lambda_v = S^1$, then $\K[\pi_1] \cong \K[t,t^{-1}]$ and a cofibrant replacement is given by
the free graded algebra
\[ \mathbb{K} \langle s_1, t_1, k_1, l_1, u_1 \rangle, \ \ |s_1|=|t_1|=0, |k_1|=|l_1|=-1,
|u_1|= -2 \] 
with the differential 
\begin{align*}
    dk_1 &= 1 - s_1 t_1 \\
    dl_1 &= 1 - t_1 s_1 \\
    du_1 &= k_1 s_1 - s_1 l_1 
\end{align*}
A DG-algebra defined over $\K[t,t^{-1}]$ can be pulled back to a weakly equivalent DG-algebra over this cofibrant
replacement. (See \cite{tabuada} for background in model categories on DG-algebras that we are using in a very simple case here.)

\subsection{Construction of Legendrian invariants}\label{ssec:Leginv} 

As above, let $X$ be a Liouville domain with $c_1(X)=0$ (for $\Z$-grading) and $\partial X = Y$ its contact boundary. Let     $\Lambda =
\bigsqcup_{v \in \pi_0(\Lambda)} \Lambda_v$ be a Legendrian submanifold in $Y$ where $\Lambda_v$ is a connected
component of $\Lambda$. Assume that $\Lambda$ is relatively spin and that its Maslov class vanishes. Let each connected component $\Lambda_{v}$ be decorated with a sign and
write $\Lambda^{+}$ and $\Lambda^{-}$ for the union of the components decorated accordingly. (Our
different treatments of $\Lambda^{+}$ and $\Lambda^{-}$ is natural from the point of view of handle
attachments; recall from the introduction that when $\Lambda^{-}$ is a union of
spheres we attach usual Lagrangian disk-handles to $\Lambda^{-}$ and handles with cotangent ends to
$\Lambda^{+}$.)
When we have an exact Lagrangian filling $L$ of $\Lambda$ (relatively spin and with
vanishing Maslov class), $L$
can also be decomposed into embedded components $L = \bigcup_{v \in \Gamma} L_v$. These embedded components are
not disjoint, they are allowed to intersect transversely at finitely many points. There is a
bijection between $\Gamma$ and the embedded components of $L$.

We require that if two components $\Lambda_{w_{1}}$ and $\Lambda_{w_{2}}$ are boundary components of the same embedded component $L_{v}$ then either both belong to $\Lambda^{-}$ or both to $\Lambda^{+}$. Using this property, we get a decomposition $\Gamma =
\Gamma^+ \sqcup \Gamma^-$, corresponding to the decomposition $\Lambda = \Lambda^+ \sqcup \Lambda^-$. 

Let $\k$ be the semi-simple ring generated by mutually orthogonal idempotents
$\{ e_v \}_{v \in \Gamma}$. If we are not given a filling of $\Lambda$, then
the index set $\Gamma$ is taken to be the connected components, $\pi_0(\Lambda)$, instead. If we need
to distinguish between the two choices, we will denote them as $\k_{\Lambda}$ and $\k_{L}$. Note that there is an injective ring map $\k_{L} \to \k_{\Lambda}$ which takes the idempotent $e_{v}$ corresponding to an embedded component $L_{v}$ to the sum $e_{w_{1}}+\dots+e_{w_{r}}$ of idempotents of its boundary components $\Lambda_{w_{j}}$. In particular, this map turns any $\k_{\Lambda}$-bimodule into a $\k_{L}$-bimodule.

Let $\mathcal{R}$ denote the set of non-empty Reeb chords of $\Lambda$. This is a graded set: the grading of a
chord $c \in \mathcal{R}$ is given by $|c| = -\mathrm{CZ}(c)$ where $\mathrm{CZ}(c)$ is the Conley-Zehnder grading, see Section \ref{sec:mdlispaces}.
(With this convention, the unique chord $c$ of the standard Legendrian unknot in
$\R^3$ has $|c|=-2$ and for the corresponding Legendrian unknot in $\R^{2n-1}$ with one Reeb chord
$c$, $|c|=-n$. See also Remark \ref{spherecon}.) 

Note that the vector space generated by $\mathcal{R}$ is a $\k$-bimodule, where $e_v \mathcal{R} e_w$ corresponds to the set of
Reeb chords from $\Lambda_v$ to $\Lambda_w$. The underlying algebra of the standard
Chekanov-Eliashberg DG-algebra is generated freely by $\mathcal{R}$ over $\k$. We need to
modify this in the case $\Lambda^+$ is non-empty to incorporate chains in the based loop space of
$\Lambda_v$ for $v\in \Gamma^+$. Let us first do this using cubical chains.  

For each $v \in \Gamma^+$, consider the cubical chains
$C_{-*}(\Omega_{p_v}\Lambda_v)$ as a $\k$-algebra by requiring that the left or right
action of $e_w$ is trivial except if $w=v$ when it acts as identity. Let $CE^*$ be the algebra over $\k$ given by adjoining elements of $\mathcal{R}$ to the union
of $C_{-*}(\Omega_{p_v} \Lambda_v)$ for $v \in \Gamma^+$. Thus an element of $CE^*$ is a a sum of alternating words in Reeb chords, $\sigma_{1}c_{1}\sigma_{2}c_{2}\dots \sigma_{m}c_{m}\sigma_{m+1}$, where $c_{j}$ are Reeb chords and $\sigma_{j}$ chains of based loops in the component of the Legendrian where the adjacent Reeb chord lies.

Now the differential on $CE^*$ is defined by extending the differential on the cubical complexes
$C_{-*}(\Omega_{p_v}\Lambda_v)$ for $v \in \Gamma_+$. We describe the differential on a single Reeb chord and extend it by the graded
Leibniz rule. The differential $d$ on a Reeb chord decomposes to a sum 
\[ d = \sum_{i \geq 0} \Delta_i, \]
where for any Reeb chord $c_0$ only finitely many $\Delta_i(c_0)$ are non-zero. The operations $\Delta_{i}(c_{0})$ are defined as follows.

Consider moduli spaces of holomorphic disks with positive puncture at $c_{0}$, see Section
\ref{sec:mdlispaces} for definitions and notation. More precisely, consider Reeb chords $c_i, \ldots, c_1$ such that $c_{0}c_{i}\dots c_{1}$ is a composable word and let $\mathbf{c}=c_{0}^{+}c_{i}^{-}\dots c_{1}^{-}$. 
Consider the space of disks $D_{i+1}$ with one distinguished positive puncture and $i$ negative punctures (across which the boundary numbering is constant in the terminology of Section \ref{sec:mdlispaces}). Consider the moduli space $\mathcal{M}^{\sy}(\mathbf{c})$. Theorems
\ref{thm:mdlitv} and \ref{thm:mdlicmpct} imply that this is a smooth orientable manifold with a natural compactification as a stratified space that carries a fundamental chain. It follows that, via the evaluation map at a point in the
boundary arcs of $D_{i+1}$, $\mathcal{M}^{\sy}(\mathbf{c})$ parameterizes a chain of paths in the
$(i+1)$-fold product $\Lambda^{\times(i+1)}$. We transform these chains of paths to chains of based loops as follows. Pick on each component $\Lambda_{v}$ reference arcs connecting all Reeb chord enpoints to the base point. Let $U_\nu$ be a disk which we can take as a neighborhood of these arcs. Then $(\Lambda_{v},\ast_{v})$, where $\ast_{v}$ is the base point is homotopy equivalent to $\Lambda_{v}/U_{v}$ and the moduli space naturally define chains of loops in the latter. 
In this way, the chains of paths become chains of based loops in
$(\Omega_{p}\Lambda)^{\times (i+1)}$. We treat two cases separately. First, if all boundary components of $D_{i+1}$ map to components in $\Lambda^{-}$ then we let
\begin{equation}\label{eq:only-} 
[\mathcal{M}^{\sy}(\mathbf{c})]=
\begin{cases}
n c_{i}\dots c_{1} &\text{if } \dim(\mathcal{M}^{\sy}(\mathbf{c}))=1,\\
0 &\text{if } \dim(\mathcal{M}^{\sy}(\mathbf{c}))\ne 1,
\end{cases}
\end{equation}
where $n$ is the algebraic number of $\R$-components in the moduli space. 
Second, if some boundary component maps to a component in $\Lambda^{+}$ then
we write $[\mathcal{M}^{\sy}(\mathbf{c})]$ for the chain of paths in $(\Omega_{p}\Lambda)^{i+1}$, where we separate the components in the product by the Reeb chords $\mathbf{c}'=c_{i}\dots c_{1}$:
\[ 
[\mathcal{M}^{\sy}(\mathbf{c})]=\sigma_{i+1}c_{i}\sigma_{i}\dots \sigma_{2}c_{1}\sigma_{1},
\]
where $\sigma_{j}$ are the components of the fundamental chain $\sigma\colon
\mathcal{M}^{\sy}(\mathbf{c})\to(\Omega\Lambda)^{\times (i+1)}$. Further, we write $e_{v}$ for each boundary component that maps to a
component in $\Lambda^{-}$ in between the Reeb chords $c_{i}\dots c_{1}$ as above. 

A subtle point here is that the moduli space $\mathcal{M}^{\sy}(\mathbf{c})$ naturally gives rise to
a chain $\sigma$ in $C_{-*}( \Omega \Lambda^{\times(i+1)})$ rather than in $C_{-*}(\Omega
\Lambda)^{\otimes (i+1)}$. Note that $\sigma_i$
are simply components of $\sigma$, they are not considered as chains. To separate these out we apply the
cubical Alexander-Whitney map, recalled in Section \ref{Coefficients}:
\[ \eta : C_{-*}((\Omega \Lambda)^{\times (i+1)}) \to C_{-*}(\Omega
\Lambda)^{\otimes
(i+1) } \]

With these conventions we then define for $i \geq 0$,  
\[ 
\Delta_i(c_0) := \sum_{\mathbf{c}=c_0^{+}c_i^{-}\dots c_1^{-}} \eta[\mathcal{M}^{\sy}(\mathbf{c})],
\] 
where we separate the components of the tensor product by the Reeb chords in $\mathbf{c}'$ in analogy with the notation for the product chain and where $\eta$ is the Serre diagonal from Equation \ref{serrediag}. The output of $\Delta_{i}(c_{0})$ is thus a sum of alternating words of chains of loops in $C_{-\ast}(\Omega\Lambda)$ and Reeb chords, and   
$\Delta_{i}$ is an operation of degree $2-i$ on $LC_*(\Lambda)$. We point out that if there are $\Lambda^+$ components then higher dimensional moduli spaces contribute to
the differential (unlike the case when $\Lambda = \Lambda^-$). 
Note also that it is possible to have holomorphic disks contributing to $\Delta_0$, which means that the chord $c_0$ is the positive puncture of a disk without negative punctures.

Our next result shows that the operations $\Delta_{i}$ give a differential on $CE^{\ast}$. The proof of that result uses boundaries of moduli spaces of holomorphic disks. By SFT compactness \cite{BEHWZ} and standard gluing results, see e.g., \cite[Appendix A]{ES} and \cite[Appendix B]{Ersft} the boundary of a moduli space $\mathcal{M}^{\sy}(\mathbf{c})$ consists of several level holomorphic buildings of curves with top level in $\mathcal{M}^{\sy}(\mathbf{c}')$ and lower levels in $\mathcal{M}^{\sy}(\mathbf{c}'')$ where the positive puncture of a curve in a lower level is attached at a negative puncture of a curve above it. We will use the compact notation $\star$ to denote all such broken configurations and write simply
\[ 
\partial\mathcal{M}(\mathbf{c})= \mathcal{M}(\mathbf{c}')\star\mathcal{M}(\mathbf{c}'').
\] 
We next need to consider the fundamental chain of loops $[\mathcal{M}(\mathbf{c}')\star\mathcal{M}(\mathbf{c}'')]$ carried by 
$\mathcal{M}(\mathbf{c}')\star\mathcal{M}(\mathbf{c}'')$, or in other words the codimension $1$ boundary of $[\mathcal{M}(\mathbf{c})]$. If the dimension of $\mathcal{M}(\mathbf{c})$ is $d$ then its boundary components give $d-1$ dimensional chains of loops in $\Lambda$. Consider a several level building with moduli space components $\mathcal{M}_{j}$ of dimension $d_{j}>1$, $j=1,\dots,m$. Then, by SFT compactness, $d=\sum_{j=1}^{m} d_{j}$. A boundary component of a several level disk that consists of boundary segments from $k$ disks in $\mathcal{M}_{j_{1}},\dots,\mathcal{M}_{j_{k}}$ will then carry a chain of loops in $\Lambda$ of dimension $\sum_{r=1}^{k}(d_{j_{r}}-1)\le d-2$, with equality only if the broken configuration consists of only two levels. It follows that only two level curves contribute to $[\mathcal{M}(\mathbf{c}')\star\mathcal{M}(\mathbf{c}'')]$ and that the chains of loops along boundary two level boundary segments are given by the Pontryagin product of the two chains at one level segments that form the two level segment.   

\begin{prop} 
    Let $d\colon CE^{\ast} \to CE^{\ast}$ be the map extended to $CE^*$ by the graded
Leibniz rule. 
Then $d$ is a differential, $d^{2}=0$. We call $CE^{\ast}$ with the differential $d$
the \emph{Chekanov-Eliashberg DG-algebra}. 
\end{prop}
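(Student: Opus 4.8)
The plan is to use that $d$ is a $\k$-linear derivation of degree $1$ on the algebra $CE^{\ast}$, which is generated over $\k$ by the Reeb chords $\mathcal{R}$ together with the subalgebras $C_{-\ast}(\Omega_{p_v}\Lambda_v)$, $v\in\Gamma^{+}$. Since $d$ obeys the graded Leibniz rule \eqref{Leib}, its square $d^{2}$ is an (ordinary, degree $2$) derivation, so it vanishes as soon as it vanishes on a generating set. On a factor lying in $C_{-\ast}(\Omega_{p_v}\Lambda_v)$ the operator $d$ is by definition the cubical boundary $\partial$, hence $d^{2}=\partial^{2}=0$ there. It therefore remains to prove $d^{2}(c_0)=0$ for an arbitrary Reeb chord $c_0$.

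To unwind this, recall $d(c_0)=\sum_{i\ge 0}\Delta_i(c_0)$, a finite sum, with each $\Delta_i(c_0)=\sum_{\mathbf{c}}\eta[\mathcal{M}^{\sy}(\mathbf{c})]$ an alternating word whose letters are Reeb chords $c_k$ and cubical chains of based loops $\sigma_j$; here $[\mathcal{M}^{\sy}(\mathbf{c})]$ is the fundamental chain provided by Theorems \ref{thm:mdlitv} and \ref{thm:mdlicmpct} and $\eta$ is the Serre diagonal, which is a chain map. Applying $d$ a second time and expanding by the Leibniz rule, $d^{2}(c_0)$ is a sum of terms of two kinds:
\begin{itemize}
\item[(I)] the cubical boundary $\partial$ applied to one loop-chain letter $\sigma_j$ of a word occurring in $d(c_0)$;
\item[(II)] the operation $\sum_{j}\Delta_j$ applied to one Reeb-chord letter $c_k$ of such a word.
\end{itemize}

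The strategy is to match both kinds of terms with the codimension-one boundary strata of the compactification $\overline{\mathcal{M}^{\sy}(\mathbf{c})}$. For (I), since $\eta$ is a chain map and $[\overline{\mathcal{M}^{\sy}(\mathbf{c})}]$ is a fundamental chain, $\partial$ of it equals, up to sign, the fundamental chain of the codimension-one boundary, which by Theorem \ref{thm:mdlicmpct} is the union of two-level holomorphic buildings: a disk $u^{\mathrm{top}}$ with positive puncture $c_0$ whose negative punctures comprise a block of the $c_k$'s together with one new chord $c'$, glued at $c'$ to a disk $u^{\mathrm{bot}}$ with positive puncture $c'$ and the remaining consecutive block of $c_k$'s as negative punctures. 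For (II), splicing $\Delta_{\bullet}(c')$, read off from $u^{\mathrm{bot}}$, into the slot occupied by $c'$ in the word coming from $u^{\mathrm{top}}$ produces exactly this same configuration: the loop chains adjacent to $c'$ in $u^{\mathrm{top}}$ get multiplied with the outer loop chains of $u^{\mathrm{bot}}$ via the Pontryagin product, which is precisely the concatenation of evaluation paths along the boundary arc that is created by the gluing. Thus (I) and (II) enumerate the same set of two-level buildings, and $d^{2}(c_0)=0$ reduces to the statement that each building contributes to (I) and to (II) with opposite signs.

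The main obstacle is exactly this sign comparison together with the coherence of the fundamental chains. Concretely one must: fix coherent orientations on all the moduli spaces $\mathcal{M}^{\sy}(\mathbf{c})$ using the relative spin structure and the capping-operator conventions of Section \ref{sec:mdlispaces}; verify that the boundary orientation of a two-level stratum differs from the product orientation by a sign that, once combined with the Koszul signs produced by the Leibniz rule, by $\eta$, and by the degree shifts, matches the sign in (I) with an overall extra $-1$; and check that the fundamental chain of the stratified compactification satisfies $\partial[\overline{\mathcal{M}^{\sy}(\mathbf{c})}]=\sum\pm[\text{codim-}1\text{ strata}]$ with no spurious boundary contributions, which is where the transversality of Theorem \ref{thm:mdlitv} and the precise description of Theorem \ref{thm:mdlicmpct} are used, supplemented by the assumed abstract perturbation scheme for the rigid anchoring planes in $X_\Lambda$, whose breaking is absorbed into the anchored moduli spaces. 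Granting these, the terms of type (I) and (II) cancel in pairs, so $d^{2}(c_0)=0$, and hence $d^{2}=0$ on all of $CE^{\ast}$. When $\Gamma^{+}=\varnothing$ only the rigid (after $\R$-reduction, $0$-dimensional) moduli spaces contribute, the loop-chain letters disappear, and the argument specializes to the classical proof that the Chekanov--Eliashberg differential squares to zero.
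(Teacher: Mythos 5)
Your proposal is correct and follows essentially the same route as the paper: reduce to generators via the Leibniz rule, identify the two kinds of terms in $d^{2}(c_{0})$ with the two-level boundary strata of $\overline{\mathcal{M}^{\sy}(\mathbf{c})}$, and use that the Serre diagonal $\eta$ is a chain map compatible with the Pontryagin product (the paper spells out this last compatibility explicitly on a test chain, which you assert rather than verify, but that is the only substantive difference in emphasis).
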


\begin{rem}
    When $\Lambda=\Lambda^{-}$, $CE^*$ was called $LCA^*$ in \cite{EtLe} - this is cohomologically graded version of the usual Legendrian homology algebra $LHA_*$ in \cite{BEE}. By definition, we have $LHA_*= CE^{-*}$. 
\end{rem}

\begin{proof} 
In case there are only components in $\Lambda^{-}$ involved the result follows from standard arguments involving the boundary of 1-dimensional moduli spaces, see e.g., \cite{EESPxR,Ersft,BEE}. Consider therefore the case when there are chains in the loop space involved. 

Let $\mathbf{c}=c_{0}^+ c_{m}^-\dots c_{1}^-$. The $d$-dimensional moduli space
$\mathcal{M}^{\sy}(\mathbf{c})$ contributes to $d c_{0}$. Its boundary consists
of broken curves with one level of dimension $d-k$ and one of dimension $k$ for
$0< k <d$, we find, with $\partial$ denoting the natural tensor extension of
the boundary operator in singular homology over boundary components involved in
    $\Lambda^{+}$, that \[ \partial [\mathcal{M}^{\sy}(\mathbf{c})] =
[\mathcal{M}^{\sy}(\mathbf{c}')\star \mathcal{M}^{\sy}(\mathbf{c}'')] \] where
$\star$ is as explained above,
where $\mathbf{c}'=c_{0}^+ c_{m}^-\dots
c_{j}^{-} b^{-} c_{j-k}^{-}\dots c_{1}^{-}$, and  $\mathbf{c}''=
b^{+}c_{j-1}^-\dots c_{j-k+1}^{-}$.  We next apply the cubical
Alexander-Whitney map $\eta$ to this formula to deduce \begin{align*}
\partial\circ\eta[\mathcal{M}^{\sy}(\mathbf{c})]&=\eta\circ\partial[\mathcal{M}^{\sy}(\mathbf{c})]=\eta[\mathcal{M}^{\sy}(\mathbf{c}')\star
\mathcal{M}^{\sy}(\mathbf{c}'')]\\ &= \eta[\mathcal{M}^{\sy}(\mathbf{c}')]\cdot
\eta[\mathcal{M}^{\sy}(\mathbf{c}'')], 
\end{align*} 
where $\cdot$ is the Pontryagin product and we used that the
$\eta$ is a chain map and is compatible with the product. The fact that $\eta$
is a chain map is well known. We verify that it is compatible with the product
below. It follows that the terms contributing to $d^{2}$ which arises from the
differential acting on chains and acting on Reeb chords cancel.

It remains to check that $\eta$ is compatible with the product,
we need to check that the following compositions agree:
\begin{align*} 
C_{-*}(\Omega_{p_u} \Lambda_u &\times \Omega_{p_v} \Lambda_v) \otimes 
C_{-*}(\Omega_{p_v} \Lambda_v \times \Omega_{p_w} \Lambda_w) \\
    &\xrightarrow{\times}   C_{-*}(\Omega_{p_u} \Lambda_u \times \Omega_{p_v} \Lambda_v \times \Omega_{p_v} \Lambda_v \times
    \Omega_{p_w} \Lambda_w)\\ & \xrightarrow{\id \times \cdot \times \id}  
C_{-*}(\Omega_{p_u} \Lambda_u \times \Omega_{p_v} \Lambda_v \times
    \Omega_{p_w} \Lambda_w)\\ & \xrightarrow{\eta} 
    C_{-*}(\Omega_{p_u} \Lambda_u)  \otimes C_{-*}(\Omega_{p_v} \Lambda_v \times
    \Omega_{p_w} \Lambda_w) \\ & \xrightarrow{\id \otimes \eta} 
    C_{-*}(\Omega_{p_u} \Lambda_u)  \otimes C_{-*}(\Omega_{p_v} \Lambda_v) \otimes
    C_{-*}(\Omega_{p_w} \Lambda_w)  
\end{align*}
and 
\begin{align*} C_{-*}(\Omega_{p_u} \Lambda_u &\times \Omega_{p_v} \Lambda_v) \otimes 
    C_{-*}(\Omega_{p_v} \Lambda_v \times \Omega_{p_w} \Lambda_w) \\
    &\xrightarrow{\eta \otimes \eta}  
    C_{-*}(\Omega_{p_u} \Lambda_u) \otimes C_{-*}(\Omega_{p_v} \Lambda_v) \otimes
    C_{-*}(\Omega_{p_v} \Lambda_v) \otimes C_{-*}(\Omega_{p_w} \Lambda_w)\\ & \xrightarrow{\id
    \otimes \times \otimes \id}  
    C_{-*}(\Omega_{p_u} \Lambda_u) \otimes C_{-*}(\Omega_{p_v} \Lambda_v \times \Omega_{p_v}  \Lambda_v)
    \otimes C_{-*}(\Omega_{p_w} \Lambda_w)\\ & \xrightarrow{\id \otimes \cdot \otimes \id} 
    C_{-*}(\Omega_{p_u} \Lambda_u)  \otimes C_{-*}(\Omega_{p_v} \Lambda_v) \otimes
    C_{-*}(\Omega_{p_w} \Lambda_w)  
\end{align*}
This is easily checked by evaluating it on a test chain $(\sigma,\tau)$. Note that this uses the fact that the cubical chain complex is a quotient. Namely, degenerate cubical chains are divided out.
\end{proof} 

\begin{rem}
	As discussed in Section \ref{sec:mdlispaces}, the moduli spaces in the definition of the differential on $CE^{\ast}$ above are defined in terms of anchored moduli spaces $\mathcal{M}^{\sy}$, i.e., moduli spaces of disks with additional interior punctures where holomorphic planes in the filling with asymptotic markers are attached. We point out that in order to calculate the differential one need only take into account rigid such holomorphic planes of dimension zero. For higher dimensional moduli spaces of planes of dimension $d_{0}>0$ the dimension of the curves in the symplectization is $d+1-d_{0}$ and does not contribute to the $d$-dimensional chain $[\mathcal{M}^{\sy}]$.   
\end{rem}

\begin{rem}\label{r:topologicalversion}
As mentioned in the introduction we relate $CE^{\ast}$ as defined above to a parallel copies version
    of the same algebra which is defined solely in terms of rigid moduli spaces. In order to do so
    it is convenient to use a topologically simpler but algebraically more complicated model of
    $CE^{\ast}$, defined as follows. The generating set of our algebra is extended to chains in the
    product $(\Omega \Lambda)^{\times (i+1)}$, where we separate the coordinate functions by Reeb chords. We define the product of two such chains by taking the Pontryagin product of the chains at adjacent factors giving an operation 
\[ 
    C_{-\ast}((\Omega \Lambda)^{\times (i+1)})\otimes C_{-\ast}((\Omega
    \Lambda)^{\times(j+1)})\to
    C_{-\ast}((\Omega \Lambda)^{\times(i+j+1)}).
\]
The differential on this version of $CE^{\ast}$ is then defined by the singular differential on the chain and as
\[ 
\Delta_i(c_0) := \sum_{\mathbf{c}=c_0^{+}c_i^{-}\dots c_1^{-}} [\mathcal{M}^{\sy}(\mathbf{c})],
\]
on Reeb chord generators. (In other words we define it as above but disregard the diagonal
    approximation.) It follows from the K\"unneth formula that the two versions of $CE^{\ast}$ are quasi-isomorphic.    
\end{rem}

Although, the definitions of $CE^*$ given above works generally, from a computational
perspective, it is hard to get our hands on, as the cubical chain complexes $C_{-*}(\Omega_{p_v} \Lambda_v)$ have uncountably many elements. 

Next, we provide a modification of the definition that gives a quasi-isomorphic DG-algebra, under the assumption that for each $v\in
\Gamma^+$, there exists a
DG-algebra quasi-isomorphism 
\[ \Phi \colon C_{-*}(\Omega_{p_v} \Lambda_v) \to \K \langle \mathcal{E}_v \rangle \] 
where $\K \langle \mathcal{E}_v \rangle$ is a DG-algebra structure on a free algebra generated by a
graded finite set $\mathcal{E}_v$. For example, as
discussed in Section \ref{Coefficients}, such a DG-algebra map exists when $\Lambda_v$ is
simply-connected (or if $\Lambda_v$ is a $K(\pi_1,1)$ space, one can first work with the group ring
$\K[\pi_1]$ and base-change to a cofibrant replacement of it). 

We define a graded quiver $\mathcal{Q}_\Lambda$ with vertex set $\mathcal{Q}_0 = \Gamma$ and arrows in correspondence with
\[ \mathcal{Q} := \mathcal{R} \cup \bigcup_{v \in \Gamma^+} \mathcal{E}_v. \] 
More precisely, there are arrows from vertex $v$ to $w$ corresponding to the set of Reeb chords from
$\Lambda_v$ to $\Lambda_w$. In addition, for each $v \in \Gamma^{+}$, there are arrows from $v$ to $v$ corresponding to the elements in $\mathcal{E}_v$. 
Let $LC_*(\Lambda)$ be the graded $\k$-bimodule generated by $\mathcal{Q}$. Thus, there is one generator for each arrow in $\mathcal{Q}$ and an idempotent $e_v$ for each vertex $v \in \mathcal{Q}_0$ and write $\overline{LC}_{\ast}(\Lambda)$ for the submodule without the idempotents.

Let $CE^*(\Lambda)$ be $\k$-algebra given by the tensor algebra 
\[ 
CE^*(\Lambda) = \k \oplus \bigoplus_{i=1}^\infty \overline{LC}_{\ast}(\Lambda)[-1]^{\otimes_{\k} i}
\]
Recall that the path algebra of a quiver is defined as a vector space having all paths in the quiver as basis (including, for each vertex $v$ an idempotent $e_v$), and multiplication given by concatenation of paths. Thus, the $\k$-bimodule $CE^*(\Lambda)$ is the path algebra of the quiver $\mathcal{Q}_\Lambda$ where the grading of each arrow is shifted up by $1$. Just like in the cobar construction we write elements in $CE^{\ast}(\Lambda)$ as
\[ 
[x_{m}|\dots|x_{1}] = s^{-1}x_{m}\otimes_\k \dots \otimes_\k s^{-1}x_{1}\in CE^{\ast}(\Lambda),
\]
where $x_{j}\in \overline{LC}_{\ast}(\Lambda)$.

Next, we equip the $\k$-algebra $CE^*(\Lambda)$ with a differential using the moduli spaces of holomorphic disks defined in Section \ref{sec:mdlispaces}. This differential is induced by operations 
\[ 
\Delta_i \colon \overline{LC}_{\ast}(\Lambda) \to \overline{LC}_*(\Lambda)^{\otimes_\k i}, \quad i=0,1,\ldots, 
\]
where $\overline{LC}_{\ast}(\Lambda)^{\otimes_\k 0}=\k$,
that give $LC_*(\Lambda)$ the structure of an $A_\infty$-coalgebra if $\Delta_0=0$.

Consider a generator of $\overline{LC}_{\ast}(\Lambda)$. If it is a generator $\sigma \in
\mathcal{E}_v$ of the free model of $C_{-\ast}(\Omega_{p_v} \Lambda_{v})$ for some component
$\Lambda_{v}\subset\Lambda^{+}$ then we define
\begin{equation}\label{eq:coalgchain}
\Delta_{i}\sigma = d_{i}\sigma,
\end{equation}
where $d_{i}$ is the co-product that corresponds to $i^{th}$ homogeneous piece of the differential
in the free model $\K\langle \mathcal{E}_v \rangle$ of $C_{-*}(\Omega_{p_v}
\Lambda_v)$. If it is a Reeb chord $c_{0}$ the define
$\Delta_i(c_0)$ as before using moduli spaces $\mathcal{M}^{sy}(\mathbf{c})$ but now take the image of all
the singular chains in $C_{-*}(\Omega_{p_v} \Lambda_v)$ under the map $\Phi\colon
C_{-*}(\Omega_{p_v} \Lambda_v) \to \K \langle \mathcal{E}_v \rangle$. Since the map $\Phi$ is a DG-algebra map, the proof that $d$
is differential on $CE^*$ is the same. Furthermore, since $\Phi$ is a quasi-isomorphism, we
get a quasi-isomorphic chain complex $CE^*$ if we use $\K\langle \mathcal{E}_v \rangle$ coefficients instead of
$C_{-*}(\Omega_{p_v} \Lambda_v)$.

From now on, unless otherwise specified, we will always assume that we work with a free (over $\k$) model of $CE^*$. 

If there exists an augmentation $\epsilon\colon CE^*(\Lambda) \to \k$, then there is a change of coordinates which turn $LC_{\ast}(\Lambda)$ into a $A_{\infty}$-coalgebra. More precisely, consider the restriction of $\epsilon$, $\epsilon_{1}\colon LC_{\ast}(\Lambda)\to\k$, where we think of $LC_{\ast}(\Lambda)$ as the degree $1$ polynomials in $CE^{\ast}(\Lambda)$. Define
\[ 
LC_{\ast}^{\epsilon}=\k\oplus \ker(\epsilon_{1}).
\]   
Note that $\ker(\epsilon_{1})$ is generated idempotents $e_{v}$ and by $c-\epsilon(c)$, where $c$ ranges over the generators of $\overline{LC}_{\ast}(\Lambda)$. Let
\[ 
\phi_{\epsilon}\colon \bigoplus_{i\geq 0} LC_*^{\otimes_{\k}i}\to \bigoplus_{i\geq 0} LC_*^{\otimes_{\k}i},
\]
be the $\k$-algebra automorphism defined on generators as $\phi_{\epsilon}(c)=c+\epsilon(c)$. Define the operations $\Delta_i^\epsilon\colon LC_{\ast}^{\epsilon}(\Lambda)\to LC_{\ast}^{\epsilon}(\Lambda)^{\otimes_{\k} i}$ by
\[ \Delta_i^\epsilon  = \phi_\epsilon \circ \Delta_i \circ \phi_{\epsilon}^{-1}. \]
 
\begin{thm} \label{noncurved} 
The operations $(\Delta_{i})_{i \geq 1}$ satisfy the $A_\infty$-coalgebra relations and with these operations $LC_{\ast}^{\epsilon}(\Lambda)$ is a coaugmented conilpotent coalgebra.
\end{thm}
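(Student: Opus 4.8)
The plan is to deduce Theorem \ref{noncurved} from the fact that the original operations $\Delta_i$ assemble into a differential $d$ on the free $\k$-algebra $CE^*(\Lambda)$, i.e. $d^2=0$, which was already established in the preceding proposition. First I would observe that the free $\k$-algebra $CE^*(\Lambda)=\k\oplus\bigoplus_{i\ge 1}\overline{LC}_*(\Lambda)[-1]^{\otimes_\k i}$ is literally the underlying graded algebra of the cobar construction $\Omega LC_*(\Lambda)$, provided one declares the coproducts $\Delta_i\colon\overline{LC}_*(\Lambda)\to\overline{LC}_*(\Lambda)^{\otimes_\k i}$ to be the structure maps of an $A_\infty$-coalgebra; so the statement ``$(\Delta_i)_{i\ge1}$ satisfy the $A_\infty$-coalgebra relations'' is, essentially by the bookkeeping identity (i)--(ii) recalled in the cobar section, \emph{equivalent} to $d^2=0$ for the degree-$1$ coderivation $d$ determined by the $\Delta_i$, once we know there is no curvature term $\Delta_0$. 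The key step is therefore to handle $\Delta_0$: if some Reeb chord $c_0$ bounds a rigid holomorphic disk with no negative punctures, the naive map $d$ on $CE^*(\Lambda)$ has a ``constant'' term and $LC_*(\Lambda)$ is only a curved $A_\infty$-coalgebra, which is not what is asserted.

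The mechanism that removes the curvature is precisely the change of coordinates $\phi_\epsilon$ already introduced: given the augmentation $\epsilon\colon CE^*(\Lambda)\to\k$, one passes to $LC_*^\epsilon(\Lambda)=\k\oplus\ker(\epsilon_1)$ with operations $\Delta_i^\epsilon=\phi_\epsilon\circ\Delta_i\circ\phi_\epsilon^{-1}$. I would argue as follows. The algebra automorphism $\phi_\epsilon$ of $\bigoplus_i LC_*^{\otimes_\k i}$ conjugates the coderivation $d=\sum_i\Delta_i$ into a new coderivation $d^\epsilon=\phi_\epsilon\circ d\circ\phi_\epsilon^{-1}$, which still squares to zero since conjugation is an algebra automorphism and hence preserves $d^2=0$. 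Moreover $d^\epsilon$ restricts to $LC_*^\epsilon(\Lambda)$, because $\phi_\epsilon$ and $\phi_\epsilon^{-1}$ differ from the identity only by the $\k$-valued shift $c\mapsto c+\epsilon(c)$, and the defining property of an augmentation, $\epsilon\circ d=0$, is exactly what guarantees that the degree-$(\le 0)$ part of $d^\epsilon$ applied to generators of $\ker(\epsilon_1)$ again lands in $\bigoplus_i \ker(\epsilon_1)^{\otimes_\k i}$ with vanishing $\Delta_0^\epsilon$ component. Concretely, $\Delta_0^\epsilon(c-\epsilon(c))=\Delta_0(c)+(\text{terms killed by }\epsilon_1)-\epsilon(\Delta_0(c)+\dots)=0$ by the augmentation equation; this is the one computation I would spell out. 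Thus $d^\epsilon$ is a genuine (non-curved) coderivation of the cobar type, and reading off its components $\Delta_i^\epsilon$ via identities (i)--(ii) from the cobar subsection yields the $A_\infty$-coalgebra relations \eqref{coainf}.

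It remains to check the two finiteness/structural conditions. For the direct-sum factorization condition in the definition of an $A_\infty$-coalgebra (that $c\mapsto(\Delta_i(c))_i$ lands in $\bigoplus_i$ and not merely $\prod_i$), I would invoke the geometric input from Section \ref{sec:mdlispaces}: for a fixed Reeb chord $c_0$ there are only finitely many composable words $\mathbf{c}=c_0^+c_i^-\cdots c_1^-$ with nonempty relevant moduli space and bounded dimension, hence only finitely many $i$ with $\Delta_i(c_0)\ne 0$; for generators coming from $\mathcal{E}_v$ the differential of the free model $\K\langle\mathcal{E}_v\rangle$ is a finite sum by construction; and $\phi_\epsilon$, being the identity plus a degree-preserving scalar shift, preserves this property. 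For conilpotency and the coaugmentation, I would note that the generators can be totally ordered (Reeb chords by action, loop-space generators by a chosen order) so that, by the same action/energy considerations, each $\Delta_i^\epsilon$ lowers this filtration — i.e. $\Delta_i(c)$ is a word in strictly lower generators — which is exactly the conilpotency criterion recalled after Lemma \ref{dualbar0}; strict counit and coaugmentation are given by the splitting $LC_*^\epsilon=\k\oplus\ker(\epsilon_1)$. The main obstacle, and the only place real care is needed, is the $\Delta_0$/curvature bookkeeping: verifying that conjugation by $\phi_\epsilon$ together with $\epsilon\circ d=0$ exactly cancels the constant term and produces an honest (uncurved) $A_\infty$-coalgebra with the correct signs under the conventions of Remark \ref{rem:Seidelsign}; everything else is a translation of the already-proved $d^2=0$ and of standard cobar formalism.
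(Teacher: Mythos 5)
Your proposal is correct and follows essentially the same route as the paper: the $A_\infty$-coalgebra relations are obtained by combining $d^2=0$ on $CE^*(\Lambda)$ with conjugation by $\phi_\epsilon$, the vanishing of $\Delta_0^\epsilon$ comes from the augmentation equation $\epsilon(dc)=0$, and conilpotency/finiteness comes from the action bound (Stokes' theorem) on disks contributing to the differential. Your write-up simply spells out more of the bookkeeping (the cobar identification and the direct-sum factorization condition) than the paper's terser proof.
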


\begin{proof}
Let $d$ denote the differential on $CE^{\ast}(\Lambda)$ and let $c$ be a generator of $\overline{LC}_{\ast}(\Lambda)$. Since $\epsilon$ is a augmentation, $\epsilon(dc)=0$ and it follows that $\Delta_{0}^{\epsilon}=0$. The $A_{\infty}$-coalgebra relations then follows by combining the equation $d^{2}=0$ from Theorem \ref{noncurved} with the automorphism $\phi_{\epsilon}$. 

The coaugmentation is simply the inclusion of $\k$. The fact that $LC^{\epsilon}_{\ast}$ is
    conilpotent follows from Stokes' theorem: the sum of the actions of the Reeb chords at the negative end of a disk contributing to the differential is bounded above by the action of the Reeb chord at the positive end. This gives the desired finiteness.    
\end{proof}
 
\begin{rem}
Note that if the original operation $\Delta_{0}$ on $LC_{\ast}(\Lambda)$ equals $0$ then the map $\epsilon$ which takes all generators of $\overline{LC}_{\ast}(\Lambda)$ to $0$ is an augmentation and in this case $LC_{\ast}(\Lambda)^{\epsilon}=LC_{\ast}(\Lambda)$ by construction.
\end{rem} 

\begin{rem}
If there is an augmentation $\epsilon\colon CE^{\ast}(\Lambda)\to \k$ then $CE^{\ast}(\Lambda)$ can be expressed as the cobar construction of a coalgebra: by construction	
\[ 
CE^*(\Lambda) = \Omega(LC_*^\epsilon(\Lambda)). 
\] 
\end{rem}

We next consider the $\k$-linear dual $LA^*_\epsilon(\Lambda) :=
(LC_*^\epsilon(\Lambda))^\#$ of $LC_{\ast}^{\epsilon}(\Lambda)$. 
It follows from Section \ref{ssec:gradeddual} that this is an augmented $A_\infty$-algebra. We call it the \emph{Legendrian $A_\infty$-algebra}.

\begin{rem}
In case $\Lambda=\Lambda^{-}$, it can be shown that this $A_\infty$-algebra can be obtained
from the endomorphism algebra of the augmentation $\epsilon$ in $Aug_{-}$ category  of \cite{BC} by
adjoining a unit to it, but is, in general, different from the endomorphism algebra in the $Aug_{+}$
category of \cite{NRSSZ}.	
\end{rem}

\begin{defi} Given an augmentation $\epsilon\colon CE^*(\Lambda) \to \k$, we define the \emph{ completed
		Chekanov-Eliashberg DG-algebra} to be $\widehat{CE}_\epsilon^*  := \Bar (LA_\epsilon^*)^{\#}$. The underlying $\k$-algebra is the completed tensor algebra:
	\[ \widehat{CE}^*(\Lambda) = \varprojlim_i CE^*(\Lambda)/I^{i} = \k \langle \langle \overline{LC}_{\ast}^{\epsilon}(\Lambda)[-1] \rangle \rangle \]
	where $\overline{LC}_{\ast}^{\epsilon}(\Lambda)$ is the ideal determined by the natural augmentation.  
\end{defi} 

Note that there is a natural chain map:
\[ \Phi \colon CE^*(\Lambda) \to \widehat{CE}^*_\epsilon(\Lambda). \]

%

%
%
%
%
%

\begin{rem} \label{spherecon}  
To illustrate the various gradings, the unique Reeb chord for the standard Legendrian unknot $\Lambda$ in $\R^{2n-1}$ has degree is $-n$ in $LC_*$, $n$ in
    $LA^*$ and $-(n-1)$ in $CE^*$ (while it is $(n-1)$ in $LHA_*$).
    Therefore, we have the graded isomorphisms: $H_*(LC) \cong H_{-*}(S^{n})$, $H^*(LA)
    \cong H^*(S^{n})$ and $H^*(CE) \cong H_{-*} (\Omega S^{n})$.
\end{rem}

\subsection{Parallel copies}\label{sec:parallel}
In this section we will describe the perturbation scheme we will use to define various version of Lagrangian Floer cohomology. For exact Lagrangian submanifolds with Legendrian boundary we get an induced perturbation scheme for the Legendrian boundary that will allow us to define a simpler version of the Chekanov-Eliashberg algebra which is isomorphic to it in case the Legendrian is simply connected. 

Let $X$ be a Weinstein manifold and let $L\subset X$ be an exact Lagrangian submanifold with
Legendrian boundary $\Lambda$. We assume that $\Lambda$ is embedded but allow $L$ to be a several
component Lagrangian with components that intersect transversely. Assume that the components of $L$
are decorated with signs and write $L^{+}$ and $L^{-}$ for the union of the components decorated
$+$ and $-$, respectively. We will use specific families of Morse functions to shift Lagrangian
and Legendrian submanifolds off of themselves in order to relate holomorphic curve theory to Morse
theory, and to perform Floer cohomology calculations without Hamiltonian perturbations. Before we discuss the details of this we recall some general results for Morse flow trees.

\subsubsection{General results for flow trees}\label{ssec:flowtreebasics}
In this section we recall several basic results for Morse flow trees from \cite{E}. Morse flow trees live in a neighborhood of a given Lagrangian or Legendrian and are thus defined in the corresponding cotangent bundle or the 1-jet space. In this paper, we will consider only the case graphical Lagrangians and Legendrians in the cotangent bundle and 1-jet space, respectively. That corresponds to a simple special case of the more general situation considered in \cite{E}, where the nearby Lagrangians and Legendrians are allowed to have singularities when projected to the zero-section.  

Let $M$ be a smooth manifold with cylindrical ends of the form $\partial M\times[0,\infty)$. Consider the cotangent bundle $T^{\ast}M$ and the $1$-jet space $J^{1}M$. We consider graphical Lagrangians and associated Legendrians $\Gamma_{dF}\subset T^{\ast} M$ and $\Gamma_{j^{1}F}\subset J^{1}M$. In the ends our functions will have the form $F=e^{t}f(q)+c$, where $t\in[0,\infty)$, $f\colon\partial M\to\R$ and $c$ is a constant. Let $L_{1},\dots, L_{m}$ be a collection of graphical Legendrians in $T^{\ast}M$ and $\tilde L_{j}$ be a Legendrian lift of $L_{j}$. As in \cite[Section 2.2.2]{E}, $\tilde L_{j}$ defines local gradients as well as cotangent and 1-jet lifts of paths in $M$. Furthermore, \cite[Lemma 2.8]{E} shows that there are maximal flow lines and as in \cite[Definition 2.9]{E} we define their flow orientation. We define flow trees of $\tilde L=\bigcup_{j}\tilde L_{j}$ as in \cite[Definition 2.10]{E} and we will also use partial flow trees, which are flow trees with 'free' 1-valent vertices, not necessarily at a critical point. 

We next discuss transversality for flow trees following \cite[Section 3]{E}. There are two dimension concepts of a flow tree $\Gamma$ involved here: the formal dimension $\dim(\Gamma)$, see \cite[Definition 3.4]{E} and the geometric dimension $\mathrm{gdim}(\Gamma)$, see \cite[Definition 3.5]{E}. In the graphical case considered here these can be described as follows. The formal dimension $\dim(\Gamma)$ is the dimension of the space of flow trees around a tree $\Gamma$ without degeneracies (i.e., only tri-valent internal vertices and non-zero length flow lines at positive punctures not at a minimum and negative punctures not at a maximum) assuming transverse intersections of flow manifolds at each vertex. The geometric dimension on the other hand, which is the dimension of a flow trees near $\Gamma$ with fixed degeneracies (higher valence vertices etc). It is then clear that $\mathrm{gdim}(\Gamma)\le \dim(\Gamma)$. 

We will use a transversality result that says that for generic geometric data the following holds:
\begin{itemize}
\item[$(\mathrm{FT})$] Every flow tree $\Gamma$ comes in a smooth family of dimension $\mathrm{gdim}(\Gamma)$. If $\Gamma$ is degenerate then there is a natural Whitney stratification of the $\dim(\Gamma)$-dimensional space of flow trees around $\Gamma$ with strata of dimension $\mathrm{gdim}(\Gamma)$.	
\end{itemize}
This result follows from \cite[Proposition 3.14]{E}. We next discuss the adaption (simplification actually) in the current set up of the results from \cite[Section 3]{E} that leads to \cite[Proposition 3.14]{E}. First, since all Lagrangians considered here are graphical their front projections are smooth with empty singular locus of the front and the preliminary transversality conditions of \cite[Section 3.1.1]{E} hold trivially. This absence of sigularities also means that all the results \cite[Lemmas 3.9 -- 12]{E} guaranteeing finitely many vertices for trees in the presence of front singularities hold automatically. The \cite[Proposition 3.14]{E} then follows readily and shows that for an open dense set of graphical Lagrangians or Legendrians $(\mathrm{FT})$ holds.    

We say that a finite collection of functions $F_{1},\dots,F_{k}$ on $M$ is flow tree generic provided $(\mathrm{FT})$ holds. It is a consequence of \cite[Proposition 3.14]{E} that any collection of functions can be made flow tree generic by an arbitrarily small perturbation and furthermore that if $F_{1},\dots,F_{k-1}$ is already flow tree generic then $F_{1},\dots, F_{k-1}, F_{k}$ can be made flow tree generic by arbitrarily small perturbation of $F_{k}$.

\subsubsection{Systems of parallel copies}\label{ssec:paralleldetails} 
In this section we give an account for how to choose systems of parallel copies for Lagrangians and Legendrians in such a way that higher product and co-product operations on the Morse complexes can be directly defined (without mapping telescopes of continuation maps, typically used in Hamiltonian Floer cohomology). 

Let $L$ be a Lagrangian with Legendrian boundary $\Lambda$, then a neighborhood of $L$ in $X$ looks like $T^{\ast} L$ and in the cylindrical end $[0,\infty)\times \Lambda$, the vector tangent to $T^{\ast}L$ in the direction of the dual of the $[0,\infty)$-direction corresponds to the Reeb direction. We consider a collection of parallel copies $L_{j}$, $j=0,1,2,\dots$, $L=L_0$. Here $L_{j}$ is the graph in $T^{\ast}L$ of the differentials $dF_{j}$ of a Morse function $F_{j}\colon L\to\R$. The Morse functions $F_{j}$ will have critical points in the compact part of $L$ and in the cylindrical ends they will look like Morsifications of the Reeb push-off, see below for details.

We next discuss the main strategy without all technicalities: the first Morse function $H_{1}=F_{1}$ gives the first parallel copy at small distance $\epsilon>0$ from $L_{0}=L$. We define $L_{1}$ as the graph of the differential of $\epsilon F_{1}$. We want all other copies to be good approximations of $L_{1}$ as seen from $L_{0}$, so that flow lines between $L_{j}$ and $L_{0}$ and between $L_{1}$ and $L_{0}$ are sufficiently close that the corresponding spaces of flow lines can be canonically identified. Let  $L_{j}^{1}=L_{1}$ for $j>1$. 

We next construct $L_{2}=L_{2}^{2}$ as the graph of the differential of a function $\epsilon^{2}H_{2}$ over $L_{2}^{1}=L_{1}$. For small $\epsilon>0$, $L_{2}$ is then well-approximated by $L_{1}$ as seen from $L_{0}$, and flow lines between $L_{0}$ and $L_{1}$ can be identified with flow lines between $L_{0}$ and $L_{2}$. We also want spaces of flow lines between $L_{1}$ and $L_{2}$ to be identified with flow lines between $L_{0}$ and $L_{1}$. This holds provided $H_{2}$ is a sufficiently good approximation of $H_{1}$. Thus we take
\[ 
H_{2}=F_{1} + \epsilon H_{2}=H_{1}+\epsilon H_{2},
\] 
where $H_{2}$ is sufficientkly close to $H_{1}$ and such that the following further condition holds. The Lagrangians $L_{0}$, $L_{1}$, and $L_{2}$ together also define flow trees with three punctures. We take $H_{2}$ so that $(\mathrm{FT})$ holds for $L_{0}$, $L_{1}$, and $L_{2}$. It follows from \cite[Proposition 3.14]{E} that this can be achieved by arbitrarily small perturbation of $H_{2}$.

The construction now proceeds in the same manner. First, preliminarily, set $L_{j}^{2}=L_{2}$, $j>2$. Then let $L_{3}=L_{3}^{3}$ be the graph of the function $\epsilon^{3}H_{3}$ over $L_{3}^{2}=L_{2}$. In order for $L_{3}$ to look like $L_{1}$ from the point of view of $L_{2}$ and like $L_{2}$ from the point of view of $L_{1}$ we take
\[ 
H_{3}= F_{2} +\epsilon^{2} H_{3} = H_{1} +\epsilon H_{2} + \epsilon^{2} H_{3}.
\]
For $\epsilon>0$ sufficiently small we may then identify flow lines and flow trees of any three of the functions and after arbitrarily small perturbation of $H_{3}$, condition $(\mathrm{FT})$ holds for $L_{j}$, $j=0,1,2,3$. Continuing like this we get 
\begin{equation}\label{eq:fctnHoutline} 
H_{k}= F_{k-1} +\epsilon^{k-1} H_{k}=\sum_{k=1}^{k}\epsilon^{k-1}H_{k}. 
\end{equation} 
The corresponding collection $L_{0},\dots,L_{k}$ of parallel copies then has the following properties: flow trees with boundary on any increasing collection $L_{i_{1}},\dots, L_{i_{k}}$ are arbitrarily close to flow trees of $L_{0},\dots, L_{k-1}$ and condition $(\mathrm{FT})$ holds for $L_{0},\dots,L_{k}$.

In order to get the system of parallel copies we need first need to set conventions for the description of the ends. In the ends our Lagrangians $L$ look like cylinders over Legendrians $\Lambda$. A small neighborhood of $\Lambda$ in the contact boundary can be identified with a small neighborhood of the zero section in the $1$-jet space of $\Lambda$. We think of this as the intersection of $(-\delta,\delta)\times T^{\ast}\Lambda$ with a small neighborhood of the zero-section in the cotangent bundle factor and the contact form is $ds-pdq$, where $s$ is a coordinate on $(-\epsilon,\epsilon)$. In this end the Lagrangian is $[0,\infty)\times\Lambda$ and the corresponding neighborhood is $(-\epsilon,\epsilon)\times[0,\infty)\times T^{\ast}\Lambda\subset T^{\ast}[0,\infty)\times\Lambda$. We observe then that the result of moving $\Lambda$, $\epsilon$ units along the Reeb flow is the graph of the differential of the function $B(t,q)=\epsilon t$, $(t,q)\in[0,\infty)\times \Lambda$. We will Morsify this Bott-situation by considering graphs of $F(t,q)=\epsilon (t + f(q))$, where $f(q)$ is a Morse function. Then the Reeb chords inside the neighborhood of $L$ at infinity  between the graph of $F$ and $[0,\infty)\times\Lambda$ are in natural 1-1 correspondence with critical points of $f$. 
In this set up, with infinite ends, there are also flow trees with positive punctures asymptotic to Reeb chords at infinity. In the compactification of the space of flow trees there are flow trees entirely in the $\R$-invariant end, $T^{\ast}\R\times\Lambda$. In the end we have $dF = \epsilon(dt + \frac{\partial f}{\partial q}dq)$ and the results about Morse flow trees from Section \ref{ssec:flowtreebasics} follow readily from the corresponding results of flow trees of $f$ on $\Lambda$.

We now turn to a more detailed description of the construction of parallel copies such that flow trees of ordered subcollections of parallel copies can be identified as discussed above.
Write $[0,\infty)\times Y$ and $[0,\infty)\times\Lambda$ for the ends of $X$ and $L$, respectively.
In the end, with coordinates $(t,q)\in [0,\infty)\times \Lambda$. Consider a collection of pairs of Morse functions $(F_j,f_j)$, where $F_j\colon L\to\R$ and
$f_{j}\colon\Lambda\to\R$, $j=1,2,\dots$ are related as follows in the ends:  
\[ 
F_j(t,q)=\epsilon(t + f_{j}(q))+b,\quad \text{ for }1\gg \epsilon > 0, \text{ and }b>0, 
\] 
and that $F_j$ does not have any local maxima. We next discuss further restrictions related to critical poin 

Let $(F_1,f_1)$ be any pair of positive Morse functions as above. Let $z_1^{1},\dots, z_{1}^{m}$ be
the critical points of $F_1$ and let $x_1^{1},\dots,x_{1}^{l}$ be the critical points of $f_1$. Fix
disjoint coordinate balls $B_j^{1}\subset L$ around $z_j^{1}$ and $D_j^{1}\subset\Lambda$ of
$x^{1}_{j}$ such that $F_1$ and $f_1$ are given by quadratic polynomials in these coordinates. Fix small $\sigma>0$ such that 
\begin{align*} 
|dF_1|&>\sigma_1=\sigma \quad\text{ on }\quad L-\bigcup_{j=1}^{m}B^{j}_{1},\\
|df_1|&> \sigma_1=\sigma \quad\text{ on }\quad \Lambda-\bigcup_{j=1}^{l}D^{j}_{1},
\end{align*}
Let $(F_2,f_2)$ be another pair of positive Morse functions with $m$ respectively $l$ critical points $z_{2}^{1},\dots, z_{2}^{m}$ and $x_{1}^{1},\dots, x_{2}^{l}$, where 
\begin{align*}
z_{2}^{j}&\in B_{1}^{j}\quad\text{ and }\quad \ind(z_{2}^{j})=\ind(z_{1}^{j}),\\
x_{2}^{j}&\in D_{1}^{j}\quad\text{ and }\quad \ind(x_{2}^{j})=\ind(x_{1}^{j})
\end{align*}
Let $\sigma_{2}<\sigma\sigma_{1}$ and fix coordinate balls $B^{j}_{2}\subset B_{1}^{j}$ and $D^{j}_{2}\subset D^{j}_{2}$  such that 
\begin{align*} 
|dF_2|&>\sigma_{2}\quad\text{ on }\quad L-\bigcup_{j=1}^{m}B^{j}_{2},\\
|df_2|&>\sigma_{2}\quad\text{ on }\quad \Lambda-\bigcup_{j=1}^{l}D^{j}_{2}.
\end{align*} 
Finally, we make sure that $F_2<\sigma F_1$, and $f_{2}< \sigma f_{1}$ which we obtain by over all scaling. Note that we might have to shrink $\sigma_2$ after scaling.

We continue inductively and construct a family of pairs $(F_k,f_k)$, $k=1,2,\dots$ of positive Morse
functions with the following properties. Each $F_k$ has $m$ critical points $z_{k}^{1},\dots, z_k^{m}$, each $f_k$ has $l$ critical points $x_{k}^{1},\dots,x_{k}^{l}$. There is $0<\sigma_k$ and disjoint coordinate balls $B_{k}^{j}$ around $z_{k}^{j}$ and $D_{k}^{j}$ around $x_{k}^{j}$ such that 
\begin{align*}
|dF_k|&>\sigma_{k}\quad\text{ on }\quad L-\bigcup_{j=1}^{m}B^{j}_{k},\\
|df_k|&>\sigma_{k}\quad\text{ on }\quad L-\bigcup_{j=1}^{l}D^{j}_{k}.
\end{align*}
Furthermore, the following hold
\begin{itemize}
	\item $B^{j}_{k}\subset B^{j}_{k-1}$ and $D^{j}_{k}\subset D^{j}_{k-1}$,
	\item $\sigma_k\le \sigma\sigma_{k-1}$,
	\item $F_{k}< \sigma F_{k-1}$ and $f_k\le \sigma f_{k-1}$.
\end{itemize}

We next take into account the sign decoration. Assume that $L^{-}$ is non-empty. In this case we
first construct functions $\{(G_{j},g_{j})\}_{j=1}^{\infty}$ exactly as above on all components of
$L$. The actual functions $\{(F_{j},f_{j})\}$ on $(L,\Lambda)$ are then
$(F_{j},f_{j})=(G_{j},g_{j})$ on $L^{+}$ and $(F_{j},f_{j})=(-G_{j},-g_{j})$ on $L^{-}$.

Consider now the Morse functions
\[
\left(H_k= \sum_{j=1}^{k} F_j, h_k= \sum_{j=1}^{k} f_j\right).
\]
(Compare $(F_{j},f_{j})$ to the function $\epsilon^{k}F_{k}$ in the discussion preceding \eqref{eq:fctnHoutline}.) Define the system of parallel copies $L_{0},\dots,L_{k},\dots$ by letting $L_{k}$ be the graph $\Gamma_{d H_{k}}$ of the differential of $H_{k}$. Then we have the following.

\begin{lem}\label{l:flowtree1}
For generic choice of functions $(F_{j},f_{j})$ and all sufficiently small $\sigma>0$ in the construction above, the resulting system of parallel copies $\{L_{j}\}_{j=0}^{\infty}$ has the following properties:
\begin{itemize}
\item Intersection points $L_{k_{0}}\cap L_{k_{1}}$ are transverse and are in natural 1-1 correspondence with intersection points of $L_{0}\cap L_{1}$ (or, in terms of $L$ only: critical points of $F_{1}$ and self-intersection points of $L=L_{0}$).
\item On $L_{+}$, if $k_{0}< k_{1}$ then Reeb chords form $\Lambda_{k_{0}}$ to $\Lambda_{k_{1}}$ are in natural 1-1 correspondence with Reeb chords from $\Lambda_{0}$ to $\Lambda_{1}$ (or, in terms of $\Lambda$ only, critical points of $f_{1}$ and Reeb chords of $\Lambda=\Lambda_{0}$).
\item Flow tree transversality $(\mathrm{FT})$ holds for all ordered finite subcollections $L_{k_{0}},L_{k_{1}},\dots L_{k_{m}}$, $k_{0}<k_{1}<\dots< k_{m}$ of parallel copies. Furthermore, the space of flow trees for any two such ordered collections $L_{k_{0}},L_{k_{1}},\dots L_{k_{m}}$ and $L_{j_{0}},L_{j_{1}},\dots L_{j_{m}}$ are canonically isomorphic.  
\end{itemize}
\end{lem}

\begin{proof}
Intersection points  $L_{k_0}\cap L_{k_1}$ corresponds to intersections between the components of $L$ and critical points of $H_{k_1}-H_{k_0}$. Since
\[ 
H_{k_1}-H_{k_0}=F_{k_{0}+1}+\mathcal{O}(\sigma F_{k_0+1}), 
\]
we find that from the point of view of $L_{k_0}$, $L_{k_1}$ can be viewed as a small perturbation of $L_{k_0+1}$. In particular, if $k_0\ne k_1$ then $L_{k_0}\cap L_{k_1}$ is transverse and there is a unique intersection point near each intersection point in $L\cap L_{1}$ which corresponds to critical points of $F_1$ and self-intersections of $L$. The statement on Reeb chords follows similarly. The last statement follows from the special case of \cite[Proposition 3.14]{E} as described above.
\end{proof}


\begin{rem}
	In Sections \ref{ssec:CWBEE} and \ref{ssec:CWpwrap} we will also apply this construction to Lagrangian submanifolds $C\subset W$, where $W$ is a Weinstein cobordism with both positive and negative ends. Our Lagrangian submanifolds $C$ that will be equipped with systems of parallel copies will however have only positive ends in that case and the above discussion applies without change. See Remark \ref{r:inducednegparallel} for a version when both positive and negative ends are perturbed. 
\end{rem}

\subsubsection{Holomorphic disks and flow trees}
In this section we discuss results relating holomorphic disks and Morse flow trees that are use in computations with parallel copies. 

Let $L\subset X$ be a Lagrangian with cylindrical end $\R\times\Lambda\subset \R\times Y$ and let $\overline{L}(\sigma)=\{L_{j}(\sigma)\}_{j=0}^{\infty}$ be a system of parallel copies for $L$ constructed as in Section \ref{sec:parallel}, where $\sigma>0$ is the scaling parameter. (Roughly, $L_{0}=L$ and $L_{k}(\sigma)$ is at distance $\sigma^{k}$ from $L_{k-1}(\sigma)$, $k=1,2,3,\dots$.) 

We first consider the relation between local holomorphic disks and Morse flow trees. We have the following result for words $\mathbf{a}$ and $\mathbf{c}$ of Reeb chords and intersection points corresponding to critical points of the shifting functions $(F_{1},f_{1})$, see Section \ref{sec:parallel}. 

\begin{lem}\label{l:localdisktreecorr}
	If $\overline{L}$ is flow tree generic and $\kappa=(\kappa_{0},\kappa_{1},\dots,\kappa_{m})$ is an increasing (decreasing) boundary numbering then for all $\sigma>0$ sufficiently small there is a natural 1-1 correspondence between rigid holomorphic disks in $\mathcal{M}^{\fl}(\mathbf{a},\kappa)$ and rigid flow trees of $L_{\kappa_{0}},\dots,L_{\kappa_{m}}$ with asymptotics according to $\mathbf{a}$: there is a neighborhood of the cotangent lift of each rigid tree that contains the boundary of a unique rigid holomorphic disk that is transversely cut out and each rigid disk has boundary in the neighborhood of some rigid tree. Similarly, there are natural 1-1 correspondences between rigid disks in $\mathcal{M}^{\co}(\mathbf{c};\kappa)$ and $\mathcal{M}^{\sy}_{\parallel}(\mathbf{c},\kappa)$ and corresponding rigid flow trees determined by $L_{\kappa_{0}},\dots, L_{\kappa_{m}}$.   
\end{lem} 

\begin{proof}
	This is a consequence of the main results in \cite{E}, \cite[Theorem 1.2 and 1.3]{E}, that show, for compact $L$, as $\sigma\to 0$, any sequence of rigid disks converges to a rigid flow tree (`compactness') and also that near any rigid flow tree in the limit there is a unique rigid holomorphic disk for all sufficiently small $\sigma>0$ (`gluing'). It is essential for this 1-1 corresponednce to hold that there are no multiply covered disks. In the present case the increasing (decreasing) condition guarantees no disk is multiply covered. The modifications necessary for the case of cylindrical ends and corresponding Langrangians of the form $\Lambda\times\R\subset T^{\ast}\Lambda\times\R$ are straightforward (see e.g. \cite{EKH} for flow tree results in a related setting). 
\end{proof}

The second result concern a mixed picture where the disks do not lie entirely in the cotangent bundle. In this case holomorphic disks on a system of parallel copies admit a description with holomorphic disks on the underlying Lagrangian with flow trees attached along their boundaries. Such configurations were considered and the main correspondence was worked out in the setting of knot contact homology in \cite{EENS}. Consider a Lagrangian $L$ and a system of parallel copies $\overline{L}=\{L_{j}\}_{j=0}^{\infty}$ as above and let $\kappa$ be an increasing (decreasing) boundary decoration. A quantum flow tree of $\overline{L}$ is a finite collection of holomorphic disks $D_{1},\dots, D_{m}$ with boundaries on sub-decorations $\kappa^{1}\subset\kappa^{m}$ of $\kappa$ with flow trees emanating from their boundaries with boundary sub-decorations $\theta^{1},\dots,\theta^{n}$, such that inserting flow tree domains in the disks gives a disk and such that inserting the cotangent lifts of of the flow tree at the insertion points we get a boundary condition respecting the decoration $\kappa$. 

In order to establish the desired correspondence between rigid quantum flow trees and rigid holomorphic disks we need additional transversality properties of the shifting Morse function that controls the interface of holomorphic disks and flow trees. The argument is the following. Start with a system of parallel copies that satisfies flow tree transversality and perturb the almost complex structure so that moduli spaces of holomorphic disks with decreasing (increasing) boundary decoration (that cannot be multiply covered) are transversely cut out. Then perturb the shifting Morse functions slightly so that partial flow trees are transverse to the boundary evaluation maps of the transversely cut out holomorphic curves. We say that parallel copies and almost complex structures with this transversality properties are quantum flow tree transverse. Arguing as for flow tree transversality it is straightforward to show that flow tree transversality holds after arbitrarily small perturbation of the shifting Morse functions.

\begin{rem}
	We note one feature of using parallel copies that all approximate a single push off: for a rigid configuration of quantum disks, disk components can be connected only by Morse flow lines (not trees). To see this, consider a rigid configuration with three disks connected by a tree with a trivalent vertex. As $\sigma\to 0$ the tree converges to a flow line and all three disks have to intersect it. This will generically not happen for a rigid configuration, i.e., such configurations can appear only when the formal dimension is at least one. 
\end{rem}      

\begin{lem}\label{l:diskquantumtreecorr}
	If $\overline{L}$ is quantum flow tree generic and $\kappa=(\kappa_{0},\kappa_{1},\dots,\kappa_{m})$ is an increasing (decreasing) boundary numbering then for all $\sigma>0$ sufficiently small there is a natural 1-1 correspondence between rigid holomorphic disks in $\mathcal{M}^{\fl}(\mathbf{a},\kappa)$ and rigid quantum flow trees of $L_{\kappa_{0}},\dots,L_{\kappa_{m}}$ with asymptotics according to $\mathbf{a}$: there is a neighborhood of the cotangent lift of each rigid quantum tree that contains the boundary of a unique rigid holomorphic disk that is transversely cut out and each rigid disk has boundary in the neighborhood of some rigid quantum tree. Similarly, there are natural 1-1 correspondences between rigid disks in $\mathcal{M}^{\co}(\mathbf{c};\kappa)$ and $\mathcal{M}^{\sy}_{\parallel}(\mathbf{c},\kappa)$ and corresponding rigid quantum flow trees determined by $L_{\kappa_{0}},\dots, L_{\kappa_{m}}$.   
\end{lem}

\begin{proof}
	As for flow trees there are two main ingredients: `compactness': as $\sigma\to 0$, any sequence of rigid disks converges to a rigid quantum flow tree and `gluing': near any rigid flow tree in the limit there is a unique rigid holomorphic disk for all sufficiently small $\sigma>0$. The technically most difficult point is gluing. It is present already in the flow tree case in \cite{E} discussed above. The full correspondence were worked out with all details in the case of knot conormals in \cite[Section 5.3]{EENS} and \cite[Section 5.4]{EENS}, respectively. The case considered here can be established in a similar way.   
\end{proof}

\begin{rem}\label{r:smallpossmallneg}
In the setting of Lemma \ref{l:diskquantumtreecorr}, the action-integral $\int d\alpha$ of a holmorphic disk in $\mathcal{M}^{\sy}_{\parallel}(\mathbf{c},\kappa)$ is positive since the almost complex structure $J$ is compatible with the contact form. By Stokes theorem the action-integral equals the difference between the action of the Reeb chord at the positive puncture and the sum of the actions at the negative punctures. It follows in particular that if the positive puncture is a Morse chord then all negative punctures are Morse chords as well and the moduli spaces are controlled already by Lemma \ref{l:localdisktreecorr}.   
\end{rem}

\subsection{Chekanov-Eliashberg algebra with parallel copies}\label{ssec:parallelcopies}
We will relate the Legendrian invariants $LA^{\ast}$ and $LC_{\ast}$ to Floer (co)homology of exact
Lagrangian submanifolds. When studying Lagrangian Floer (co)homology we employ the technique of
parallel copies to achieve transversality and it will be convenient to express the Legendrian
invariants in the same language. As it turns out, in the case that the Legendrians are simply
connected this technique leads to a simpler formulation of the theory which incorporates a model of
chains on the based loop space automatically. Recall that Lagrangian fillings of Legendrian
submanifolds induce augmentations, which after change of variables lead to non-curved Legendrian
$A_\infty$-coalgebras. Geometrically, this means that one uses anchored holomorphic disks. We will
assume that an augmentation of $\Lambda$ has been fixed in this section and all disks considered
will be anchored with respect to this augmentation. We now turn to the description of this theory in
the Legendrian setting. 
   
Let $\Lambda$ be as above with decomposition $\Lambda=\Lambda^{+}\sqcup\Lambda^{-}$. Fix a Morse
function $f\colon\Lambda\to\R$ which is positive on $\Lambda^{+}$ and negative on $\Lambda^{-}$. Use
it as described in Section \ref{sec:parallel} to construct a system $\bar\Lambda=\{\Lambda_{j}\}_{j=1}^{\infty}$ of parallel copies of $\Lambda=\Lambda_{0}$.    

Let $\mathcal{Q}_{0}$, $\k$, and $\mathcal{R}$ be as above. Let $\mathcal{R}^{+}$ denote the Reeb
chords connecting $\Lambda_{0}$ to $\Lambda_{1}$ that lie in a small neighborhood of $\Lambda_{0}$.
By construction there is then a natural 1-1 correspondence between $\mathcal{R}^{+}$ and the set of
critical points of $f$ on $\Lambda^{+}$. (Since $f$ shifts $\Lambda^{-}$ in the negative Reeb
direction there are no such short chords near $\Lambda^{-}$.) Write
\[ \mathcal{R}_{\parallel}=\mathcal{R}\cup\mathcal{R}^{+}, \]
and think of chords in $\mathcal{R}^{+}$ as connecting a component $\Lambda_{v}$ to itself.
Again the set $\mathcal{R}_{\parallel}$ is a graded set: Reeb chords $c\in \mathcal{R}$ are graded as above $|c|=-\mathrm{CZ}(c)$ and the grading of a short chord $c \in \mathcal{R}^{+}$ equals the negative of the Morse index of the critical point of $f$ corresponding to $c$.

We define a graded quiver $\mathcal{Q}_{\parallel,\Lambda}$ with vertex set $\mathcal{Q}_0 = \Gamma$ and arrows in correspondence with
\[ \mathcal{Q}_{\parallel} := \mathcal{R}_{\parallel}. \] 
More precisely, there are arrows from vertex $v$ to $w$ corresponding to the set of Reeb chords from
$\Lambda_v$ to $\Lambda_w$ if $v\ne w$ and corresponding to short Reeb chords from $\Lambda_{v}={\Lambda_{v}}_{0}$ to ${\Lambda_{v}}_{1}$ if $v=w$. 
 
Let $LC_*^{\parallel}(\Lambda)$ be the graded $\k$-bimodule generated by $\mathcal{Q}_{\parallel}$. 
We define an $A_\infty$-coalgebra structure on $LC^{\parallel}_{\ast}(\Lambda)$ given by operations $\Delta_i$ as follows. Given a chord $c_0$ (input) and a chords $c_i, \ldots, c_1$ (outputs), we consider the disk $D_{i+1}$ with distinguished puncture at $c_{0}$ and a \emph{strictly decreasing} boundary decoration $\kappa$. 
Let $\mathbf{c}=c_{0}^{+}c_{i}^{-}\dots c_{1}^{-}$. Consider the moduli space
$\mathcal{M}^{\sy}_{\parallel}(\mathbf{c};\kappa)$. We write $|\mathcal{M}^{\sy}_{\parallel}(\mathbf{c};\kappa)|$ for the
algebraic number of $\R$-components in this moduli space provided
$\dim(\mathcal{M}^{\sy}(\mathbf{c};\kappa))=1$ and $|\mathcal{M}^{\sy}(\mathbf{c};\kappa)|=0$
otherwise. Define for $i > 0$,  
\[ 
\Delta_i(c_0) := \sum_{\mathbf{c}=c_0^{+}c_i^{-}\dots c_1^{-}} |\mathcal{M}^{\sy}_{\parallel}(\mathbf{c};\kappa)|\mathbf{c}',
\] 
where $\mathbf{c}'=c_{i}\dots c_{1}$. This gives an operation of degree $2-i$ on
$LC^{\parallel}_{*}(\Lambda)$. Note that $\Delta_0 =0$ trivially, since the decoration
$\kappa$ is strictly increasing. 

It is not a priori clear that the maps $\Delta_i$ are well-defined as the sum may involve infinitely
many terms. This can be avoided easily if $\Lambda_v$ are simply-connected for $v \in \Gamma^+$.
Namely, the simply-connectedness of $\Lambda_v$ guarantees that the short chords cost index, whereas
the long chords cost energy. In the non-simply connected case, we will only consider a completed version of $LC_{\ast}^{\parallel}$ where infinite expressions are allowed. We define the parallel copies algebra first in the simply connected case and turn to the non-simply connected case in Section \ref{ssec:parallelwithpi1}.

\subsubsection{The parallel copies dg-algebra in the simply connected case}\label{ssec:ssec:parallelwithoutpi1}
When the operations $\Delta_{i}$ defined above has suitable finiteness properties they define a co-algebra. More precisely, we have the following. 

\begin{lem}\label{l:d2=0parallelcoalg}
    Suppose $\Lambda$ is simply-connected or more generally, suppose that $\Delta_i$
    factorizes through the natural inclusion 
    \[ \bigoplus_{i=1}^\infty LC_{\ast}^{\parallel}[-1]^{\otimes_{\k}i} \to
    \prod_{i=1}^{\infty} LC_{\ast}^{\parallel}[-1]^{\otimes_{\k}} \]
    for all $i \geq 1$,  then $LC_{\ast}^{\parallel}$ equipped with the operations
    $(\Delta_{i})_{i\geq 1}$ is an $A_\infty$-coalgebra. 
\end{lem}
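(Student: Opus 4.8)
The plan is to derive the co-$A_\infty$ relations \eqref{coainf} for $(\Delta_i)_{i\ge1}$ from the boundary structure of one-dimensional moduli spaces of anchored holomorphic disks, exactly as in the proof that the differential on $CE^*(\Lambda)$ squares to zero. First I would record that, under the factorization hypothesis, each $\Delta_i(c_0)$ is a finite sum and $\Delta_i(c_0)=0$ for all but finitely many $i$, so the formal expressions below make sense and $LC_*^{\parallel}(\Lambda)$ lies in the class of $A_\infty$ coalgebras of Section \ref{ssec:Ainftycoalg} as soon as the relations hold. Then I would form the free $\k$-algebra $\mathcal{T}:=\k\oplus\bigoplus_{i\ge1}\overline{LC}_*^{\parallel}(\Lambda)[-1]^{\otimes_\k i}$, define $d$ on generators by $d=\sum_{i\ge1}\Delta_i$, and extend it to $\mathcal{T}$ as a $\k$-linear graded derivation; by the standard relation between the cobar differential and the co-$A_\infty$ relations recalled in Section \ref{barcobarsec}, $(\Delta_i)_{i\ge1}$ satisfies \eqref{coainf} if and only if $d^2=0$, and no curvature term intervenes because $\Delta_0=0$ (the decoration $\kappa$ being strictly increasing). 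Thus the lemma reduces to $d^2=0$.

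To prove $d^2=0$ I would run the usual codimension-one degeneration argument. Fix a composable word $\mathbf{c}=c_0^+c_m^-\cdots c_1^-$ with $\dim\mathcal{M}^{\sy}(\mathbf{c};\kappa)=1$. By Theorem \ref{thm:mdlitv} this is a smooth oriented $1$-manifold and by Theorem \ref{thm:mdlicmpct} it has a compactification whose boundary consists of two-level broken buildings, each level a rigid anchored disk. The point special to the parallel-copies setting is that a strictly increasing boundary decoration on $D_{m+1}$ restricts to strictly increasing decorations on both levels of such a building, and the sheet labels of the chord along which the levels are glued agree on the two pieces; hence the boundary points of $\mathcal{M}^{\sy}(\mathbf{c};\kappa)$ are in bijection with the configurations counted by the composite operations in \eqref{coainf}. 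Counting this oriented $0$-dimensional boundary with signs then gives
\[ \sum_{i=1}^{m}\sum_{j=0}^{m-i}\bigl(\id^{\otimes_\k(m-i-j)}\otimes_\k\Delta_i\otimes_\k\id^{\otimes_\k j}\bigr)\Delta_{m-i+1}(c_0)=0, \]
which is \eqref{coainf} in the relevant degree. Breaking off of rigid planes in $X$ contributes nothing extra: such degenerations are already absorbed into the anchored counts and cancel because $\epsilon$ is an augmentation, precisely as in the proof of Theorem \ref{noncurved}. The signs are bookkept with the orientation conventions of Section \ref{sec:mdlispaces} (following \cite{seidelbook}); the slightly unusual signs in \eqref{coainf} are exactly those produced by comparing the boundary orientation of $\mathcal{M}^{\sy}(\mathbf{c};\kappa)$ with the product orientation on the glued levels, as in the computation for $CE^*(\Lambda)$ (cf. \cite{BEE}).

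I expect the only real obstacle to be the compactness bookkeeping in the second paragraph: checking that, in the parallel-copies framework, the strictly increasing decoration genuinely survives every codimension-one splitting, that the two levels of a broken building are themselves configurations of the type counted by the $\Delta_i$, and that no further degenerations (for instance ones involving Morse flow trees near $\Lambda$, or short-chord breakings that do not respect the increasing decoration) appear in the boundary. All of this is a minor variation on the analysis already established for $CE^*(\Lambda)$, after which the co-$A_\infty$ relations follow formally, and combined with the factorization hypothesis this shows $LC_*^{\parallel}(\Lambda)$ is an $A_\infty$ coalgebra.
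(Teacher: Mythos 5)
Your argument is essentially the paper's: both derive the co-$A_\infty$ relations from the two-level boundary of the one-dimensional moduli spaces $\mathcal{M}^{\sy}(\mathbf{c};\kappa)$, with the breakings at a chord connecting distinct copies giving the composite terms (composability resting on the canonical identification of moduli spaces for different increasing numberings, Theorem \ref{thm:mdlicopies}) and the remaining breakings cancelled via the anchoring. The one imprecision is that those remaining degenerations are splittings at a chord connecting a copy of $\Lambda$ to \emph{itself}, capped by rigid augmentation disks with boundary on $L$, not ``rigid planes in $X$''; the cancellation mechanism you invoke (the chain map equation for $\epsilon$) is nonetheless the correct one.
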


\begin{proof}
Recall that the moduli spaces involved in the definition of the operations $\Delta_{i}$ are moduli spaces of anchored disks, where we use the augmentation induced by the filling $L$. Consider a 1-dimensional moduli space $\mathcal{M}^{\sy}(\mathbf{c};\kappa)$ and note that its boundary consists of two level rigid disks by Theorems \ref{thm:mdlicmpct} and \ref{thm:mdlitv}. There are two cases, either the levels are joined at a chord connecting the same copy of $\Lambda$ to itself or the chord connects distinct copies. Since we count anchored disks and since the augmentation satisfies the chain map equation, the first type of breaking cancels algebraically. 
Theorem \ref{thm:mdlicopies} then shows that there are canonical identifications between moduli spaces for different increasing numberings and the proof follows since the second type of breakings  are exactly what contributes to the coalgebra relations and are then in algebraic one to one correspondence with the endpoints of an oriented compact 1-manifold.
\end{proof}
%
%

We will define the parallel copies version of the Chekanov-Eliashberg algebra as the reduced bar construction of a the co-algebra above. To simplify matters we show that for for generic system of parellel copies the coalgebra has a strict counit. (For more general systems of parallel copies one can instead use the definition in Section \ref{ssec:parallelwithpi1}.) Consider a
generator of $LC_{\ast}^{\parallel}$ which is a small Reeb chord $z$ that corresponds to a critical
point of the Morse function $f$. The coalgebra operations $\Delta_{i}(z)$ then counts holomorphic
disks with positive puncture at $z$ which, by an action argument, must lie in a small neighborhood of $\R\times\Lambda$ and rigid such holomorphic disks are in natural correspondence with
rigid Morse flow trees, see Remark \ref{r:smallpossmallneg}. In the simply connected case, i.e., for Morse functions without critical points of index $1$ and $n-1$ we have the following result.

\begin{lem}\label{l:DanR}
Suppose $\Lambda$ is simply connected. If the Morse functions for parallel copies described above are sufficiently close to the first function (i.e., if $\epsilon>0$ in the construction of shifting copies is sufficiently small) then if $x_{v}$ is the minimum of the Morse function on the component $\Lambda_{v}$, the following holds: $\Delta_{1}(x_{v})=0$, $\Delta_{2}(x_{v})=x_{v}\otimes x_{v}$, and
$\Delta_{i}(x_{v})=0$ for $i>2$. Furthermore, if $c$ is any other generator corresponding to
an arrow from $v$ to $w$ then $\Delta_{2}(c)=c\otimes x_{v}+(-1)^{|c|}x_{w}\otimes c+D_{2}(c)$, where $D_{2}(c)$ does not contain any $x_{v}$ factor, and
$\Delta_{i}(c)$ does not contain any factor $x_{v}$ for $i\ne 2$.
\end{lem}

\begin{proof}
Consider a system $\overline{\Lambda}(\sigma)=\{\Lambda_{j}(\sigma)\}$ of parallel copies as in Section \ref{ssec:paralleldetails}.	Note that $x_{v}$ is a Morse chord of action $\mathcal{O}(\sigma)$. Hence, disks with one positive positive puncture at $x_{v}$ can have only other Morse chords as negative punctures. For sufficiently small $\sigma>0$, Lemma \ref{l:localdisktreecorr} then shows we can compute $\Delta_{i}(x_{v})$ by counting all flow trees with a positive puncture at $x_{v}$ and Lemma \ref{l:flowtree1} shows that the flow trees are independent of increasing boundary decoration. The equation $\Delta_{1}(x_{v})$ follows since there is no (negative) gradient flow line emanating from a minimum. For the equation $\Delta(x_{v})=x_{v}\otimes x_{v}$ we consider three copies $L_{0},L_{1},L_{2}$ and observe that there is a unique flow tree with positive puncture at $x_{v}$ and two negative punctures at $x_{v}$, this flow tree consists simply of two flow lines starting at the minumum chord connecting $L_{0}$ to $L_{2}$ and ending at the minimum chords connecting $L_{0}$ to $L_{1}$ and $L_{1}$ to $L_{2}$, respectively.

To see the equations $\Delta_{i}(x_{v})=0$, $i>2$, we start from a general limiting argument for flow trees of parallel copies. Consider a flow tree with positive puncture at a Morse chord $a$ and negative punctures at Morse chords $b_{1},\dots,b_{m}$. As we take the limit $\sigma\to 0$, all shifting functions approach to multiples of the same Morse function and the flow tree limits to a broken flow line starting at $a$ connecting to $b_{i_{1}}$, then from $b_{i_{1}}$ to $b_{i_{2}}$, continuing in this way until all negative punctures have been met. 

Consider now a tree with positive puncture at $x_{v}$. In the limit this converges to a flow line emanating from $x_{v}$, which must then be constant. This shows that all negative punctures must be $x_{v}$ as well. Since the dimension of a tree positive puncture at $x_{v}$ and $i$ negative punctures at $x_{v}$ is $i-2$, $\Delta_{i}(x_{v})=0$ if $i>2$. 

We next consider the properties of $\Delta_{i}(c)$ for $c\ne x_{v}$. The flow trees contributing $c\otimes x_{v}+(-1)^{|c|}x_{w}\otimes c$ to $\Delta_{2}(c)$ are easily found. Consider three parallel copies $L_{0},L_{1},L_{2}$ the flow trees consists of a single flow line from either one of the endpoints of the chord $c$ to the minimum $x_{v}$ or $x_{w}$ of the corresponding component.

We next show that these are the only contributions with negative punctures at minima. We start in the case when $c$ is a Morse chord. Consider a tree with positive puncture at some Morse chord $c$ which does not limit to a single flow line to the minimum as $\sigma\to 0$ and assume that one of the negative punctures is $x_{v}$. Consider first the case when $x_{v}$ is at the last puncture corresponding to the smallest function difference. Assume that the negative punctures are $x_{v}$ and a word of punctures $b$ then $\|c\|-\|b\|-\|x_{v}\|-1=\|c\|-\|b\|=0$, where $\|y\|=\ind(y)-1$. Consider the limit when this function difference goes to zero. Then the flow tree goes to a flow tree with a flow line to $x_{v}$ attached. The remaining flow tree has dimension $\|c\|-\|b\|-1=-1$ and hence does not exist by the flow tree transversality condition $(\mathrm{FT})$ in Section \ref{ssec:flowtreebasics} for the subset of parallel copies obtained by forgetting the last copy. 

Consider next the case that $x_{v}$ is at some other function difference. Then we have negative punctures $b$ before $x_{v}$ and $a$ after $x_{v}$ and thus $a$ are Morse chords of smaller function differences. Consider the limit when all these smaller function differences shrink. In the limit we find a flow tree with negative punctures at $(b,x_{v})$ with a partial flow tree with negative punctures at $a$ attached. The evaluation dimension of the latter tree (i.e. the dimension of the partial tree with a free positive puncture) is
\[ 
(n-1)-\|a\|< n-1,
\]  
where we use the simple connectivity to get strict inequality. Applying the degeneration above to the remaining tree we get a tree with a flow line to $x_{v}$ attached and its evaluation dimension is
\[ 
\|c\|-\|b\|.
\]
Now, $\|c\|-\|b\|-\|a\|-1=-1$ so these two trees do not meet by condition $(\mathrm{FT})$ in Section \ref{ssec:flowtreebasics}. 

The remaining possibility is that the small tree with negative punctures $a$ intersects the flow line towards $x_{v}$. However,  such a tree can be viewed as the original partial tree merging with a flow line from the minimum and then continuing. For $\sigma>0$, at the scale of the tree with punctures $c$ and $b$ (i.e. $\sigma^{k}$ for some $k$) the flow line from the minimum and the flow at the positive puncture of the partial tree attached are very close to parallel (non-parallel only at order $\sigma^{k+l}$ for $l>0$), therefore the evaluation map at the positive puncture of the partial tree with negative punctures $(x_{v},a)$ is arbitrarily close to the evaluation map of the original partial tree with negative punctures $a$ and taking the limit $\sigma\to 0$, the above dimension count: $\|c\|-\|b\|-\|a\|-1=-1$ shows that these does not intersect if $(\mathrm{FT})$ in Section \ref{ssec:flowtreebasics} holds and $\sigma>0$ is  sufficiently small.

We finally consider the case when $c$ is not a Morse chord and a disk which in the limit $\sigma\to 0$ does not converge to a constant disk with a flow line attached and which has a puncture at $x_{v}$. Such a disk must have a non-constant disk component in the limit and by Lemma \ref{l:diskquantumtreecorr} it converges to this disk with flow trees attached in the limit. Let $b$ denote the negative punctures of the disk in the limit and $a$ all Morse chord negative punctures except $x_{v}$. If a flow line to $x_{v}$ is directly attached to the disk then the dimension of the quantum flow tree obtained by removing this flow line is $\|c\|-\|b\|-\|a\|-1=-1$ and hence it does not exist by quantum flow tree transversality, see Lemma \ref{l:diskquantumtreecorr}. If this is not the case then $x_{v}$ is one of the negative punctures in a flow tree attached to the disk. Now that partial flow tree with positive puncture constrained to the evaluation map of the disk must be rigid and arguing exactly as for the trees above we see that quantum flow tree transversality shows that no such configuration exists for sufficiently small $\sigma>0$.   
\end{proof}

\begin{rem}
The simple connectivity is used in the above proof to ensure that cutting with a small tree really reduces dimension. Here cutting means, intersect and start a flow from the intersection locus. In the case that there are index $1$ critical points one could have $|a|=0$ in the above and indeed there are trees with arbitrarily many punctures at index $1$ critical points and then a puncture at $x_{v}$.
\end{rem}

Lemma \ref{l:DanR} shows that, in the simply connected case, there is a
strict coaugmentation  
\begin{equation}\label{eq:coaugparallel}
        \eta\colon\k\to \k_{-} \oplus LC_{\ast}^{\parallel},\quad \eta(e_{v})=x_{v}.
\end{equation}
where $\eta$ is defined by 
\begin{equation} 
\eta(e_{v})=
\begin{cases}
        x_{v} &\text{if $\Lambda_v \subset \Lambda^+$},\\
        e_{v} &\text{if $\Lambda_v \subset \Lambda^-$},
\end{cases}
\end{equation}

\begin{defi}
If $\Lambda$ is simply-connected, the parallel copies Chekanov-Eliashberg DG-algebra is 
\[ 
        CE^{\ast}_{\parallel} = \Omega (\k_- \oplus LC_{\ast}^{\parallel}).
\]
\end{defi}

\subsubsection{The parallel copies dg-algebra in the non-simply connected case}\label{ssec:parallelwithpi1}
In the non-simply connected case, the operations $\Delta_{i}$ defined counting holomorphic curves are not necessarily finite. To get a workable definition we will instead start from an algebra structure on the dual $LA^{\ast}$ of $LC_{\ast}$. More precisely, we proceed as follows. 

Let $LA^*_{\parallel}(\Lambda)$ be the graded $\k$-bimodule generated by $\mathcal{Q}_{\parallel}$. 
We define an $A_\infty$-algebra structure on $LA_{\parallel}^{\ast}(\Lambda)$ given by operations $\Delta_i'$ as follows. Given chords $c_i, \ldots, c_1$ (inputs) and a chord $c_0$ (output), we consider the disk $D_{i+1}$ with distinguished puncture at $c_{0}$ and a \emph{strictly increasing} boundary decoration $\kappa$. As above, let $\mathbf{c}=c_{0}^{+}c_{i}^{-}\dots c_{1}^{-}$ and consider 
$\mathcal{M}^{\sy}(\mathbf{c};\kappa)$. Define for $i > 0$,  
\[ 
\Delta_i'(\mathbf{c}') := \sum_{\mathbf{c}=c_0^{+}c_i^{-}\dots c_1^{-}} |\mathcal{M}^{\sy}_{\parallel}(\mathbf{c};\kappa)|c_{0},
\] 
where $\mathbf{c}'=c_{i}\dots c_{1}$. This gives an operation of degree $2-i$ on
$LA^{\ast}_{\parallel}(\Lambda)$. Note that $\Delta_0' =0$ trivially, since the decoration
$\kappa$ is strictly increasing. 

\begin{lem}\label{l:d2=0parallelalg}
	$LA^{\ast}_{\parallel}$ equipped with the operations
	$(\Delta_{i})_{i\geq 1}'$ is an $A_\infty$-algebra. 
\end{lem}
 
\begin{proof}
Identical to the proof of Lemma \ref{l:d2=0parallelcoalg}.
\end{proof} 

In order to define the parallel copies dg-algebra we first add idempotents $e_{v}$ to $LA^{\ast}_{\parallel}$, one for each component $\Lambda_{v} \subset \Lambda^{-}$. We then get the algebra  $\k_{-} \oplus LA^{\ast}_{\parallel}$. Equip it with the trivial augmentation $\epsilon'$ which is the projection to $\k$. Define
\[ 
{{\widetilde{CE}}^{\ast}}_{\parallel}=(B(\k_{-} \oplus LA^{\ast}_{\parallel}))^{\#},
\] 
and let $\mathscr{I}$ denote the subalgebra of ${{\widetilde{CE}}^{\ast}}_{\parallel}$ as the space of functionals which vanish on monomials not containing $u_{x}$ for some minimum chord $x\in\Lambda$. 

\begin{lem}\label{l:quotientalalgebra}
	The subalgebra $\mathscr{I}$ is closed under the differential.   
\end{lem}

\begin{proof}
	To see that $\mathscr{I}$ is closed under the differential we note that there is no negative gradient flow line that starts at the minimum, therefore any negative gradient tree that starts at $u_x$ must also have a negative puncture at $u_{x}$. 
\end{proof}

\begin{rem}\label{r:gendefparallelsimplyconnected}
In the simply connected case, $LA^{\ast}_{\parallel}=(LC_{\ast}^{\parallel})^{\#}$ and there is a natural restriction map $\rho\colon {{\widetilde{CE}}^{\ast}}_{\parallel}\to \Omega LC_{\ast}^{\parallel}$. Since $u_{x}$ are strict idempotents by Lemma \ref{l:DanR}, this is a chain map. The kernel of $\rho$ is $\mathscr{I}$ and consequently, $\Omega LC_{\ast}^{\parallel}={{\widetilde{CE}}^{\ast}}_{\parallel}/\mathscr{I}$.
\end{rem}

Guided by Remark \ref{r:gendefparallelsimplyconnected} we define the parallel copies DG-algebra as follows in the non-simply connected case.
 
\begin{defi}     
If $\Lambda$ is not simply-connected, then we define with notation as above, the completed DG-algebra
    \[ \widehat{CE}^{\ast}_{\parallel}(\Lambda) =  {{\widetilde{CE}}^{\ast}}_{\parallel}/\I.\] 
\end{defi}


\subsection{Isomorphism between Chekanov-Eliashberg algebras in the simply connected case} 
\label{CEsimplyconnected}

We next show that if $\Lambda$ is simply connected then $CE^{\ast}_{\parallel}(\Lambda)$ is in fact isomorphic to $CE^{\ast}(\Lambda)$. To this end we first establish a Morse theoretic version of Adams result mentioned in the introduction, which here corresponds to the purely local situation of the zero-section in a 1-jet space.

Let $Q$ be a \emph{simply-connected} smooth manifold with a base point $q\in Q$. Fix a system of positive Morse functions
$\bar f=\{f_{j}\}_{j=1}^{\infty}$ as in Section \ref{sec:parallel} and assume that the functions have only one minimum and no index
one critical points. (This can always be arranged by handle cancellation if $\dim(Q)\ge 5$, see Remarks \ref{r:dim4} and \ref{r:dim4iso} for the lower dimensional case.) We will first discuss a Morse flow tree model for chains on $Q$ that we denote
$CM_{-\ast}(Q)$. Our treatment of Morse flow trees follows \cite{E}, see Sections \ref{ssec:flowtreebasics} and \ref{ssec:paralleldetails}. We first recall the details of the flow tree definitions from \cite[Section 2]{E} in the special case needed here.  

Consider a strip $\R\times[0,m]$ or half-strip $[T,\infty)\times[0,m]$ with coordinates $s+i\tau$ and with $m-1$ slits
along $[a_{j},\infty)\times j$, $j=1,\dots, m-1$, and $T\le a_{j}$ in the half-strip case. In the
half strip case the vertical segment $T\times[0,m]$ is a finite end that will be used as an input,
and we do not consider it as a part of the boundary of the strip with slits. In the strip case, the
input is at the puncture $-\infty\times[0,m]$, and in both cases we call punctures at $+\infty$
output. Order the boundary components according to the positive boundary orientation of the disk
with punctures starting from the input and decorate its boundary components by a strictly increasing
sequence of positive integers $\kappa_1<\kappa_2<\ldots<\kappa_{m}$. Let $\kappa = \{\kappa_i
\}_{i=1}^{m}$ denote this decoration. Cutting the strip by line segments $a_j
\times [0,m]$, $j=1,\dots,m-1$, subdivides it into \emph{strip regions} of the form
$[s_{0},s_{1}]\times[\tau_{0},\tau_{1}]$, where $s_{0}\in\{-\infty, a_{1},\dots,a_{m}\}$,
$s_{1}\in\{a_{1},\dots,a_{m},\infty\}$, and $\tau_{0},\tau_{1}\in\{0,1,\dots,m\}$ and with a
numbering $\kappa_{j}$ on each boundary component $[s_{0},s_{1}]\times\{\tau_{0}\}$ and
$[s_{0},s_{1}]\times\{\tau_{1}\}$. 

\begin{defi} \emph{(\cite[Definition 2.10]{E})}
A flow tree is a continuous map from a strip with slits into $Q$ which in each strip region
    $[s_{0},s_{1}]\times [\tau_{0},\tau_{1}]$ depends only on the first coordinate $s\in[s_{0},s_{1}]$ and there satisfies the gradient equation 
\[
\dot x(s) =-\nabla (f_{\kappa_{i}}-f_{\kappa_{j}})(x(s)),
\]
where $\kappa_{i}$ is the numbering of the upper horizontal boundary of the strip region and $\kappa_{j}$ that of the lower.

A partial flow tree is defined analogously except that the domain is a half-strip with slits $[T,\infty)\times [0,m]$. 	
\end{defi}

If $y$ is a critical point of $f_{1}$ then we let $|y|=-\mathrm{index}(y)$ denote the negative Morse index of $y$.
If $\mathbf{y}=y_{0}y_{1}\dots y_{m}$ is a word of critical points of $f_{1}$ then the space of flow trees $\mathcal{T}(\mathbf{y})$ with input puncture at $y_{0}$ and output punctures $\mathbf{y}'=y_{1}\dots y_{m}$, in the order induced by the boundary orientation, has dimension (formal dimension in the language of Section \ref{ssec:flowtreebasics})
\begin{equation}\label{eq:dimtree} 
\dim(\mathcal{T}(\mathbf{y}))= |\mathbf{y}'| - |y_{0}| + (m-2).
\end{equation}
For sufficiently small perturbing system of Morse functions $\bar f$, the space of flow trees is
independent of the increasing boundary decoration $\kappa$, see Lemma \ref{l:flowtree1}.

Let $CM_{-\ast}(Q)$ denote the
$\mathbb{K}$-module generated by critical points of $f_{1}$ and equip $CM_{-\ast}(Q)$ with the structure of a coalgebra with operations $\Delta_{i}$ given by
\[ 
\Delta_{i}(y_{0})=\sum_{|\mathbf{y}'|=|y_{0}|-(i-2)}|\mathcal{T}(\mathbf{y})|\mathbf{y}',
\] 
where the sum ranges over all $\mathbf{y}'$ of word length $i$. It is not hard to see that the boundary of a 1-dimensional space of flow trees consists of broken rigid flow trees from which it follows that the operations $\Delta_{i}$ satisfy the coalgebra relations, compare Lemma \ref{l:d2=0parallelcoalg}. Furthermore, by Lemma \ref{l:DanR}, the coalgebra has a natural co-unit, the critical point which is the minimum of $f_{1}$. We will call this critical point the \emph{co-unit critical point}. We say that a flow tree with no puncture mapping to the co-unit critical point is \emph{co-unit free}.  

The coalgebra $CM_{-\ast}(Q)$ agrees with the Floer coalgebra $CF_{\ast}(Q)$,
where we view $Q$ as the zero section in its own cotangent bundle $T^{\ast}Q$ as follows. Let $\bar Q(\eta)=\{Q_{j}(\eta)\}_{j=0}^{\infty}$ be the system of parallel copies of the
$0$-section $Q \subset T^{\ast}Q$ corresponding to the system of functions $\bar f$, where $\eta>0$ gives the size of the perturbation: $|f_{k+1}-f_{k}|_{C^{s}}=\mathcal{O}(\eta^{k+1})$, see Section \ref{ssec:paralleldetails}.  
Then, by Theorem \ref{l:localdisktreecorr}, 
there is, for all sufficiently small shifts, a natural one to one correspondence between rigid
holomorphic disks with boundary on $\bar Q$ and Morse flow trees in $Q$ determined by $\bar f$. This
gives a chain isomorphism $CM_{-\ast}(Q)\to CF_{\ast}(Q)$. 

We now return to the Morse theoretic approach to Adams' result. We will define a map
\[
\phi\colon C_{-\ast}(\Omega Q)\to \Omega CM_{-\ast}(Q),
\]  
in terms of the operation of attaching co-unit free partial flow trees to Moore loops 
$\sigma_{v}\colon [0,r_v]\to Q$, $r_{v}\ge 0$, based at $q\in Q$, parameterized by a simplex, $v\in\Delta$. To define this operation we will use the following notion: we say that a partial flow tree parameterized by a half-strip $\gamma\colon [T,\infty)\times[0,m]\to Q$ starts at a point $p\in Q$ if its input puncture maps to $p$, $\gamma(T\times[0,m])=p$.

\begin{figure}[h!]
    \centering
    \begin{tikzpicture}[scale=0.6]
    \tikzset{->-/.style={decoration={ markings,
                mark=at position #1 with {\arrow{>}}},postaction={decorate}}}
    \draw [black, thick=1.5] (-5,-1) to (0,-1);
    \draw [black, thick=1.5] (-5,2) to (0,2);
    \draw [black, thick=1.5] (0,0) to (-1,0);
     \draw [black, thick=1.5] (0,1) to (-2,1);

    \draw [black, thick=1.5] (-5,3) to (0,3);
    \draw [black, thick=1.5] (-5,5) to (0,5);
    \draw [black, thick=1.5] (0,4) to (-1.5,4);
 
    \draw [black, thick=1.5] (-5,6) to (0,6);
    \draw [black, thick=1.5] (-5,8) to (0,8);
     \draw [black, thick=1.5] (0,7) to (-2,7);

    \draw[black,dashed] (-5,2) to (-5,3);
    \draw[black,dashed] (-5,5) to (-5,6);

  \draw [black, dashed] (-5, -1) to[in=135,out=225] (-5,8);
    \node at (-5.5,0.5) {\footnotesize{$\sigma_v(t_2)$}}  ;
    \node at (-5.5,4) {\footnotesize{$\sigma_v(t_1)$}}  ;
    \node at (-5.5,7) {\footnotesize{$\sigma_v(t_0)$}}  ;

        \node at (0.4,-1) {$\kappa_8$};
        \node at (0.4,2) {$\kappa_5$};
        \node at (0.4,0) {$\kappa_7$}; 
        \node at (0.4,1) {$\kappa_6$};

        \node at (0.4,3) {$\kappa_5$};
        \node at (0.4,5) {$\kappa_3$};
        \node at (0.4,4) {$\kappa_4$};
        \node at (0.4,6) {$\kappa_3$};
        \node at  (0.4,8) {$\kappa_1$};
        \node at  (0.4,7) {$\kappa_2$};
    \end{tikzpicture}
    \caption{A configuration with 3 partial flow trees attached to $\sigma_v$ at the points $\sigma_v(t_0)$ ,
    $\sigma_v(t_1)$, $\sigma_v(t_2)$. The numbers on the right determines the gradient equation at
    that end. The dashed part represents the loop $\sigma_v$}   \label{adamsmap}
\end{figure}

Attaching partial flow trees to $\sigma_{v}\colon[0,r_{v}]\to Q$ then means fixing points $0\le t_{0}\le t_{1}\dots\le t_{m}\le r_{v}$ and partial flow trees $\Gamma_{j}$, $j=1,\dots,m$ that start at $\sigma_{v}(t_{j})$. Our map $\phi$ takes
values in $\Omega CM_{-\ast}(Q)$ and will accordingly be defined by attaching flow
trees which have no output at the co-unit. 

Take the system of parallel copies $\bar Q(\eta)$ to be flow tree generic (to satisfy $(\mathrm{FT})$ of Section \ref{ssec:flowtreebasics}). Then the set of positive punctures of minimum free partial flow trees for a fixed
increasing boundary numbering constitutes a codimension two subset in $Q$ and that the corresponding
subset for any numbering lies in an $\mathcal{O}(\eta^{2})$-neighborhood of it. We note that, flow tree genericity, in particular, means that the base point $q$ does not lie on any minimum free partial flow tree.

If $\sigma_{v}\colon [0,r_{v}]\to Q$ is a loop with flow trees attached at $0\le t_{0}\le t_{1}\dots\le t_{m}\le r_{v}$, we also introduce a numbering of the components of $[0,r_{v}]-\{t_{1},\dots,t_{m}\}$ induced by the flow trees attached as follows. The right most interval $(t_{m},r_{v}]$ is numbered by $\kappa_{0}$. The right boundary segment of the strip with slits attached at $\sigma_{v}(t_{m})$ is numbered by $\kappa_{0}$ as well, whereas the left boundary segment of its domain is numbered by $\kappa_{k_{m}}$. 
We number the boundary segment $(t_{m-1},t_{m})$ and the right boundary segment of the domain of the flow tree attached at $\sigma_{v}(t_{m-1})$ by $\kappa_{k_{m}}$ as well. 
The left boundary segment of the flow tree attached at $\sigma_{v}(t_{m-1})$ is then numbered by $\kappa_{k_{m-1}}$, 
which determines the numbering of the segment $(t_{m-2},t_{m-1})$ as well as the right boundary
segment in the flow tree at $\sigma_{v}(t_{m-2})$, etc. We view the end result of this process as
the domain for a loop with flow trees attached with numbering $\kappa$ that decreases, see Figure
\ref{adamsmap}. 

Note next that if $\sigma\colon I^{d}\to\Omega Q$ is a $d$-dimensional cube in general
position with respect to $\bar Q$ (i.e., transverse to the stratified space of the partial puncture of all minimum free partial flow trees) the set of $\sigma_{v}$, $v\in I^{d}$ for which a single partial
flow tree can be attached is at most $(d-1)$-dimensional. Attaching more partial flow trees, the
dimension decreases further, by at least one for each flow tree. We say that the loops in $\sigma$
with flow trees attached which form a $0$-dimensional family are the rigid loops with flow trees in
$\sigma$. If $\sigma$ is a cubical simplex in $\Omega Q$ and if $\mathbf{y}$ is a word of critical points then we let 
\[
\mathcal{T}(\sigma;\mathbf{y})
\]
denote the space of loops with flow trees in $\sigma$, where the critical points at punctures of the flow trees read in order give the word $\mathbf{y}$. The formal dimension of $\mathcal{T}(\sigma;\mathbf{y})$ is then
\[
\dim(\mathcal{T}(\sigma;\mathbf{y}))=|\mathbf{y}|+\dim(\sigma)+(\ell(\mathbf{y})-1),
\]
where $\ell$ is the word length, and for chains transverse to the system of parallel copies the formal dimension equals the actual dimension.

Note that if the set of loops in $\sigma$ with flow trees is transversely cut out then 
by definition construction of the system of parallel copies, see Theorem \ref{thm:mdlicopies},
loops with flow trees corresponding to different
decreasing numberings are canonically diffeomorphic. We
define the map $\phi$ by counting rigid loops with flow trees in cubical simplices $\sigma$:
\begin{equation}\label{eq:deflooptotree}
\phi(\sigma)=\sum_{\dim(\mathcal{T}(\sigma;\mathbf{y}))=0}|\mathcal{T}(\sigma;\mathbf{y})|\mathbf{y}.
\end{equation}

\begin{rem}
	The map $\phi$ is defined only for chains that are suitably transverse to the system of parallel copies $\bar Q(\eta)$. Since the shifting Morse functions do not have any index one critical points a partial flow tree has at most $\dim(M)$ punctures. Consider the natural evaluation map on partial flow trees that takes a flow tree to the location of its positive puncture discussed above. The image of this map for partial flow trees not involving the minimum is a stratified space of codimension two and by construction of parallel copies the corresponding set for partial flow trees defined by distinct boundary numberings lie $\mathcal{O}(\eta^{2})$-close to each other. The map $\phi$ above is defined for chains of loops with evaluation maps that are smooth and transverse to this stratified subset. It is straightforward to see that any chain of loops is homotopic to such a transverse chain and that any two transverse chains that are homologous are homologous through a transverse chain of loops. This means that we can replace all chains by chains that represent these transversality conditions. 
\end{rem}

In order to connect this to the path loop fibration we consider a similar map
\[
    \hat{\phi} \colon C_{-\ast}(\mathrm{P} Q)\to \Omega CM_{-\ast}(Q)\otimes^{\t} CM_{-\ast}(Q),
\]
where $\t$ denotes the canonical twisting cochain of the cobar construction and $\mathrm{P}Q$ the based path space. This map can be described geometrically as follows. The chain complex $C_{-\ast}(\mathrm{P} Q)\to \Omega CM_{-\ast}(Q)\otimes^{\t} CM_{-\ast}(Q)$ can be thought of as generated by words of critical points in which the last letter is distinguished and may be the minimum $x$, in other words, the words are either minimum free, or the last letter (and only the last) is the minimum. The differential counts rigid flow trees as usual and also here only the last letter may be the minimum. To define the map $\hat{\phi}$ we consider chains of paths. As above we attach co-unit free partial flow trees to paths in such a chain at interior points and also attach a partial flow tree with last puncture distinguished at the endpoint of the path. Also here, only the distinguished (i.e., the last puncture in the tree at the end point of the path) may be the minimum.
The map $\hat\phi$ then counts rigid paths with flow trees attached as described.

\begin{lem}
    The maps $\phi$ and $\hat{\phi}$ are chain maps.
\end{lem}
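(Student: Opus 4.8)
The plan is to prove both assertions by the standard argument of identifying the boundary of one-dimensional moduli spaces of loops (respectively paths) with flow trees attached. For $\phi$, fix a cubical chain $\sigma\colon I^{d}\to\Omega Q$ in general position with respect to $\bar f$ and a word $\mathbf{y}$ of critical points of $f_{1}$ with $\dim\mathcal{T}(\sigma;\mathbf{y})=1$. By the transversality and compactness theory for Morse flow trees in the non-singular case (cf.\ \cite{E}), $\mathcal{T}(\sigma;\mathbf{y})$ compactifies to a one-manifold with boundary, and the content of the lemma is that the signed count of its boundary points reproduces the chain map identity. The first step is therefore to enumerate the codimension-one boundary strata of $\overline{\mathcal{T}(\sigma;\mathbf{y})}$.

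There are exactly two kinds of such strata. First, the chain $\sigma$ may degenerate onto a face of $I^{d}$; summed over all $\mathbf{y}$ these strata contribute $\phi(\partial\sigma)$, with $\partial$ the cubical boundary on $C_{-\ast}(\Omega Q)$. Second, one of the attached partial flow trees may break along an interior edge at a critical point $z$, splitting into a lower-dimensional partial flow tree with a new output at $z$ together with a rigid flow tree with input $z$; in the output word this collapses a consecutive block of letters to the single letter $z$, and this is precisely the effect, read in reverse, of replacing $z$ by a term of $\Delta_{i}(z)$. Hence, summed over all $\mathbf{y}$, these strata contribute $\m_{1}\circ\phi(\sigma)$, where $\m_{1}$ is the cobar differential of $\Omega CM_{-\ast}(Q)$ (the $i=1$ contributions give the internal Morse differential of $CM_{-\ast}(Q)$, the $i\geq 2$ contributions the comultiplications $\Delta_{i}$). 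The key point is that no further strata occur: a collision of two attachment points $t_{j}=t_{j+1}$ would force a single loop $\sigma_{v}$ to meet the locus of positive punctures of partial flow trees — which is of codimension two in $Q$ — at two points coming together, hence is of codimension at least two in the universal family and is absent from a generic one-dimensional family; and the base point $q$ can be kept off every minimum-free partial flow tree. Equating the two descriptions of $\partial\overline{\mathcal{T}(\sigma;\mathbf{y})}$ and summing over $\mathbf{y}$ yields the chain map identity $\m_{1}\circ\phi=\phi\circ\partial$, up to the overall sign dictated by the conventions of Section \ref{algebra}; this is where one uses the definition \eqref{eq:deflooptotree} of $\phi$.

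For $\hat\phi$ one runs the same argument on one-dimensional families of paths with flow trees, and there is one additional kind of boundary stratum coming from the flow tree attached at the endpoint $\gamma(r)$ of the path. If that endpoint tree breaks by a rigid sub-tree splitting off at its distinguished last output puncture, one obtains the internal Morse differential applied to the last ($CM_{-\ast}(Q)$) tensor factor, that is, the term $\mathrm{Id}\otimes\Delta_{1}$ in the twisted differential \eqref{koszulcomplex}. If instead a rigid flow tree with $i\geq 2$ outputs splits off at one of the remaining output punctures of the endpoint tree, those outputs are transferred into the cobar factor, which is exactly the term $(\m_{2}^{(i)}\otimes\mathrm{Id})\circ(\mathrm{Id}\otimes\t^{\otimes_{\k} i-1}\otimes\mathrm{Id})\circ(\mathrm{Id}\otimes\Delta_{i})$ of \eqref{koszulcomplex}, with $\t$ the universal twisting cochain of the cobar construction; the place at which the new letters are inserted matches because the endpoint tree sits after all the trees along the path. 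The transition in which the last attachment point along the path reaches the endpoint $r$ is again of codimension at least two and does not contribute. Thus the boundary of the one-dimensional moduli space splits as the singular boundary of the path chain together with $d^{\t}\circ\hat\phi$, which is the asserted chain map identity.

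The main obstacle is the analytic input. One must know that for a generic shifting system $\bar f$ with no index-one critical points the moduli spaces $\mathcal{T}(\sigma;\mathbf{y})$ and their path analogues are transversely cut out and compactify as manifolds with boundary whose codimension-one strata are exactly the broken configurations listed above; this is the flow-tree transversality and gluing theory of \cite{E} specialized to fronts with no singularities, together with the codimension-two estimate for the image of the positive-puncture evaluation map on partial flow trees (which in turn uses the absence of index-one critical points). Once this is granted, the only remaining issue is the sign bookkeeping: verifying that the orientations of the flow-tree moduli spaces combine with the Koszul and Serre sign conventions of Section \ref{algebra} to give the chain map identities with the correct signs. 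This is routine but tedious, and is the only place where genuine care is needed.
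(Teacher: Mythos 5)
Your overall strategy coincides with the paper's: both chain map identities are read off from the boundary of the compactified one-dimensional moduli spaces of loops (resp.\ paths) with partial flow trees attached, and your two main boundary types ($\sigma$ degenerating to a face of $I^{d}$, and a rigid flow tree splitting off at a critical point) correctly account for $\phi\circ\partial$ and $\m_{1}\circ\phi$. The genuine gap is your claim that a collision of two attachment points $t_{j}=t_{j+1}$ is of codimension at least two and therefore absent from a generic one-dimensional family. This is false. If two trees are attached at distinct points $t_{j}<t_{j+1}$, the corresponding stratum of $\mathcal{T}(\sigma;\mathbf{y})$ is cut out from the $(d+2)$-dimensional space of $(v,t_{j},t_{j+1})$ by two evaluation conditions; the collision locus replaces $(t_{j},t_{j+1})$ by a single $t$ with $\sigma_{v}(t)$ on the intersection of the two evaluation loci, which costs exactly one more dimension. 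Hence the collision locus has codimension one in the moduli space and appears at isolated points of a one-dimensional family; left unaddressed, these would be extra boundary contributions and the chain map identity would not follow. The paper's resolution (the Remark and Figure following the lemma) is that such configurations are \emph{interior} points: the two half-strips with slits attached at the coinciding point join into a single half-strip with a slit of width equal to the sum of the widths, so the stratum with two trees at $t_{j}<t_{j+1}$ and the stratum with a single tree carrying the combined outputs attached at one point are two top-dimensional strata glued along this common codimension-one face, and no boundary is created there. You need this identification (or some substitute for it) for your boundary enumeration to be complete; the same mechanism, not a codimension count, is what disposes of the transition in which the last attachment point along a path reaches the endpoint in the $\hat{\phi}$ case.

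A secondary imprecision: in matching degenerations with the terms of the twisted differential \eqref{koszulcomplex} for $\hat{\phi}$, the terms $(\m_{2}^{(i)}\otimes\mathrm{Id})\circ(\mathrm{Id}\otimes\t^{\otimes_{\k}i-1}\otimes\mathrm{Id})\circ(\mathrm{Id}\otimes\Delta_{i})$ arise from a rigid tree with $i$ outputs splitting off at the \emph{distinguished last} output of the endpoint tree (its first $i-1$ outputs entering the cobar factor and its last output becoming the new distinguished factor), not at one of the remaining output punctures; splittings at the non-distinguished outputs contribute instead to the cobar differential on the first tensor factor.
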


\begin{proof}
	To see this note that the contributions to the chain map equation in both cases correspond to the boundary of an oriented compact 1-dimensional manifold. 
\end{proof}

\begin{rem}
	The codimension one boundary of $\mathcal{T}(\sigma;\mathbf{y})$ corresponds either to the loop or path moving to the boundary of $\sigma$ or to the breaking of a flow tree at a critical point. 
	Instances when two trees are attached at the same point are naturally interior points of the
    moduli space where the disks with slits join to a new disk with a slit of width equal to the
    sum of the widths. See Figure \ref{interiorpoints}.
\end{rem}

\begin{figure}[h!]
    \centering
    \begin{tikzpicture}[scale=1]
    \tikzset{->-/.style={decoration={ markings,
                mark=at position #1 with {\arrow{>}}},postaction={decorate}}}

        \begin{scope}[scale=0.4]
    \draw [black, thick=1.5] (-5,3) to (0,3);
    \draw [black, thick=1.5] (-5,5) to (0,5);
    \draw [black, thick=1.5] (0,4) to (-1.5,4);
 
    \draw [black, thick=1.5] (-5,6) to (0,6);
    \draw [black, thick=1.5] (-5,8) to (0,8);
     \draw [black, thick=1.5] (0,7) to (-2,7);

     \draw[black,dashed] (-5,2) to (-5,3);
    \draw[black,dashed] (-5,5) to (-5,6);
   \draw[black,dashed] (-5,8) to (-5,9);
   \end{scope} 

    \draw[-stealth,decorate,decoration={snake,amplitude=3pt,pre length=2pt,post length=3pt}]
    (0.4,2.2) -- ++(0.5,0);

        \begin{scope}[scale=0.4, xshift=8cm]
    \draw [black, thick=1.5] (-5,3.5) to (0,3.5);
    \draw [black, thick=1.5] (0,4.5) to (-1.5,4.5);
 
    \draw [black, thick=1.5] (-5,5.5) to (0,5.5);
    \draw [black, thick=1.5] (-5,7.5) to (0,7.5);
     \draw [black, thick=1.5] (0,6.5) to (-2,6.5);

     \draw[black,dashed] (-5,2) to (-5,3.5);
   \draw[black,dashed] (-5,7.5) to (-5,9);
   \end{scope} 

    \draw[-stealth,decorate,decoration={snake,amplitude=3pt,pre length=2pt,post length=3pt}]
    (3.6,2.2) -- ++(0.5,0);

        \begin{scope}[scale=0.4, xshift=16cm]
    \draw [black, thick=1.5] (-5,3.5) to (0,3.5);
    \draw [black, thick=1.5] (0,4.5) to (-1.5,4.5);
 
    \draw [black, thick=1.5] (-3,5.5) to (0,5.5);
    \draw [black, thick=1.5] (-5,7.5) to (0,7.5);
     \draw [black, thick=1.5] (0,6.5) to (-2,6.5);

     \draw[black,dashed] (-5,2) to (-5,3.5);
   \draw[black,dashed] (-5,7.5) to (-5,9);
   \end{scope} 

        \begin{scope} [yshift=-1cm, xshift=-1.4cm]  

   \draw[black, dashed] (0,0) to[in=90, out=90] (1,0);  
   
   \draw[black, thick=1] (0.3,0.27) to (0.2, -0.3); 
   \draw[black, thick=1] (0.2, -0.3) to (0, -0.5); 
   \draw[black, thick=1] (0.2, -0.3) to (0.4, -0.5); 

   \draw[black, thick=1] (0.7,0.27) to (0.8, -0.3); 
   \draw[black, thick=1] (0.8, -0.3) to (1, -0.5); 
   \draw[black, thick=1] (0.8, -0.3) to (0.6, -0.5); 

    \end{scope} 

 \draw[-stealth,decorate,decoration={snake,amplitude=3pt,pre length=2pt,post length=3pt}]
    (0.4,-1.2) -- ++(0.5,0);

 \begin{scope} [yshift=-1cm, xshift=1.8cm]  

   \draw[black, dashed] (0,0) to[in=90, out=90] (1,0);  
   
   \draw[black, thick=1] (0.3,0.27) to (0.2, -0.3); 
   \draw[black, thick=1] (0.2, -0.3) to (0, -0.5); 
   \draw[black, thick=1] (0.2, -0.3) to (0.4, -0.5); 

   \draw[black, thick=1] (0.3,0.27) to (0.8, -0.3); 
   \draw[black, thick=1] (0.8, -0.3) to (1.1, -0.4); 
   \draw[black, thick=1] (0.8, -0.3) to (0.7, -0.5); 

    \end{scope} 

 \draw[-stealth,decorate,decoration={snake,amplitude=3pt,pre length=2pt,post length=3pt}]
    (3.6,-1.2) -- ++(0.5,0);

 \begin{scope} [yshift=-1cm, xshift=5cm]  

   \draw[black, dashed] (0,0) to[in=90, out=90] (1,0);  
 
   \draw[black, thick=1] (0.3, 0.27) to (0.3, -0.03);
   \draw[black, thick=1] (0.3,-0.03) to (0.2, -0.7); 
   \draw[black, thick=1] (0.2, -0.7) to (0, -0.9); 
   \draw[black, thick=1] (0.2, -0.7) to (0.4, -0.9); 

   \draw[black, thick=1] (0.3,-0.03) to (0.8, -0.7); 
   \draw[black, thick=1] (0.8, -0.7) to (1.1, -0.8); 
   \draw[black, thick=1] (0.8, -0.7) to (0.7, -0.9);

 \end{scope}

    \end{tikzpicture}
    \caption{Flow trees attached at the same point are interior points: in the source (top),
in the target (bottom). }\label{interiorpoints} \end{figure}

With this established we can now prove Adams' result:
\begin{thm}\label{thm:Adams}
	The flow tree map $\phi\colon C_{-\ast}(\Omega Q)\to \Omega CM_{-\ast}(Q)$ is a quasi-isomorphism.
\end{thm}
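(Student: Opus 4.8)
The plan is to recognise $\phi$ as the map induced on \emph{fibres} by the chain map $\hat\phi$, and to exploit that the two ambient ``total complexes'' are acyclic, transferring the conclusion along the path--loop fibration. First, observe that $\phi$ sits inside $\hat\phi$: a based loop, viewed as a path, has its endpoint at $q$, and since $q$ is chosen generically it lies on no co-unit-free partial flow tree, so the only flow tree that can be attached at the endpoint of a loop is the trivial one terminating at the co-unit critical point. Hence $\hat\phi$ carries the subcomplex $C_{-\ast}(\Omega Q)\subset C_{-\ast}(\mathrm{P}Q)$ into $\Cobar CM_{-\ast}(Q)\otimes 1\cong\Cobar CM_{-\ast}(Q)$, where it coincides with $\phi$. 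Moreover the endpoint evaluation $\mathrm{P}Q\to Q$ and the projection $\Cobar CM_{-\ast}(Q)\otimes^{\t}CM_{-\ast}(Q)\to CM_{-\ast}(Q)$ make $\hat\phi$ a morphism of ``fibrations'' covering the canonical quasi-isomorphism $C_{-\ast}(Q)\to CM_{-\ast}(Q)$ comparing singular and Morse chains (recall also $CM_{-\ast}(Q)\cong CF_\ast(Q)$).

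Next I would show that $\hat\phi$ is a quasi-isomorphism. The based path space $\mathrm{P}Q$ is contractible, so $C_{-\ast}(\mathrm{P}Q)\simeq\mathbb{K}$. On the other side, $\Cobar CM_{-\ast}(Q)\otimes^{\t}CM_{-\ast}(Q)$ is the Koszul complex of the universal twisting cochain $\t\colon CM_{-\ast}(Q)\to\Cobar CM_{-\ast}(Q)$; since $Q$ is simply connected and the perturbing Morse functions have a single minimum and no index-one critical points, $CM_{-\ast}(Q)$ is a locally finite, simply-connected coalgebra, so $\t$ is Koszul and its Koszul complex is acyclic --- the standard bar--cobar fact recalled around \eqref{barcobar} (equivalently, $\mathrm{Id}_{\Cobar CM_{-\ast}(Q)}$ corresponds to $\t$ under \eqref{bij1}; cf.\ \cite[Appendix A]{Posit}). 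A chain map between two complexes quasi-isomorphic to $\mathbb{K}$ concentrated in degree $0$ which sends the class of the constant path at $q$ to $[\,]\otimes(\text{co-unit})$, a generator of $H^{0}$ of the target, is a quasi-isomorphism; hence $\hat\phi$ is one.

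Finally I would transfer along the fibrations. Equip $C_{-\ast}(\mathrm{P}Q)$ with the Serre filtration of a CW structure on $Q$ subordinate to the Morse function (so that a partial flow tree issued from a point of the $p$-skeleton reaches a critical point of index $\le p$), and $\Cobar CM_{-\ast}(Q)\otimes^{\t}CM_{-\ast}(Q)$ with the filtration by homological degree in the $CM_{-\ast}(Q)$-factor. Then $\hat\phi$ preserves these filtrations, the associated spectral sequences are of Künneth form over a simply-connected base --- with $E_{2}$-pages $H_{\ast}(Q)\otimes H_{\ast}(\Omega Q)$ and $H_{\ast}(Q)\otimes H_{\ast}(\Cobar CM_{-\ast}(Q))$, both converging to $\mathbb{K}$ --- and the induced map on $E_{2}$ is the isomorphism on $H_{\ast}(Q)$ tensored with $H_{\ast}(\phi)$. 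Since $\hat\phi$ and the base map are quasi-isomorphisms, the Zeeman comparison theorem (cf.\ \cite{weibel}) forces $H_{\ast}(\phi)$ to be an isomorphism, i.e.\ $\phi$ is a quasi-isomorphism.

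The step I expect to be the main obstacle is the last one: arranging the two filtrations so that $\hat\phi$ is genuinely filtration preserving and induces the stated map on pages, identifying the $E_{2}$-terms, and verifying (conditional/strong) convergence. The transversality provisos in the definitions of $\phi$ and $\hat\phi$ --- one works with chains in general position with respect to the flow trees, replacing an arbitrary chain by a homologous transverse one --- intervene here as well, but are routine, and the remaining verifications are sign bookkeeping in the cobar construction. (An alternative, since $\phi$ is in fact a DG-algebra map, is to apply $\Bar$ and compare $\Bar\phi$ with the Morse--singular quasi-isomorphism using $\Bar\Cobar CM_{-\ast}(Q)\simeq CM_{-\ast}(Q)$ and $\Bar C_{-\ast}(\Omega Q)\simeq C_{-\ast}(Q)$; but the path-space argument above is the one adapted to the constructions at hand.)
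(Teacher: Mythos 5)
Your proposal is correct and follows essentially the same route as the paper: both arguments rest on the acyclicity of $C_{-\ast}(\mathrm{P}Q)$ and of the twisted complex $\Omega CM_{-\ast}(Q)\otimes^{\t}CM_{-\ast}(Q)$, the endpoint-degree filtration making $\hat\phi$ a map of fibration-type spectral sequences over the Morse--singular comparison, and Zeeman's comparison theorem to conclude that the fibre map $\phi$ is a quasi-isomorphism. The only cosmetic difference is that the paper verifies acyclicity of the twisted complex by an explicit flow-tree/filtration argument (the unique tree with inputs $y$ and outputs $x$, $y$ pairing words ending in the co-unit with those that do not), whereas you invoke the general acyclicity of the universal Koszul complex; both are valid.
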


\begin{proof} The first observation is that the chain complex $\Omega CM_{-\ast}(Q)\otimes^{\t}
    CM_{-\ast}(Q)$ is acyclic. To see that, note that for each critical point $y$ there is a unique
    flow tree with positive puncture $y$ and two negative punctures, one at the co-unit $x$ and one at $y$, see Lemma \ref{l:DanR}.
    
    Add a constant to the Morse function $f$ used to build the parallel copies so that the minimum $x$ lies at level $0$ and all other critical points at positive levels. We then filter by action, more precisely we associate to a word $y_{1}\dots y_{m}$ of critical points the action $\sum_{j}f(y_{j})$. Then by definition of a flow tree the differential does not increase action. Since all flow trees except those involving the co-unit decrease action we find that the differential on the associated graded complex acts only on the last (distinguished) letter in the word and it acts there as $y\mapsto yx$ if $y\ne x$ and $x\mapsto 0$. Since this is an isomorphism from words not ending with the co-unit $x$ to those which does end with
    $x$, the desired acyclicity follows. Clearly, $C_{-\ast}(\mathrm{P}Q)$ is also acyclic. 
	
    Consider next the stratification of $Q$ induced by the stable manifolds of the Morse function $f$ and the corresponding filtration on $C_{-\ast}(\mathrm{P}Q)$ induced by evaluation at the endpoint. The corresponding filtration on $\Omega CM_{-\ast}(Q)\otimes^{\t} CM_{-\ast}(Q)$ is filtration by degree of the distinguished (the last) critical point and the map $\hat{\phi}$ respects these filtrations. The corresponding $E_{1}$-terms with induced maps are
	\[
        C_{-\ast}(Q;H_{-\ast}(\Omega Q))  \to H_{\ast}(\Omega CM_{-\ast}(Q))\otimes CM_{-\ast}(Q),	\] 
	and $E_{2}$-terms
	\[
	H_{-\ast}(Q;H_{-\ast}(\Omega Q))  \to H_{-\ast}(Q;H_{\ast}(\Omega CM_{-\ast}(Q))).
	\] 
    Zeeman's comparison theorem \cite[Sec. 3.3]{McCleary} then establishes the result.
\end{proof}

\begin{rem}\label{r:dim4}
	If $\dim(Q)=4$ then the Morse function may have critical points of index one. In this case we
    use stabilization as follows. Multiply $Q$ by $\R^{N}$ for any $N \geq 2$, and consider the function $F(q,x)=f(q)+x^{2}$, then $F$ has the same critical points as $f$ and $-\nabla F$ is inward point at infinity.  In $Q\times \R^{N}$ there is room to cancel 1-handles and the above applies. In this case we define $CM_{-\ast}(Q)$ to be $CM_{-\ast}(Q\times \R^{N})$  (which is a 1-reduced version of the original complex). Noting that $C_{-\ast}(Q\times \R^{N})$ and $C_{-\ast}(Q)$ are canonically isomorphic the result follows also in this case.   
\end{rem}

We next show that $CE^{\ast}_{\parallel}(\Lambda)$ and $CE^{\ast}(\Lambda)$ are isomorphic if $\Lambda$ is simply connected.
This is a more or less direct consequence of the description of rigid disks on a Legendrian with parallel copies in Lemma \ref{l:diskquantumtreecorr} and the isomorphism in Theorem \ref{thm:Adams}. Since components in $\Lambda_{-}$ are not affected by this choice of $CE^{\ast}_{\parallel}$ versus $CE^{\ast}$, we disregard them and assume that $\Lambda=\Lambda_{+}$ in what follows.

Recall the definition of $CE^{\ast}(\Lambda)$ given in Remark \ref{r:topologicalversion}, generated by
chains $C_{-\ast}((\Omega_{p}\Lambda)^{\times (i+1)})$ in the product of the based loop space of $\Lambda$ with factors separated by Reeb chords. 
Here the differential of a chain
is just the usual differential of the chain whereas the differential of a Reeb chord is a sum over
all moduli spaces of disks with one positive puncture at the chord and any number of negative
punctures. Such a moduli space carries a fundamental chain and the contribution to the differential is alternating word of chains in the based loop space corresponding to the boundary arcs of the disk carried by the fundamental chain and Reeb chords at the negative puncture:
\[
d c_{0} = \sum_{\mathbf{c}'}[\mathcal{M}^{\sy}(\mathbf{c})],
\]  
where $\mathbf{c}=c_{0}\mathbf{c}'$ and $\mathbf{c}'$ is a word of Reeb chords, here we use the the diagonal in the product of loop spaces, see Remark \ref{r:topologicalversion}.

We next consider a system of parallel copies $\bar\Lambda(\eta)=\{\Lambda_{j}(\eta)\}_{j=0}^{\infty}$ defined by a system of positive Morse functions, see Section \ref{ssec:paralleldetails}, where $\Lambda_{0}=\Lambda$. Recall that the generators of the algebra $CE^{\ast}_{\parallel}(\Lambda)$ are the Reeb chords connecting $\Lambda_{0}$ to $\Lambda_{1}$, and that these are long, corresponding to Reeb chords of $\Lambda$ and short, corresponding to critical points of $f_{1}$, except for the minimum. The differential counts rigid disks with one positive puncture in $\mathcal{M}^{\sy}_{\parallel}(\mathbf{b};\kappa)$ where $\kappa$ is a decreasing boundary numbering, $\mathbf{b}=b_{0}\mathbf{b}'$. 

%
%

We next consider the map 
\[
\phi\colon CE^{\ast}(\Lambda)\to CE^{\ast}_{\parallel}(\Lambda),
\]
which takes every Reeb chord to itself and takes a chain $\sigma$ in the based loop space to $\phi(\sigma)$, where $\phi$ is as in \eqref{eq:deflooptotree} and where we identify the critical points of $f_{1}$ with the corresponding Reeb chords connecting $\Lambda_{0}$ to $\Lambda_{1}$.

\begin{thm}\label{t:parallel=loops}
	The map $\phi$ is a DG-algebra map and	
	if $\Lambda^{+}$ is simply connected then $\phi$ is a quasi-isomorphism. 		
\end{thm}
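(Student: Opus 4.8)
The plan is to prove the two assertions separately, both building on Theorem~\ref{t:quantumflowtree}. First I would verify that $\phi$ is a DG-algebra map. Multiplicativity is essentially built into the definition: $\phi$ is the identity on Reeb chords in $\mathcal{R}$, and on chains of based loops it is the flow-tree map of \eqref{eq:deflooptotree}, which is multiplicative because attaching partial flow trees to a Pontryagin product $\sigma\star\tau$ of loops is the same as attaching trees to $\sigma$ and to $\tau$ separately (a configuration of rigid trees over $\sigma\star\tau$ splits uniquely according to which of the two loops carries each attaching point). So it remains to check that $\phi$ intertwines the differentials. On a pure-loop chain this is Lemma~\ref{thm:Adams}'s chain-map statement (the flow tree map $\phi$ is a chain map). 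On a Reeb-chord generator $c_0$, one compares $d c_0=\sum_{\mathbf c'}[\mathcal M^{\sy}(\mathbf c)]$ in $CE^{\ast}(\Lambda)$ (as in Remark~\ref{r:topologicalversion}) with $d c_0$ in $CE^{\ast}_{\parallel}(\Lambda)$, which counts rigid disks in $\mathcal M^{\sy}(\mathbf b;\kappa)$ with a decreasing boundary numbering. Theorem~\ref{t:quantumflowtree} gives a bijection between the latter and rigid configurations of a holomorphic disk with boundary on $\Lambda$ (contributing a term $b_0\mathbf b'_{\mathrm{lo}}$) with partial flow trees attached along its boundary arcs; but "a holomorphic disk with boundary on $\Lambda$ with rigid partial flow trees attached along its boundary'' is, tautologically, exactly what $\phi$ produces when it is applied to a fundamental chain of the moduli space $\mathcal M^{\sy}(\mathbf c)$ appearing in $d c_0$ in $CE^{\ast}$, because $\phi$ replaces each boundary-arc chain of loops by the flow-tree count $\phi(\sigma_j)$. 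Matching the two counts term by term, and checking the signs (using that the boundary numbering scheme and the orderings in \eqref{eq:deflooptotree} are the ones dictated by the boundary orientation), gives $\phi\circ d = d\circ\phi$ on chords, hence everywhere by the Leibniz rule.

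For the quasi-isomorphism statement under the assumption that $\Lambda^{+}$ is simply connected, I would argue by a filtration/spectral-sequence comparison. Equip both $CE^{\ast}(\Lambda)$ and $CE^{\ast}_{\parallel}(\Lambda)$ with the word-length filtration counting the number of Reeb-chord letters in a monomial (treating loop-space chains and short chords as "length zero''). The map $\phi$ preserves this filtration since it is the identity on long Reeb chords and sends loop chains to words of short chords. On the associated graded, the differential that changes Reeb-chord length is killed, and what survives is, on each factor between consecutive long chords, the internal differential: the singular/cubical differential on $C_{-\ast}(\Omega_p\Lambda_v)$ on one side and the cobar differential of $CM_{-\ast}(\Lambda_v)\cong LC^{\parallel}_{\ast}$ on the other side, i.e.\ $\Omega CM_{-\ast}(\Lambda_v)$. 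The induced map on associated graded is then, factor by factor, precisely the Adams flow-tree quasi-isomorphism $\phi\colon C_{-\ast}(\Omega_p\Lambda_v)\to\Omega CM_{-\ast}(\Lambda_v)$ of Theorem~\ref{thm:Adams} (tensored over the long-chord letters). Since $\Lambda_v$ is simply connected for $v\in\Gamma^{+}$, Theorem~\ref{thm:Adams} applies and this map is a quasi-isomorphism; a K\"unneth argument handles the tensor factors. Finally I would invoke the Eilenberg–Moore/classical comparison theorem (\cite[Theorem 5.5.1]{weibel}) to conclude that $\phi$ is a quasi-isomorphism once one checks the filtrations are exhaustive and bounded below in the appropriate sense — here using the action filtration from Stokes' theorem (as in the proof of Theorem~\ref{noncurved}) to guarantee convergence, which is exactly where simple-connectedness is used to avoid having to pass to completions.

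The main obstacle I anticipate is not the homological-algebra bookkeeping but the geometric input behind the bijection in Theorem~\ref{t:quantumflowtree} being used at the chain level rather than just for rigid (0-dimensional) disks: to show $\phi$ is a chain map on Reeb-chord generators one needs the correspondence between holomorphic disks on $\bar\Lambda$ and disk-with-flow-trees configurations to be compatible with the fundamental chains of the relevant 1-dimensional moduli spaces, not merely a count of points. Concretely, one must argue that the boundary of the 1-dimensional moduli space $\mathcal M^{\sy}(\mathbf b;\kappa)$ breaks in exactly the ways that match (i) the $A_\infty$/co-$A_\infty$ breakings producing the cobar differential of $CM_{-\ast}$, (ii) the internal differential $\partial\sigma$ of the loop chains, and (iii) disk breakings already present in $CE^{\ast}$ — with the interior-point phenomenon of Figure~\ref{interiorpoints} (two flow trees colliding into one wide slit) being genuinely interior and contributing nothing. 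This requires the parametrized version of the convergence/gluing analysis of \cite{E, EESa}, which the cited corollaries provide in the rigid case; extending the accounting to 1-parameter families and identifying all boundary contributions (including signs and orientations of the moduli spaces, via the relatively spin structure and the chosen perturbation scheme) is the technical heart of the argument.
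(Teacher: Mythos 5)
Your overall strategy coincides with the paper's: the chain map property is extracted from Theorem \ref{t:quantumflowtree} (whose convergence/gluing content, compatibility with fundamental chains, and orientation bookkeeping you correctly identify as the technical heart, deferred to \cite{E,EESa}), and the quasi-isomorphism is proved by a filtered comparison reducing to the Morse-theoretic Adams equivalence of Theorem \ref{thm:Adams} on the associated graded. The genuine gap is in your choice of filtration. You filter by the \emph{number} of long Reeb-chord letters and claim that on the associated graded only the internal (loop-space, resp.\ Morse--cobar) differential survives. That is not correct: the component $\Delta_1$ of the differential of a long chord $c_0$, coming from the moduli spaces $\mathcal{M}^{\sy}(c_0^{+}b^{-})$ with a single long output chord $b$ decorated by loop chains (resp.\ short chords), preserves the number of long Reeb-chord letters and therefore survives on your associated graded (see e.g.\ the terms $b_1$, $b_3$ in $dc_1$ for the trefoil). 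So your first page is not a tensor product of Adams complexes over fixed long-chord letters, and the K\"unneth/Zeeman step does not apply as stated. Moreover, the word-length filtration here is decreasing and unbounded, so it suffers exactly the convergence failure the paper warns about for cobar constructions; you cannot run the spectral sequence of one filtration and borrow convergence from a different one, which is what your appeal to the action filtration ``to guarantee convergence'' amounts to.

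The fix --- and what the paper actually does --- is to filter both sides by the total \emph{action} of the (long) Reeb chords from the outset. Since every non-constant holomorphic disk has positive energy, Stokes' theorem forces the total long-chord action to drop strictly under every term of the differential coming from a non-trivial disk; in particular the $\Delta_1$ terms are killed on the associated graded, which then carries exactly the internal differential, and the induced map is the Adams quasi-isomorphism of Theorem \ref{thm:Adams} factor by factor (tensored over the long-chord letters). The same action bound (output action bounded by input action, finitely many chords below any given action) gives convergence without any completion. Your remarks on multiplicativity of $\phi$, on the role of the augmentation in ensuring $\Delta_0=0$ (Theorem \ref{noncurved}), and on the interior-point phenomenon for colliding flow trees are accurate and consistent with the paper.
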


\begin{proof}
	The fact that $\phi$ is a chain map follows from Lemma \ref{l:diskquantumtreecorr}.
	Filter the algebras by action of Reeb chords in the left hand side and actions of long Reeb chords in the right hand side. The map respects this filtration. The $E_{2}$-pages are obtained by acting by the differential on the chains on the based loop space only in the left hand side and on Morse chords only in the right hand side. The result is words of Reeb chords separated by homology classes in the based loop space and by homology classes in the (reduced) bar construction on the Morse coalgebra on the left and right hand sides, respectively. On these $E_{2}$-pages the map $\phi$ induces an isomorphism by Theorem \ref{thm:Adams}. Since the sum of actions of the Reeb chords at the negative end is bounded by that at the positive end, the spectral sequences converges. The theorem follows.  
\end{proof}

\begin{rem}\label{r:dim4iso}
	Note that the isomorphism in Theorem \ref{t:parallel=loops} is compatible with the stabilization of Remark \ref{r:dim4}. To see this we multiply the ambient contact manifold $Y$ with contact form $\alpha$ by $T^{\ast}\R^{N}$ and consider $\Lambda\times\R^{N}\subset Y\times T^{\ast}\R^{N}$ with contact form $\theta=(\alpha - y\cdot dx)$. The Reeb chord of $\Lambda\times\R^{N}$ then come in $\R^{N}$-families, one for each Reeb chord of $\Lambda$. Consider the contact form $e^{x^{2}}\theta$ and note that with respect to this contact form the Reeb chords of $\Lambda\times\R^{N}$ are in natural one to one correspondence with those of $\Lambda$ and there is a canonical isomorphism between $CE^{\ast}(\Lambda)$ and $CE^{\ast}(\Lambda\times\R^{N})$. In fact the disks in the differential are canonically identified. It follows in particular, that Theorem \ref{t:parallel=loops} holds also if $\dim(Q)\le 4$.   
\end{rem}

\section{Lagrangian (co)algebra}
As before, $X$ is a Liouville manifold with $c_1(X)=0$ and $L$ is an
exact relatively spin Lagrangian in $X$ with vanishing Maslov class and ideal boundary given by the Legendrian $\Lambda$. 

We will associate several chain level structures to $L$. To begin with, let us first assume that $L$
is an embedded Lagrangian. Since $L$ has boundary, in classical topology, one can consider either $C^*(L)$ or $C^*(L,
\partial L)$. In our case, these two choices are reflected in the choices of $+$ or $-$ decorations
on $L$, respectively. More generally, let $L^{v}$, $v\in \Gamma$ be the (irreducible) components of $L$. As
with the Legendrian submanifolds in Section \ref{ssec:Leginv}, we assume these components of
$L$ are decorated by signs and we write $L=L^{+}\cup L^{-}$ for the corresponding decomposition. 
Let $F\colon L\to\R$ be a Morse function with prescribed behavior at infinity (depending on
the $+$ or $-$ decoration) as explained in
Section \ref{sec:parallel}. We use this to construct a system of parallel copies $\bar
L=\{L_{j}\}_{j=1}^{\infty}$, as in Section \ref{sec:parallel}, shifted at
infinity along the Reeb flow either in the positive or negative direction on $L^{+}$ and $L^{-}$, respectively. 

Now, using the parallel copies, $\{L_j\}_{j=1}^\infty$, we define a graded quiver $\mathcal{Q}_L$ as
follows. The parallel copies $\{L_j \}_{j=1}^\infty$ give rise to following sets, fix $i_1< i_2$
positive integers, and $v,w \in \Gamma$ with $v \neq w$: \begin{itemize} 
    \item Intersection points $L^v_{i_1} \cap L^v_{i_2}$ in bijection with the union of critical points of
        $F|_{L_v}$. (These critical points may depend on the $+$ or $-$ decoration on $L_v$, for example, one can turn a $-$ decorated component $L^v$ into $+$ decorated, by introducing critical points corresponding to the topology of $\partial L_v$, see Figure
\ref{intersectiongen}).   
    \item Intersection points $L^v_{i_1} \cap L^w_{i_2}$ in bijection with $L^v \cap L^w$.  
    \end{itemize}

\begin{figure}[h!]
    \centering
    \begin{tikzpicture}[scale=1]
    \tikzset{->-/.style={decoration={ markings,
                mark=at position #1 with {\arrow{>}}},postaction={decorate}}}
    \draw [black, thick=1.5] (1.5,3) to[in=330,out=270] (0,0);
    \draw [black, thick=1.5] (0,0) to[in=270,out=150] (-1,3);
    \draw [black, thick=1.5] (0,0) to[in=270,out=210] (-1.5,3);
    \draw [black, thick=1.5] (0,0) to[in=270,out=30] (1,3);
   
    \draw [black, thick=1.5] (-3.5,2) to[in=330,out=270] (-5,0);
    \draw [black, thick=1.5] (-3.5,3) to[in=90,out=270] (-4,2);
    \draw [black, thick=1.5] (-4,3) to[in=90,out=270] (-3.5,2);
     \draw [black, thick=1.5] (-5,0) to[in=270,out=30] (-4,2);
   
    \draw [black, thick=1.5] (-6.5,3) to[in=90,out=270] (-6,2);
    \draw [black, thick=1.5] (-6,3) to[in=90,out=270] (-6.5,2);
    \draw [black, thick=1.5] (-5,0) to[in=270,out=150] (-6,2);
    \draw [black, thick=1.5] (-5,0) to[in=270,out=210] (-6.5,2);

    \node at (1.5,3.2) {\footnotesize{$i_2$}}  ;
    \node at (1,3.2) {\footnotesize{$i_1$}}  ;
    \node at (-1.5,3.2) {\footnotesize{$i_1$}}  ;
     \node at (-1,3.2) {\footnotesize{$i_2$}}  ;

    \node at (-3.5,3.2) {\footnotesize{$i_1$}}  ;
    \node at (-4,3.2) {\footnotesize{$i_2$}}  ;
    \node at (-6.5,3.2) {\footnotesize{$i_2$}}  ;
     \node at (-6,3.2) {\footnotesize{$i_1$}}  ;

    \node at (0, 2) {\footnotesize{$L^{-}$}}; 
    \node at (-5, 2) {\footnotesize{$L^{+}$}};

    \end{tikzpicture}
    \caption{Difference between $+$ and $-$ generators for $i_1 < i_2$. Both the left and the right hand side depicts shifts corresponding to Morse functions with a maximum. One of the intersection points in $L_{+}$ is the minimum and corresponds to the unit for the Floer cohomology product.}
    \label{intersectiongen}
\end{figure}

Furthermore, by the construction in Section \ref{sec:parallel}, there are canonical bijections between the above sets associated
to any pairs $(i_1, i_2)$ and $(i'_1, i'_2)$ with $i_1 < i_2$ and $i'_1 < i'_2$. So, fix a pair $(i_1,i_2)$ such that $i_1 < i_2$
and define a graded quiver $\mathcal{Q}_L$ with vertex set $\Gamma$ and with an arrow connecting $v$
to $w$ (possibly equal to $v$) for each element of the above sets. Let $\mathcal{I}$ denote
the set of arrows. 

Alternatively, one can describe the generators as the set of intersection points in $L_{0}\cap L_{1}$, between the original $L$ and the first shifted copy.

Let $CF^*(L)$ be the graded $\k$-bimodule generated by $\mathcal{I}$. Thus, there is one generator $x_{vw}$ in degree $|x_{vw}|$ for each arrow in $\mathcal{Q}_L$ from $v$ to $w$.  We endow $CF^{\ast}(L)$ with the structure of an $A_{\infty}$-algebra. Let $x_{0}$ be an intersection point generator and let $\mathbf{x}'=x_{i}\dots x_{1}$ be a word of intersection points. Consider the disk $D_{i+1}$ with $i+1$ boundary punctures and with a decreasing numbering of its boundary arcs $\kappa$. Let $\mathbf{x}=x_{0}x_{i}\dots x_{1}$.  Consider the moduli space $\mathcal{M}^{\fl}(\mathbf{x};\kappa)$, see Appendix \ref{sec:mdlispaces} for notation.  Define the operations $\m_{i}$ by
\[ 
\m_{i}(\mathbf{x}') = \sum_{\ell(\mathbf{x}')=i, |x_{0}|=|\mathbf{x}'|+(2-i)} |\mathcal{M}^{\fl}(\mathbf{x};\kappa)|x_{0}
\] 
where $\ell(\mathbf{y})$ denotes the word length of $\mathbf{y}$ and where $|\mathcal{M}^{\fl}(\mathbf{x};\kappa)|$ denotes the algebraic number of points in the oriented 0-dimensional manifold.

\begin{figure}[h!]
    \centering
    \begin{tikzpicture}[scale=1]
    \tikzset{->-/.style={decoration={ markings,
                mark=at position #1 with {\arrow{>}}},postaction={decorate}}}

        \draw[thick, red] (-1,0) arc (0:90:1);
        \draw[thick, blue] (-2,1) arc (90:180:1);
        \draw[thick, blue] (-3,0) arc (180:240:1);
        \draw[thick, red] (-2.5,{-sin(60)} ) arc (240:300:1);
        \draw[thick, blue] (-1.5,{-sin(60)} ) arc (300:360:1);

           \draw[thick, fill=black] (-2,1) circle(.05);
        \draw[thick, fill=black] (-1.5, {-sin(60)} ) circle(.05); 
        \draw[thick, fill=black] (-2.5, {-sin(60)} ) circle(.05);

        \draw[thick, fill=black] (-1,0) circle(.05);
        \draw[thick, fill=black] (-3,0) circle(.05);

            \node at (-0.7,0.6) {$L_{\kappa_1}$}; 
            \node at (-3.2,0.6) {$L_{\kappa_0}$}; 
            \node at (-3.3,0.0) {$x_{0}$};
            \node at (-2.6,-1.1) {$x_{1}$};
            \node at (-1.2,-1.0) {$x_{2}$};
            \node at (-0.7,0.0) {$x_{3}$};
            \node at (-2.1,1.25) {$x_{4}$};
             \node at (-0.8,-0.7) {$L_{\kappa_2}$}; 
    \node at (-3.2,-0.6) {$L_{\kappa_4}$}; 
    \node at (-2,-1.3) {$L_{\kappa_3}$}; 

\end{tikzpicture}
\caption{An example of a disk with labellings. The blue labelled Lagrangians are perturbed with $+$ perturbations and red labelled Lagrangrians are perturbed with $-$ perturbation.}

\end{figure}

\begin{lem}
The operations $\m_{i}$ satisfy the $A_{\infty}$-algebra relations and are independent of the decreasing boundary labelling $\kappa$.
\end{lem}

\begin{proof}
This follows by the usual argument: after noting that the decreasing boundary numbering ensures that there is no boundary bubbling, one observes that the terms in the $A_{\infty}$-algebra relations count the ends of a 1-dimensional oriented compact manifold by Theorems \ref{thm:mdlitv} and \ref{thm:mdlicmpct}. Note also that the operations compose because of Theorem \ref{thm:mdlicopies}.
\end{proof}

We call $CF^{\ast}(L)$ the Lagrangian Floer cohomology algebra of $L$. Let $u_{v}$ denote the generator corresponding to the minimum on the component $L_{v}\subset L^{+}$. If $L_{v}$ is simply connected then by Lemma \ref{l:DanR} $u_{v}$ is a strict idempotent. 
We write $\k_{-} \oplus CF^*(L)$ for the augmented algebra where we adjoined an idempotent $e_{w}$
for each component $L_w\subset L^{-}$. (On these components the shifting function is decreasing at
infinity and has a maximum rather than a minimum in the compact part.) Note that this is a connected
algebra over $\k$. 

\begin{rem} The two different choices of perturbations at infinity corresponding to $+$ and $-$ are
    the two extremal constructions where one pushes the copies either always in positive direction or
    always in the negative direction. One can also choose perturbations at infinity to depend on the
    topology of the manifold at infinity (see, for example, Section 4 of
    \cite{abouzaid}). All our constructions should extend meaningfully to this more
    general setting but we have not pursued this direction in this paper.
\end{rem}

We next consider various linear duals of $CF^{\ast}(L)$ and associated algebraic objects. The
simplest case occurs when $CF^{\ast}(L)$ is simply connected. In this case the linear dual
$CF_{\ast}(L)$ is a coalgebra with operations $\Delta_{i}$ dual to $\m_{i}$, and as before we can
adjoin $\k_-$ so that $\k_- \oplus CF_*(L)$ is co-augmented over $\k$. Then, we define the  
\emph{Adams-Floer DG-algebra}
\[ 
\Omega (\k_{-} \oplus CF_*(L)),
\]
by applying the cobar construction.

In the non-simply connected case, we replace this object by the \emph{completed Adams-Floer DG-algebra}: 
\[
(\Bar (\k_{-} \oplus CF^*(L)))^\#.
\]

%


\begin{ex} Let $L$ be the standard Lagrangian $D^n$ filling the standard Legendrian unknot $\Lambda
    \subset S^{2n-1}$. The Floer cohomology can be computed as follows:
\[ 
CF^*(L) =
\begin{cases}
\mathbb{K}x,\, |x|=0, &\text{if $L$ is decorated with $+$,}\\
\mathbb{K}c,\, |c|=n.   & \text{if $L$ is decorated with $-$.}
\end{cases}
\]
Alternatively, if we want compatibility with the inclusion $C^{\ast}(D^{n},\partial D^{n})\to C^{\ast}(D^{n})$ as
\[ 
CF^*(L) =
\begin{cases}
\mathbb{K}c \oplus \mathbb{K} y \oplus \mathbb{K} x,\\ 
|c|=n, |y|=n-1, |x|=0,\,
dy =c &\text{if $L$ is decorated with $+$,}\\
\mathbb{K}c,\, |c|=n.   & \text{if $L$ is decorated with $-$.}
\end{cases}
\]
\end{ex}

In Section \ref{sec:CWnoHam}, we introduce a model for wrapped Floer cohomology without Hamiltonian term and
prove it is quasi-isomorphic to the usual wrapped Floer cohomology. We refer to there for details
and give only a short description here. The chain complex underlying $CW^{\ast}(L)$ is the
following. Let $L=L_0$ and shift $L$ off itself to $L_{1}$ by a Morse function that is positive at
infinity (as in the definition of parallel copies when $L$ is decorated $+$). The generators of $CW^{\ast}(L)$ are then Reeb chords connecting $L_{1}$ to $L_{0}$ and intersection points in $L_{0}\cap L_{1}$.

There is an $A_\infty$-functor, often called the \emph{acceleration functor}, 
\[ CF^*(L) \to CW^*(L). \]
If $L$ is decorated $+$, it can be shown that this functor is unital.

%


\section{Maps relating Legendrian and Lagrangian (co)algebras}\label{Sec:LegLag}
We continue with our usual set-up, where $X$ is a Liouville manifold with $c_1(X)=0$ and $L$ is an
exact Lagrangian in $X$ with vanishing Maslov class and ideal boundary given by the Legendrian
$\Lambda$. Let $\Gamma$ be the set of embedded components of $L$ subdivided into
$\Gamma^{+}\cup\Gamma^{-}$. Let $\Theta$ be the set of components of $\Lambda$ with induced
subdivision $\Theta^{+}\cup\Theta^{-}$. 

In this section we will define twisting cochains and associated dg-algebra maps relating the parallel copies version $CE^{\ast}_{\parallel}(\Lambda)$ and the Floer cohomology $CF^{\ast}(L)$. Since $L$ is an exact filling, we have an augmentation $\epsilon_L \colon CE^{\ast}_{\parallel}(\Lambda) \to \k$. As in Section \ref{ssec:Leginv} we use this augmentation throughout to change coordinates in such a way that $\Delta_{0}=0$. 

As explained in Section \ref{ssec:parallelcopies}, the definition of $CE^{\ast}_{\parallel}$ differs depending on whether or not the components of $\Lambda$ in $\Theta^{+}$ are simply connected or not. We will start in the simply connected case and turn to the non-simply connected case, using the definitions in Section \ref{ssec:parallelwithpi1} later. 

Assume thus that all components of $\Lambda$ in $\Theta^{+}$ are simply connected. 
As usual let $\mathbf{k}_{-}\oplus LC_{\ast}^{\parallel}(\Lambda)$ denote the coalgebra corresponding to $CE^{\ast}_{\parallel}(\Lambda)$ augmented by the Lagrangian filling, with co-units $e_{v}$ adjoined to all components $\Lambda_{v}$ in $\Theta^{-}$. As $\Lambda^{+}$ has simply connected components, by Lemma \ref{l:DanR}, this is a co-unital coalgebra with co-unit 
\[ 
\sum_{v\in\Theta^{+}} x_{v} + \sum_{v\in\Theta^{-}} e_{v}.
\]
Let $\eta\colon \k\to \k_{-}\oplus LC_{\ast}^{\parallel}(\Lambda)$ denote the co-augmentation
\begin{equation}\label{eq:augfortwist} 
\eta(e_{v})=
\begin{cases}
x_{v} &\text{if $v\in\Theta^{+}$},\\
e_{v} &\text{if $v\in\Theta^{-}$},
\end{cases}
\end{equation}
see \eqref{eq:coaugparallel},
so that $CE^{\ast}_{\parallel} = \Omega (\k_{-}\oplus LC_{\ast}^{\parallel}) $. Note that this
means that $x_v$ are traded for $e_v$ for $v \in \Theta^+$. 


Consider the Floer cohomology $A_{\infty}$-algebra $CF^{\ast}(L)$. In case all components of $L^{+}$ are simply connected there exists a strict
idempotent $u_{v}\in CF^{\ast}(L)$ for each $v\in\Gamma^{+}$ corresponding to the minimum of the
shifting Morse function and that we make $CF^{\ast}(L)$ unital by adding an idempotent $e_{w}$ for each $w\in\Gamma^{-}$. We write the strictly unital algebra $\k_{-}\oplus CF^{\ast}(L)$. Let $\epsilon\colon \k_{-}\oplus CF^{\ast}(L)\to \k$ be the augmentation that maps $u_{v}$ to $e_{v}$ for $v\in\Gamma^{+}$ and $e_{w}$ to $e_{w}$ for $w\in\Gamma^{-}$. Consider the dual of the bar construction:
\begin{equation}\label{eq:simplyconnectedualbar} 
\A=(B(\k_{-}\oplus CF^{\ast}(L)))^{\#},
\end{equation}
or in other words the completed Adams-Floer DG-algebra. In what follows we will represent $\A$ as a quotient in way that generalizes to the non-simply connected case in analogy with the construction in Section \ref{ssec:parallelwithpi1}. In the non-simply connected case we introduce strict idempotents by hand as follows.

Consider adding extra idempotents $e_{v}$, $v\in\Gamma^{+}$ to $\k_{-}\oplus CF^{\ast}(L)$. This gives $\k\oplus CF^{\ast}(L)$ and we equip it with the trivial augmentation $\epsilon'$ which is the projection to $\k$. Let
\[ 
\A'=(B(\k\oplus CF^{\ast}(L)))^{\#},
\] 
and let $\mathscr{I}$ denote the subalgebra of $\A'$ given by the
space of functionals which vanish on monomials not containing $u_v$ for some $v\in\Gamma^{+}$. Let $\rho\colon \A'\to\A$ denote the restriction map induced by the inclusion $\overline{CF}^{\ast}(L)\to CF^{\ast}(L)$. 

\begin{lem}\label{l:quotientalalgebra2}
The subalgebra $\mathscr{I}$ is closed under the differential. In the simply connected case,	
the map $\rho$ is a chain map with kernel $\mathscr{I}$ and consequently, $\A$ is quasi-isomorphic to $\A'/\mathscr{I}$.  
\end{lem}

\begin{proof}
To see that $\mathscr{I}$ is closed under the differential we note that there is no negative gradient flowline that starts at the minimum, therefore any negative gradient tree that starts at $u_v$ must also have a negative puncture at $u_{v}$. Lemma \ref{l:localdisktreecorr} then implies that the differential takes $\mathscr{I}$ to $\mathscr{I}$.

In the simply connected case, it is clear that $\mathscr{I}$ is the kernel of $\rho$. 
\end{proof}


In the general case we define $\A=\A'/\I$. Lemma \ref{l:quotientalalgebra2} shows that in the simply connected case this definition agrees with the alternate definition of $\A$ given in \eqref{eq:simplyconnectedualbar}.  

\begin{rem}
The somewhat artificial construction of $\A$ as $\A=\A'/\I$ is used to adapt the bar construction to a not necessarily strictly unital algebra. 
\end{rem}

We next define a map $\t'$ on generators of $CE^{\ast}_{\parallel}(\Lambda)$ which then gives a map
\[ 
\t'\colon LC_*^{\parallel}(\Lambda) \to \A', 
\]
in the simply connected case and in that case it will induce a twisting co-chain
\[ 
\t\colon \k_{-}\oplus LC_{\ast}^{\parallel}(\Lambda)\to\A.
\]
The map $\t'$ is defined by the following curve count for generators of $LC_{\ast}^{\parallel}(\Lambda)$.
Fix systems of parallel copies $\bar L$ of $L$. Recall that the components labelled with a $+$ are shifted by a positive Morse function and the components labelled with a $-$ sign are shifted by a negative Morse function.

Let $c$ be a Reeb chord of $\bar{\Lambda}$ and let $\mathbf{x}_{0}=x_{0;1}\dots x_{0;j}$, $j>0$ be a non-empty word of intersection points of $\bar{L}$. Let 
\[ 
\mathbf{c}=c\mathbf{x}_{0}
\]
and define
\begin{equation}\label{eq:deft'}
\t'(c)=\sum_{|\mathbf{x}_{0}|=|c|+(1-j)}
|\mathcal{M}^{\fl}(\mathbf{c})|\mathbf{x}_{0},
\end{equation}
where we interpret $\mathbf{x}_{0}$ as en element in $\A'$.

\begin{figure}[h!]
    \centering
    \begin{tikzpicture}[scale=1]
    \tikzset{->-/.style={decoration={ markings,
                mark=at position #1 with {\arrow{>}}},postaction={decorate}}}

        \draw[thick, red] (-1,0) arc (0:90:1);
        \draw[thick, blue] (-2,1) arc (90:180:1);
        \draw[thick, blue] (-3,0) arc (180:240:1);
        \draw[thick, red] (-2.5,{-sin(60)} ) arc (240:300:1);
        \draw[thick, blue] (-1.5,{-sin(60)} ) arc (300:360:1);

           \draw[thick, fill=black] (-2,1) circle(.05);
        \draw[thick, fill=black] (-1.5, {-sin(60)} ) circle(.05); 
        \draw[thick, fill=black] (-2.5, {-sin(60)} ) circle(.05);

        \draw[thick, fill=black] (-1,0) circle(.05);
        \draw[thick, fill=black] (-3,0) circle(.05);

            \node at (-0.7,0.6) {$L_{\kappa_0}$}; 
            \node at (-3.2,0.6) {$L_{\kappa_4}$}; 
             \node at (-0.8,-0.7) {$L_{\kappa_1}$}; 
    \node at (-3.2,-0.6) {$L_{\kappa_3}$}; 
    \node at (-2,-1.3) {$L_{\kappa_2}$}; 

            \node at (-2,1.3) {$c$};
            \node at (-0.5,0) {$x_{0;4}$};
            \node at (-3.4,0) {$x_{0,1}$};
            \node at (-2.7,-1.2) {$x_{0,2}$};
            \node at (-1.3,-1.2) {$x_{0,3}$};

\end{tikzpicture}
\caption{An example of a disk contributing to $\t'$. The blue labelled Lagrangians are perturbed with $+$ perturbations and red labelled Lagrangrians are perturbed with $-$ perturbation.}

\end{figure}

\begin{rem}
In the non-simply connected case we use the same formula to define $\t'(c)$ and note that the the sum in the definition may be infinite.
\end{rem}

We have the following.
\begin{prop}\label{twch}
If $v\in\Theta^{+}$ is such that $\Lambda_{v}$ is a boundary component of $L_{w}$ for $w\in\Gamma^{+}$ then
\[ 
\t'(x_{v})= u_{w}',
\] 
where $u_{w}'$ is the dual of the minimum $u_{w}$. Furthermore,
$\t'$ satisfies the equation of a twisting cochain. 
\end{prop}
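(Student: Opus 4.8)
The proof has two parts, one for each assertion, and both rest on the analysis of the moduli spaces $\mathcal{M}^{\fl}(\mathbf{c})$, $\mathbf{c}=c\mathbf{x}_{0}$, of holomorphic disks with boundary on the parallel copies $\bar L$, one positive puncture at a Reeb chord $c$ of $\bar\Lambda$, and the remaining punctures at intersection points of $\bar L$. Since $L$ is an exact filling these disks are anchored by rigid planes with boundary on $L$, and we work throughout in the coordinates in which $\Delta_{0}=0$, so that $LC_{\ast}^{\parallel}(\Lambda)$ is an (uncurved) $A_{\infty}$-coalgebra.

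\textbf{First assertion.} Let $v\in\Theta^{+}$ with $\Lambda_{v}=\partial L_{w}$, $w\in\Gamma^{+}$, and let $x_{v}\in\mathcal{R}^{+}$ be the short chord corresponding to the minimum of the shifting function $f$ on $\Lambda_{v}$. The action of $x_{v}$ is $O(\delta)$ for the small perturbation, so by a standard action estimate every disk contributing to $\t'(x_{v})$ is contained in an arbitrarily small neighbourhood of $\Lambda_{v}$, which sits inside the cotangent-cone end of $L_{w}$. There, the correspondence between rigid holomorphic disks with positive puncture at a short chord and rigid partial Morse flow trees on $L_{w}$ attached at the corresponding critical point of $F|_{\partial L_{w}}$ (Theorem~\ref{t:quantumflowtree}, equivalently \cite[Theorem~1.1]{E}) shows that the only such rigid configuration is the trivial one: since $x_{v}$ has Morse index $0$, the tree has no genuine edges, and its single output puncture is forced to be the minimum of $F$ on $L_{w}$, i.e.\ the strict idempotent $u_{w}$. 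Hence $\mathcal{M}^{\fl}(x_{v}\,\mathbf{x}_{0})$ is empty unless $\mathbf{x}_{0}=u_{w}$, in which case it is a single point, so $\t'(x_{v})=u_{w}'$; this is also consistent degreewise, $|x_{v}|=0=|u_{w}|$.

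\textbf{Second assertion: the twisting cochain equation.} Recall $\A'=(\Bar(\k\oplus CF^{\ast}(L)))^{\#}$ is a DG-algebra whose differential $\m_{1}^{\A'}$ is dual to the bar differential of $CF^{\ast}(L)$ — on a dual monomial it lengthens the word by un-collapsing one letter through the operations $\m_{j}$ of $CF^{\ast}(L)$ — and whose product is dual to the deconcatenation coproduct, so that $[\mathbf{a}]^{\vee}\cdot[\mathbf{b}]^{\vee}=\pm[\mathbf{a}\mathbf{b}]^{\vee}$ is concatenation of words. Fix a generator $c$ of $LC_{\ast}^{\parallel}(\Lambda)$ and a word $\mathbf{x}_{0}$ of intersection points with $\dim\mathcal{M}^{\fl}(c\,\mathbf{x}_{0})=1$. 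By Theorems~\ref{thm:mdlitv}, \ref{thm:mdlicmpct} and \ref{thm:mdlicopies} this is an oriented compact $1$-manifold, and its boundary consists of two-level broken configurations of exactly two types. In a \emph{Type~A} breaking the top level lives in the symplectization, with positive puncture $c$ and negative punctures $b_{d},\dots,b_{1}$ (long and short chords of $\bar\Lambda$), anchored by planes on $L$, and contributes to $\Delta_{d}(c)$; the $d$ lower levels, each with positive puncture $b_{j}$, contribute $\t'(b_{j})$, and their boundary-arc output words concatenate, so by the description of the product on $\A'$ this family contributes $\sum_{d\ge 1}(\pm)\,\m_{2}^{(d)}\circ(\t')^{\otimes_{\k}d}\circ\Delta_{d}(c)$ evaluated on $\mathbf{x}_{0}$, the $d=1$ term being $\t'(\Delta_{1}c)$. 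In a \emph{Type~B} breaking the disk degenerates at an intersection point of $\bar L$ into a disk still carrying the puncture $c$ together with a pure Floer polygon contributing some $\m_{j}$ of $CF^{\ast}(L)$; summing over all such breakings reproduces exactly the pairing of $\t'(c)$ with $b[\mathbf{x}_{0}]$, which by the description of $\m_{1}^{\A'}$ equals $\pm\langle\m_{1}^{\A'}\t'(c),\mathbf{x}_{0}\rangle$. Since $\#\,\partial\overline{\mathcal{M}^{\fl}(c\,\mathbf{x}_{0})}=0$ for all $\mathbf{x}_{0}$, and the sums are finite on each generator by the action bound on the negative punctures, we obtain
\[
\m_{1}^{\A'}\circ\t' \;-\; \t'\circ\Delta_{1} \;+\; \sum_{d\ge 2}(-1)^{d}\,\m_{2}^{(d)}\circ(\t')^{\otimes_{\k}d}\circ\Delta_{d} \;=\; 0,
\]
with signs as dictated by the conventions of \cite{seidelbook}; this is precisely \eqref{eq:twcochain}. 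Composing with $\rho\colon\A'\to\A$ and noting $\t'(\eta(e_{v}))=u_{w}'\in\mathscr{I}=\ker\rho$, the induced map $\t\colon\k\oplus\overline{LC}_{\ast}^{\parallel}(\Lambda)\to\A$ is a genuine normalized twisting cochain.

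\textbf{Expected main obstacle.} The analytic input — transversality, SFT/Floer compactness and the gluing statements identifying $\partial\overline{\mathcal{M}^{\fl}}$ with precisely the Type~A and Type~B strata (in particular the anchoring, which is what renders $LC_{\ast}^{\parallel}$ uncurved) — is furnished by the cited moduli theorems. The genuinely delicate point is the combinatorial and sign bookkeeping: verifying that the concatenation of boundary-arc output words of the anchored lower-level disks matches the deconcatenation-dual product of $(\Bar CF^{\ast}(L))^{\#}$, and that the Floer-breaking strata assemble precisely into $\m_{1}^{\A'}\circ\t'$ with the signs of \cite{seidelbook}. For the first assertion the corresponding subtlety is the confinement estimate allowing the holomorphic problem near $\Lambda_{v}$ to be replaced by the Morse flow-tree model.
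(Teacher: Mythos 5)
Your proof follows essentially the same route as the paper's: small action plus the flow-tree correspondence for the first assertion, and the identification of the two boundary strata of the one-dimensional moduli spaces $\mathcal{M}^{\fl}(c\,\mathbf{x}_{0})$ (symplectization breaking versus breaking at Lagrangian intersection points) with the terms of \eqref{eq:twcochain} for the second; your Type~A/Type~B discussion is just a more explicit version of the paper's one-line appeal to Theorems \ref{thm:mdlitv}, \ref{thm:mdlicmpct} and \ref{thm:mdlicopies}. One local inaccuracy in your first part: the small-action estimate confines the disk to a neighbourhood of the Lagrangian $L_{w}$, not of $\Lambda_{v}$, and the rigid configuration is \emph{not} a tree with no edges — it is the unique negative gradient flow line of the shifting function on $L_{w}$ running from the boundary minimum $x_{v}$ into the interior minimum $u_{w}$ (see Figure \ref{mintomin}); as literally written, confinement near $\Lambda_{v}$ would be incompatible with an output at $u_{w}$, which lies in the compact part of $L_{w}$. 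With that sentence corrected the argument agrees with the paper's.
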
 

\begin{figure}[h!]
    \centering
    \begin{tikzpicture}[scale=1]
    \tikzset{->-/.style={decoration={ markings,
                mark=at position #1 with {\arrow{>}}},postaction={decorate}}}

 \begin{scope} 
     \clip (-6.25,2.75) to[in=270,out=110] (-6.5, 3.5) to (-6, 3.5) to[in=70,out=270] (-6.25,2.75);

\clip[preaction={draw,fill=gray!30}] (-7,4) rectangle (-3,2);
                             \end{scope}

    \draw [black, thick=1.5] (-3.5,2) to[in=330,out=270] (-5,0);
    \draw [black, thick=1.5] (-3.5,3.5) to[in=90,out=270] (-4,2);
    \draw [black, thick=1.5] (-4,3.5) to[in=90,out=270] (-3.5,2);
     \draw [black, thick=1.5] (-5,0) to[in=270,out=30] (-4,2);
   
    \draw [black, thick=1.5] (-6.5,3.5) to[in=90,out=270] (-6,2);
    \draw [black, thick=1.5] (-6,3.5) to[in=90,out=270] (-6.5,2);
    \draw [black, thick=1.5] (-5,0) to[in=270,out=150] (-6,2);
    \draw [black, thick=1.5] (-5,0) to[in=270,out=210] (-6.5,2);

    \node at (-6.25,3.7) {\footnotesize{$x_v$}}  ;
     \node at (-6.45,2.75) {\footnotesize{$u_w$}}  ;

    \end{tikzpicture}
    \caption{Minimum $x_v$ is sent to the minimum $u_w$} 
    \label{mintomin}
\end{figure}

\begin{proof} 
For the first property we need to understand rigid holomorphic disks with positive puncture at the
    Reeb chord $x_{v}$. By small action such a holomorphic disk must lie in a neighborhood of
    $L_{w}$ and is hence given by Morse flow trees. There is only one flow line emanating from the
    minimum in $\Lambda_{v}$ and the flow line generically ends at the minimum of the shifting of
    $L_{w}$, see Figure \ref{mintomin}. The first equation follows. 	
	
To see the twisting co-chain equation,	
we need to check that 
\[ 
\m_1 \circ \mathfrak{t}' - \mathfrak{t}' \circ \Delta_1 + \sum_{d \geq 2} (-1)^d \m_2^{(d)}
\circ {\mathfrak{t}'}^{\otimes_\k d} \circ \Delta_d = 0 
\]
where $\m_2^{(2)} := \m_2$, and $\m_2^{(i)} := \m_2\circ (\mathrm{Id}_\A \otimes_\k \m_2^{(d-1)})$. 
To this end, we consider the boundary of the $1$-dimensional moduli space $\mathcal{M}^{\fl}(c_{0}\mathbf{x})$. By Theorem \ref{thm:mdlicmpct} this corresponds to two level curves which by Theorems \ref{thm:mdlitv} and \ref{thm:mdlicopies} form the boundary of an oriented compact 1-manifold. 
\end{proof}

\begin{figure}[h!]
    \centering
    \begin{tikzpicture}[scale=1]
    \tikzset{->-/.style={decoration={ markings,
                mark=at position #1 with {\arrow{>}}},postaction={decorate}}}

        \draw[thick, red] (-1,0) arc (0:30:1);
        \draw[thick, blue] (-1.15,{sin(30)}) arc (30:150:1);
        \draw[thick, blue] (-2.87,{sin(30}) arc (150:270:1);
        \draw[thick, red] (-2,-1 ) arc (270:360:1);
  
        \draw[thick, red] (0.64,{sin(30)}) arc (-30:90:1);
        \draw[thick, blue] (-0.23,2) arc (90:210:1);
        \draw[thick, red] (-1.1,{sin(30}) arc (210:330:1);

        \draw[thick, blue] (2.55,0) arc (0:30:1);
        \draw[thick, red] (2.42,{sin(30)}) arc (30:150:1);
        \draw[thick, red] (0.68,{sin(30}) arc (150:270:1);
        \draw[thick, blue] (1.55,-1 ) arc (270:360:1);

        \draw[thick, fill=black] (-1.15, {sin(30)}) circle(.05);
        \draw[thick, fill=black] (-2.87, {sin(30)} ) circle(.05); 
        \draw[thick, fill=black] (-2, -1 ) circle(.05);

        \draw[thick, fill=black] (-0.23,2 ) circle(.05);

        \draw[thick, fill=black] (2.4, {sin(30)}) circle(.05);
        \draw[thick, fill=black] (0.68, {sin(30)} ) circle(.05); 
        \draw[thick, fill=black] (1.55, -1 ) circle(.05); 
  
            \node at (-1.5, {sin(30)}) {$c_1$};
            \node at (-3.2, {sin(30)}) {$x_{0;1}$};
            \node at (-2.1, -1.3) {$x_{0;2}$};

        \node at (-0.23,2.2 ) {$c$};
            \node at (2.8, {sin(30)}) {$x_{0;4}$};
            \node at (0.98, {sin(30)} ) {$c_2$}; 
            \node at (1.55, -1.3 ) {$x_{0;3}$};

\end{tikzpicture}
\caption{
        a two-story disk which contributes to the term $\m_2\circ \t^{\otimes2} \circ \Delta_2$ applied to $c$.There are similar disks in the compactification of the 1-dimensional moduli space with $2$  replaced by $n$.  
        }

\end{figure}

\begin{figure}[h!]
    \centering
    \begin{tikzpicture}[scale=1]
    \tikzset{->-/.style={decoration={ markings,
                mark=at position #1 with {\arrow{>}}},postaction={decorate}}}

        \draw[thick, blue] (0,0) arc (0:150:1);
            \draw[thick, blue] (-1.87,{sin(30)}) arc (150:210:1);
            \draw[thick, red] (-1.87,{-sin(30)} ) arc (210:360:1);
       
       \draw[thick, red] (0,0) arc (180:330:1);
        \draw[thick, red] (1.87,{sin(30}) arc (30:90:1);
         \draw[thick, blue] (1,1) arc (90:180:1);
            \draw[thick, blue] (1.87,{-sin(30)} ) arc (-30:30:1);

        \draw[thick, fill=black] (0,0) circle(.05);
            \draw[thick, fill=black] (1.87,{sin(30)}) circle(.05);
            \draw[thick, fill=black] (1.87,{-sin(30)}) circle(.05);
            \draw[thick, fill=black] (-1.87,{sin(30)}) circle(.05);
            \draw[thick, fill=black] (-1.87,{-sin(30)}) circle(.05);
       \draw[thick, fill=black] (1,1) circle(.05);

        \node at (0.25,0) {$x'$}; 
        \node at (1,1.2) {$c$};
        \node at (2.3,{sin(30)}) {$x_{0;4}$};
        \node at (2.3,{-sin(30)}) {$x_{0;3}$};
        \node at (-2.3,{sin(30)}) {$x_{0;1}$};
        \node at (-2.3,{-sin(30)}) {$x_{0;2}$};

\end{tikzpicture}
\caption{
        a two-story disk which contributes to the term $\m_1\circ \t$ applied to $c$.
}

\end{figure}

Proposition \ref{twch} shows that $\t'$ maps the submodule generated by $x_{v}$, $v\in\Theta^{+}$ into $\mathscr{I}\subset\A'$. Hence, by letting $\t(e_v)=0$, $\t'$ induces a map
\[ 
\t\colon \k_{-}\oplus LC_{\ast}^{\parallel}(\Lambda)\to \A'/\mathscr{I}=\A.
\]
Note that if $\eta \colon \k \to \k_{-}\oplus LC_{\ast}^{\parallel}(\Lambda) $ is the co-augmentation in \eqref{eq:augfortwist} and $\epsilon\colon\A\to\k$ is the trivial augmentation then $\epsilon\circ\t=\t\circ\eta=0$.
\begin{cor}
The map $\t$ is a twisting co-chain.
\end{cor}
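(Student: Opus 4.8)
The plan is to deduce this directly from Proposition \ref{twch} together with the previous Lemma identifying $\A$ with $\A'/\mathscr{I}$. First I would recall precisely what must be checked: $\t\colon \k\oplus\overline{LC}_{\ast}^{\parallel}(\Lambda)\to\A$ is a twisting cochain in $\mathrm{Tw}(\k\oplus\overline{LC}_{\ast}^{\parallel}(\Lambda),\A)$, which by the definition in Section \ref{twisting} means (i) $\t$ is a degree-$1$ $\k$-linear map satisfying the twisting-cochain equation \eqref{eq:twcochain} with respect to the coalgebra operations $\Delta_i$ on $LC^{\parallel}_{\ast}(\Lambda)$ and the DG-algebra structure $(\m_1,\m_2)$ on $\A$, and (ii) the compatibility with co-augmentation/augmentation, $\t\circ\eta = \epsilon\circ\t = 0$. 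The second point is already noted in the paragraph immediately preceding the Corollary, so the real content is transporting equation \eqref{eq:twcochain} for $\t'$ down to the quotient.

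The key steps, in order: (1) By Proposition \ref{twch}, $\t'\colon LC_{\ast}^{\parallel}(\Lambda)\to\A'$ satisfies the twisting-cochain equation $\m_1\circ\t' - \t'\circ\Delta_1 + \sum_{d\ge2}(-1)^d\m_2^{(d)}\circ(\t')^{\otimes_\k d}\circ\Delta_d = 0$ as a map to $\A'$. (2) Again by Proposition \ref{twch}, $\t'$ sends the coaugmentation ideal generators $x_v$ ($v\in\Theta^+$) to $u_w'\in\A'$, hence $\t'$ carries the two-sided ideal generated by the $x_v$ into $\mathscr{I}\subset\A'$; since $\mathscr{I}$ is a DG-ideal (being spanned by monomials containing some $u_v'$, and $\rho$ of the Lemma is a chain map with kernel $\mathscr{I}$), the induced map $\t$ to $\A'/\mathscr{I}=\A$ is well-defined and $\k$-linear of degree $1$. (3) Passing to the quotient $\A'\to\A'/\mathscr{I}$ is a map of DG-algebras, so it intertwines $\m_1$ and $\m_2$ (hence all $\m_2^{(d)}$); moreover the coalgebra structure maps $\Delta_i$ descend to $\k\oplus\overline{LC}_{\ast}^{\parallel}(\Lambda)$ from $LC_{\ast}^{\parallel}(\Lambda)$ by the construction of the strictly counital coalgebra via $\mathrm{coker}(\eta)$. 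Applying the quotient map to the twisting-cochain identity for $\t'$ therefore yields the corresponding identity for $\t$. (4) Finally verify $\epsilon\circ\t = 0$ (immediate: $\t'$ lands in the augmentation ideal of $\A'$ away from the $u_v'$ directions, and $\t$ is defined on $\overline{LC}^{\parallel}_{\ast}$) and $\t\circ\eta = 0$ (immediate: $\eta$ maps into $x_v$'s and $e_w$'s, and $\t$ kills the coaugmentation ideal image on the nose by construction of the quotient), which are exactly the compatibility conditions required.

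The main obstacle — such as it is — is bookkeeping rather than substance: one must be careful that the coalgebra operations $\Delta_i$ on $LC_{\ast}^{\parallel}(\Lambda)$ genuinely descend to the strictly counital coalgebra $\k\oplus\overline{LC}_{\ast}^{\parallel}$ compatibly with how $\Delta_i(x_v)$ and $\Delta_i(c)$ behave (the explicit formulas $\Delta_2(x_v)=x_v\otimes x_v$, $\Delta_2(c)=c\otimes x_v + (-1)^{|c|}x_w\otimes c$, and $\Delta_i$ of a generator containing no $x_v$ factor for $i\neq2$, established before \eqref{eq:coaugparallel}), and that these match the $u_w'$-identification on the algebra side so that the quotient of the $\t'$-identity is exactly the $\t$-identity with the reduced $\Delta_i$. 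Once this compatibility is spelled out, the Corollary follows by one application of the DG-quotient map to the equation of Proposition \ref{twch}. I would present the proof as: ``$\t'$ maps the coaugmentation ideal into $\mathscr{I}$ by Proposition \ref{twch}, hence descends to $\t$; since $\A'\to\A$ is a DG-algebra map and the coalgebra structure descends to the coaugmentation cokernel, the twisting-cochain equation for $\t'$ projects to that for $\t$; compatibility with (co)augmentation was observed above.''
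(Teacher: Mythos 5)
Your proposal is correct and follows essentially the same route as the paper, whose proof is the one-line observation that since $\t'$ satisfies the twisting-cochain equation the induced map $\t$ on the quotient does as well (with the (co)augmentation compatibility noted just before the statement). Your version merely spells out the bookkeeping — that $\t'$ carries the ideal generated by the $x_v$ into $\mathscr{I}$, that $\A'\to\A'/\mathscr{I}$ is a DG-algebra map, and that the $\Delta_i$ descend to $\k\oplus\overline{LC}^{\parallel}_{\ast}$ — which the paper leaves implicit.
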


\begin{proof}
Since $\t'$ satisfies the twisting co-chain equation so does $\t$. 	
\end{proof}

This twisting cochain is always defined, and determines a map:
\begin{equation} \label{ingen} 
CE^*_{\parallel}(\Lambda) \to \A. 
\end{equation}
We next consider the question whether $\t \in \mathrm{Kos}(LC_{\ast}^{\parallel}(\Lambda), \A)$.
The following theorem gives a sufficient condition for $\t$ to be Koszul: 

\begin{thm} \label{mainthm} Suppose that $LC_*^{\parallel}(\Lambda)$ is locally finite, simply-connected $\k$-bimodule.
    Suppose, in addition, that $HW^*(L)=0$, then $\A$ is quasi-isomorphic to $\Omega(\k_{-}
    \oplus CF_*(L))$ and $\t \colon LC_{\ast}^{\parallel}(\Lambda) \to \A$ is a
    Koszul twisted cochain. In
    other words, the induced DG-algebra map:
    \[ CE^*_{\parallel}(\Lambda) \to \A\approx \Omega (\k_{-} \oplus CF_*(L)) \]
is a quasi-isomorphism.
\end{thm}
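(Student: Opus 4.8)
The plan is to reduce the assertion to the acyclicity of the Koszul complex of $\t$, and then to prove that acyclicity geometrically, by a symplectic field theory degeneration which identifies the Koszul complex with the wrapped Floer complex of the filling; the hypothesis $HW^{\ast}(L)=0$ enters only at this last step, through the vanishing of $CW^{\ast}(L)$.

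\emph{Algebraic reductions.} Since $LC_{\ast}(\Lambda)$ is locally finite and simply-connected and $\Lambda$ is simply-connected, Theorem \ref{t:parallel=loops} gives a quasi-isomorphism $CE^{\ast}(\Lambda)\xrightarrow{\ \sim\ }CE^{\ast}_{\parallel}(\Lambda)=\Omega LC_{\ast}^{\parallel}(\Lambda)$, with $\C=\k\oplus\overline{LC}_{\ast}^{\parallel}(\Lambda)$ a locally finite, simply-connected, coaugmented conilpotent coalgebra quasi-isomorphic to $LC_{\ast}(\Lambda)$. By Theorem \ref{generation}, $CF^{\ast}(L)$ is quasi-isomorphic to $LA^{\ast}=(LC_{\ast}(\Lambda))^{\#}$, hence to a locally finite, simply-connected $A_{\infty}$-algebra; applying Lemma \ref{dualbar} to (such a model of) $\k_{-}\oplus CF^{\ast}(L)$ shows both that $\Omega(\k_{-}\oplus CF_{\ast}(L))$ is complete and that the canonical map $\Omega(\k_{-}\oplus CF_{\ast}(L))\to\bigl(\Bar(\k_{-}\oplus CF^{\ast}(L))\bigr)^{\#}=\A$ is a quasi-isomorphism, which is the first assertion. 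Under these identifications the map \eqref{ingen} is the DG-algebra map $\Omega\C\to\A$ associated by the bar-cobar adjunction of Section \ref{twisting} to the twisting cochain $\t$; by Definition \ref{ktc}, together with the equivalence between Koszulity of $\t$ and acyclicity of the Koszul complex (valid here because $\C$ and $\A$ have locally finite, simply-connected models, cf. \cite[Appendix A]{Posit}), it then remains to prove that the Koszul complex $\A\otimes^{\t}_{\k}\C$ of \eqref{koszulcomplex} is acyclic.

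\emph{Geometric identification of the Koszul complex.} Here the plan is to show that $\A\otimes^{\t}_{\k}\C$ is quasi-isomorphic to the homotopy fibre of the unit map $\k\to CW^{\ast}(L)$, where $CW^{\ast}(L)$ is the (Hamiltonian-perturbation-free) wrapped Floer complex of $L$ in $X$ built as in Appendix \ref{sec:CWnoHam} from intersection points of $L$ with a pushed-off copy and Reeb chords of $\Lambda$, so that the projection to $\k$ becomes the canonical one. On generators this is clear after replacing $\A$ by its minimal model $LA^{\ast}$: the generators of $\A\otimes^{\t}_{\k}\C$ are those of $CF^{\ast}(L)$ together with their products with the Reeb-chord and short-chord generators of $\overline{LC}_{\ast}^{\parallel}(\Lambda)$, and these match the generators of $CW^{\ast}(L)$. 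For the differential one stretches the neck along a contact hypersurface enclosing $\Lambda$: a disk contributing to $d$ in $CW^{\ast}(L)$ breaks into a part in $X$ with boundary on the copies of $L$, producing the operations $\m_{i}$ of $CF^{\ast}(L)$ and the curve count $\t'$ of \eqref{eq:deft'}, together with parts in the symplectization with boundary on $\R\times\Lambda$, producing the coalgebra operations $\Delta_{i}$ of $LC_{\ast}^{\parallel}(\Lambda)$, all anchored by $\epsilon_{L}$. That these pieces assemble into the formula \eqref{koszulcomplex} is exactly the statement that $\t$ solves the Maurer--Cartan equation \eqref{eq:MCeq}, which is Proposition \ref{twch}; the compactness, transversality and gluing needed are of the type treated by the moduli theory of Section \ref{sec:mdlispaces} (Theorems \ref{thm:mdlicmpct}, \ref{thm:mdlitv} and \ref{thm:mdlicopies}). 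Granting this, $HW^{\ast}(L)=0$ forces $CW^{\ast}(L)\simeq 0$, so the homotopy fibre of $\k\to CW^{\ast}(L)$ is quasi-isomorphic to $\k$; hence $\A\otimes^{\t}_{\k}\C$ is acyclic, $\t$ is a Koszul twisting cochain, and $CE^{\ast}(\Lambda)\to\A\approx\Omega(\k_{-}\oplus CF_{\ast}(L))$ is a quasi-isomorphism. (Once $CF^{\ast}(L)\simeq LA^{\ast}=(LC_{\ast}^{\epsilon}(\Lambda))^{\#}$ is in hand and $LC_{\ast}^{\epsilon}(\Lambda)$ is locally finite and simply-connected, the same conclusion can alternatively be deduced algebraically from Theorem \ref{doubledual}.)

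\emph{Main obstacle.} The crux is the geometric identification in the previous paragraph: arranging the neck-stretching so that the differential of the wrapped complex decomposes precisely into the pieces $\m_{\bullet}$, $\Delta_{\bullet}$ and $\t$ combined by \eqref{koszulcomplex}, with the correct signs and orientations, while bookkeeping the loop-space (equivalently, parallel-copy) coefficients and the anchoring used to make the relevant coalgebra non-curved. By contrast, the algebraic reductions and the concluding homotopy-fibre argument are routine given the results already established.
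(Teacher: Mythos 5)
Your algebraic reductions are sound and run parallel to the paper's, and your closing parenthetical (deduce everything from $CF^{\ast}(L)\simeq LA^{\ast}$ together with Theorem \ref{doubledual}) is in fact the paper's actual proof: Theorem \ref{ainfiso} supplies the $A_\infty$ quasi-isomorphism $\k_{-}\oplus CF^{\ast}(L)\to LA^{\ast}_{\parallel}(\Lambda)$, one applies $\Bar$, dualizes using local finiteness and simple connectivity, and concludes via Lemma \ref{dualbar} as in Theorem \ref{doubledual}. The issue is that you relegate this to an aside and make the neck-stretching identification of the Koszul complex with (the fibre of) $\k\to CW^{\ast}(L)$ your primary argument, and that step has a genuine gap. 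The underlying graded $\k$-bimodules do not match: the Koszul complex $\A\otimes^{\t}_{\k}\C$ of \eqref{koszulcomplex}, with $\A=(\Bar(\k_{-}\oplus CF^{\ast}(L)))^{\#}$, is generated by duals of arbitrary-length bar words $[a_{n}|\cdots|a_{1}]$, $a_{i}\in\overline{CF}^{\ast}(L)$, tensored with a generator of $\C$, whereas $CW^{\ast}(L)$ is generated by the intersection points $L_{0}\cap L_{1}$ together with the Reeb chords of $\Lambda$ — a direct sum, not a tensor product, and with no words of length $\ge 2$. So ``on generators this is clear'' is false, ``replacing $\A$ by its minimal model $LA^{\ast}$'' is not meaningful ($LA^{\ast}=\C^{\#}$ is the Koszul dual of $\A$, not a model of it), and the differential of $CW^{\ast}(L)$ cannot literally decompose into the terms of \eqref{koszulcomplex}. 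A correct version of this idea would require constructing an explicit quasi-isomorphism between these two very differently sized complexes, which is additional (and nontrivial) work you have not supplied; as written the identification is only true a posteriori, once both sides are known to be quasi-isomorphic to $\k$, which is circular.

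What the geometry actually gives — and what the paper uses — is the linearized version of your picture: the action filtration $0\to CW^{\ast}_{0}(L)\to CW^{\ast}(L)\to CW^{\ast}_{+}(L)\to 0$, whose subcomplex is $CF^{\ast}(L)$ and whose quotient is (the shifted dual of) $LC_{\ast}^{\parallel}(\Lambda)$; the vanishing $HW^{\ast}(L)=0$ forces the connecting homomorphism to be an isomorphism, and its dual is the quasi-isomorphism $\e_{1}$ of Theorem \ref{ainfiso}. Koszulity of $\t$ is then a purely formal consequence (Theorem \ref{doubledual}), using the simple connectivity of $LC_{\ast}(\Lambda)$ to guarantee that the relevant bar constructions are locally finite so that dualizing preserves quasi-isomorphisms. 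Two smaller points: citing Theorem \ref{generation} in your ``algebraic reductions'' already smuggles in $HW^{\ast}(L)=0$ (the body of the paper proves that statement as Theorem \ref{ainfiso} under exactly this hypothesis), contradicting your claim that the vanishing enters only at the last step; and the simple connectivity hypothesis is not merely a convenience for invoking \cite[Appendix A]{Posit} — without it $\Bar(\k_{-}\oplus CF^{\ast}(L))$ need not be locally finite and the dualization step genuinely fails, as the paper's trefoil example shows.
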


\begin{cor}\label{cor:CE=loopspace} 
	In the situation of Theorem \ref{mainthm}, suppose that $L$ is connected and decorated by $-$ and that
    $\partial L = \Lambda$ is diffeomorphic to a sphere $S^{n-1}$. Writing $\overline{L} = L \cup_{\partial} D^n$, there exists a quasi-isomorphism of DG-algebras
    \[ CE^*(\Lambda) \to C_{-*}(\Omega \overline{L}), \] 
where $\Omega \overline{L}$ is the based loop space of $\overline{L}$. 
\end{cor}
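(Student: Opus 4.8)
The plan is to deduce the corollary from Theorem \ref{mainthm} by identifying the target algebra $\Omega(\k_- \oplus CF_*(L))$ with $C_{-*}(\Omega\overline L)$ when $L$ is a connected $(-)$-decorated filling of a sphere. First I would observe that with $\Gamma = \Gamma^-$ consisting of the single component $L$, the hypothesis "$\Lambda \cong S^{n-1}$" makes $LC_*(\Lambda)$ locally finite and simply-connected: the only Reeb-chord generators are those of the standard Legendrian sphere together with the boundary-connected-sum chords produced by the filling, all of which sit in the correct degree range by Remark \ref{spherecon} (a sphere's $CE^*$ is supported away from degree $-1$). This lets me invoke Theorem \ref{mainthm}, giving a quasi-isomorphism of DG-algebras $CE^*(\Lambda) \to \Omega(\k_- \oplus CF_*(L))$, provided I also verify the second hypothesis $HW^*(L) = 0$.

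Verifying $HW^*(L)=0$ is the first substantive step. Since $\Lambda \subset S^{2n-1}$ is the standard Legendrian sphere filled by $L$, attaching the critical Weinstein handle along $\Lambda^-$ to the ball $X = D^{2n}$ produces $X_\Lambda \simeq T^*S^n$ (up to the completion), with $L_\Lambda = \overline L = L \cup_\Lambda D^n$ a closed exact Lagrangian. By Theorem \ref{srgry} the wrapped Floer cohomology $CW^*(C,C)$ of the cocore $C$ computes $CE^*(\Lambda)$; alternatively I can argue directly that $C$ is displaceable in the relevant region, or use that $X_\Lambda$ is subcritical-plus-one-handle so that $HW^*(C)$ of a cocore is finite-dimensional — in any event the needed vanishing $HW^*(L) = 0$ for the filling $L$ (with its positive wrapping) follows because $L$ is a disk which can be pushed off itself at infinity with no intersection points surviving, exactly as in the unknot example computed earlier in the paper. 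I would spell this out using the model of $CW^*$ without Hamiltonian term from Section \ref{sec:CWnoHam}.

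The second step is to identify $\Omega(\k_- \oplus CF_*(L))$ with $C_{-*}(\Omega\overline L)$. Here I use that $CF^*(L_\Lambda) = CF^*(\overline L)$, which by the (generalized) simply-connectedness case of Theorem \ref{generation}, or more directly by the classical fact that $CF^*$ of the zero-section in $T^*\overline L$ is quasi-isomorphic to $C^*(\overline L)$ (Abbondandolo–Schwarz, or the infinitesimal Fukaya category computation), is quasi-isomorphic to the cochain algebra $C^*(\overline L)$. Dualizing, $\k_- \oplus CF_*(L) \simeq C_*(\overline L)$ as a DG-coalgebra. Since $\overline L = L \cup_\Lambda D^n$ with $L$ connected and $\partial L = S^{n-1}$, the closed manifold $\overline L$ is simply-connected when $n \geq 3$ (its fundamental group is a quotient of $\pi_1(L)$ by the normally generated image of $\pi_1(S^{n-1})$, hence trivial once $CE^*(\Lambda)$ is supported in negative degrees — this is precisely the simple-connectivity input built into the locally-finite simply-connected hypothesis on $LC_*$). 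Then Adams' cobar theorem (or Theorem \ref{thm:Adams} in its topological incarnation, together with the completeness criterion for $\Omega C_*(\overline L)$ which holds since $C_*(\overline L)$ is locally finite and simply-connected) gives a quasi-isomorphism $\Omega C_*(\overline L) \to C_{-*}(\Omega\overline L)$. Composing the three quasi-isomorphisms yields $CE^*(\Lambda) \to C_{-*}(\Omega\overline L)$.

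The main obstacle I anticipate is the bookkeeping at the interface of the two "simply-connected" hypotheses: one must be careful that the locally-finite, simply-connected condition on $LC_*(\Lambda)$ — which is what Theorem \ref{mainthm} literally requires — is genuinely implied by "$\Lambda \cong S^{n-1}$ and $L$ a filling," and conversely that this condition forces $\overline L$ to be simply-connected so that Adams' theorem applies on the topological side. Disentangling these, and handling the low-dimensional cases $n \le 4$ (where one appeals to the stabilization trick of Remarks \ref{r:dim4} and \ref{r:dim4iso}), is where the real care is needed; the rest is assembling quasi-isomorphisms already established in the paper.
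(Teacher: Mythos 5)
Your overall skeleton --- invoke Theorem \ref{mainthm}, identify $\k_-\oplus CF_*(L)$ with $C_*(\overline{L})$, and finish with Adams' cobar equivalence $\Omega C_*(\overline{L})\simeq C_{-*}(\Omega\overline{L})$ --- is exactly the paper's proof, and that part of your write-up is correct. The problem is your first two paragraphs. The phrase ``in the situation of Theorem \ref{mainthm}'' means that the hypotheses of that theorem ($LC_*(\Lambda)$ locally finite and simply-connected, and $HW^*(L)=0$) are \emph{assumed}; they are not to be derived from ``$\Lambda$ is diffeomorphic to $S^{n-1}$.'' Your attempted derivation silently specializes to the case where $\Lambda$ is the \emph{standard} Legendrian unknot in $S^{2n-1}$, $X$ is a ball, and $L$ is a disk --- none of which is assumed. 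A Legendrian sphere in a general Liouville domain can have Reeb chords in arbitrary degrees (so simple-connectedness of $LC_*$ is a genuine extra hypothesis, not a consequence of the diffeomorphism type of $\Lambda$), $X_\Lambda$ need not be $T^*S^n$, and the claim ``$HW^*(L)=0$ because $L$ is a disk which can be pushed off itself'' is simply false in this generality: $L$ is an arbitrary connected exact filling with spherical boundary, and the vanishing of its wrapped Floer cohomology is an assumption (satisfied, e.g., when $X$ is subcritical or flexible), not something you get for free.

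Two smaller points. First, the identification $\k\oplus CF^*(L)\simeq C^*(\overline{L})$ is justified in the paper by the fact that $\overline{L}=L\cup_\Lambda D^n$ becomes a \emph{closed} exact Lagrangian in $X_\Lambda$, so its Floer cohomology is its singular cohomology; your appeal to Theorem \ref{generation} or Abbondandolo--Schwarz lands in the same place but is a detour. Second, your instinct that the simple-connectivity of $\overline{L}$ needed for Adams' theorem deserves justification is reasonable (the paper asserts it without comment), but your argument for it imports the hypothesis ``$CE^*(\Lambda)$ supported in degrees $<0$'' from a \emph{different} theorem later in the paper; under the hypotheses actually in force here, what one reads off from $LA^*\simeq\k\oplus CF^*(\overline{L})$ being simply-connected as a graded module is the vanishing of $H^1(\overline{L};\K)$, which is weaker than $\pi_1(\overline{L})=1$. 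If you want to flag this, flag it as an implicit assumption rather than proving it by borrowing hypotheses the corollary does not have.
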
 
\begin{proof} We first note that there exists a quasi-isomorphism $\k \oplus CF^*(L) \to
    C^*(\overline{L})$ since $L$ is an exact Lagrangian, this is well-known and can be deduced e.g.~from Lemma \ref{l:localdisktreecorr}. We next use Theorem \ref{mainthm}
    and Adams' cobar equivalence (\cite{adams}): \[ \Omega (C_*(\overline{L})) \simeq C_{-*}(\Omega
    {\overline{L}}) \]
    which holds for the simply-connected topological space $\overline{L}$.     
\end{proof}

Theorem \ref{mainthm} is obtained as a  corollary of Theorem \ref{doubledual} and the following
result:

\begin{thm} \label{ainfiso} Suppose that $HW^*(L) =0$. Then, there exists a quasi-isomorphism of augmented $A_\infty$-algebras:
        \[\e\colon CF^*(L) \to LA_{\parallel}^*(\Lambda), \] 
such that 
\[ \e(u_v) = \sum_{\Lambda_{w}\subset\partial L_{v}}x_{w} \text{\ for $v\in\Gamma^{+}$}.
\]
   
\end{thm}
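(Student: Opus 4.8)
The plan is to construct $\Psi$ via a holomorphic curve count closely parallel to the twisting cochain $\t'$ of Proposition \ref{twch}, and then to recognize it as the transpose of an $A_\infty$-comap $LC_*^{\parallel}(\Lambda) \to \k_- \oplus CF_*(L)$, or dually, of the twisting cochain $\t$ of \eqref{ingen}. Recall from \eqref{bij1} and Section \ref{ssec:barcobaradj} that $\t$ corresponds to a DG-algebra map $\Omega LC_*^{\parallel}(\Lambda) = CE^*_\parallel(\Lambda) \to \A = (\Bar(\k_- \oplus CF^*(L)))^\#$, and that by Lemma \ref{dualbar} the target is quasi-isomorphic to $\rhom_{\k_-\oplus CF^*(L)}(\k,\k)$. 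So the statement I need is really: the twisting cochain $\t$ is Koszul, which by Definition \ref{ktc} means exactly that the associated $\phi$ (here landing in $\A$) is a quasi-isomorphism. Equivalently, by the discussion after Definition \ref{ktc}, the Koszul complex $\A \otimes_\k^{\t} LC_*^{\parallel}(\Lambda)$ is acyclic. The role of the hypothesis $HW^*(L)=0$ is that this Koszul complex should be identified, up to quasi-isomorphism, with a geometric complex computing wrapped Floer cohomology of $L$ (or of the cocore disks $C_\Lambda$), which then vanishes.

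The key steps, in order, would be: (1) Define the components $\Psi_i \colon (\k_-\oplus CF^*(L))^{\otimes_\k i} \to LA^*_\parallel(\Lambda)[1-i]$ by counting rigid elements of moduli spaces $\mathcal{M}^{\co}$ of holomorphic disks with one positive Reeb chord puncture on $\bar\Lambda$, $i$ boundary punctures at intersection points of $\bar L$, and boundary on a suitable system of parallel copies; the output records the Reeb chord. This is the "co" version of the moduli spaces already invoked in the proof of Proposition \ref{twch}. The formula for $\Psi$ on idempotents is forced by the count of the unique small flow tree from the minimum, exactly as in Figure \ref{mintomin}. (2) Verify the $A_\infty$-map equations for $\Psi$ by the standard boundary-of-one-dimensional-moduli-space argument, using Theorems \ref{thm:mdlitv}, \ref{thm:mdlicmpct}, \ref{thm:mdlicopies}: the codimension-one boundary strata split as an $A_\infty$-operation $\m_i$ of $CF^*(L)$ composed into one $\Psi$-factor, or as a composition of several $\Psi$-factors fed into a coalgebra operation $\Delta_j$ of $LC_*^{\parallel}$ dualized to $LA^*_\parallel$. (3) Show $\Psi$ is a quasi-isomorphism. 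For this I would set up the Koszul/twisted complex $\A \otimes_\k^{\t} LC_*^{\parallel}(\Lambda)$ and identify it geometrically: it is the cone (or a filtered object whose associated graded is built from) a Cthulhu-type/wrapped complex for the pair $(L, C_\Lambda)$, which by invariance of wrapped Floer cohomology under the handle attachments of Section \ref{ssec:partwrap} (cf. Theorem \ref{srgry}) computes $HW^*(L)$, hence is acyclic by hypothesis. Acyclicity of this Koszul complex is, by the standard fact cited after Definition \ref{ktc}, equivalent to $\t$ being Koszul, equivalently to $\phi$, equivalently $\Psi$, being a quasi-isomorphism.

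The main obstacle will be step (3): making precise the identification of the algebraically-defined Koszul complex $\A \otimes_\k^{\t} LC_*^{\parallel}(\Lambda)$ with a geometric complex whose homology is $HW^*(L)$. This requires a gluing/compactness analysis relating configurations of a disk on $\bar L$ with several disks on $\bar\Lambda$ attached along the boundary — i.e. the "anchored" picture of \cite{BEE} enriched with the parallel-copies perturbation data of Section \ref{sec:parallel} — to the genuine holomorphic strips defining $CW^*$. One must check that the twisted differential $d^{\t}$ of \eqref{koszulcomplex} matches the count of these broken configurations, including signs and the contributions of the loop-space (flow-tree) factors via Theorem \ref{t:quantumflowtree}. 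In the simply-connected, locally finite setting this is where the hypotheses are genuinely used: simple-connectedness guarantees that $LC_*^{\parallel}$ is a well-defined (uncompleted) $A_\infty$-coalgebra and that the spectral sequence comparing the algebraic and geometric complexes converges, and local finiteness is needed to apply Lemma \ref{dualbar} so that $\A \simeq \rhom_{\k_-\oplus CF^*(L)}(\k,\k)$ in the first place. Once the identification is in place, $HW^*(L)=0$ gives acyclicity and the theorem follows; combined with Theorem \ref{doubledual} this yields Theorem \ref{mainthm}.
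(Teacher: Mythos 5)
Your steps (1) and (2) --- defining $\Psi_i$ by counting rigid disks with one positive Reeb chord puncture and $i$ boundary punctures at intersection points, fixing $\Psi$ on idempotents by the minimum-to-minimum flow tree of Figure \ref{mintomin}, and verifying the $A_\infty$-relations from the boundary of one-dimensional moduli spaces --- agree with the paper. The problem is step (3). You propose to deduce that $\Psi$ is a quasi-isomorphism from acyclicity of the Koszul complex $\A\otimes^{\t}_{\k}LC_*^{\parallel}(\Lambda)$, i.e.\ from $\t$ being Koszul. But Koszulity of $\t$ is the statement that $CE^*_{\parallel}(\Lambda)=\Omega LC_*^{\parallel}(\Lambda)\to\A$ is a quasi-isomorphism, which is Theorem \ref{mainthm}, not Theorem \ref{ainfiso}; the two are genuinely inequivalent, because a quasi-isomorphism of $A_\infty$-coalgebras need not survive the cobar construction. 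Theorem \ref{mainthm} needs $LC_*$ locally finite and simply connected --- hypotheses Theorem \ref{ainfiso} does not assume --- and the logical order in the paper is the reverse of yours: Theorem \ref{ainfiso} is proved first under the single hypothesis $HW^*(L)=0$, and Theorem \ref{mainthm} is then deduced from it by dualizing bar constructions (Lemma \ref{dualbar}), with the extra hypotheses entering only at that later stage. The trefoil filled by a torus shows the statements come apart: there $HW^*(L)=0$ and Theorem \ref{ainfiso} holds (it is what feeds Theorem \ref{comple}), yet $\t$ is not Koszul. So even if your geometric identification of the Koszul complex with a wrapped complex were carried out, your argument would establish the theorem only under additional hypotheses and would fail exactly where the paper needs it.

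The paper's route to the quasi-isomorphism is more elementary and stays entirely pre-surgery (no cocores, no appeal to Theorem \ref{srgry}): filter the Hamiltonian-free wrapped complex $CW^{\ast}(L)$ by action to get the short exact sequence $0\to CW^{\ast}_{0}(L)\to CW^{\ast}(L)\to CW^{\ast}_{+}(L)\to 0$, where $CW^{\ast}_{0}$ is generated by intersection points (its dual is $CF^{\ast}(L)$) and $CW^{\ast}_{+}$ by Reeb chords (its dual is $LA^{\ast}_{\parallel}(\Lambda)$); acyclicity of the middle term forces the connecting homomorphism to be an isomorphism, and its linear dual is identified with $\Psi_1$. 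A complete proof also needs the separate deformation argument (attaching cylinders over $\Lambda$ and stretching) that reduces the components decorated $+$ to this picture; your proposal does not address that case.
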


Note that $HW^*(L)=0$ if $X$ is subcritical, or more generally a flexible Weinstein
manifold. 

Note also that if there is no bijection between connected components of $L$ and connected components of $\Lambda$, then we have to work over the semisimplering of idempotents determined by the Lagrangian. 

\begin{proof} 
We construct an $A_\infty$-map $\e = (\e_i)_{i \geq 1}$ where 
\[ 
\e_i \colon (CF^*(L))^{\otimes_{\k} i} \to LA_{\parallel}^*(\Lambda) 
\]
$\e_i$ are constructed by dualizing the components of the map $\t'$. More explicitly,
given $c_{0}$ and $\mathbf{x}_{0}=x_{0;n},\ldots, x_{0;1}$, write $\mathbf{c}=c_{0}\mathbf{x}_{0}$ as in \eqref{eq:deft'}, and define
\[ 
\e_i(\mathbf{x}_{0}) = 
\sum_{c_{0}\in\mathcal{R}} |\mathcal{M}^{\fl}(\mathbf{c})| c_{0} 
\]
The proof that $(\e_i)_{i \geq i}$ is an $A_\infty$-map follows as in the proof of Proposition \ref{twch}. 

Now, we need to check that $\e_1$ is a quasi-isomorphism. We point out that $\e_{1}$ concern only strips and are defined using only two parallel copies. In the case that $L=L^{-}$, this is a consequence of the exact sequence for wrapped Floer cohomology induced by the subdivision of the complex into high and low energy generators:
\[ 
\begin{CD}
0 @>>> CW^{\ast}_{0}(L) @>>> CW^{\ast}(L) @>>> CW^{\ast}_{+}(L) @>>> 0,
\end{CD}
\]
as follows. In terms of the version of wrapped Floer cohomology presented in Section
    \ref{sec:wrappediso}, the low energy sub-complex $CW^{\ast}_{0}(L)$ is generated by Lagrangian
    intersection points in $L_{0}\cap L_{1}$, where $L=L_{0}$ and $L_{1}$ is the first parallel copy
    of $L$, shifted by a positive Morse function $f$ that increases in the end, see Section
    \ref{sec:parallel}. The differential on $CW^{\ast}_{0}(L)$ counts holomorphic strips which is
    incoming along $L_{1}$ and outgoing along $L_{0}$ at the output puncture. Similarly, the high
    energy quotient $CW^{\ast}_{+}(L)$ is generated by Reeb chords connecting $\Lambda_{1}$ to
    $\Lambda_{0}$, the differential counts holomorphic strips interpolating between such Reeb
    chords.  Since $CW^{\ast}(L)$ is acyclic it follows that the connecting homomorphism $HW^{\ast}_{+}(L)\to HW^{\ast+1}_{0}(L)$   is an isomorphism. In order to connect this to $CF^{\ast}(L)$ and $LA_{\parallel}^{\ast}(\Lambda)$, renumber the parallel copies so that $L_{1}$ now lies in the negative Reeb direction of $L_{0}$ at infinity and the shifting function $f$ is replaced by $-f$. Then note that since $L$ is labeled by $-$ the following holds. 
\begin{itemize}
\item The linear dual of $CW^{\ast-1}_{+}(L)$ is canonically identified with
    $LA^{\ast}_{\parallel}(\Lambda)$ as a chain complex (Note that $CW^{\ast-1}_+(L)$ also has an
        $A_\infty$-coalgebra structure as defined in \cite{EO} and this should dualize to the $A_\infty$-algebra structure on $LA^{\ast}$ but we do not need that here).
\item The linear dual of $CW^{\ast}_{0}(L)$ is canonically identified with $CF^{\ast}(L)$
    as a chain complex. 
\item The linear dual of the connecting homomorphism can be canonically identified with the map
    $\e_{1}\colon \k_{-} \oplus CF^{\ast}(L)\to \k_{-}\oplus LA^{\ast}_{\parallel}(\Lambda)$ on critical points that counts
        strips with an input puncture at $L_0 \cap L_1$ and output puncture at a Reeb chord and is the
        canonical map on $\k_{-}$.
\end{itemize}
Since the connecting homomorphism is an isomorphism so is its linear dual. (The argument here is originally due to Seidel, compare \cite[Theorem 7.2]{EKH}.) 

In the case that $L^{+}\ne \varnothing$ the argument just given applies after certain deformation that we describe next. For components in $L^{+}$, $CF^{\ast}(L)$ and $LA^{\ast}_{\parallel}(\Lambda)$ are defined via parallel copies shifted in the positive Reeb direction at infinity. To connect to the previous case we consider a Lagrangian cobordism of two cylinders $\R\times\Lambda_{0}$ which is constant and $\R\times \Lambda_{1}$ which is the trace of an isotopy pushing $\Lambda_{1}$ across $\Lambda_{0}$ in the negative Reeb direction. This can be arranged so that the intersection points of the two cylinders are in natural 1-1 correspondence between the short Reeb chords between $\Lambda_{0}$ and $\Lambda_{1}$. Furthermore, it is straightforward to show that there is exactly one transverse holomorphic strip connecting each intersection point to the corresponding Reeb chord at the negative end of the cobordism. 

Adding these cylinders to $L^{+}$ we get a 1-parameter family of pairs of Lagrangian submanifolds $\hat L^{\rho}_{0}$ and $\hat L_{1}^{\rho}$, where $\rho>0$ is a gluing parameter that measures the length of the trivial cobordism between $L^{+}$ and the added cylinders. The wrapped Floer cohomology $CW^{\ast}(\hat L_{0}^{\rho},\hat L^{\rho}_{1})$ between Lagrangians $\hat L_{0}^{\rho}$ and $\hat L_{1}^{\rho}$ vanishes: it is isomorphic to the wrapped Floer cohomology $CW^{\ast}(L)$ by Hamiltionian deformation invariance. Write $CW^{\ast}(\hat L^{\rho}):=CW^{\ast}(L_{0}^{\rho},L^{\rho}_{1})$. This complex is then acyclic and is generated by the set of long Reeb chords $C^{+}$ from $L_{0}$ to $L_{1}$, the set of intersection points $I$ between the cylinders, and intersection points $P$ in $L$. Let $C^{-}$ denote the short Reeb chords connecting $L_{0}$ to $L_{1}$ and recall the natural 1-1 correspondence $C^{-}\approx I$ above. Let $\rho>0$. We claim that the following sets are in natural 1-1 correspondence  for all sufficiently large $\rho$: 
\begin{itemize}
\item[$(i)$] Rigid strips of $\hat L^{\rho}$ with input puncture at  $c\in C^{+}$ and output puncture at $p\in P$ and rigid strips of $L$ with input puncture at $c \in C^+$ and output puncture at $p \in P$.
\item[$(ii)$] Rigid strips of $\hat L^{\rho}$ with input puncture at  $c\in C^{+}$ and output puncture at $q\in I$ and rigid-up-to-translation strips of $\R\times\Lambda$ with input puncture in $c\in C^{+}$ and output puncture at $q\in C^{-}$.
\item[$(iii)$] Rigid strips of $\hat L^{\rho}$ with input puncture at  $p\in P$ and output puncture at $q\in I$ and rigid strips of $L$ with input puncture at $p\in P$ and positive puncture at $q\in C^{-}$.
\end{itemize}   
To see this note first that the strips in $(i)$ are unaffected by adding the almost trivial cobordism: the strips are transversely cut out and therefore solutions for small variations of the boundary data are canonically identified. Taking $\rho$ sufficiently large the boundary data of the disks can be made arbitrarily close. 

For $(ii)$, in the limit $\rho\to\infty$ the disks limits to an anchored disk with a positive puncture and a puncture at the intersection point. Gluing the basic strip connecting the intersection point to the short Reeb chord to it gives a 1-dimensional moduli space. The other boundary component of this moduli space consists of a rigid strip in the cobordism and a disk or plane of dimension one in either symplectization end. Since the only rigid strips in the cobordism with negative ends at Reeb chords are either close to trivial strips or a basic strip it follows that the other boundary component of the moduli space must be a trivial strip followed by a strip connecting $c$ to $q$ in the negative symplectization end. For $(iii)$ we note that every rigid strip must break under stretching into two rigid strips. Since the only rigid strips in the upper part are the basic strips the claim follows.  

Observe that the strips in $(i)$ and $(iii)$ contribute to $\t'$ and the strips in $(ii)$ to the differential on $LC_{\ast}^{\parallel}(\Lambda)$. The vanishing of the wrapped Floer cohomology of $\hat L^{\rho}$ then implies that $\e_{1}$ is a quasi-isomorphism. The last statement follows from Proposition \ref{twch}. 
\end{proof}

\begin{proof}{(\it of Theorem \ref{mainthm})} Note that the $A_\infty$-quasi isomorphism given in Theorem \ref{ainfiso} induces a
    quasi-isomorphism of DG-algebras
    \[ \Phi\colon \Bar(\k_{-} \oplus CF^*(L)) \to \Bar(LA_{\parallel}^*(\Lambda)) \]
by an application of \cite[Thm.~7.4]{EM} with respect to length filtrations on the bar construction.

By the local finiteness and simply-connectedness assumptions, each of these bar constructions are
    locally finite. So, we can apply the linear dual operation to get a quasi-isomorphism of
    DG-algebras:
    \begin{align} \label{Phi} \Phi^\# \colon \Bar(LA_{\parallel}^*(\Lambda))^\# \to \Bar(\k_{-} \oplus CF^*(L))^\#
    \end{align}
    The result then follows as in Theorem \ref{doubledual} where locally finiteness of the grading
    enabled us to appeal to Lemma \ref{dualbar}. Therefore, the quasi-isomorphism given in \eqref{Phi}
    induces the quasi-isomorphism:
  \[ \Omega(LC^{\parallel}_*(\Lambda)) \to \Omega(\k_{-} \oplus CF_*(L))  \]
as required. 
\end{proof}

We next turn to the non-simply connected case where we use $CE^{\ast}_{\parallel}(\Lambda)$ as defined in Section \ref{ssec:parallelwithpi1} directly without using the corresponding co-algebra. 
Note that the $A_\infty$-algebras $\k_{-} \oplus CF^*(L)$ and
$LA^*_{\parallel}(\Lambda)$ are finite rank $\k$-bimodules (in particular, they are locally
finite), thus we can consider their $\k$-duals which are, by definition, the $A_\infty$-coalgebras
$\k_{-} \oplus CF_*(L)$ and $LC_*^{\parallel}(\Lambda)$. However, unless we have the simply-connectedness assumption, the $A_\infty$-quasi isomorphism 
\[ \e \colon \k_{-} \oplus CF^*(L) \to LA_{\parallel}^* (\Lambda) \] 
does not necessarily dualize to an $A_\infty$-comap 
\[ \mathfrak{f}\colon  LC_*^{\parallel}(\Lambda) \to \k_{-} \oplus CF_*(L) \] 
because $A_\infty$-comaps are required to factorize through inclusion of the corresponding direct
sum into the direct product as in \eqref{factor}. This is to ensure that
$A_\infty$-coalgebra maps $\mathfrak{f}$ induces a DG-algebra map $\Omega \mathfrak{f}$ on the cobar
construction. 

If we drop this condition, we can observe that the $A_\infty$-quasi isomorphism
dualizes to DG-algebra map 
\[ \widetilde{CE}^*_{\parallel}(\Lambda) \to \A \] and this is just the map \eqref{ingen} induced by the twisting cochain $\mathfrak{t}$. Furthermore by Proposition \ref{twch} this gives a map a DG-algebra map
\[ \widehat{\Omega}(\mathfrak{f})\colon \widehat{CE^*_{\parallel}}(\Lambda) \to \widehat{\Omega} (\k_{-} \oplus CF_*(L))\]  

Now, since $\mathfrak{f}$ is a quasi-isomorphism, by using the length filtration on
$\widehat{\Omega}$, and appealing to \cite[Thm. 7.4]{EM}, we can conclude the following. 

\begin{thm} \label{comple} 
	Suppose that $HW^*(L)=0$, then there exists a quasi-isomorphism of DG-algebras
    \[ \widehat{CE^*_{\parallel}}(\Lambda) \to \widehat{\Omega}(\k_{-} \oplus CF_*(L)). \]
\end{thm}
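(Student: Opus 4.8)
The plan is to derive this by dualizing the $A_\infty$-quasi-isomorphism $\Psi\colon \k_{-}\oplus CF^*(L)\to LA_{\parallel}^*(\Lambda)$ of Theorem~\ref{ainfiso} and then applying the completed cobar construction, so that the hypothesis $HW^*(L)=0$ enters only through Theorem~\ref{ainfiso}. Recall that $\widehat{CE^*_{\parallel}}(\Lambda)=\widehat{\Omega}LC_*^{\parallel}(\Lambda)$ by definition. Both $\k_{-}\oplus CF^*(L)$ and $LA_{\parallel}^*(\Lambda)$ are finite-rank $\k$-bimodules, hence locally finite, so we may pass to graded duals; the dual of $\Psi$ is the twisting cochain $\mathfrak{t}$ of Proposition~\ref{twch}, which is defined unconditionally, and it gives rise to the DG-algebra map \eqref{ingen}, $CE^*_{\parallel}(\Lambda)=\Omega LC_*^{\parallel}(\Lambda)\to\widehat{\Omega}(\k_{-}\oplus CF_*(L))$. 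As the text preceding the statement explains, the obstruction to $\mathfrak{t}$ coming from an honest $A_\infty$-comap $LC_*^{\parallel}(\Lambda)\to\k_{-}\oplus CF_*(L)$ — namely that its components factor through the direct sum rather than the direct product — is precisely what becomes irrelevant after completing the cobar; since the target is already complete, the universal property of completion extends \eqref{ingen} uniquely to a DG-algebra map $\widehat{\Omega}(\mathfrak{f})\colon \widehat{CE^*_{\parallel}}(\Lambda)\to\widehat{\Omega}(\k_{-}\oplus CF_*(L))$, which is the map in the statement.

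It remains to show $\widehat{\Omega}(\mathfrak{f})$ is a quasi-isomorphism. For this I would equip the two completed cobar constructions with the word-length filtrations described in Section~\ref{barcobarsec}: they are decreasing, exhaustive, bounded above and complete Hausdorff, so the associated spectral sequences converge conditionally to the cohomologies of the two sides, and — since $LC_*^{\parallel}(\Lambda)$ and $\k_{-}\oplus CF_*(L)$ are locally finite, hence so are the two completed cobars — the spectral sequences are regular and therefore converge strongly. The map $\widehat{\Omega}(\mathfrak{f})$ respects these filtrations, and on the associated graded pieces, which are tensor powers over $\k$ of the shifted coaugmentation ideals, it is the corresponding tensor power of $\mathfrak{f}_1$, which up to a degree shift is the graded dual of $\Psi_1=\e_1$. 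Since $\e_1$ is a quasi-isomorphism by Theorem~\ref{ainfiso} and $\k$ is semisimple, the Künneth theorem shows $\widehat{\Omega}(\mathfrak{f})$ induces an isomorphism on $E_1$-pages, and the comparison theorem \cite[Theorem~7.4]{EM} (whose hypotheses are exactly the completeness and convergence just arranged) yields the desired quasi-isomorphism. This is the same formal mechanism as in the proof of Theorem~\ref{mainthm}, but no simply-connectedness is needed because completing makes both filtrations automatically complete.

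The main obstacle is the bookkeeping around the failure of $\mathfrak{f}$ to be a strict $A_\infty$-comap: one cannot apply $\Omega$ directly to a quasi-isomorphism of $A_\infty$-coalgebras and expect a quasi-isomorphism of DG-algebras (cf. \cite[Section~2.4.1]{LV}), so the whole argument must be run on the completed cobar side, and one must verify carefully that the length filtration on $\widehat{\Omega}(\k_{-}\oplus CF_*(L))$ is complete Hausdorff and that \cite[Theorem~7.4]{EM} applies in this non-connected, non-simply-connected setting. Once these convergence issues are pinned down, the identification of the $E_1$-map with tensor powers of $\e_1$ and the conclusion are routine.
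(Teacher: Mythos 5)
Your argument is essentially identical to the paper's: dualize the quasi-isomorphism of Theorem \ref{ainfiso} to the (non-strict) comap $\mathfrak{f}$, observe it still induces the DG-algebra map \eqref{ingen} into the completed cobar, extend via the universal property of completion, and conclude with the length filtration and \cite[Thm. 7.4]{EM}. One small caveat: local finiteness of $LC_*^{\parallel}(\Lambda)$ does not imply local finiteness of $\widehat{\Omega}$ of it (degree-zero generators produce infinitely many words in a fixed degree), but this detour is unnecessary since the comparison theorem of \cite{EM} requires only completeness of the filtrations, which holds by construction.
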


Note that the completion $\widehat{CE_{\parallel}^{*}}$ is in general a cruder invariant than
both $CE^{\ast}_{\parallel}(\Lambda)$ and $CE^*(\Lambda)$. Though, we always have a map 
\[ CE^*(\Lambda) \to \widehat{CE^*_{\parallel}}(\Lambda). \] 
Theorem \ref{comple} can be used to compute $\widehat{CE^*_{\parallel}}$ in a variety of
cases. For example, if $L$ is connected Lagrangian filling decorated by $-$ of a Legendrian $\Lambda$ which is
diffeomorphic to a sphere $S^{n-1}$, then
writing $\overline{L} = L \cup_{\partial} D^n$, then we have a quasi-isomorphism $\k \oplus
CF_*(L) \simeq C_*(\overline{L})$ since $\overline{L}$ is an exact Lagrangian. Hence, we have a map
\[ CE^*(\Lambda) \to \widehat{\Omega}(C_*(\overline{L})) \]
Here the right hand side can often be computed, in particular
$H^0(\widehat{\Omega}(C^*(L))$ is the group ring of the unipotent completion of the fundamental group
$\pi_1(L)$ (cf. \cite{chen}). 

In particular, any information on the completion map $CE^*(\Lambda) \to
\widehat{CE^*_{\parallel}}(\Lambda)$ can help to obtain information about $CE^*(\Lambda)$. We
will see an application of this idea in the next section.

We end this section with a discussion of the twisting cochains constructed above from an after surgery perspective. Assume that all components of $\Lambda^{-}$ are spheres and recall the surgery isomorphism
\[ 
\Theta\colon CW^{\ast}(C)\to CE^{\ast}(\Lambda)
\]
of Conjecture \ref{appconj}. Let $\widetilde{\Theta}=\phi\circ\Theta$, where $\phi$ is the identity map on components in $\Lambda_{-}$ and the map $\phi$ of Theorem \ref{t:parallel=loops}. We next note that there is a natural $A_{\infty}$-algebra map 
\[ 
\Psi\colon CW^{\ast}(C)\to \Bar(\k\oplus CF^*(L))^\#=\A'\to\A=\A'/\I,
\]
where we identify $\k_{-}\oplus CF^{\ast}(L)$ with the Floer cohomology $CF^{\ast}(L')$ of the manifold after surgery obtained by capping off all boundary spheres in $\Lambda^{-}$ by disks. (Note that in the simply connected case, the shifting Morse function then extends with a unique minimum in each disk which gives an idempotent corresponding to $e_{v}$.)

The map $\Psi$ is defined by a curve count. Fix systems of parallel copies $\bar C$ of $C$ and $\bar L'$ of $L'$. Let $\mathbf{c}'=c_{1}\dots c_{i}$ be a word of Reeb chords of $C$ and let $\mathbf{x}_{0}=x_{0;1}\dots x_{0;j}$ be a word of intersection points of $L'$. Let 
\[ 
\mathbf{c}=\mathbf{c}' z_{v}\mathbf{x}_{0} z_{w}
\]
and define
\[ 
\Psi(\mathbf{c}')=\sum_{|\mathbf{x}_{0}|=|\mathbf{c}'|+(1-i)}
|\mathcal{M}^{\overline{\co}}(\mathbf{c})|\mathbf{x}_{0}.
\]

\begin{rem}\label{r:inducednegparallel}
We require here that the parallel copies $\bar L'$ give a system of parallel copies of $\Lambda$ near the surgery region such that for the components of $\Lambda_{v}$ labeled by $-$ (resp. $+$) the induced parallel copies $\bar\Lambda_{v}$ lies in the negative (resp. positive) Reeb direction. Compare Figure \ref{intersectiongen}.
\end{rem}

\begin{thm}
	The map $\Psi$ is a map of $A_{\infty}$-algebras and $\Psi(z_{v})=u_{v}'$, where $z_{v}$ is the strict unit in $CW^{\ast}(C_{v})$ and $u_{v}'$ is the dual of the 
	unit in $\mathbb{K}e_{v}\oplus CF^{\ast}(L_{v})$ for each $v$.
\end{thm}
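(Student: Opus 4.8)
The plan is to follow the same strategy used in the proof of Proposition \ref{twch} and Theorem \ref{ainfiso}, now adapted to the after-surgery geometric setup. First I would set up the moduli-space bookkeeping: the map $\Psi$ is built from counts of rigid holomorphic disks $\mathcal{M}^{\overline{\co}}(\mathbf{c})$ in the cobordism obtained by surgery, with one positive puncture word $\mathbf{c}'$ of Reeb chords of $C$, intersection points on $L'$ along the remaining boundary, and the distinguished strict units $z_v, z_w$ separating the $C$-part from the $L'$-part. The target $\A = \Bar(\k_{-}\oplus CF^*(L'))^{\#}$ is the completed Adams-Floer DG-algebra, so the components of $\Psi$ are naturally the $\k$-linear duals of the operations $\mathfrak{t}'$-style disk counts. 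The $A_\infty$-algebra relations for $\Psi$ are then, as usual, read off from the codimension-one boundary of the $1$-dimensional moduli spaces $\mathcal{M}^{\overline{\co}}(\mathbf{c})$: by Theorem \ref{thm:mdlicmpct} these boundaries decompose into two-level broken curves, and by Theorems \ref{thm:mdlitv} and \ref{thm:mdlicopies} the two kinds of splittings (one disk carrying the $CW^*(C)$-operations at the positive punctures, the other contributing to the bar differential on $\A$, together with the splittings that reassemble the $A_\infty$-functor relation) match up exactly with the terms in the $A_\infty$-map equation for $\Psi$.

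Next I would verify the normalization statement $\Psi(z_v) = u_v'$. This is the surgery analogue of the first part of Proposition \ref{twch}: one examines rigid holomorphic disks with positive puncture at the strict unit $z_v \in CW^*(C_v)$. By a small-action argument, such a disk must be confined to a neighborhood of the surgery region near $C_v$ and $L'_v$, and is therefore described by Morse flow trees (or their anchored analogues); there is a unique flow line emanating from the relevant minimum, and it ends generically at the minimum of the shifting Morse function on $L'_v$, which represents the dual $u_v'$ of the unit. For $v\in\Theta^{-}$ one uses that after capping off the boundary sphere by a disk the shifting Morse function extends with a unique minimum in the disk, giving the idempotent $e_v$, and the same flow-line argument identifies the image. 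Here Remark \ref{r:inducednegparallel} is what guarantees the parallel copies are correctly oriented (negative Reeb direction for $-$-components, positive for $+$-components) so that these basic disks are the only contributions.

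The main obstacle I expect is the transversality and compactness analysis for the moduli spaces $\mathcal{M}^{\overline{\co}}$ appearing in the surgery cobordism — in particular, controlling degenerations where holomorphic planes split off in the symplectization ends (the anchoring issue flagged in the Remark in Section \ref{ssec:connections}), and ensuring that the two-level breakings are exactly the expected ones with no extra components. As in Theorem \ref{ainfiso}, degenerations must be analyzed by the stretching argument: under stretching, rigid disks break into rigid pieces, and one must rule out or account for unexpected configurations (trivial strips, basic strips connecting intersection points to short Reeb chords, and disks in the symplectization ends). One appeals to the fact that the only rigid strips in the trivial or almost-trivial part of the cobordism are close to trivial strips or basic strips, which pins down the other ends. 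The remaining steps — checking signs, confirming the sum is finite by the usual action bound (the total action of negative asymptotics is bounded by that of the positive ones), and assembling the relations into the $A_\infty$-map equation — are routine once the moduli-theoretic input is in place, and I would treat them briefly, citing Theorems \ref{thm:mdlitv}, \ref{thm:mdlicmpct}, and \ref{thm:mdlicopies} as in the proofs of Proposition \ref{twch} and Theorem \ref{ainfiso}.
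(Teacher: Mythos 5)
Your proposal follows the paper's own proof: the $A_\infty$-relations are obtained by identifying the terms with the oriented boundary of a compact $1$-manifold via Theorems \ref{thm:mdlitv}, \ref{thm:mdlicmpct}, \ref{thm:mdlicopies}, and $\Psi(z_v)=u_v'$ is computed by representing $z_v$ as the minimum of the shifting Morse function on $C$ and tracing the unique flow-line configuration through the intersection point $C_v\cap L_v'$ to the minimum on $L_v'$ (a holomorphic quadrilateral with corners at $C_0\cap C_1$, $C_0\cap L_0'$, $C_1\cap L_1'$, and $L_0'\cap L_1'$). The only caveat is a notational slip: the punctures $z_v,z_w$ in the word $\mathbf{c}=\mathbf{c}'z_v\mathbf{x}_0 z_w$ are the Lagrangian intersection points $C\cap L'$, not the strict units of $CW^{\ast}(C_v)$.
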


\begin{proof}
	This follows as usual by identifying terms contributing to the $A_{\infty}$-relations with the oriented boundary of an oriented 1-manifold and Theorems \ref{thm:mdlitv}, \ref{thm:mdlicmpct}, \ref{thm:mdlicopies}.   
	
	To compute $\Psi(z_{v})$ note that we can represent $z_{v}$ as the minimum of the shifting Morse function of $C$ and note that there is a unique flow line from this minimum to the intersection point $C_{v}\cap L_{v}'$ and a unique flow line in $L_{v}'$ from the intersection point to the minimum in $L_{v}$. The corresponding holomorphic disk starts at the intersection point between $C_{0}$ and $C_{1}$, has two corners at $C_{0}\cap L_{0}'$ and $C_{1}\cap L_{1}'$, and ends at the intersection point in $L_{0}'\cap L_{1}'$ corresponding to the minimum of the shifting Morse function. 
\end{proof}

The pre-twisting cochain $\t'\colon LC_{\ast}^{\parallel}(\Lambda)\to\A$ can now be seen to arise via SFT-stretching as follows. Consider the first component $\Psi_{1}$ of the $A_{\infty}$-map above and a holomorphic disk contributing to it. Now stretch the lower end of the cobordism. Then by SFT-compactness the curves contributing to $\Psi_{1}$ breaks up into a curve contributing to the map $\phi\circ \Theta$ followed by the twisting cochain at each negative puncture. Noting that $z_{v}$ are the only low energy generators, it follows that the map induces a map of the high energy quotient into $\mathrm{coker}(\eta)$ and we can write the induced map $\Psi_{1}^{+}$ as $\Psi_{1}^{+}=(\Omega\t)\circ\phi\circ\Theta^{+}$.

\section{Examples and Applications}
\label{ssec:applications} 
\subsection{Concrete calculations}
In this section we compute Legendrian and Lagrangian invariants in a number of concrete examples.

\subsubsection{The unknot}
Consider the Legendrian unknot $\Lambda \subset S^{2n-1}$ for $n>1$ with its
standard filling $L=D^{n} \subset D^{2n}$. Then $\Lambda$ can be represented as a standard unknot in a small Darboux chart which has effectively only one Reeb chord with respect to the standard Reeb flow on $S^{2n-1}$, see \cite[Section 7.1]{BEE}. 

We work over a field $\mathbb{K}$. Consider first the case when $L$ is decorated by $-$. We then have
\[ 
LC_*(\Lambda) = \mathbb{K}\cdot 1 \oplus \mathbb{K} \cdot c, \ \ |c|=-n 
\]
with all coalgebra maps $(\Delta_i)_{i\geq 1}$ trivial, except $\Delta_2$ for which we have:
\[ 
\Delta_2(1) = 1 \otimes 1, \ \ \Delta_2(c) = 1 \otimes c + c \otimes 1 
\]
by the co-unitality. 
Using a Morse function on $D^{2n}$ with a unique local maximum $a$ and which decreases in the end corresponding to shift in the negative Reeb direction, we have 
\[ 
\mathbb{K} \oplus CF_*(L) = \mathbb{K} \cdot 1  \oplus \mathbb{K} \cdot a, \ \ |a|=-n 
\]
Let $\A = \Omega( \mathbb{K} \oplus CF_*(L))$ be the Adams-Floer algebra, where degree of $a$ is now shifted up by 1. Then, we have the twisting cochain 
\[ 
\t \colon LC_{\ast}(\Lambda) \to \A, \ \ \t(c) =a. 
\]
Here the disk with input $c$ and output $a$ corresponds to the family of disks with positive puncture at $c$ that sweeps $L$ once. 

(The case of $n=2$ is drawn in Figure (\ref{unknot})).
\begin{figure}[htb!]
    \centering
    \begin{tikzpicture}[scale=1]
              \tikzset{->-/.style={decoration={ markings,
                      mark=at position #1 with {\arrow[scale=2,>=stealth]{>}}},
                      postaction={decorate}}}
 \begin{scope} 
     \clip (0,0) to (0.3,0.3) to[in=170,out=45] (1.5,1) to[in=90,out=350] (2.3,0) to[in=10,out=270] (1.5,-1)to[in=315,out=190] (0.3,-0.3) to
     (0,0) ;
\clip[preaction={draw,fill=gray!30}] (-2,-2) rectangle (8,8);
                             \end{scope}
                             
    \draw [black] (-1.5,1) to[in=90,out=190] (-2.3,0);
    \draw [black] (-1.5,1) to[in=135,out=10] (-0.3,0.3);         
    \draw [black]  (-1.5,-1)to[in=270,out=170] (-2.3,0);
    \draw [black] (-1.5,-1)to[in=225,out=350] (-0.3,-0.3)  ;
    
    \draw [black] (1.5,1) to[in=90,out=350] (2.3,0);
    \draw [black] (1.5,1) to[in=45,out=170] (0.3,0.3);         
    \draw [black]  (1.5,-1)to[in=270,out=10] (2.3,0);
    \draw [black] (1.5,-1)to[in=315,out=190] (0.3,-0.3)  ;

    \draw [black] (-0.3,0.3) to (0.3,-0.3);
    \draw [black] (-0.3,-0.3) to (0.3,0.3);
\end{tikzpicture}
    \caption{Computation of $\t$ for the Legendrian unknot, the disk drawn lies in a $(n-1)$-dimensional family that sweeps the filling once.}
    \label{unknot}
\end{figure}

The Koszul complex is generated over $\mathbb{K}$ by $a^{k}$, $a^{k}c$ for $k \geq 0$.
We can compute the non-trivial part of the differential to be:
\[ d^\t(a^{k}c ) = a^{k+1}, \ \ \text{for all } k \geq 0 \]
hence $\t$ is acyclic. This is consistent with the classical Koszul duality between the algebras $C^*(S^n)$ and $C_{-*}(\Omega S^n)$
for $n>1$. 

Consider next the case when $L$ is decorated by $+$. Since $S^{n-1}$ is simply connected $CE^{\ast}(\Lambda)\approx CE^{\ast}_{\parallel}(\Lambda)$ and we will use the parallel copies version in our calculation. Choose a Morse function on $\Lambda$ with a single minimum and a single maximum. Denote the corresponding Reeb chords $x$ (the co-unit chord) and $y$. Then
\[ 
LC_{\ast}^{\parallel}(\Lambda)= 
\mathbb{K}\cdot x \oplus \mathbb{K}\cdot y\oplus \mathbb{K} \cdot c, \ \ |x|=0,\;|y|=-(n-1),\;|c|=-n. 
\]
Here 
\[ 
\Delta_{1}(c)=y,\quad \Delta_{2}(x)=x\otimes x + +(-1)^{n-1}x\otimes y + y\otimes x + (-1)^{n}x\otimes c + c\otimes x,
\]
and all other operations are trivial. It follows that
\[ 
CE^{\ast}_{\parallel}(\Lambda)=\Omega(LC_{\ast}^{\parallel}(\Lambda)) \simeq \mathbb{K}.
\]
This is in line with Conjecture \ref{conjintro} which says that
$CE^{\ast}(\Lambda)\simeq CE^{\ast}_{\parallel}(\Lambda)$ is isomorphic to $CW^{\ast}(C)$, where $C$ is the cotangent fiber in the manifold obtained by attaching a cotangent end $T^{\ast}(S^{n-1}\times[0,\infty))$ to the ball along $\Lambda$. The manifold that results from this attachment is simply $T^{\ast}\R^{n}$ and the wrapped Floer cohomology of the cotangent fiber $C$ has rank 1 and is generated by the minimum in the disk $C$, in accordance with the above calculations.

Finally, the twisting co-chain $\t$ in the $+$ case is the canonical map $\K\to\K$, and again the
Koszul complex is acyclic. As explained in Section \ref{Sec:LegLag} this map is induced by a map
$\t'\colon LC_{\ast}^{\parallel}(\Lambda)\to (\Bar{CF^{\ast}(L)})^{\#}$. To define $CF^{\ast}(L)$ we use a Morse function on $L$ with a single local minimum and which is increasing in the end corresponding to a shift in the positive Reeb direction. Denote the generator of $CF^{\ast}(L)$ corresponding to the minimum $u$, $|u|=0$. Then  
\[ 
\t'(x)=u',
\] 
where $u'$ is the dual of $u$ and the holomorphic strip is the thin strip corresponding to a rigid Morse flow line from the minimum $u$ to the minimum $y$ in the boundary.  
\subsubsection{Geometric twisting cochain for the Hopf link}
\begin{figure}[h!]
	\centering
	\begin{tikzpicture}[scale=1]

	\tikzset{->-/.style={decoration={ markings,
				mark=at position #1 with {\arrow{>}}},postaction={decorate}}}
	
	\draw [blue, thick=1.5, ->-=.7] (-1.6,0.93) to[in=90,out=190] (-2.3,0);
	\draw [blue, thick=1.5] (-1.4,1) to[in=135,out=10] (-0.3,0.3);         
	\draw [blue, thick=1.5]  (-1.6,-0.93)to[in=270,out=170] (-2.3,0);
	\draw [blue, thick=1.5] (-1.4,-1)to[in=225,out=350] (-0.25,-0.25)  ;
	
	\draw [blue, thick=1.5] (1.5,1) to[in=45,out=170] (0.3,0.3);         
	
	\draw [blue, thick=1.5]  (1.5,-1)to[in=270,out=10] (2.3,0);
	\draw [blue, thick=1.5] (1.5,1) to[in=140,out=350] (1.9,0.8);
	\draw [blue, thick=1.5] (2.05,0.65) to[in=90,out=320] (2.3,0);

	\draw [blue, thick=1.5] (1.5,-1)to[in=315,out=190] (0.3,-0.3)  ;
	
	\draw [blue, thick=1.5] (-0.3,0.3) to (0.3,-0.3);
	\draw [blue, thick=1.5] (-0.15,-0.15) to (-0.05,-0.05);
	\draw [blue, thick=1.5] (0.05,0.05) to (0.3,0.3);

	\begin{scope}[xshift=-11]
	\draw [cyan, thick=1.5, ->-=.7] (-1.5,1) to[in=90,out=190] (-2.3,0);
	\draw [cyan, thick=1.5] (-1.5,1) to[in=135,out=10] (-0.3,0.3);         
	\draw [cyan, thick=1.5]  (-1.5,-1)to[in=270,out=170] (-2.3,0);
	\draw [cyan, thick=1.5] (-1.5,-1)to[in=225,out=350] (-0.3,-0.3)  ;
	
	\draw [cyan, thick=1.5] (2,1.25) to[in=45,out=170] (0.25,0.25);         
	
	\draw [cyan, thick=1.5]  (2,-1.25)to[in=270,out=10] (2.95,0);
	\draw [cyan, thick=1.5] (2,1.25) to[in=135,out=350] (2.5,0.95);
	\draw [cyan, thick=1.5] (2.65,0.8) to[in=90,out=315] (2.95,0);

	\draw [cyan, thick=1.5] (2,-1.25)to[in=315,out=190] (0.3,-0.3)  ;
	
	\draw [cyan, thick=1.5] (-0.3,0.3) to (0.3,-0.3);
	\draw [cyan, thick=1.5] (-0.3,-0.3) to (-0.05,-0.05);
	\draw [cyan, thick=1.5] (0.05,0.05) to (0.15,0.15);
	
	\end{scope}

	\draw [red, thick=1.5] (2.5,1) to[in=90,out=190] (1.7,0);
	
	\draw [red, thick=1.5] (2.5,-1) to[in=330,out=170] (2.32,-0.95);
	\draw [red, thick=1.5] (2.2,-0.87) to[in=310,out=150] (2.07,-0.77);
	
	\draw [red, thick=1.5] (1.95,-0.65) to[in=270,out=130] (1.7,0);

	\draw [red, thick=1.5] (2.5,1) to[in=135,out=10] (3.7,0.3);         
	\draw [red, thick=1.5] (2.5,-1)to[in=225,out=350] (3.7,-0.3)  ;
	
	\draw [red, thick=1.5] (5.5,1) to[in=90,out=350] (6.3,0);
	\draw [red, thick=1.5] (5.5,1) to[in=45,out=170] (4.3,0.3);         
	\draw [red, thick=1.5, ->-=.7]  (6.3,0)to[in=10,out=270] (5.5,-1);
	\draw [red, thick=1.5] (5.5,-1)to[in=315,out=190] (4.3,-0.3)  ;
	
	\draw [red, thick=1.5] (3.7,0.3) to (4.3,-0.3);
	\draw [red, thick=1.5] (3.7,-0.3) to (3.9,-0.1);
	\draw [red, thick=1.5] (4.1,0.1) to (4.3,0.3);
	
	\node at (0.3,0) {\footnotesize{$c_1$}}; 
	\node at (1.7,0.7) {\footnotesize{$c_{21}$}}; 
	\node at (1.7,-0.7) {\footnotesize{$c_{12}$}}; 
	\node at (4,0.3) {\footnotesize{$c_2$}}; 
	\node at (-1.5,1.2) {\footnotesize{$y$}}; 
	\node at (-1.5,-1.2) {\footnotesize{$x$}}; 
	
	\end{tikzpicture}
	\caption{Hopf link with one (blue) marked $+$ and one (red) marked $-$ components.}
	\label{hopfparallel}
\end{figure}

In this section we carry out the geometric calculation of the twisting cochain for the Hopf link. As
explained we cannot directly calculate the twisting cochain into the Legendrian coalgebra with
coefficients in chains of the based loop space. We can however calculate the corresponding twisting
cochain when we replace chains on the based loop space with the Morse theory of parallel copies for
the components decorated by a positive sign. To carry out the calculation we pick a Morse function
on the component $\Lambda^{+}$ with on minimum $x$ and a maximum $y$. We place them on the circle
and choose parallel copies as shown in Figure \ref{hopfparallel}. The parallel copies algebra $CE_{\parallel}^{\ast}(\Lambda)$ is
\[ 
\k\langle x,y,c_{1},c_{12},c_{21},c_{2}\rangle,
\]
where we use notation for Reeb chords and Floer cohomology generators as in Section \ref{ssec:Hopflink}. The differential is
\begin{align*}
d c_{1} &= xc_{1} + c_{1}x + y + c_{12}c_{21},\\
d x &= xx,\\
dy &= xy - yx,\\
dc_{12} &= -xc_{12},\\
dc_{21} &= c_{21}x,\\
dc_{2} &= c_{21}c_{12}.
\end{align*}
Passing to $CE^{\ast}_{\parallel}(\Lambda)$ means dividing out by the cokernel of the coaugmentation that takes $e_{1}$ to $x$. This gives the algebra
\[ 
\k\langle y,c_{1},c_{12},c_{21},c_{2}\rangle,
\]
and the differential becomes
\begin{align*}
d c_{1} &= y + c_{12}c_{21},\\
dc_{2} &= c_{21}c_{12}.
\end{align*}

The twisting cochain $\t$ is induced from a map $\t'\colon LC_{\ast}^{\parallel}(\Lambda)\to (\Bar(\k_{-}\oplus CF^{\ast}(L)))^{\#}$ that counts holomorphic disks with one positive puncture and boundary on $L$, and with several punctures at Lagrangian intersection points in the compact part, see \eqref{eq:deft'}. In the current example it is straightforward to find these disk. Note first that by general properties, see Proposition \ref{twch},
\[ 
\t'(x)=a_{1}^{\vee},
\]
where $a_{1}$ is the idempotent corresponding to the minimum of the shifting Morse function on $L_{1}$. The holomorphic disk corresponds to a Morse flow line connecting $x$ to $u_{1}$.  We next consider $\t'(c_{1})$ and $\t'(c_{2})$. Consider first the moduli spaces $\mathcal{M}^{\fl}(c_{j})$ of holomorphic disks with a positive puncture at $c_{j}$ and boundary on $L_{j}$. As in the case of the unknot this moduli space sweeps $L_{j}$. On both $L_{1}$ and $L_{2}$ the shifting functions have one critical point, on $L_{1}$
it is a minimum and on $L_{2}$ a maximum. The maximum is constraining for the map into the linear dual of $CF^{\ast}(L_{j})$, whereas the minimum is not. We find that
\[ 
\t'(c_{1})=0,\quad \t'(c_{2})= a_{2}^{\vee}.
\] 

The space $\mathcal{M}^{\fl}(c_{j})$ give further information of the twisting co-chain as follows. Note that as the evaluation map hits the double point one can glue on a constant disk. These broken disks are also the boundary of the 1-dimensional moduli spaces $\mathcal{M}^{\fl}(c_{1}a_{12}a_{21})$ and $\mathcal{M}^{\fl}(c_{2}a_{21}a_{12})$. The other end of these moduli spaces correspond to broken disks with one level in the symplectization, a disk in $\mathcal{M}^{\sy}(c_{1}c_{12}c_{21})$ in the former case and in $\mathcal{M}^{\sy}(c_{2}c_{21}c_{12})$ in the latter, and two disks one in $\mathcal{M}^{\fl}(c_{12}a_{12})$ and one in $\mathcal{M}^{\fl}(c_{21}a_{21})$ attached at its negative end. The last disks contribute to $\t'$ and we conclude that
\[ 
\t'(c_{12})=a_{12}^{\vee},\quad \t'(c_{21})=a_{21}^{\vee}.
\]
Finally, we compute $\t'(y)$. Since $y$ is a small chord corresponding to a critical point at infinity of the shifting Morse function the only contributions to $\t'(y)$ comes from small holomorphic disks that are controlled by the Morse theory. It is straightforward to check that the only rigid disk corresponds to a flow line in $L_{1}$ connecting $y$ to the intersection point and that this flow line corresponds to a holomorphic triangle in $\mathcal{M}^{\fl}(ya_{12}a_{21})$. It follows that
\[ 
\t'(y)=a_{21}^{\vee}a_{12}^{\vee}.
\] 

The actual twisting co-chain takes the cokernel $\overline{LC}_{\ast}(\Lambda)$ of the co-augmentation into the cokernel of the co-units. Concretely, this means disregarding $x$ and $a_{1}^{\vee}$, and we get
\[ 
\t(c_{1})=0,\;\t(c_{2})=a_{2}^{\vee},\;\t(c_{12})=a_{12}^{\vee},\;\t(c_{21})=a_{21}^{\vee},
\;\t(y)=a_{12}^{\vee}a_{21}^{\vee}.
\]

\begin{rem}
	We point out that the parallel copies algebra $CE_{\parallel}^{\ast}(\Lambda)$ is defined using a fixed augmentation (in the current example the zero augmentation) on the one copy version of $CE^{\ast}(\Lambda)$. Here this is reflected in the change of variables $t-e_{1}=y$.
\end{rem}

\subsubsection{Products of spheres}
We will consider a Legendrian embedding $\Lambda\subset\R^{2(m+k)-1}$, where the ambient space is
standard contact $(2(m-k)-1)$-space with coordinates $(x,y,z)\in\R^{m+k-1}\times\R^{m+k-1}\times\R$
and contact form $dz-y\cdot dx$. We will define it by describing its front in $\R^{m+k-1}\times\R$.
To this end consider first the following construction of the front of the Legendrian unknot in
$\R^{n}\times\R$. Take a disk $D^{n}$ lying in $\R^n$. Think of it as having multiplicity two and
lift one of the sheets up in the auxiliary $\R$-direction (with coordinate $z$) keeping it fixed
along the boundary. In this way we construct the front of the standard unknot with Reeb chord at the maximum distance between the two sheets and a cusp edge along the boundary of $D^{n}$. Consider now instead the base $\R^{m+k-1}$ and the standard embedding of the $k$-dimensional sphere $S^{k}$ into this space. A tubular neighborhood of this embedding has the form $S^{k}\times D^{m-1}$ with fibers $D^{m-1}$. Now take two copies of this tubular neighborhood and repeat the above construction for the $D^{m-1}$ in each fiber. The result is the front of a Legendrian $S^{k}\times S^{m-1}$ with an $S^k$ Bott-family of Reeb chords corresponding to the maxima in fibers. Figure \ref{prodsphere} shows this front after Morse perturbation. The resulting Legendrian $\Lambda$ has only two Reeb chords. We denote them $a$ of grading $|a|=-(m+k)$ and $b$ of grading $|b|=-m$. Note also that $\Lambda$ has an exact Lagrangian filling $L\approx D^{m}\times S^{k}$.
 
\begin{figure}[h!]
    \centering
    \begin{tikzpicture}[scale=1]

    \tikzset{->-/.style={decoration={ markings,
                mark=at position #1 with {\arrow{>}}},postaction={decorate}}}


        \begin{scope} 

            \draw[black, thick=1] (0,0) to[in=180,out=0] (1,1); 
            \draw[black, thick=1] (2,0) to[in=0,out=180] (1,1); 
 \draw[black, thick=1] (0,0) to[in=180,out=0] (1,-1); 
 \draw[black, thick=1] (2,0) to[in=0,out=180] (1,-1); 

       \draw[black, dashed] (1,-1) to (1,1); 
       \node at (1.2, 0) {\footnotesize{$a$}}; 
        \end{scope}

        \begin{scope} [xshift=3cm] 

            \draw[black, thick=1] (0,0) to[in=180,out=0] (1,1); 
            \draw[black, thick=1] (2,0) to[in=0,out=180] (1,1); 
 \draw[black, thick=1] (0,0) to[in=180,out=0] (1,-1); 
 \draw[black, thick=1] (2,0) to[in=0,out=180] (1,-1); 
   \draw[black, dashed] (1,-1) to (1,1); 
       \node at (1.2, 0) {\footnotesize{$b$}}; 
    
       \draw[black, ->-=1 ] (2.7,-1.5) to[in=315, out=180] (1.7,-0.9); 
        \node at (3.2, -1.5) {\footnotesize $S^{m-1}$};
        \end{scope}        

        \begin{scope} [xshift=1.5cm, yshift=-2cm] 

      \draw[thick, fill=black, scale=5] (0,0) circle(.01);
      \draw[thick, fill=black, scale=5] (0.4,0) circle(.01);

         \draw(0,0) to[in=90, out=90] (2,0);
         \draw(0,0) to[in=270, out=270] (2,0);
        \node at (1,0) {\footnotesize{$S^k$}}; 

            \draw[dotted] (0,0) to (-0.4,0.8); 
            \draw[dotted] (2,0) to (2.4,0.8);

        \end{scope} 

\end{tikzpicture}
    \caption{Front of products of spheres}
    \label{prodsphere}
\end{figure}

Consider first the case when $L$ is decorated by $-$. If $d$ is the differential in 
$CE^{\ast}(\Lambda)$ we have
\[ 
da = 0,\; db=0.
\]
The Floer cohomology of $L$ is defined by choosing a shifting function which is decreasing at infinity and we find that $CF^{\ast}$ has two generators $M$, $|M|=m+k$ and $S$, $|S|=m$. As in the case of the unknot $\mathcal{M}^{\fl}(a)$ sweeps $L$ and $\mathcal{M}^{\fl}(b)$ sweeps $D^{m}\times \mathrm{pt}$. It follows that the twisting co-chain is
\[ 
\t(a)=M^{\vee},\; \t(b)=S^{\vee}
\] 
and duality holds.

Consider second the case when $L$ is decorated by $+$. In this case there are additional generators of $LC_{\ast}^{\parallel}(\Lambda)$ corresponding to the Morse theory of $\Lambda$. We have in addition to $a$ and $b$ above also
\[ 
x,\;|x|=0,\quad s_{1},\;|s_{1}|=-k,\quad  s_{2},\;|s_{2}|=-(m-1),\quad y,|y|=-(m+k-1).
\]
It follows that $CE^{\ast}_{\parallel}(\Lambda)$ is generated by $s_{1}$, $s_{2}$, $y$, $a$, and $b$. Using the flow tree description of moduli spaces $\mathcal{M}^{\sy}$ one verifies that if $d$ is the differential on $CE^{\ast}_{\parallel}(\Lambda)$ then
\begin{align*} 
d a &=y - ((-1)^{km}bs_{1}+s_{1}b),\; dy = s_{1}s_{2} + (-1)^{k(m-1)}s_{2}s_{1},\\
db &=s_{2},\; ds_{1}=0,\; ds_{2}=0.
\end{align*}

The Floer cohomology of $L$ is defined by choosing a shifting function which is increasing at
infinity and we find that $CF^{\ast}$ has two generators $M$, $|M|=0$ and $S$, $|S|=k$, where
$M$
is the unit. It follows that $(\Bar CF^{\ast}(L))^\# \simeq \Omega CF_{\ast}(L))$ is generated by
$S^{\vee}$ with $|S^{\vee}|=-k$ and the twisting co-chain is
\[ 
\t(a)=0,\; \t(y) = 0,\; \t(b)=0,\; \t(s_{2})=0,\;\t(s_{1})=S^{\vee},
\] 
and duality holds.

\subsubsection{Plumbings of simply-connected cotangent bundles} 

Let $\mathcal{T}$ be a tree with vertex set $\Gamma$. For each $v \in \Gamma$, let $M_v$ be a compact
simply-connected manifold of dimension $n \geq 3$. We will see that duality holds between
the wrapped and the compact Fukaya categories of the symplectic manifold $X_\mathcal{T}$
obtained by plumbing the collection of $T^*M_v$ according to the tree $\mathcal{T}$. As usual, we take the pre-surgery
perspective. Hence, consider a handle decomposition of each $M_v$ with a unique top-dimensional $n$-handle. Removing this handle, we get manifolds $L_v$ with spherical boundary $\Lambda_v$. Let
$W_{\mathcal{T}}$ be the subcritical
Weinstein manifold obtained by plumbing the cotangent bundles $T^*L_v$ according to the tree
$\mathcal{T}$. We take the plumbing region to be away from the boundary of $L_v$. Write $\Lambda =
\bigsqcup_v \Lambda_v$ for the Legendrian in the boundary of the subcritical Weinstein manifold
$W_\mathcal{T}$ which is filled by the Lagrangian $L=\bigcup_{v} L_{v}$.
Equip the components of $\Lambda$ with either $+$ or $-$ labeling. 

\begin{thm} \label{plumbs}  
	If $n \geq 3$ and $L_v$ is simply-connected for each $v\in\Gamma$, then $CE^*(\Lambda)$ and $CF^*(L)$ are Koszul dual. 
\end{thm}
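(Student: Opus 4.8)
The plan is to reduce Theorem \ref{plumbs} to Theorem \ref{mainthm}. By hypothesis each $L_v$ is a compact simply-connected manifold of dimension $n\geq 3$ with boundary a sphere $\Lambda_v\cong S^{n-1}$, so $\Lambda$ is simply-connected; hence the first condition in Theorem \ref{mainthm}, that $LC_*(\Lambda)$ be a locally finite, simply-connected $\k$-bimodule, will follow once we verify local finiteness. Local finiteness is automatic here: $\Lambda$ has finitely many Reeb chords (after a small perturbation making the Bott families of chords of the unknotted spheres Morse-nondegenerate, as in the product-of-spheres example above) and, since each $\Lambda_v$ is simply-connected with no $1$-cells needed, each $\mathcal{E}_v$ from the Adams--Hilton model of $C_{-*}(\Omega_{p_v}\Lambda_v)\simeq C_{-*}(\Omega S^{n-1})$ is finite in each degree; so $LC_*(\Lambda)$ is a finite-rank $\k$-bimodule in each degree and is supported in non-positive degrees with $LC^0\cong\k$ and $LC^{-1}=0$ because $n-1\geq 2$. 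The remaining hypothesis of Theorem \ref{mainthm} is $HW^*(L)=0$, equivalently (by the discussion after Theorem \ref{ainfiso}) the vanishing of the wrapped Floer cohomology of $L=\bigcup_v L_v$ inside $W_\mathcal{T}$.

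The key step is therefore: $HW^*(L)=0$ for the collection of plumbing Lagrangians $L=\bigcup_v L_v$ in the subcritical Weinstein manifold $W_\mathcal{T}$. I would argue this as follows. The manifold $W_\mathcal{T}$ is obtained by plumbing the cotangent bundles $T^*L_v$ of manifolds-with-boundary $L_v$; crucially $W_\mathcal{T}$ is \emph{subcritical} (it has no top-dimensional Weinstein handles, since we removed the $n$-handle of each $M_v$ before plumbing, and the plumbing operation itself does not add critical handles relative to the disjoint union of the $T^*L_v$). As noted in the paper immediately after Theorem \ref{ainfiso}, $HW^*(L)=0$ whenever $X$ is subcritical (or, more generally, flexible); the underlying reason is that in a subcritical Weinstein manifold every Lagrangian disk-cotangent-type object is displaceable — one can use the subcritical handle structure to produce a Hamiltonian isotopy displacing any compact-core Lagrangian from itself after sufficient wrapping, forcing $HW^*$ to vanish. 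I would make this precise by invoking the displaceability of Lagrangians in subcritical Weinstein domains (Cieliebak's splitting $W_\mathcal{T}\cong W'\times\mathbb{C}$ up to Weinstein homotopy, under which the $\mathbb{C}$-factor lets one displace), which gives $HW^*(L_v,L_w)=0$ for all $v,w$, and hence $HW^*(L)=\bigoplus_{v,w}HW^*(L_v,L_w)=0$.

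With both hypotheses of Theorem \ref{mainthm} in hand, the conclusion is immediate: the twisting cochain $\t\colon LC_*(\Lambda)\to\A$ is Koszul, so $CE^*(\Lambda)\to\A\simeq\Omega(\k_-\oplus CF_*(L))$ is a quasi-isomorphism, and $\mathrm{RHom}_{CF^*(L)}(\k,\k)\simeq CE^*(\Lambda)$, which is exactly the statement that $CE^*(\Lambda)$ and $CF^*(L)$ are Koszul dual in the sense of Section \ref{algkoszul}. One small point to address is the labeling: the theorem allows each component of $\Lambda$ to carry a $+$ or $-$ decoration, and Theorem \ref{mainthm} is stated uniformly for any such decoration (the augmentation $\epsilon_L$ from the exact filling $L$ exists in all cases since $L$ is exact and relatively spin with vanishing Maslov class, the latter holding because $T^*L_v$ has trivial canonical bundle and the plumbing is done away from $\partial L_v$), so no case analysis on decorations is needed.

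\textbf{Main obstacle.} The substantive point is the vanishing $HW^*(L)=0$. While the paper asserts this holds for subcritical $X$, one must be a little careful that the plumbing $W_\mathcal{T}$ of cotangent bundles of manifolds-\emph{with-boundary} is genuinely subcritical as a Weinstein manifold and that the displaceability argument applies to the non-compact Lagrangians $L_v$ (cylindrical at infinity over $\Lambda_v$); I expect this to be the step requiring the most care, handled by the Cieliebak $\times\,\mathbb{C}$ splitting together with the observation that $L_v$ remains a Lagrangian of "subcritical type" after the splitting, so that its wrapped cohomology vanishes. Everything else — local finiteness, simple-connectedness, existence of the augmentation — is routine given the hypotheses $n\geq 3$ and $L_v$ simply-connected.
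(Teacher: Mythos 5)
There is a genuine gap, and it sits exactly where the real work of the theorem is. You read the hypothesis of Theorem \ref{mainthm} that $LC_*(\Lambda)$ be a ``simply-connected $\k$-bimodule'' as following from the topological simple-connectedness of $\Lambda_v\cong S^{n-1}$. But in this paper ``simply-connected'' for a graded $\k$-bimodule is the grading condition $LC^0\cong\k$ and $LC^{-1}=0$, i.e.\ that every Reeb chord generator (and every loop-space generator on $+$-components) sits in degree $\le -2$. That is not automatic: the Reeb chords of $\Lambda_v\subset\partial W_{\mathcal T}$ are created by the Weinstein handles of $T^*L_v$, and a $k$-handle contributes chords of grading $\le -(n-k)$, so a $1$- or $(n-1)$-handle can produce chords in degree $0$ or $-1$; likewise the mixed chords introduced by the Hopf-link boundary connected sums used to perform the plumbing have gradings $-d$ and $-(n-d)$ for a choice of $d$. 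The paper's proof consists precisely of arranging these degrees: choose a handle decomposition of each $L_v$ with no $1$- and no $(n-1)$-handles (available by Smale for $n\ge 5$, and for $n=4$ only after the $\times\,\R^N$ stabilization of Remarks \ref{r:dim4} and \ref{r:dim4iso}), and choose $2\le d\le n-2$ for the Hopf-link chords. Your assertion that $LC^{-1}=0$ ``because $n-1\ge 2$'' is unsupported, and in fact false as stated for $n=3$: there $2\le d\le n-2$ is empty, and even the Adams--Hilton generator of $C_{-*}(\Omega S^{2})$ lives in degree $-1$. Your picture of $\Lambda_v$ as a standard unknotted sphere with a Bott family of chords is also not the right one; $\Lambda_v$ is the Legendrian boundary of $L_v$ determined by the chosen handle structure.

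Consequently the case $n=3$ needs a separate argument, which your proposal omits entirely. The paper handles it by observing (Remark \ref{locfinrem}) that what Theorem \ref{mainthm} really requires is local finiteness of $\Bar(LA^*)$, and this holds here because the plumbing graph is a tree: any word of Reeb chords not consisting solely of connecting chords pointing away from a fixed root must increase in degree with its length, so each graded piece of the bar construction is finite. By contrast, your treatment of the hypothesis $HW^*(L)=0$ is consistent with the paper: $W_{\mathcal T}$ is subcritical, and the paper invokes exactly this (in the sentence following Theorem \ref{ainfiso}) without further elaboration, so your displaceability discussion is a reasonable expansion of a point the paper takes for granted. But the reduction to Theorem \ref{mainthm} is not complete without the degree bookkeeping above, and that bookkeeping is the theorem's actual content.
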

\begin{proof} 
	Consider first the case $n \geq 5$. Pick a handle decomposition of $L_{v}$ without
    $1$- and $(n-1)$-handles. The existence of such a handle decomposition is equivalent to
    simply-connectivity in high dimensions by the work of Smale. Consider the corresponding
    Weinstein handle decomposition of $T^{\ast}L_{v}$.  Attaching a $k$-handle alters the Legendrian
    boundary by surgery and adds Reeb chords in the co-core sphere of the handle of grading $\le
    -(n-k)$ and it follows that all Reeb chords of $\Lambda_{v}$ have grading $\le -2$. To construct
    $\Lambda\subset \partial W_{\mathcal{T}}$, we perform a version of boundary connected sum as
    follows. For each edge in $\mathcal{T}$ we pick a $2n$-ball $B$ with two transversely
    intersecting Lagrangian disks $D\subset B$ that intersect the boundary sphere $\partial B$ in a
    standard Legendrian Hopf link $\Delta$. We then make boundary connected sum adding $(B,\Delta)$
    to join the $T^{\ast}L_{v}$ according to the tree. This adds Reeb chords of index $\le -(n-1)$
    in the boundary connected sum handles and further Reeb chords corresponding to the Reeb
    chords of each Hopf link, which effectively has four Reeb chords: two Reeb chords connecting the unknot components to themselves of grading $-n$ and two mixed Reeb chords connecting distinct components. We can pick gradings so that the gradings of these two mixed chords are $-d$ and
    $-(n-d)$ for any $d$, where the first Reeb chord goes from the component closest to priori fixed
    root of the tree $\mathcal{T}$ to the one further from it and the other in the opposite
    direction. Taking $d$ between $2$ and $n-2$ we find that $LC_*(\Lambda)$ is simply connected as is $\Lambda$. The result then follows from Theorem \ref{mainthm}.
    
For $n=4$, we can stabilize by multiplying by $\R^{N}$ as described in Remarks \ref{r:dim4} and \ref{r:dim4iso} to get 1-reduced versions of the Legendrian and Lagrangian algebras and then use the above argument.
         
Finally for $n=3$, the assumptions of Theorem \ref{mainthm} does not hold but we recall from
Remark \ref{locfinrem} that to apply the duality result from Theorem \ref{mainthm} all we really need is that $\Bar (LA^*)$ is locally finite. This is easily seen to be the case in our case, since the
    plumbing is according to a tree (which by definition has no cycles) and for any word of Reeb chords
    that do not consist of connecting Reeb chords going away from the root of the tree the
    grading has to increase with the size of the word. Alternatively, in this case we know by
    Perelman's theorem that we can take $\Lambda$ as a link of standard Legendrian spheres linked
    according to the tree as Hopf links and $\Bar (LA^*)$ can be seen to be locally finite directly. \end{proof}
\begin{rem}
	The case $n=2$ corresponds to plumbing of copies of $T^*S^2$. This case was studied in \cite{EtLe} and a
    version of the duality result still holds, at least when $\mathrm{char}(\mathbb{K}) =0$.
    However, the above argument fails in that case and a more complicated argument using an additional
    grading is used in \cite{EtLe}. Also a set of examples for $n=1$ are studied 
    in \cite{LePol}, where the plumbing tree is a star and the corresponding
    symplectic manifolds is a punctured torus. The duality still holds in this case, even though
    this is a plumbing of $T^*S^1$'s (not simply-connected). The proof of duality given in \cite{LePol} uses homological mirror symmetry. 
\end{rem}

\subsubsection{The trefoil} 

\begin{figure}[h!]
    \centering
    \begin{tikzpicture}[scale=1]

    \tikzset{->-/.style={decoration={ markings,
                mark=at position #1 with {\arrow{>}}},postaction={decorate}}}
            
\draw [black, thick=1, ->-=.7] (-1.5,1) to[in=90,out=190] (-2.3,0);
        \draw [black, thick=1] (-1.5,1) to[in=135,out=10] (-0.3,0.3);         
    \draw [black, thick=1]  (-1.5,-1)to[in=270,out=170] (-2.3,0);
    \draw [black, thick=1] (-1.5,-1)to[in=225,out=350] (-0.3,-0.3)  ;
    
    \draw [black, thick=1] (1.4,1.7) to[in=45,out=170] (0.3,0.3);         
       
    \draw [black, thick=1] (1.4,1.7) to[in=140,out=350] (1.8,1.1);
        \draw [black, thick=1] (2.05,0.85) to[in=40,out=320] (1.8,-0.3);
    \draw [black, thick=1] (2.05,-0.35) to[in=40,out=320] (1.8,-1.5);

     \draw [black, thick=1] (1.8,-1.5)to[in=10,out=220] (1.5,-1.7)  ;

    \draw [black, thick=1] (1.5,-1.7)to[in=315,out=190] (0.3,-0.3)  ;

    \draw [black, thick=1] (-0.3,0.3) to (0.3,-0.3);
    \draw [black, thick=1] (-0.3,-0.3) to (-0.1,-0.1);
    \draw [black, thick=1] (0.1,0.1) to (0.3,0.3);    
        
        \draw [black, thick=1] (2.5,1.7) to[in=40,out=190] (1.8,0.9);
        \draw [black, thick=1]  (1.8,0.9)to[in=140,out=220] (1.8,-0.1);
        \draw [black, thick=1]  (1.8,-0.3)to[in=140,out=220] (1.8,-1.3);

 \draw [black, thick=1]  (2.05,-1.45)to[in=170,out=320] (2.5,-1.7);

    \draw [black, thick=1] (2.5,1.7) to[in=135,out=10] (3.7,0.3);         
    \draw [black, thick=1] (2.5,-1.7)to[in=225,out=350] (3.7,-0.3)  ;
    
    \draw [black, thick=1] (5.5,1) to[in=90,out=350] (6.3,0);
    \draw [black, thick=1] (5.5,1) to[in=45,out=170] (4.3,0.3);         
    \draw [black, thick=1]  (6.3,0)to[in=10,out=270] (5.5,-1);
    \draw [black, thick=1] (5.5,-1)to[in=315,out=190] (4.3,-0.3)  ;

    \draw [black, thick=1] (3.7,0.3) to (4.3,-0.3);
    \draw [black, thick=1] (3.7,-0.3) to (3.9,-0.1);
    \draw [black, thick=1] (4.1,0.1) to (4.3,0.3);

    \node at (0,0.3) {\footnotesize{$c_1$}}; 
    
    \node at (1.5,1) {\footnotesize{$b_{1}$}}; 
    \node at (1.5,-0.25) {\footnotesize{$b_{2}$}}; 
    \node at (1.5,-1.4) {\footnotesize{$b_{3}$}}; 
    
    \node at (4,0.3) {\footnotesize{$c_2$}}; 

    \end{tikzpicture}
    \caption{Trefoil }
    \label{trefoil}
\end{figure}

Consider the standard Legendrian trefoil $\Lambda \subset S^{3}$ described in Figure
\ref{trefoil}. Let us first consider the case when $\Lambda$ is marked $-$. With respect to the standard choice of orientation datum, the
Chekanov-Eliashberg DG-algebra $CE^{\ast}$ is then given by the free algebra
\[ \mathbb{K} \langle c_1, c_2, b_1,b_2,b_3 \rangle, \ \ |c_1|=|c_2|=-1, |b_{1}|=|b_{2}|=|b_{3}|=0 \]
and the non-trivial part of the differential can be read from Figure \ref{trefoil} as follows: 
\begin{align*}
    dc_1 &= 1 + b_{1} + b_{3} + b_{3}b_{2}b_{1} \\
    dc_2 &=-1 - b_{1} - b_{3} - b_{1}b_{2}b_{3}. 
\end{align*}

It is well known that $\Lambda$ has an exact Lagrangian torus filling. (In fact, there are at least
five of them, see \cite{EKH}).  Any of these can be obtained by doing surgery (pinch move) at
$b_1$, $b_2$ and $b_3$ in some order. Corresponding augmentations $\epsilon_L\colon CE^* \to \K$ are given by their values
$\epsilon(b_{1}), \epsilon(b_{2}), \epsilon(b_{3}) \in \K$ subject to the condition:
\[ 
1+ \epsilon(b_{1}) + \epsilon(b_{3}) + \epsilon(b_{1}) \epsilon(b_{2}) \epsilon(b_{3})  =0.   
\]

Note that since $CE^*$ is not simply-connected, Theorem \ref{mainthm} does not apply 
here. In fact, duality does not hold in this example. However, Theorem
\ref{comple} shows that there is a quasi-isomorphisms of completions  
\[
\widehat{CE}^* \cong \widehat{\Omega} C_*(T^2) \cong \K [[u, v]] 
\] 
where the latter is a power series ring in two commuting variables concentrated in degree 0. 

Thus, the completion map composed with the twisting cochain gives an algebra map: 
\[ 
H^0(CE^*) = \K \langle b_{1},b_{2},b_{3} \rangle / \langle 1+b_{1}+b_{3}+b_{3}b_{2}b_{1} ,
1+b_{1}+b_{3}+b_{1}b_{2}b_{3} \rangle   \to \K[[u,v]]. \] 
We claim that this map is injective. Indeed, observe that $H^0(CE^*)$ is a commutative
algebra: 
\[ 
d( c_1 + c_2 + b_{3}b_{2} c_1 + c_1 b_{2}b_{3}) = b_{3}b_{2} - b_{2}b_{3}, 
\]
and thus, $b_{2}$ commutes with $b_{3}$ in homology. Using this, one shows similarly that $b_{1}$ commutes with $b_{2}$ and $b_{3}$, see \cite{CaMu}. 
Hence, we have a completion map from a commutative ring to its completion
\[ 
H^0(CE^*) = \K [b_{1},b_{2},b_{3}] / (1+b_{1}+b_{3}+b_{1}b_{2}b_{3}) \to \K[[u,v]]. 
\]
It is a well known theorem, known as Krull intersection theorem, 
in commutative algebra that for any commutative Noetherian ring which is an
integral domain the completion map is injective. Thus, even though duality fails in this example, partial information can still be obtained by considering completions.

We next describe the twisting cochain $\t\colon LC_{\ast}(\Lambda)\to \Bar CF^{\ast}(L)^{\#}$ for one of the Lagrangian fillings $L$ of $\Lambda$. More precisely, we choose the filling which is obtained by pinching first at $b_{1}$, then at $b_{2}$, then at $b_{3}$, and finally filling the resulting unknots with Lagrangian disks, see \cite[Section 8.1]{EKH}. We think of the Lagrangian filling in as two disks connected by three twisted bands corresponding to the three pinchings. We next consider moduli spaces of holomorphic disks with boundary in $L$. Here \cite[Sections 4 and 5]{EKH} gives a description in terms of Morse flow trees which gives the following:
\begin{itemize}
\item $\mathcal{M}^{\fl}(b_{1})$ consists of one disk, $\delta_{1}^{1}$. 
\item $\mathcal{M}^{\fl}(b_{2})$ consists of two disks, $\delta_{2}^{1}$ and $\delta_{2}^{2}$.
\item $\mathcal{M}^{\fl}(b_{3})$ consists of two disks, $\delta_{3}^{1}$ and $\delta_{3}^{2}$
\end{itemize}
The boundaries of these disks are as follows.
\begin{itemize}
	\item $\partial \delta_{1}^{1}$ is fiber in the first twisted band.
	\item $\partial \delta_{2}^{1}$ is a fiber in the second twisted band, and $\partial \delta_{2}^{2}$ runs across the first and the second twisted band.
	\item $\partial\delta_{3}^{1}$ is a fiber in the third twisted band, and $\partial \delta_{3}^{2}$ runs across the second and the third twisted bands.
\end{itemize}
We next describe the moduli spaces $\mathcal{M}^{\fl}(c_{2})$, $\mathcal{M}^{\fl}(c_{1})$
in a completely analogous manner. The space has four components, all diffeomorphic to intervals, as follows:
\begin{itemize}
\item $\theta_{1}$ with one boundary point the disk in
    $\mathcal{M}^{\sy}(c_{2}b_{1})$ with $\delta_{1}^{1}$ attached, and the other the disk in
        $\mathcal{M}^{\sy}(c_{2}b_{1}b_{2}b_{3})$ with $\delta_{1}^{1}$, $\delta_{2}^{1}$, and $\delta_{3}^{2}$ attached.
\item $\theta_{2}$ with one boundary point the disk in
    $\mathcal{M}^{\sy}(c_{2}b_{3})$ with $\delta_{3}^{1}$ attached, and the other the disk in
        $\mathcal{M}^{\sy}(c_{2}b_{1}b_{2}b_{3})$ with $\delta_{1}^{1}$, $\delta_{2}^{2}$, and $\delta_{3}^{1}$ attached.
\item $\theta_{3}$ with one boundary point the disk in
    $\mathcal{M}^{\sy}(c_{2}b_{3})$ with $\delta_{3}^{2}$ attached, and the other the disk in
        $\mathcal{M}^{\sy}(c_{2}b_{1}b_{2}b_{3})$ with $\delta_{1}^{1}$, $\delta_{2}^{2}$, and $\delta_{3}^{2}$ attached.
\item $\theta_{4}$ with one boundary point the disk in $\mathcal{M}^{\sy}(c_{2})$ and the other the
    disk in $\mathcal{M}^{\sy}(c_{2}b_{1}b_{2}b_{3})$ with $\delta_{1}^{1}$, $\delta_{2}^{1}$, and $\delta_{3}^{1}$ attached.
\end{itemize}
Here the disks in $\theta_{4}$ sweeps the right hand disk of $L$, whereas the evaluation maps of
$\theta_{j}$, $1\le j\le 3$ does not map surjectively onto either disk. The moduli space
$\mathcal{M}^{\fl}(c_{1})$ also has four components only one of which sweeps the left hand disk of $L$.

In order to compute the Floer cohomology $CF^{\ast}(L)$ we pick a Morse function with one maximum $M$ and two saddle points $S_{1}$ and $S_{2}$. Letting the maximum lie in the right hand disk we find:
\[ 
\t(c_{1})=0,
\]
since the only way to rigidify a disk of dimension one is that its boundary passes the maximum $M$.

The twisting cochain can now in principle be computed from the moduli spaces $\mathcal{M}^{\fl}$ described above by attaching flow trees. To get an algebraically feasible twisting co-chain we first arrange the perturbation scheme so that 
\[ 
\t(b_{1})= S_{1}^{\vee}+S_{1}^{\vee}S_{2}^{\vee},
\] 
where the first term is $\delta_{1}^{1}$ with a flow line and the second with a flow tree, and next so that
\[ 
\t(b_{3})= -S_{1}^{\vee},
\] 
the disk $\delta_{2}^{2}$ with a flow line contributes, other contributions cancel (the disk $\delta_{2}^{2}$ with two flow lines and the same disk with a flow tree, the two distinct disks with one flow lines). Remaining parts of the twisting cochain are now determined from the co-product in the Floer homology:
\[ 
dM^{\vee}= S_{1}^{\vee}S_{2}^{\vee}-S_{2}^{\vee}S_{1}^{\vee},
\]
the twisting cochain equation, and the sweeping property of $\theta_{4}$, as follows: 
\[ 
\t(c_{2})=M^{\vee}\left(\frac{1+S_{1}^{\vee}}{1+S_{1}^{\vee}+S_{1}^{\vee}S_{2}^{\vee}}\right) .
\] 
and
\[ 
\t(b_{2})= \frac{S_{2}^{\vee}}{(1+S_1^{\vee}+S_{1}^{\vee}S_{2}^{\vee})}.
\]
It is possible to check that these power series agree with the geometric count. We leave out the
details but describe the mechanism. In order to arrange that only one flow line can be attached to
$\delta_{1}^{1}$ we order the stable manifolds of the flow line of the parallel copies so that if a
flow line (or flow tree) between copy $j$ and $j-l$ is attached then following $\delta_{1}^{1}$ we already passed all intersections with stable manifolds between copy $j-l$ and $k$ for $k<j-l$. Now, if the disk intersects the collection of stable manifolds in the opposite direction this means that we can jump down in all ways, which then gives the desired power series.

We finish this section by a brief discussion of the case $L$ decorated by $+$ and parallel copies. As usual this introduces two extra generators in addition to the Reeb chords above in $LC_{\ast}$, $x$, $|x|=0$ and $y$, $|y|=-1$, where $x$ is the counit corresponding to the minimum of the shifting function and $y$ is the maximum. In the reduced coalgebra (disregarding $x$) we get the new differential (using the augmentation $\epsilon_{L}$ above to change coordinates): 
\begin{align*}
dc_1 &= b_{1} + b_{3} - b_{3}b_{2}+b_{3}b_{2}b_{1} \\
dc_2 &=-y - b_{1} - b_{3} +b_{2}b_{3} - b_{1}b_{2}b_{3}. 
\end{align*}
In this case $CF^{\ast}$ is defined instead by choosing a shifting function that increases at infinity, and $CF^{\ast}$ is generated by the unit $u$, $|u|=0$ and $s_{1}$, $s_{2}$ with $|s_{j}|=1$. The new twisting cochain is
\[ 
\t(c_{1})=\t(c_{2})=0,\; \t(y)= \left(s_{1}^{\vee}s_{2}^{\vee}-s_{2}^{\vee}s_{1}^{\vee}\right)\left(\frac{1+s_{1}^{\vee}}{1+s_{1}^{\vee}+s_{1}^{\vee}s_{2}^{\vee}}\right),
\]
and $\t(b_{j})$ is exactly as above, after the substitutions $s_{j}\to S_{j}$, $j=1,2$.

\subsubsection{Mirror of $7_2$}\label{lenny}

We next discuss an example where Koszulity fails and also the completion map fails to be injective,
even though $CE^*$ is supported in non-positive degrees. This
example was shown to us by Lenhard Ng. 

\begin{figure}[h!]
    \centering
    \begin{tikzpicture}[scale=1]

    \tikzset{->-/.style={decoration={ markings,
                mark=at position #1 with {\arrow{>}}},postaction={decorate}}}

 \begin{scope}[scale=0.5, yshift=2.5cm]
    \draw [black, thick=1] (5.5,1) to[in=90,out=350] (6.3,0);
    \draw [black, thick=1, ->-=.3] (5.5,1) to[in=45,out=170] (4.3,0.3);         
    \draw [black, thick=1]  (6.3,0)to[in=10,out=270] (5.5,-1);
    \draw [black, thick=1] (5.5,-1)to[in=315,out=190] (4.3,-0.3)  ;

    \draw [black, thick=1] (3.7,0.3) to (4.3,-0.3);
    \draw [black, thick=1] (3.7,-0.3) to (3.9,-0.1);
    \draw [black, thick=1] (4.1,0.1) to (4.3,0.3);
  
    \node at (4,0.5) {\footnotesize{$a_1$}}; 
    
    \draw[black, thick=1] (3.7, 0.3) to[in=0 ,out=135] (1, 1);
   
    \draw[black, thick=1] (3.7, -0.3) to[in=45 ,out=225] (2.85, -1.15);
     \draw[black, thick=1] (2.65, -1.3) to[in=0 ,out=225] (1, -2);
     \node at (2.75, -1.8) {\footnotesize $b_1$}; 
     
    \draw[black, thick=1] (1, -2) to[in=0 ,out=180] (-1.7, -1);

    \end{scope}

    \begin{scope}[scale=0.5]
    \draw [black, thick=1] (5.5,1) to[in=90,out=350] (6.3,0);
    \draw [black, thick=1] (5.5,1) to[in=45,out=170] (4.3,0.3);         
    \draw [black, thick=1]  (6.3,0)to[in=10,out=270] (5.5,-1);
    \draw [black, thick=1] (5.5,-1)to[in=315,out=190] (4.3,-0.3)  ;

    \draw [black, thick=1] (3.7,0.3) to (4.3,-0.3);
    \draw [black, thick=1] (3.7,-0.3) to (3.9,-0.1);
    \draw [black, thick=1] (4.1,0.1) to (4.3,0.3);

    \node at (4,0.5) {\footnotesize{$a_2$}};

    \draw[black, thick=1] (3.7, 0.3) to[in=0 ,out=135] (1, 2);
    \draw[black, thick=1] (3.7, -0.3) to[in=0,out=225] (-1.8,-2.4);

    \draw[black, thick=1] (1, 2) to[in=80 ,out=180] (0.2, 0.9);
    \draw[black, thick=1] (0.1, 0.5) to[in=0 ,out=260] (-1.7, -1.5);
    \node at (0.4, 0.2) {\footnotesize $b_2$};  
        
    \draw[black, thick=1] (-1.7, -1.5) to[in=0 ,out=180] (-3.7, -1);
    
    \node at (-3.3,-0.65) {\footnotesize $b_3$}; 
    \draw[black, thick=1] (-3.7, -1) to[in=0 ,out=180] (-5.4, -1);
     \node at (-5.9,-1.35) {\footnotesize $b_4$}; 
    \draw[black, thick=1] (-5.8, -1) to[in=0 ,out=180] (-10.3, -1);
      \node at (-8.1,-0.65) {\footnotesize $b_5$}; 
    
    \node at (-10.8,-1.35) {\footnotesize $b_6$}; 

    \draw[black, thick=1] (-10.65, -1) to[in=225 ,out=180] (-13.3, 1.5);
    
    \draw[black, thick=1] (-11.4, -2.4) to[in=225 ,out=180] (-12.3, -0.7);
  
    \draw[black, thick=1] (-13.3, 1.5) to[in=180 ,out=45] (1, 3.5);
   
     \draw[black, thick=1] (-12.1, -0.4) to[in=180 ,out=45] (-1.7, 1.5);
  
     \node at (-12.8,-0.6) {\footnotesize $b_7$}; 

    \draw[black, dashed] (-5, 1.8) to (-5,3.7);
    \draw[black, dashed] (1, 2.2) to (1,3.5);

    \end{scope}

    \begin{scope}[scale=0.3, xshift=-8cm ]
    \draw [black, thick=1] (5.5,1) to[in=90,out=350] (6.3,0);
    \draw [black, thick=1] (5.5,1) to[in=45,out=170] (4.3,0.3);         
    \draw [black, thick=1]  (6.3,0)to[in=10,out=270] (5.5,-1);
    \draw [black, thick=1] (5.5,-1)to[in=315,out=190] (4.3,-0.3)  ;

     \node at (4,0.7) {\footnotesize{$a_3$}};

    \draw [black, thick=1] (3.7,0.3) to (4.3,-0.3);
    \draw [black, thick=1] (3.7,-0.3) to (3.9,-0.1);
    \draw [black, thick=1] (4.1,0.1) to (4.3,0.3);
    \draw [black, thick=1] (3.7,-0.3) to[in=90, out=225] (3, -1.5);
    \draw [black, thick=1] (3,-2.2) to[in=180, out=270] (5, -4);
    \end{scope} 

    \begin{scope}[scale=0.3, xshift=-16cm ]
    \draw [black, thick=1] (5.5,1) to[in=90,out=350] (6.3,0);
    \draw [black, thick=1] (5.5,1) to[in=45,out=170] (4.3,0.3);         
    \draw [black, thick=1]  (6.3,0)to[in=10,out=270] (5.5,-1);
    \draw [black, thick=1] (5.5,-1)to[in=315,out=190] (4.3,-0.3)  ;
 
  \node at (4,0.7) {\footnotesize{$a_5$}}; 
    
    \draw [black, thick=1] (3.7,0.3) to (4.3,-0.3);
    \draw [black, thick=1] (3.7,-0.3) to (3.9,-0.1);
    \draw [black, thick=1] (4.1,0.1) to (4.3,0.3);
    \draw [black, thick=1] (3.7,-0.3) to[in=90, out=225] (3, -1.5);
    \draw [black, thick=1] (3,-1.8) to[in=180, out=270] (5, -4);

    \end{scope}

     \begin{scope}[scale=0.3, xshift=-12.5cm, yshift=-3.3cm]
    \draw [black, thick=1] (5.5,1) to[in=90,out=350] (6.3,0);
    \draw [black, thick=1] (5.5,1) to[in=45,out=170] (4.3,0.3);         
    \draw [black, thick=1]  (6.3,0)to[in=10,out=270] (5.5,-1);
    \draw [black, thick=1] (5.5,-1)to[in=315,out=190] (4.3,-0.3)  ;

  \node at (4,-0.7) {\footnotesize{$a_4$}}; 
    
    \draw [black, thick=1] (3.7,0.3) to (4.3,-0.3);
    \draw [black, thick=1] (3.7,-0.3) to (3.9,-0.1);
    \draw [black, thick=1] (4.1,0.1) to (4.3,0.3);
    \draw [black, thick=1] (3.7,-0.3) to[in=0, out=225] (1.5, -0.7);
    
    \draw[black,thick=1] (3.7,0.3) to[in=225, out=135](3.3, 2);  
    \draw[black,thick=1] (3.3,2) to[in=135, out=45](8.2, 3.6);  
     
    \end{scope} 

 \begin{scope}[scale=0.3, xshift=-20.5cm, yshift=-3.3cm]
    \draw [black, thick=1] (5.5,1) to[in=90,out=350] (6.3,0);
    \draw [black, thick=1] (5.5,1) to[in=45,out=170] (4.3,0.3);         
    \draw [black, thick=1]  (6.3,0)to[in=10,out=270] (5.5,-1);
    \draw [black, thick=1] (5.5,-1)to[in=315,out=190] (4.3,-0.3)  ;

  \node at (4,-0.7) {\footnotesize{$a_6$}}; 
    
    \draw [black, thick=1] (3.7,0.3) to (4.3,-0.3);
    \draw [black, thick=1] (3.7,-0.3) to (3.9,-0.1);
    \draw [black, thick=1] (4.1,0.1) to (4.3,0.3);
    \draw [black, thick=1] (3.7,-0.3) to[in=0, out=225] (1.5, -0.7);
    
    \draw[black,thick=1] (3.7,0.3) to[in=225, out=135](3.3, 2);  
    \draw[black,thick=1] (3.3,2) to[in=135, out=45](8.2, 3.6);  
     
    \end{scope}

    \end{tikzpicture}
    \caption{Mirror of $7_2$}
    \label{ngknot}
\end{figure}

Consider the Legendrian knot $\Lambda$ drawn in Figure \ref{ngknot} decorated $-$. It is easy to see
that two pinch moves indicated by dashed lines in the Figure \ref{ngknot} gives a Legendrian
unknot, hence $\Lambda$ has a Lagrangian torus filling, call it $L$. Thus, we again have a
completion map:
\[ H^0(CE^*) \to \K [[u, v]] \]
where $K[[u,v]]$ is the commutative powerseries algebra in two variables.

$CE^*$ is given by the free algebra:
\[ \K \langle a_1, a_2, a_3, a_4, a_5, a_6, b_1 ,b_2, b_3, b_4, b_5,b_6, b_7 \rangle ,
|a_i|=-1, |b_i| =0 \]
The differential is given by 
\begin{align*}
    da_1 &= -1 + (1+b_1 b_2)b_7 + b_1(1+b_4b_3) (1+b_6 b_5) \\
    da_2 &= 1- b_3(1+ b_2b_1) \\
    da_3 &= 1+ b_3b_4 \\
    da_4 &= 1+ b_5 b_4 \\
    da_5 &= 1+ b_5 b_6 \\
    da_6 &= 1+ b_7 b_6 
\end{align*}

Taking the quotient of $H^0(CE^*)$ by letting $b_4=b_6$, $b_3=b_5=b_7$, $b_1=1$ and $b_2=-1-b_4$
gives
\[ \langle  b_3, b_4 \rangle / \langle 1+ b_3b_4 \rangle \]
which is a non-commutative algebra.

Thus, the completion map cannot be injective in this case. Otherwise, $H^0(CE^*)$ and thus any
quotient of it would have been commutative.

\subsection{Simply connected Legendrian submanifolds} 
Let $\Lambda\subset Y$ be a Legendrian
$(n-1)$-submanifold with $\pi_1(\Lambda)=1$ in the boundary $Y$ of a Weinstein $2n$-manifold $X$
that bounds an exact Lagrangian $L\subset X$. Assume that $c_{1}(X)=0$ and that the Maslov class of $L$ vanishes and that $L$ is relatively spin. Decorate $L$ by $-$. Our next result shows that if the symplectic homology of $X$ vanishes and if all Reeb
chords of $\Lambda$ have negative grading as generators of $CE^{\ast}(\Lambda)$ then $CE^{\ast}(\Lambda)$ is determined by the topology of $L$ and conversely. If $\Lambda$ is a sphere then we write $\overline{L}=L\cup_{\partial} D^{n}$ for
the closed manifold obtained by adding a disk to $L$ along $\Lambda$. 

\begin{thm} Suppose that $\Lambda =\Lambda^{-}$ is simply-connected. Assume that $SH^*(X)=0$ and that $CE^{\ast}(\Lambda)$ is supported in degrees $\le -1$. Then $L$ is simply connected. Moreover, if $\Lambda$ is a sphere then $CE^*(\Lambda)$ is isomorphic to $C_{-*}(\Omega \overline{L})$.
\end{thm}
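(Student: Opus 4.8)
The strategy is to combine the duality results of Section~\ref{Sec:LegLag} with an input from rational homotopy theory. Since $SH^*(X)=0$, a filling such as $L$ (relatively spin, vanishing Maslov class, decorated by $-$) has vanishing wrapped Floer cohomology $HW^*(L)=0$. Indeed, the cotangent-type Lagrangian $L$ in the subcritical-like setting inherits acyclicity of $CW^*(L)$ from $SH^*(X)=0$ via the open-closed / Viterbi\'e{}-transfer argument; alternatively, one invokes the exact triangle relating $SH^*(X)$ to $SH^*(X_\Lambda)$ and the generation result of Theorem~\ref{generation}. Granting $HW^*(L)=0$, Theorem~\ref{comple} applies even without simple connectivity of $\Lambda$: it gives a quasi-isomorphism of DG-algebras
\[
\widehat{CE^*_{\parallel}}(\Lambda)\ \xrightarrow{\ \simeq\ }\ \widehat{\Omega}(\k_{-}\oplus CF_*(L)).
\]
Because $\Lambda$ is simply connected, Theorem~\ref{t:parallel=loops} identifies $CE^*_{\parallel}(\Lambda)$ with $CE^*(\Lambda)$, and one checks the completion is harmless here: the hypothesis that $CE^*(\Lambda)$ is supported in degrees $\le -1$ together with local finiteness of $LC_*(\Lambda)$ (which holds since $\Lambda$ is a closed manifold with finitely many Reeb chords and a finite free loop-space model) makes $LC_*(\Lambda)$ a \emph{connected} $\k$-bimodule; then one uses the word-length spectral sequence to see that the natural map $\Omega LC_*(\Lambda)\to\widehat\Omega LC_*(\Lambda)$ is a quasi-isomorphism in this range. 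So $CE^*(\Lambda)\simeq \widehat\Omega(\k_{-}\oplus CF_*(L))$.

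Next I would translate the algebraic side into topology. Since $L$ is an exact Lagrangian in a Weinstein manifold with the stated hypotheses, $CF^*(L)$ computes the (relative) cohomology of $L$; more precisely $\k_{-}\oplus CF^*(L)\simeq C^*(L,\partial L)$ as $A_\infty$-algebras, equivalently $\k_{-}\oplus CF_*(L)\simeq C_*(\overline L)$ when $\Lambda=\partial L$ is a sphere and $\overline L=L\cup_\partial D^n$ (capping $L$ turns relative cohomology into the cohomology of the closed manifold $\overline L$). The condition that $CE^*(\Lambda)=\Omega LC_*(\Lambda)$ is supported in degrees $\le -1$ forces $H_*(\widehat\Omega C_*(\overline L))$, hence $H_*(\Omega\overline L)$, to be concentrated in degrees $\le 0$ with $H_0=\K$; by a standard argument (the $0$-th homology of the based loop space algebra is the group algebra of $\pi_1$, and here it is $\K$, together with the fact that $\overline L$ is nilpotent or one reduces to the universal cover), this gives $\pi_1(\overline L)=1$ and therefore $\pi_1(L)=1$ since $\overline L$ is obtained from $L$ by attaching a cell of dimension $n\ge 3$. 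This proves the first assertion.

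For the second assertion, assume $\Lambda\cong S^{n-1}$ so $\overline L$ is a closed simply connected manifold. Now the completion is genuinely unnecessary: $\overline L$ simply connected implies $C_*(\overline L)$ (with a cellular or small $A_\infty$-coalgebra model) is simply connected and locally finite, so by the discussion after Lemma~\ref{dualbar0} the cobar $\Omega C_*(\overline L)$ is complete and $\Omega C_*(\overline L)\xrightarrow{\simeq}\widehat\Omega C_*(\overline L)$. Combining with the previous paragraph,
\[
CE^*(\Lambda)\ \simeq\ \Omega\bigl(\k_{-}\oplus CF_*(L)\bigr)\ \simeq\ \Omega C_*(\overline L)\ \simeq\ C_{-*}(\Omega\overline L),
\]
where the last equivalence is Adams' cobar theorem \cite{adams} for the simply connected space $\overline L$; this is exactly Corollary~\ref{cor:CE=loopspace} once we know the locally-finite simple-connectivity hypothesis of Theorem~\ref{mainthm} is met, which it is because $LC_*(\Lambda)$ for $\Lambda=S^{n-1}$ with $|c|\le -2$ for every Reeb chord is simply connected (generated in degrees $\le -2$ apart from the idempotents), using the grading normalization of Remark~\ref{spherecon}. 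The main obstacle is the first step: verifying that the support hypothesis on $CE^*(\Lambda)$ upgrades ``connected'' to the local finiteness and (after the $\pi_1=1$ conclusion) simple-connectivity needed to run the Koszul-duality machinery without completion; once $\pi_1(L)=1$ is established the rest is bookkeeping with the spectral sequences already set up in Section~\ref{barcobarsec}.
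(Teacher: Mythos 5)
Your treatment of the second assertion is essentially the paper's: once $\pi_1(L)=1$ is known and $\Lambda$ is a sphere, everything reduces to Corollary \ref{cor:CE=loopspace} (Theorem \ref{mainthm} plus Adams' theorem), and your observation that the degree hypothesis makes $LC_*(\Lambda)$ simply connected (generators in degrees $\le -2$) is correct and is what makes the uncompleted duality available. The first assertion, however, has a genuine gap.

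You try to extract $\pi_1(L)=1$ from the duality statement via the chain ``$CE^*(\Lambda)$ supported in degrees $\le -1$ $\Rightarrow$ $H_*(\widehat{\Omega}\,C_*(\overline L))$ concentrated in degrees $\le 0$ with $H_0=\K$, hence $H_*(\Omega\overline L)$ likewise, hence $\pi_1=1$.'' The word ``hence'' in the middle is exactly where this breaks. For a non-simply-connected space, the (completed) cobar construction on a small, locally finite chain model does not compute chains on the based loop space: as the paper notes right after Theorem \ref{comple} (citing \cite{chen}), $H^0(\widehat{\Omega}\,C_*(\overline L))$ is the group ring of the \emph{unipotent completion} of $\pi_1(\overline L)$. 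A nontrivial perfect fundamental group has trivial unipotent completion, so $H^0=\K$ is perfectly compatible with $\pi_1(\overline L)\neq 1$; the mirror-of-$7_2$ example in Section \ref{lenny} is included precisely to show how much the completion forgets. (Two smaller issues: mere connectedness of $LC_*$ does not make $\Omega\to\widehat{\Omega}$ a quasi-isomorphism — the trefoil shows this — though your hypotheses in fact give simple connectedness; and $\overline L$ is not even defined in the first assertion, where $\Lambda$ need not be a sphere.) Even the uncompleted statement $CE^*(\Lambda)\simeq\Omega(\k_{-}\oplus CF_*(L))$ cannot see $\pi_1(L)$, because identifying a cobar construction on a finite coalgebra model with $C_{-*}(\Omega\overline L)$ already requires the simple connectedness you are trying to prove.

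The paper's argument for $\pi_1(L)=1$ is instead geometric and sits outside the Koszul duality machinery: it introduces wrapped Floer cohomology $HW^*_{\pi_1}(L)$ with coefficients in $\Z[\pi_1(L)]$, which vanishes as a module over $SH^*(X)=0$, and computes it with a Morse model on the universal cover $\tilde L$ (one maximum, no index $n-1$ critical points, no minima, suitable boundary behaviour). The hypothesis that all Reeb chords have degree $\le -1$ forces the augmentation to be trivial, so the high-energy differential counts honest strips with boundary in $\R\times\Lambda$, which — since $\Lambda$ is simply connected — cannot carry a nontrivial $\Z[\pi_1(L)]$-weight. Comparing with the trivial-coefficient complex, one finds that $HW^{-n}_{\pi_1}(L)$ has one generator for each nontrivial element of $\pi_1(L)$, so its vanishing forces $\pi_1(L)=1$. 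This local-coefficient input is exactly what your purely algebraic route is missing and, by the unipotent-completion obstruction above, cannot be recovered from it.
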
  

\begin{proof}
	Consider the wrapped Floer cohomology $HW_{\pi_1}^{\ast}(L)$ of $L$ with coefficients in $\Z[\pi_1(L)]$. Using our model for wrapped Floer cohomology in Section \ref{sec:CWnoHam}, a chain complex $CW_{\pi_{1}}^{\ast}(L)$ which calculates $HW_{\pi_{1}}^{\ast}(L)$ can be described as follows. Let $L=L_{0}$ and let $L_{1}$ be a parallel copy of $L$ shifted in the negative Reeb direction at infinity. The complex $CW_{\pi_{1}}^{\ast}(L)$ is then generated over $\Z[\pi_{1}]$ by the intersection points in $L_{0}\cap L_{1}$ which we call the Morse generators and the Reeb chords starting on $\Lambda_{0}$ and ending on $\Lambda_{1}$. The differential of a generator counts the usual rigid holomorphic strips, keeping track of the homotopy class of the loop obtained from the boundary component of the disk in $L_{1}$ completed by the reference paths connecting Reeb chord endpoints and intersection points to the base point. We point out that since there are no Reeb chords of degree $0$ the augmentation induced by $L$ is trivial and the high energy part of the differential on $CW^{\ast}_{\pi_{1}}$ counts honest holomorphic strips (without extra negative punctures at augmented Reeb chords).
	As for usual wrapped Floer homology, $HW_{\pi_{1}}^{\ast}(L)$ is naturally a module over symplectic cohomology $SH^*(X)$ and hence vanishes. 
    
    We next describe a geometric version of the complex $CW^{\ast}_{\pi_{1}}(L)$ that we call $CW^{\ast}_{\tilde p}$ and that also computes $HW^{\pi_1}(L)$. Let $\tilde p\colon\tilde L\to L$ denote the universal covering of $L$ and let $\tilde{\Lambda}=\tilde p^{-1}(\Lambda)$. Pick a Morse function $f\colon \tilde L\to\R$ with the following properties
    \begin{itemize}
    	\item $f$ has exactly one local maximum $M$. 
    	\item $f$ has no index $n-1$ critical points.
    	\item $f$ has no local minima.
    	\item $f$ is constant on $\tilde \Lambda$ where it attains its global minimum and if $\nu$ is the unit normal vector field along $\tilde{\Lambda}$ then $df(\nu)=-1$. 
    \end{itemize}
    The generators of $CW^{\ast}_{\tilde p}$ are of two types
    \begin{itemize}
    \item[(i)] The preimages of end points of Reeb chords $L_{0}\to L_{1}$ in $L\approx L_{1}$ under $\tilde p$ graded as the corresponding Reeb chord in $CE^{\ast}(\Lambda)$.
    \item[(ii)] The critical points of the Morse function $f\colon \tilde L\to \R$ graded by the negative of the Morse index.
    \end{itemize}  
    
    Let $M_{-\ast}$ denote the Morse chain complex of $f$ with cohomological grading, with generators as in $(ii)$ and  differential $\delta$ which counts negative gradient flow lines. Then $M_{-\ast}$ is supported in degrees $d$ with $-n\le d\le -1$ and $M_{-(n-1)}=0$. Let $C^{\ast}$ denote the complex generated by the generators of type $(i)$ and equip it with the differential $\partial$ that counts lifts of the boundary of holomorphic strips in the symplectization interpolating between Reeb chords. (This corresponds naturally to the high-energy part of the differential on $CW^{\ast}_{\pi_{1}}$.) 
    By our assumption on Reeb chord grading the grading of $C^{\ast}$ supported in degrees $d$ where $d\le -1$.
	
	We now define the complex $CW^{\ast}_{\tilde p}=C^{\ast}\oplus M_{-\ast}$ with differential
	\[ 
	d=\left(
	\begin{matrix}
	\partial & \phi \\
	0 & \delta
	\end{matrix}
	\right),
	\]
	where $\delta$ and $\partial$ are the differentials on $M^{\ast}$ and $C^{\ast}$ and where $\phi$ counts rigid lifts of disks with flow lines of $f$ attached. (This is the linear part of the map $\phi$ in \eqref{eq:deflooptotree}.) 
	
	The homology of $d$ is then isomorphic to $HW_{\pi_{1}}^{\ast}(L)$. To see this note that we can describe $CW^{\ast}_{\pi_{1}}(L)$ exactly as $CW^{\ast}_{\tilde p}(L)$ just replacing the Morse function $f$ above with a Morse function $h\circ \tilde p$, where $\tilde p$ is a Morse function on $L$ without minimum and with the required boundary behavior. Thus passing from $CW^{\ast}_{\pi_{1}}(L)$ to $CW^{\ast}_{\tilde{p}}(L)$ corresponds to deforming the Morse function on $\tilde L$ and it is well known that this induces a homotopy of complexes.  
    In particular $CW^{\ast}_{\tilde p}(L)$ is acyclic.
	
	We next want to show that $\pi_{1}(L)\approx 1$ or equivalently that the map $\tilde L\to L$ has
    degree one. To show this we first observe that since there are no Reeb chords of grading $0$ the
    augmentation of $CE^*(\Lambda)$ is trivial and the differential on $C^{\ast}$ counts honest holomorphic strips in the symplectization. This in turn means that the whole boundary of any holomorphic strip contributing to $\partial$ actually lies in $\Lambda\times\R$ and therefore cannot pick up any non-trivial $\Z[\pi_1]$-coefficient.
	
	Consider the following part of the chain complex $C^{\ast}\oplus M_{-\ast}$:
	\[ 
	\begin{CD}
	\dots @>>> C^{-(n+1)} @>>> C^{-n}\oplus M_{-n} @>>> C^{-(n-1)} @>>> \dots,
	\end{CD}
	\]
	where we use that $M_{-k}=0$ for $k=n-1$ and $k>n$. It follows from the above discussion and the vanishing of the wrapped homology $HW^{\ast}(L)$ with trivial coefficients that that the cohomology $HW^{-n}_{\pi_{1}}(L)$ in degree $-n$ has one generator for each non-trivial element in $\pi_1$. On the other hand $HW^{-n}_{\pi_{1}}(L)=0$ and we conclude that $\pi_1=1$.
	
	The statement about the isomorphism class of $CE^{\ast}(\Lambda)$ then follows from Corollary \ref{cor:CE=loopspace}.
\end{proof}

\appendix

\section{Basic results for moduli spaces}\label{sec:mdlispaces}
Consider as above a Weinstein manifold with an exact Lagrangian submanifold $(X,L)$, which outside a
compact set agrees with the positive part of the symplectization of the contact manifold with Legendrian
submanifold $(Y,\Lambda)$. We assume that the Maslov class of $L$ vanishes and that $L$ is relatively spin. We will consider several versions of punctured holomorphic spheres and disks
with boundary on $L$. The most basic disks we consider will lie either in $X$ or in the
symplectization $\R\times Y$. We call the former \emph{filling curves} and the latter
\emph{symplectization curves}. We will also consider a more general cobordism setting where, like in the
symplectization, disks may have both positive and negative punctures. Here we assume that $(W,K)$ is
a Weinstein cobordism with negative end $(-\infty,0]\times (Y,\Lambda)$ and positive end
$[0,\infty)\times(Z,\Gamma)$, where $Z$ is a contact manifold and $\Gamma$ is a Legendrian
submanifold.   We call disks in $W$ \emph{cobordism disks}.

When we will want to consider the relation of our `pre-surgery' invariants to `post- surgery' invariants, we will consider the case when $K$ decomposes
as a Lagrangian $C\subset W$ with positive end $\Gamma$
and empty negative end and a Lagrangian $L$ with negative end $\Lambda$ and empty positive end. We
further assume that there is a natural one-to-one correspondence between the components of $L^{v}$ of $L$ and $C^{v}$ of $C$, $v\in Q_{0}$, and that corresponding components $L^{v}$ and $C^{v}$ intersect transversely in one point $z^{v}$ and that $L^{v}\cap C^{w}=\varnothing$ if $v\ne w$.

We will describe a geometric setup that covers the cases considered below. Let $Y$ be the contact boundary of the Weinstein manifold $X$, where $c_{1}(X)=0$. Let $\Lambda\subset Y$ be a Legendrian with connected components $\Lambda_{1},\dots,\Lambda_{m}$. Let $D$ denote the unit disk in $\C$ and let $z_{1},\dots,z_{r}$ be boundary punctures and $\zeta_{1},\dots,\zeta_{k}$ be interior punctures. Let each component of $\partial D\setminus\{z_{1},\dots,z_{r}\}$ be decorated with a component $\Lambda_{j}$. The boundary punctures come in two types, positive and negative, all interior punctures are negative. Following \cite{Ersft} we make the further requirement that the disk be \emph{admissible}:

Any arc in $D$ that connects two boundary arcs in $\partial D\setminus\{z_{1},\dots, z_{r}\}$ subdivides the boundary punctures into two subsets. If both these subsets contain positive punctures then the labels of the two boundary segments at the endpoints of arc are different.

\begin{rem}
When we consider parallel copies of Lagrangians and Legendrians, boundary arcs labeled by different numbers in the numbering of parallel copies lie on copies shifted by different Morse functions and correspond to distinctly labeled boundary conditions in the current discussion.
\end{rem}

We also assume that the disk has one distinguished boundary puncture. Note that using a conformal model where the distinguished positive puncture lies at $1\in\partial D$ and an interior puncture $\zeta_{j}$ at the origin the positive real axis determines an asymptotic marker at $\zeta_{j}$ for each $j$. In the conformal model of the upper half plane with the distinguished puncture at infinity, this marker at any interior puncture is that determind by the vertical axis.  


\subsection{Moduli spaces of spheres for anchoring and compactifications of moduli spaces of disks}      

Following \cite{BEE}, all symplectization and cobordism disks we consider will be
anchored. This means that the actual disks we consider have, aside from their boundary punctures, also additional interior punctures where the maps are asymptotic to Reeb orbits at
the negative end. An anchored disk is such a disk completed by holomorphic planes in $X$
at all its negative interior punctures, see Figure \ref{anchored}.

When defining our version of the Chekanov-Eliashberg algebra $CE^{\ast}$,
use moduli spaces of anchored disks to parameterize chains of boundary paths.
When defining our versions of the Legendrian coalgebra $LC_{\ast}$ and the wrapped Floer $A_{\infty}$-algebra $CW^{\ast}$, we will need to consider disks in the symplectization with additional interior and boundary punctures, completed by rigid planes in $X$ and disks in $(X,L)$, respectively. We will also call such disks `anchored disks'.

Although, standard arguments using classical methods allow us to prove transversality for the disks with boundary punctures that we consider, anchored disks require transversality and gluing also for holomorphic planes in $(X,L)$ and that requires a more general perturbation scheme. Necessary perturbations for such curves were constructed in \cite{Esurgerycurves}. 

To state the relevant result let $Y$ denote the contact boundary of $X$ and consider a Reeb orbit $\gamma$ in $Y$ with a marker (i.e., a point $p\in\gamma$) on it. We write $\gamma'$ for the orbit $\gamma$ with a marker.
Let $\mathcal{M}(\gamma')$ denote the moduli space of holomorphic planes in $X$ with positive puncture with an asymptotic marker where the curve is asymptotic to $\gamma$ with the asymptotic maker mapping to the marker on $\gamma$. As in \cite[Theorem 1.1]{Esurgerycurves}, we define perturbation data $\lambda$ so that $\mathcal{M}^{\lambda}(\gamma')$ is a transversely cut out space of solutions to a perturbed Cauchy-Riemann equation $\bar\partial_{J_{\lambda}}u=0$, where $J_{\lambda}$ is a domain dependent almost complex structure that is allowed to depend also on the map $u$ in the neighborhood of $\mathcal{M}(\gamma')$. The moduli space $\mathcal{M}^{\lambda}(\gamma')$ furthermore has a natural compactification $\overline{\mathcal{M}}(\gamma')$ as a manifold with boundary with corners, where boundary strata correspond to several level spheres. Here levels not in moduli spaces of the form $\mathcal{M}^{\lambda}(\beta')$ just discussed lie in a moduli space $\mathcal{M}^{\sy,\lambda}(\beta',\boldsymbol{\eta}')$, where $\beta'$ is a Reeb orbit with marker and $\boldsymbol{\eta}'=\eta_{1}'\dots\eta_{k}'$ is a word of Reeb orbits with markers. Elements in $\mathcal{M}^{\sy,\lambda}(\beta',\boldsymbol{\eta}')$ are maps $u\colon S\to\R\times Y$ of a punctured spheres $S$ into the symplectization. There are fixed cylindrical ends $S^{1}\times[0,\infty)$ near the punctures in $S$ that are compatible with breaking in the sense of \cite[Section 2.1]{EO}. The map $u$ takes $(1,\infty)$ at a puncture to the marker of the corresponding Reeb orbit and $u$ again solves a perturbed Cauchy-Riemann equation $\bar\partial_{J_{\lambda}} u=0$, where $J_{\lambda}$ is domain dependent and only depends on the angular coordinate in the ends near the punctures, has a positive puncture at $\beta'$ and negative punctures at the orbits in $\boldsymbol{\eta}'$. 

We refer to \cite[Section 2.4]{Esurgerycurves} for more details on $\overline{\mathcal{M}}(\gamma')$. Here we only point out that the asymptotic marker at the positive puncture of a curve in the compactification determines asymptotic markers at all negative punctures and that the level structure is compatible with this in the sense that the asymptotic marker at the positive puncture in a lower level curve agrees with the asymptotic marker at the negative puncture where it is attached.

We next consider holomorphic disks in a cobordism $(Z,K)$ with positive and negative ends $(\partial_{\pm} Z,\partial_{\pm}K) $. We include also the case when the cobordism $(Z,K)$ is trivial, i.e. the symplectization $(\R\times Y,\R\times\Lambda)$, with $(\partial_{\pm}Z,\partial_{\pm} K)=(Y,\Lambda)$. Let $\mathbf{c}$ be a word of Reeb chords of $\partial_{\pm} K$. Let $\boldsymbol{\gamma}=\gamma_{1}\dots\gamma_{k}$ be a word of Reeb orbits in $\partial_{-}Z$. We define  $\mathcal{M}^{\mathrm{neg}}(\mathbf{c},\boldsymbol{\gamma}')$ to be the moduli space of punctured holomorphic disks in $Z$ with boundary on $K$, with boundary punctures mapping to Reeb chords in the word $\mathbf{c}$ and one distinguished boundary puncture, and with additional negative interior punctures at $\zeta_{1},\dots,\zeta_{k}$ mapping to Reeb orbits in the word $\boldsymbol{\gamma}$. Note that the distinguished boundary puncture determines an asymptotic marker at each interior puncture $\zeta_{j}$ that determines a marker on the corresponding $\gamma_{j}$. Let $\boldsymbol{\gamma}'$ denote the corresponding word or Reeb orbits with markers.  Below we will show that such moduli spaces of disks with interior negative punctures with markers that are relevant to our study cannot contain multiple covers and are transversely cut out for generic almost complex structure. 

Recall the symplectic filling $X$ of the negative end $Y$ of the cobordism above. We will use punctured spheres curves in $X$ in the compactification of the moduli spaces $\overline{\mathcal{M}}^{\lambda}(\gamma')$ to fill the interior punctures of the disks and treat them as disks with only boundary punctures. For this purpose, we define the moduli space of anchored disks $\mathcal{M}^{\mathrm{anc}}(\mathbf{c})$ as
\[ 
\mathcal{M}^{\mathrm{anc}}(\mathbf{c})=\bigcup_{\boldsymbol{\gamma}'}\bigl(\mathcal{M}^{\mathrm{neg}}(\mathbf{c},\boldsymbol{\gamma}')\times\prod_{\gamma_{j}'\in\boldsymbol{\gamma}'}\mathcal{M}^{\lambda}(\gamma_{j}')\bigr),
\]
where markers on Reeb orbits are induced from the distinguished boundary puncture. Note here that the topology on the moduli space of anchored curves is the product topology. This means in particular that the dimension of the boundary evaluation map equals the dimension $\dim(\mathcal{M}^{\mathrm{anc}}(\mathbf{c}))$ only on components $\mathcal{M}^{\mathrm{neg}}(\mathbf{c},\boldsymbol{\gamma}')\times\prod_{\gamma_{j}'\in\boldsymbol{\gamma}'}\mathcal{M}^{\lambda}(\gamma_{j}')$, where $\dim(\mathcal{M}^{\lambda}(\gamma_{j}'))=0$, for all $j$.

We consider next the case when the cobordism is trivial $(\R\times Y,\R\times \Lambda)$ and when all punctures mapping to chords in $\mathbf{c}$ are positive. The above construction then gives a stratification of the moduli space $\mathcal{M}(\mathbf{c})$ of holomorphic disks in $(X,L)$ as follows. First, since all moduli spaces $\mathcal{M}^{\mathrm{neg}}(\mathbf{c},\boldsymbol{\gamma}')$ are transversely cut out, the corresponding moduli spaces $\mathcal{M}^{\mathrm{neg},\lambda}(\mathbf{c},\boldsymbol{\gamma}')$, where a small perturbation near the negative ends corresponding to the perturbation $\lambda$ of holomorphic planes with asymptotic marker has been turned on, is canonically diffeomorphic to $\mathcal{M}^{\mathrm{neg}}(\mathbf{c},\boldsymbol{\gamma}')$. Gluing the curves in $\mathcal{M}^{\mathrm{neg},\lambda}(\mathbf{c},\boldsymbol{\gamma}')$ to the curves in $\mathcal{M}^{\lambda}(\gamma_{j}')$, and extending the perturbation, we get a compactification of the moduli space $\mathcal{M}(\mathbf{c})$ with boundary given by tree configurations of anchored disks. Near a broken configuration the moduli space is a manifold with boundary with corners with corner structure induced by the gluing parameters, compare \cite[Sections 6.4 -- 6.6]{ES}.

For example, if $\mathbf{c}=c$ is a single Reeb chord then $\mathcal{M}(c)$ has a natural compactification with boundary of the form 
\[ 
\mathcal{M}^{\mathrm{anc}}(c,\mathbf{b})\times\prod_{b_{j}\in\mathbf{b}}\mathcal{M}^{\mathrm{anc}}(b_{j}).
\]

All moduli spaces of disks considered in this paper will be anchored and we will drop the superscript $\mathrm{anc}$ from the notation.

\begin{figure}[h!]
    \centering
    \begin{tikzpicture}[scale=1.5]
    \tikzset{->-/.style={decoration={ markings,
                mark=at position #1 with {\arrow{>}}},postaction={decorate}}}
    \draw [black, thick=1.5] (2,3) to[in=75,out=270] (1.6,0.95);
     \draw [black, thick=1.5] (1.53,0.55) to[in=90,out=260] (1.5,0); 
        
    \draw [black, thick=1.5] (2,0) to[in=90,out=90] (2.5,0);         
    \draw [black, thick=1.5] (3,0) to[in=90,out=90] (3.5,0);         
        \draw [black, thick=1.5] (2.5,3) to[in=90,out=270] (4,0);
    \draw [black, thick=1 ] (1.5,0) to (2,0); 
    \draw [black, thick=1 ] (2.5,0) to (3,0); 
    \draw [black, thick=1 ] (3.5,0) to (4,0); 
    \draw [black, thick=1] (2,3) to (2.5,3); 

    \draw [black, thick=1.5] (2.2,1.5) to[in=90,out=270] (-0.5,0);
    \draw [black, thick=1.5] (2.7,1.5) to[in=90,out=270] (1,0);
     \draw [black, thick=1.5] (0.5,0) to[in=60,out=120] (0,0); 

    \draw [black, thick=1.5, dashed] (-0.5,0) to[in=90,out=90] (0,0); 
    \draw [black, thick=1.5] (-0.5,0) to[in=270,out=270] (0,0); 
    \draw [black, thick=1.5, dashed] (0.5,0) to[in=90,out=90] (1,0); 
        \draw [black, thick=1.5] (0.5,0) to[in=270,out=270] (1,0); 

    \draw [black, thick=1.5, dashed] (-0.5,-0.35) to[in=90,out=90] (0,-0.35); 
    \draw [black, thick=1.5] (-0.5,-0.35) to[in=270,out=270] (0,-0.35); 
    \draw [black, thick=1.5, dashed] (0.5,-0.35) to[in=90,out=90] (1,-0.35); 
        \draw [black, thick=1.5] (0.5,-0.35) to[in=270,out=270] (1,-0.35); 

    \draw [black, thick=1.5] (-0.5,-0.35) to[in=180,out=270] (-0.25,-0.7); 
    \draw [black, thick=1.5] (0,-0.35) to[in=0,out=270] (-0.25,-0.7); 
  \draw [black, thick=1.5] (0.5,-0.35) to[in=180,out=270] (0.75,-0.7); 
    \draw [black, thick=1.5] (1,-0.35) to[in=0,out=270] (0.75,-0.7); 

\end{tikzpicture}
    \caption{Anchored disks}
    \label{anchored}
\end{figure}

\subsection{Moduli spaces of anchored disks}\label{ssec:perturbationsanchor}
Consider a system of parallel copies $\bar L=\{L_j\}_{j=0}^{\infty}$, where $L_0=L$ is an embedded Lagrangian, as in Section \ref{sec:parallel}. This induces a system $\bar\Lambda=\{\Lambda_j\}_{j=0}^{\infty}$ of parallel copies of $\Lambda$ in $Y$.
We first discuss numberings that determine the boundary conditions of the holomorphic disks that we use in the various theories considered.
Let $D_{m}$ denote the unit disk in the complex plane with $m$ boundary punctures
$\zeta_{1},\dots,\zeta_{m}$. One of the boundary punctures is distinguished. We choose notation so
that $\zeta_{1}$ is distinguished. The $m$ punctures subdivides the boundary of $D_{m}$ into $m$
boundary arcs. We will consider disks with numbered boundary arcs where the numbers correspond to
the parallel copies $L_{j}$. We will consider two types of numberings \emph{increasing} and
\emph{decreasing}. Traversing the boundary of the disk across a boundary puncture in the positive direction, the numbering increases, remain constant, or decreases as we pass the puncture. We call punctures increasing, constant and decreasing accordingly. We call a disk increasing (decreasing) if all its non-distinguished punctures are either increasing (decreasing) or constant. Then its distinguished puncture is decreasing (increasing) or constant. 

\begin{figure}[h!]
    \centering
    \begin{tikzpicture}[scale=1]
    \tikzset{->-/.style={decoration={ markings,
                mark=at position #1 with {\arrow{>}}},postaction={decorate}}}

        \draw[thick, black] (-1,0) arc (0:90:1);
        \draw[thick, black] (-2,1) arc (90:180:1);
        \draw[thick, black] (-3,0) arc (180:240:1);
        \draw[thick, black] (-2.5,{-sin(60)} ) arc (240:300:1);
        \draw[thick, black] (-1.5,{-sin(60)} ) arc (300:360:1);

           \draw[thick, fill=black] (-2,1) circle(.05);
        \draw[thick, fill=black] (-1.5, {-sin(60)} ) circle(.05); 
        \draw[thick, fill=black] (-2.5, {-sin(60)} ) circle(.05);

        \draw[thick, fill=black] (-1,0) circle(.05);
        \draw[thick, fill=black] (-3,0) circle(.05);

            \node at (-0.7,0.6) {$L_{\kappa_1}$}; 
            \node at (-3.2,0.6) {$L_{\kappa_0}$}; 
             \node at (-0.8,-0.7) {$L_{\kappa_2}$}; 
    \node at (-3.2,-0.6) {$L_{\kappa_4}$}; 
    \node at (-2,-1.3) {$L_{\kappa_3}$};

            \node at (-3.3,0) {$c$};

         \draw[thick, black] (3,0) arc (0:90:1);
        \draw[thick, black] (2,1) arc (90:180:1);
        \draw[thick, black] (1,0) arc (180:240:1);
        \draw[thick, black] (1.5,{-sin(60)} ) arc (240:300:1);
        \draw[thick, black] (2.5,{-sin(60)} ) arc (300:360:1);

           \draw[thick, fill=black] (2,1) circle(.05);
        \draw[thick, fill=black] (2.5, {-sin(60)} ) circle(.05); 
        \draw[thick, fill=black] (1.5, {-sin(60)} ) circle(.05);

        \draw[thick, fill=black] (3,0) circle(.05);
        \draw[thick, fill=black] (1,0) circle(.05);

            \node at (3.3,0.6) {$L_{\kappa_0}$}; 
            \node at (0.8,0.6) {$L_{\kappa_1}$}; 
             \node at (3.2,-0.7) {$L_{\kappa_4}$}; 
    \node at (0.8,-0.6) {$L_{\kappa_2}$}; 
    \node at (2,-1.3) {$L_{\kappa_3}$};

    \node at (3.3,0) {$x$};

    \end{tikzpicture}
        \caption{strictly decreasing(left) and strictly increasing(right) }

\end{figure}

When defining operations $\Delta_{k}$ for the the Legendrian $A_{\infty}$-coalgebra $LC_{\ast}$, we count anchored increasing disks in the symplectization asymptotic to Reeb chords at all punctures. When defining the operations for the Lagrangian $A_{\infty}$-algebras $CF^{\ast}$ and $CW^{\ast}$, we count decreasing disks in $X$. When defining the twisting cochain 
$\mathfrak{t}\colon LC_{\ast}^{\parallel} \to \Omega(\mathbf{k}_{-}\oplus CF_{\ast})$, 
we count increasing disks in X which are asymptotic to a Reeb chord at the distinguished puncture, and to Lagrangian intersection points at the other punctures.

We next consider asymptotic conditions at the boundary punctures. There are two basic forms of asymptotics, a puncture is either asymptotic to a Lagrangian intersection point or to a Reeb chord. The former case is the standard form of asymptotics in Lagrangian Floer theory and the latter in Legendrian DG-algebras. More precisely, we choose an almost complex structure on $X$ which in the cylindrical end $\R\times Y$ is invariant under $\R$-translation, leaves the contact planes invariant and is compatible with the symplectic form induced on the contact planes by the contact form. Furthermore, it pairs the $\R$-direction with the Reeb direction. This means in particular that the Reeb chord strip which is the product of a Reeb chord and $\R$ is holomorphic. Reeb chord asymptotics means, and we study holomorphic disks with boundary punctures that are asymptotic to these Reeb chord solutions,  while Lagrangian intersection point asymptotics means we study holomorphic disks that are asymptotic to constant strips at intersection points. 

First we consider disks in the filling $(X,L)$. Consider a disk $D_{m}$ as above with strictly increasing or decreasing numbering $\kappa=(\kappa_1,\dots,\kappa_{m})$ and let $\mathbf{a}=a_{1}\dots a_{m}$ be a word of Reeb chords and Lagrangian intersection points in $L_{0}\cap L_1$. We let $\mathcal{M}^{\fl}(\mathbf{a};\kappa)$ denote the moduli space of holomorphic disks $u\colon (D_{m},\partial D_{m})\to (X,\bar L)$ that satisfies the following conditions:
\begin{itemize}
\item $u$ takes the boundary component labeled by $\kappa_{j}$ to the Lagrangian $L_{\kappa_j}$.
\item $u$ is asymptotic to the unique Reeb chord or Lagrangian intersection point of $L_{\kappa_j}$ and $L_{\kappa_{j+1}}$ near $a_{j}$ at $\zeta_{j}$, where we let $\kappa_{m+1}=\kappa_{1}$.  	
\end{itemize}

We next consider disks in the symplectization.  
Consider the disk $D_{m+k}$ with increasing or decreasing boundary numbering $\kappa'$ and punctures $\zeta_{1},\dots,\zeta_{m+k}$.  We next note that in the symplectization there are two possible Reeb chord asymptotics, positive or negative according to the sign of the $t$-coordinate near the puncture. Let $\mathbf{c}'=c_{1}^{\sigma_{1}}\dots c_{m+k}^{\sigma_{m+k}}$ be a word of signed Reeb chords of $\Lambda_{0}\cup \Lambda_1$, where $\sigma\in\{+,-\}$ is a sign. 
If $m>1$ then we require that the Reeb chords $c_{r}$ at all constant punctures connect $\Lambda_{0}$ to $\Lambda_{0}$ and that their signs are all negative, $\sigma_{r}=-1$. (These constant punctures will be capped by augmentation disks.) We let $\mathcal{M}^{\sy,\circ}_{\parallel}(\mathbf{c}';\kappa')$ denote the moduli space of anchored holomorphic disks $v\colon (D_{m},\partial D_{m})\to (\R\times Y,\R\times \Lambda)$ that satisfy the following conditions:
\begin{itemize}
	\item $v$ takes the boundary components labeled by $\kappa_{j}$ to the Lagrangian $\Lambda_{\kappa_j}$.
	\item $v$ is asymptotic at positive or negative infinity according to the sign of $\sigma_{j}$ to the unique Reeb chord between $\Lambda_{\kappa_j}$ and $\Lambda_{\kappa_{j+1}}$ near $c_{j}$ at a puncture $\zeta_{j}$.  		 
\end{itemize}
If $\mathbf{c}$ is a word of strictly increasing (decreasing) Reeb chords then we define the moduli space $\mathcal{M}^{\sy}_{\parallel}(\mathbf{c};\kappa)$ by anchoring also at constant boundary punctures:
\[ 
\mathcal{M}^{\sy}_{\parallel}(\mathbf{c};\kappa)=
\bigcup_{\mathbf{c}\subset\mathbf{c}'}\left(\mathcal{M}^{\sy,\circ}_{\parallel}(\mathbf{c}';\kappa')\times\prod_{c_{r}\in\mathbf{c}'\setminus\mathbf{c}}\mathcal{M}(c_{r})\right),
\] 
where the union runs over all words $\mathbf{c}'$ extending $\mathbf{c}$ by constant punctures.
 
We will also consider a simpler version with only one copy of the Legendrian $\Lambda=\Lambda_{0}$ in the case that exactly one puncture in $\mathbf{c}$ is positive and all other are negative. In the language above all punctures of such a disk are constant and we write $\mathcal{M}^{\sy}(\mathbf{c})$ for the moduli space of such disks. The (constant) negative punctures of disks in this moduli space are typically not filled by augmentation disks.

Finally, we consider disks in the cobordism and in the filled cobordism. We start with the cobordism
disks without filling. 
with the cobordism disks without filling. Recall that we assume a decomposition
$K = C \cup L$ where $C$ has no negative end, and $L$ has no positive end. Consider a system of parallel copies $\bar C=\{C_{j}\}_{j=0}^{\infty}$.
Consider the disk $D_{i+j+2}$ where we fix two punctures that subdivide the boundary of the disk
into two arcs, upper and lower. Let $\kappa$ be a decreasing boundary numbering of the boundary
components in the upper arc and extend it to a constant numbering in the lower arc. Let
$\mathbf{c}_{0}=c_{0;1}\dots c_{0;j}$ be a composable word of Reeb chords connecting $\Lambda_{v}$
to $\Lambda_{w}$, and let $\mathbf{c}'=c_{i}\dots c_{1}$ be a word of Reeb chords of $\Gamma$. Consider the word of Reeb chords and intersection points 
\[ 
\mathbf{c}=c_{0;1}\dots c_{0;j}z^{w}c_{i}\dots c_{1}z^{v}
\]
and let 
\[ 
\mathcal{M}^{\co}(\mathbf{c};\kappa)
\]
denote the moduli space of holomorphic disks $u\colon (D_{i+j+2},\partial D_{i+j+2})\to (W,\bar C\cup L)$ such that the following holds.
\begin{itemize}
	\item $u$ is asymptotic to the Reeb chord $c_{0;r}$ at its $r^{\rm th}$ constant puncture, takes adjacent boundary arcs to $L$, and neighboring punctures to the unique intersection point near $z^{w}$ in $L\cap C^{w}_{\kappa_{i}}$ and near $z^{v}$ in $L\cap C^{v}_{\kappa_{i}}$, respectively. 
	\item On remaining boundary arcs and punctures the boundary maps as described by $\mathbf{c}$ and the numbering $\kappa$, exactly as above.
\end{itemize}

\begin{figure}[h!]
    \centering
    \begin{tikzpicture}[scale=1]
    \tikzset{->-/.style={decoration={ markings,
                mark=at position #1 with {\arrow{>}}},postaction={decorate}}}

        \draw[thick, black] (-1,0) arc (0:60:1);
            \draw[thick, black] (-1.5,{sin(60)}) arc (60:120:1);
            \draw[thick, black] (-2.5,{sin(60)}) arc (120:180:1);
        \draw[thick, black] (-3,0) arc (180:240:1);
        \draw[thick, black] (-2.5,{-sin(60)} ) arc (240:300:1);
        \draw[thick, black] (-1.5,{-sin(60)} ) arc (300:360:1);

            \draw[thick, fill=black] (-1.5,{sin(60)}) circle(.05);
            \draw[thick, fill=black] (-2.5,{sin(60)}) circle(.05);
            \draw[thick, fill=black] (-1.5, {-sin(60)} ) circle(.05); 
        \draw[thick, fill=black] (-2.5, {-sin(60)} ) circle(.05);

        \draw[thick, fill=black] (-1,0) circle(.05);
        \draw[thick, fill=black] (-3,0) circle(.05);

            \node at (-2, 1.3) {$C_{\kappa_1}$};
            \node at (-0.7,0.6) {$C_{\kappa_2}$}; 
            \node at (-3.2,0.6) {$C_{\kappa_0}$}; 
             \node at (-0.8,-0.7) {$L$}; 
    \node at (-3.2,-0.6) {$L$}; 
    \node at (-2,-1.3) {$L$}; 
    \node at (-3.5, 0) {$z^v$};
     \node at (-0.5, 0) {$z^w$};
     \node at (-2.5, {-sin(60)-0.3}) {$c_1$};
     \node at (-1.5, {-sin(60)-0.3}) {$c_2$};
            \node at (-2.6, {sin(60)+0.2}) {$c_{0;1}$};
            \node at (-1.3, {sin(60)+0.2}) {$c_{0;2}$};

\end{tikzpicture}
\caption{Disk contributing to $\mathcal{M}^{\co}(\mathbf{c};\kappa)$
  }

\end{figure}

The disks in the filled cobordism are entirely analogous. Here we assume that $L$ is a Lagrangian submanifold in $X=X_{0}\cup W$. Consider again a system of parallel copies $\bar C=\{C_{j}\}_{j=0}^{\infty}$ and also a system of parallel copies $\bar L=\{L_{j}\}_{j=0}^{\infty}$ of $L$. Consider the disk $D_{i+j+2}$ where we fix two punctures that subdivide the boundary of the disk into two arcs, upper and lower. Let $\kappa$ be a decreasing boundary numbering of the boundary components in the upper and lower arcs. Let $\mathbf{x}_{0}=x_{0;1}\dots x_{0;j}$ be a word of intersection points of $L$ and let $\mathbf{c}'=c_{i}\dots c_{1}$ be a word of Reeb chords of $\Gamma$. Consider the word of Reeb chords and intersection points 
\[ 
\mathbf{c}=x_{0;1}\dots x_{0;j}z^{w}c_{i}\dots c_{1}z^{v}
\]
and let 
\[ 
\mathcal{M}^{\overline{\co}}(\mathbf{c};\kappa)
\]
denote the moduli space of holomorphic disks $u\colon (D_{i+j+2},\partial D_{i+j+2})\to (X,\bar C\cup \bar L)$ such that the following holds.
\begin{itemize}
	\item $u$ is asymptotic to the intersection point $x_{0;r}$ at its $r^{\rm th}$ puncture in the lower arc, takes adjacent boundary arcs to $L$, and neighboring punctures to the unique intersection point near $z^{w}$ in $L_{\kappa_{j}}^{w}\cap C^{w}_{\kappa_{j+1}}$ and the unique intersection point near $z^{v}$ in $K_{\kappa_{1}}\cap C^{v}_{\kappa_{i+j}}$. 
	\item On remaining boundary arcs and punctures the boundary maps as described by $\mathbf{c}$ and the numbering $\kappa$, exactly as above.
\end{itemize}

The disks in the cobordism without filling will be used to map into the dg-algebra of the negative end whereas the disks in the filled cobordism will be used to map into the Floer cohomology of a Lagrangian. This is why we use parallel copies in one case but not the other.   

The formal dimension of the moduli spaces above is computed in terms of the negative of a Conley-Zehnder index $\mathrm{CZ}$ of the Reeb chords. Recall $\mathrm{CZ}(a)$ of a Reeb chord $a$ of $\Lambda$ as defined for example in \cite[Section 2.1]{BEE}: we pick paths connecting base points in the boundary of the various components of $L$, and paths connecting Reeb chord endpoints to the base points. We define $\mathrm{CZ}(a)$ to be the Maslov index of this path closed up by a positive rotation in the contact plane, and the grading $|a|=-\mathrm{CZ}(a)$. For a Lagrangian intersection $x$ between $L^{1}$ and $L^{2}$ we similarly pick paths connecting to the base points and use these to form a loop $\gamma$ starting in $L^{2}$ and ending in $L^{1}$ and define $\mathrm{CZ}(x)$ to be the Maslov index of the loop of Lagrangian planes that results from closing up the path of Lagrangian planes along $\gamma$ by a positive rotation, and $|x|=-\mathrm{CZ}(x)$. The Conley-Zehnder index is independent of the base point paths since the Maslov class vanishes.

\begin{rem}
The above gradings are related to the grading $|\cdot|_{\mathrm{Leg}}$ in the Legendrian contact homology algebra, see e.g. \cite{EESPxR,BEE} as follows: 
\[ 
|c|_{\mathrm{Leg}}=-|c|-1.
\]
Note that gradings generally depend on the choice of paths connecting endpoints to the base point: two such choices differ by a loop and the grading is shifted by the Maslov index of that loop. In particular, if the Maslov class vanishes the grading is well defined. Also, the paths connecting tangent planes at base points in different components are defined only up to choice. Changing the homotopy class shifts the Maslov potential between components and indices of mixed Reeb chords accordingly.  
\end{rem} 
 
\begin{lem}
The formal dimension of the moduli space $\mathcal{M}^{\fl}(\mathbf{a};\kappa)$ equals
\[ 
\dim\left(\mathcal{M}^{\fl}(\mathbf{a};\kappa)\right)=
(n-3) - \sum_{j=1}^{m}(|a_{j}|-(n-2)).
\]
The formal dimension of the moduli spaces $\mathcal{M}^{\sy}_{\parallel}(\mathbf{c};\kappa)$ equals
\[ 
\dim\left(\mathcal{M}^{\sy}_{\parallel}(\mathbf{c};\kappa)\right)=
(n-3) + \sum_{\sigma_{j}=-1} (|c_{j}|+1) - \sum_{\sigma_{j}=+1}(|c_{j}|-(n-2)).
\]
The formal dimension of the moduli spaces $\mathcal{M}^{\sy}(\mathbf{c})$ equals
\[ 
\dim\left(\mathcal{M}^{\sy}(\mathbf{c})\right)=
(n-3) + \sum_{\sigma_{j}=-1} (|c_{j}|+1) - \sum_{\sigma_{j}=+1}(|c_{j}|-(n-2)).
\]
The formal dimension of the moduli space $\mathcal{M}^{\co}(\mathbf{c};\kappa)$ equals
\[ 
\dim\left(\mathcal{M}^{\co}(\mathbf{c};\kappa)\right)=
1 - \sum_{r=1^{i}}(|c_{r}|-(n-2)) + \sum_{s=1}^{j} (|c_{0;s}|+1).
\]
The formal dimension of the moduli space $\mathcal{M}^{\overline{\co}}(\mathbf{c};\kappa)$ equals
\[ 
\dim\left(\mathcal{M}^{\overline{\co}}(\mathbf{c};\kappa)\right)=
1 - \sum_{r=1^{i}}(|c_{r}|-(n-2)) - \sum_{s=1}^{j} (|x_{0;s}|-(n-2)).
\]
\end{lem}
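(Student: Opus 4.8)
The plan is to treat all four formulas as instances of a single Fredholm index computation: the formal dimension of each moduli space is $\dim = \mathrm{ind}(D_u)$, the index of the linearized $\bar\partial$-operator $D_u$ associated to a punctured holomorphic disk $u$ with totally real boundary condition along a union of the parallel copies and prescribed asymptotics at the punctures (Reeb chords at the ends at infinity, Lagrangian intersection points in the compact part). First I would assemble the master index formula and then fix its normalization on model configurations.

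For the master formula I would invoke Riemann–Roch for punctured surfaces together with additivity of the index under splitting a disk into thin strips and pairs of pants, as in \cite[Section~2]{BEE}, \cite{EESPxR} and \cite{E}. This writes $\mathrm{ind}(D_u)$ as a \emph{domain term} — the dimension of the space of conformal structures on the punctured disk corrected by the automorphism group, which is $n-3$ plus the number of non-constant punctures when all asymptotics are Reeb chords or intersection points — plus one \emph{local term} per puncture, given by (minus) the Conley–Zehnder index of the asymptotic Reeb chord at an end, or (minus) the Maslov index of the intersection point at a compact puncture. Throughout I would translate between the grading used here, $|c| = -\mathrm{CZ}(c)$, equivalently $|c|_{\mathrm{Leg}} = -|c|-1$, and the homological conventions of \cite{BEE,EESPxR}, and similarly $|x| = -\mathrm{CZ}(x)$ for intersection points, with $\mathrm{CZ}(x)$ the Maslov index of the loop of Lagrangian planes described in the text. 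With these conventions a positive Reeb-chord end at $c$ contributes $-(|c|-(n-2))$ and a negative one contributes $(|c|+1)$, the relative shift being exactly the difference between the Conley–Zehnder index of a Reeb chord viewed at a positive versus a negative puncture; an intersection-point puncture contributes $-(|x|-(n-2))$, with the understanding that the distinguished (output) puncture of $D_m$, traversed according to the boundary orientation, sees the two sheets in the opposite order and so enters via its Poincaré-dual grading, which accounts for the superficially asymmetric look of the $\mathcal{M}^{\fl}$ formula.

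Next I would specialize. For $\mathcal{M}^{\fl}(\mathbf{a};\kappa)$ with strictly monotone $\kappa$ every puncture is non-constant, the domain term is $(n-3)+m$, and summing the local contributions $-(|a_j|-(n-2))$ over the $m$ punctures gives $(n-3)-\sum_j(|a_j|-(n-2))$. For $\mathcal{M}^{\sy}(\mathbf{c};\kappa)$ one simply keeps track of the signs, producing $(n-3)+\sum_{\sigma_j=-1}(|c_j|+1)-\sum_{\sigma_j=+1}(|c_j|-(n-2))$. For $\mathcal{M}^{\co}(\mathbf{c};\kappa)$ and $\mathcal{M}^{\overline{\co}}(\mathbf{c};\kappa)$ the two distinguished punctures at the transverse core–cocore intersection points $z^v, z^w$ are prescribed to be constant for the numbering, so they neither enter the domain term nor, with the chosen normalization of their Maslov indices, contribute to the index; the domain term is then $1$ rather than $n-3$, while the $c_r$ behave as positive ends of $\Gamma$ and the $c_{0;s}$ (resp. $x_{0;s}$) as negative ends at $Y$ (resp. intersection points), yielding the stated expressions. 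The additive constants $n-3$ and $1$ and the per-puncture shifts $-(n-2)$, $+1$, $0$ would be pinned down by evaluating the master formula on rigid model configurations of known dimension: the trivial strip $\mathbb{R}\times c$ over a Reeb chord, which must be $1$-dimensional; the basic strip over a short chord; the disk over the standard unknot filling that sweeps $D^n$ once (an $(n-1)$-dimensional family with a single positive puncture of degree $-n$, from the earlier computation); the thin Floer strips and triangles; and the near-constant disk through $z^v, z^w$. Finally I would record that these are moduli spaces of anchored disks, so that attaching rigid holomorphic planes at the negative interior punctures leaves the formal dimension unchanged — each such plane is $0$-dimensional and gluing is index-additive — whence the formulas coincide with those for the underlying disks with boundary punctures only.

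The hard part is the bookkeeping in the first two steps: reconciling the cohomological grading used here with the homological LCH conventions, keeping the per-puncture signs consistent (positive versus negative ends, output versus input intersection points, and the Poincaré-duality relation $|x_{vw}|+|x_{wv}|=n$ implicitly invoked by the choice of boundary orientation on $D_m$), and correctly counting non-constant punctures as a function of the monotone numbering $\kappa$ — strictly monotone numberings have all punctures non-constant, whereas a general decreasing numbering can have constant punctures that change the domain term. Once these conventions are fixed, the four formulas follow by direct substitution into the master formula.
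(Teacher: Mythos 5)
The paper's entire proof of this lemma is a citation: ``See \cite[Theorem A.1]{CEL}.'' What you propose — assembling a master Fredholm index formula via Riemann--Roch for punctured disks, with a domain/conformal-structure term plus one Conley--Zehnder/Maslov contribution per puncture, translating between the cohomological grading $|c|=-\mathrm{CZ}(c)$ and the LCH convention $|c|_{\mathrm{Leg}}=-|c|-1$, pinning the constants on model configurations (trivial strip, unknot sweep, thin Floer strips), and using index-additivity under gluing to dispose of the anchoring planes — is precisely a re-derivation of that cited theorem. So in substance your route is the same as the paper's; you are just carrying out the computation that the paper outsources to \cite{CEL}, which makes your argument more self-contained but not a different method.

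One concrete point to fix before the write-up closes: in your specialization to $\mathcal{M}^{\fl}(\mathbf{a};\kappa)$ you take the domain term to be $(n-3)+m$ \emph{and} assign each of the $m$ punctures the contribution $-(|a_j|-(n-2))$; these sum to $(n-3)+m-\sum_j(|a_j|-(n-2))$, which overshoots the stated formula by $m$. In the normalization of \cite[Theorem A.1]{CEL} the extra conformal modulus created by each boundary puncture is already absorbed into the per-puncture asymptotic contribution, so you must either drop the $+m$ from the domain term or subtract $1$ from each local term — not both bookkeepings at once. Your own final step (calibrating against rigid model configurations such as the trivial strip over a Reeb chord and the $(n-1)$-dimensional unknot family) is exactly the check that would detect and repair this, so the plan is sound; just make sure that calibration is actually performed for each of the three additive constants ($n-3$, $n-3$, $1$) and each per-puncture shift rather than asserted.
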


\begin{proof}
See \cite[Theorem A.1]{CEL}.
\end{proof}

We next study topological properties of the moduli spaces just defined. It turns out to be comparatively simple because of two key features. First, since we require our disks to switch copies at punctures ``in the same direction'' they cannot be multiply covered, and second, for the same reason there can be no boundary splitting. As in \cite{Ersft}, the first property allows us to prove transversality by perturbing the almost complex structure and the second shows that the moduli spaces admit compactifications consisting only of punctured curves joined at Reeb chords or Lagrangian intersection points. Precise formulations of these results are as follows. 

\begin{thm}\label{thm:mdlitv}
For generic almost complex structure $J$ the moduli spaces $\mathcal{M}^{\fl}(\mathbf{a};\kappa)$, $\mathcal{M}^{\sy}_{\parallel}(\mathbf{c};\kappa)$, $\mathcal{M}^{\sy}(\mathbf{c})$, $\mathcal{M}^{\co}(\mathbf{c};\kappa)$, and $\mathcal{M}^{\overline{\co}}(\mathbf{c};\kappa)$ are transversely cut out manifolds of dimensions $\dim(\mathcal{M}^{\fl}(\mathbf{a};\kappa))$ and $\dim(\mathcal{M}^{\sy}_{\parallel}(\mathrm{c});\kappa)$, $\dim(\mathcal{M}^{\sy}(\mathrm{c}))$,  $\dim(\mathcal{M}^{\co}(\mathrm{c});\kappa)$, and $\dim(\mathcal{M}^{\overline{\co}}(\mathrm{c});\kappa)$, respectively. 
\end{thm}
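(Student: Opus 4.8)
## Proof strategy for Theorem \ref{thm:mdlitv}

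The plan is to reduce the transversality statement to the standard machinery for somewhere-injective punctured holomorphic curves, exploiting the two structural features emphasized just before the statement: the boundary-numbering convention forbids multiple covers, and it forbids boundary bubbling. First I would set up the Banach manifold of maps and the $\bar\partial_J$-section in the usual way (Sobolev completions with exponential weights at the punctures, following the setup in \cite{BEE} and \cite{Ersft}), so that the moduli spaces in question are the zero sets of Fredholm sections whose indices are the formal dimensions computed in the preceding lemma. The key point to establish is that every element $u$ of each moduli space is \emph{somewhere injective} — more precisely, that the set of injective points of $u$ (points $z$ in the interior or on a boundary arc where $du_z\ne 0$ and $u^{-1}(u(z))=\{z\}$) is open and dense. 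Once this is known, the standard Sard–Smale argument (perturbing $J$ in the cylindrical-invariant, contact-compatible class, supported away from the Reeb-chord strips at the punctures and away from the fixed intersection points) yields a comeager set of $J$ for which the universal moduli space is cut out transversely, and hence the fibers over generic $J$ are transversely cut out manifolds of the expected dimension.

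The heart of the argument, and the step I expect to be the main obstacle, is precisely the somewhere-injectivity claim. Here the boundary-numbering convention does the essential work: along the boundary of $D_m$, the arcs are mapped to distinct parallel copies $L_{\kappa_j}$ (respectively $\Lambda_{\kappa_j}$), and across each non-constant puncture the label strictly changes in one direction. A nonconstant holomorphic disk that factored through a branched cover of lower degree would have to respect this labeling, which forces the cover to be trivial on the boundary; combined with the fact that the parallel copies are genuinely distinct (not just $C^1$-close — they are the time-one images under the flows of the nested Morse functions of Section \ref{sec:parallel}, with strictly ordered critical values), this rules out interior multiple covers as well, by the argument of \cite[]{Ersft} adapted to the present boundary conditions. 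For the symplectization and cobordism disks one must also rule out that a component is a Reeb-chord strip or a trivial strip over an intersection point; but the constant-puncture convention, which demands that all constant punctures carry negative asymptotics connecting $\Lambda_0$ to $\Lambda_0$, together with the action/index bookkeeping, excludes such degenerate components from a disk with the prescribed asymptotics. In the anchored case one additionally needs transversality for the rigid holomorphic planes in $(X,L)$ used to cap the negative interior punctures; as noted in the Remark accompanying the statement, this is the one place where classical methods do not suffice, and here one invokes the fixed abstract perturbation scheme (\cite{FOOO,Hofer,Pardon,HondaBao}) and the gluing theory for anchored configurations, treating the planar anchors as a fixed transversely-cut-out input to which the disk moduli are glued.

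With somewhere-injectivity in hand, I would carry out the remaining steps in the following order. (1) Show the universal moduli space $\widetilde{\mathcal M}=\{(u,J): \bar\partial_J u=0\}$ is a Banach manifold, by checking that the linearized operator, augmented by variations of $J$, is surjective at every $(u,J)$; surjectivity uses the injective points together with a standard separation-of-bubbles/unique-continuation argument to produce, for any cokernel element, a $J$-variation pairing nontrivially with it. (2) Apply Sard–Smale to the projection $\widetilde{\mathcal M}\to\mathcal J$ to obtain a comeager set of regular values $J$. (3) For such $J$, conclude that $\mathcal{M}^{\fl}(\mathbf a;\kappa)$, $\mathcal{M}^{\sy}(\mathbf c;\kappa)$, $\mathcal{M}^{\co}(\mathbf c;\kappa)$, $\mathcal{M}^{\overline{\co}}(\mathbf c;\kappa)$ are smooth manifolds of the dimensions recorded in the lemma, with orientations determined by the chosen relative spin structure and coherent orientation data as in \cite{BEE} (orientability being asserted elsewhere in the excerpt). (4) Finally, note that the class of admissible $J$ — cylindrical at infinity, pairing $\partial_t$ with the Reeb field, compatible on the contact distribution — is itself a Baire space, so intersecting over the finitely many (for fixed action bound) tuples $(\mathbf a;\kappa)$, $(\mathbf c;\kappa)$ relevant to any given computation yields a single generic $J$ that works simultaneously, which is what the applications require.
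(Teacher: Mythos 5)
Your overall scaffolding (Fredholm setup, exclusion of multiple covers via the strictly monotone boundary numbering, perturbation of $J$, Sard--Smale) matches the paper's intent, and your observation that the numbering convention is what rules out multiple covers and boundary bubbling is exactly the paper's starting point. However, there is a genuine gap at the step you yourself flag as the heart of the matter: you pass from ``not multiply covered'' to ``the set of injective points is open and dense'' and then invoke the standard universal-moduli-space argument. For punctured disks with Lagrangian boundary and asymptotics at Reeb chords and intersection points, this implication is not standard, and the paper's proof is structured precisely around the scenario in which it fails. Concretely, the paper localizes at a puncture mapping to a Lagrangian intersection point, puts the picture in coordinates where the two Lagrangians are $\R^{n}$ and $i\R^{n}$ in $\Cc^{n}$, and uses the leading term of the Fourier expansion of $u$ at the puncture to show that the local sheet containing the puncture meets the hyperplanes $H_{\pm\epsilon}$ with parities that differ according to the sign of $\epsilon$. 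By analytic continuation, any other local sheet (of $u$ or of a comparison disk) passing through that ball either contains injective points --- in which case your argument goes through --- or has image \emph{coinciding completely} with the sheet at the puncture. In the latter case there are no injective points in the region at all, so ``perturb $J$ near an injective point'' is unavailable; the paper instead perturbs $J$ near \emph{both} hyperplanes $H_{+\epsilon}$ and $H_{-\epsilon}$ and uses the parity asymmetry (the distinguished sheet contributes oddly to only one of the two) to show the resulting variations cannot all pair trivially with a cokernel element. This is the argument of \cite{EESPxR}, and it is the content your proposal is missing: your ``standard separation-of-bubbles/unique-continuation argument to produce, for any cokernel element, a $J$-variation pairing nontrivially with it'' is exactly the assertion that needs this parity trick to be true.

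A secondary, smaller point: the paper's exclusion of multiple covers is also run through this local analysis at a puncture (via the asymptotic Fourier expansion), rather than through the global ``labels force the cover to be trivial on the boundary'' argument you sketch; your version is plausible but would still need the local statement to conclude surjectivity of the linearized operator, so it does not let you avoid the analysis above. Your treatment of the anchoring planes via the fixed abstract perturbation scheme is consistent with the paper's Remark and is fine as stated.
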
  

\begin{proof} 
	A well-known argument gives transversality for disks that are somewhere injective on the boundary by perturbing the almost complex structure: any element in the cokernel of the linearized operator must be zero on the set of injectivity and then identically zero by unique continuation. Our disks are not necessarily somewhere injective but the disks cannot be multiple covers and there is a region with the property of the region of injectivity above. We briefly recall the argument for this from \cite[Lemma 4.5]{EESPxR}. 
	Fix a puncture of $u$. 
	
	Consider first the more difficult case when this puncture maps to a Lagrangian intersection.
    Pick coordinates so that the intersection point lies at the origin in $\Cc^{n}$ and so that the
    two Lagrangians correspond to $\R^{n}$ and $i\R^{n}$. Let $(x_{1}+iy_{1},\dots,x_{n}+iy_{n})$ be
    standard coordinates on $\Cc^{n}$. Consider the complex hyperplanes
    $H_{\pm\epsilon}=\{x_{1}+iy_{1}=\pm\epsilon(1+i)\}$. Looking
    at the Fourier expansion of $u$ near the puncture it is clear that for suitable coordinates
    (such that the leading Fourier coefficient of $u$ lies in direction of the first coordinate) the
    number of intersection points of the image of $u$ and $H_{\pm\epsilon}$ near the puncture have
    different parities depending on the sign of $\epsilon$. By analytic continuation, other disks
    and half disks mapping there either have images agreeing completely in the ball or there are
    injective points of the disk near the puncture. In the case where they agree completely we note
    that the parity of the number of intersection points in each local sheet not containing the
    puncture with $H_{\pm\epsilon}$ is independent of the sign of $\pm\epsilon$. We then find that we can achieve
    transversality by perturbing the complex structure near $H_{\pm\epsilon}$: because the sheet
    with the puncture intersects only one of $H_{\pm\epsilon}$, if the contributions of the sheets
    mapping to $H_{-\epsilon}$ cancel then those mapping to $H_{+\epsilon}$ cannot cancel and vice
    versa, by unique continuation.  
    
    Once transversality is achieved the statement that solutions form manifolds follows from a well-known argument, see e.g. \cite[Proposition 2.3]{EESPxR}. 
\end{proof}

\begin{thm}\label{thm:mdlicmpct}
The moduli space $\mathcal{M}^{\fl}(\mathbf{a};\kappa)$ admits a compactification consisting of several level disks joined at Reeb chords and intersection points, where some levels may lie in the symplectization.

The moduli spaces $\mathcal{M}^{\sy}_{\parallel}(\mathbf{c};\kappa)$ and $\mathcal{M}^{\sy}(\mathbf{c})$ admit compactifications consisting of several level disks joined at Reeb chords. 

The moduli space $\mathcal{M}^{\co}(\mathbf{a};\kappa)$ admits a compactification consisting of several level disks joined at Reeb chords. There is one level of disks in the cobordisms and remaining levels in the symplectization ends.

The moduli space $\mathcal{M}^{\overline{\co}}(\mathbf{a};\kappa)$ admits a compactification consisting of several level disks joined at Reeb chords and intersection points. 

\end{thm}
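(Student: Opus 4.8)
\textbf{Proof plan for Theorem \ref{thm:mdlicmpct}.}

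The plan is to derive all four compactness statements from a single application of the SFT compactness theorem (\cite{Gromov}, and its SFT extensions \cite{BEHWZ}), together with the two structural features already used to prove Theorem \ref{thm:mdlitv}: our disks switch parallel copies ``in the same direction'' at every puncture, so they cannot be multiply covered, and for the same reason no boundary bubbling can occur. First I would fix notation: a sequence of $J$-holomorphic disks in one of the moduli spaces has uniformly bounded energy (the energy is controlled by the actions of the asymptotic Reeb chords and intersection points, which are fixed), so by SFT compactness a subsequence converges to a pseudoholomorphic building. The task is then purely combinatorial: to show that the only buildings that can arise are the ones described in the statement, i.e. several-level configurations of punctured disks joined at Reeb chords (and, in the filling and filled-cobordism cases, at Lagrangian intersection points), with the level structure dictated by the target — one distinguished level in $X$ (resp. in the cobordism $W$) and the remaining levels in the symplectization ends $\R\times Y$ (resp. in $\R\times Y$ and $\R\times Z$).

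The key steps, in order, are as follows. \emph{Step 1: rule out sphere bubbles and interior nodes.} Since $c_1(X)=0$ and the Maslov class of $L$ (and of $C$, $K$) vanishes, and since our almost complex structures are cylindrical at infinity, any bubbled sphere or interior disk node would carry positive energy with the same asymptotics available only to lower-energy pieces; the standard index/monotonicity argument, exactly as in \cite{Ersft, EESPxR}, excludes these. \emph{Step 2: rule out boundary bubbling.} This is where the ``same direction'' numbering is essential: a boundary bubble would have to be a disk with boundary on a single parallel copy $L_{\kappa}$ (or $\Lambda_{\kappa}$), with a constant boundary numbering — but our asymptotic conventions force the numbering to change strictly at the non-constant punctures and the distinguished puncture always sits at a jump, so no such degenerate disk with the required asymptotics exists. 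Concretely, the monotone ordering of the copies means a disk boundary can never ``return'' to a previously visited copy, which is precisely the obstruction to boundary bubbling; this argument is identical to the one in \cite{Ersft}. \emph{Step 3: identify the surviving degenerations as breaking at Reeb chords (and intersection points).} Once spheres and boundary bubbles are excluded, the only remaining way for energy to escape is neck-stretching at the cylindrical ends, which produces exactly multi-level buildings broken along Reeb chords; breaking along interior Reeb orbits is absorbed into the anchoring data (rigid planes in $X$), so it does not enlarge the combinatorial type. For $\mathcal{M}^{\fl}$ and $\mathcal{M}^{\overline{\co}}$ one also gets breaking at Lagrangian intersection points because the parallel copies are not disjoint; for $\mathcal{M}^{\sy}$ and $\mathcal{M}^{\co}$ the relevant asymptotics at the distinguished ``$z^v$'' punctures are rigid (there is a unique transverse intersection point near each $z^v$ and a unique thin strip), so these punctures do not degenerate further. \emph{Step 4: place the levels in the correct targets.} The target-geometry forces at most one level in $X$ (resp. in $W$): a level in $X$ has all negative asymptotics and cannot sit below another non-symplectization level; dually a level in the cobordism $W$ has the negative end capped by $L$, so it is the bottom non-cylindrical level, with symplectization levels of $\R\times Y$ below and of $\R\times Z$ above. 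This yields precisely the stratified compactification claimed, and together with the transversality of all strata from Theorem \ref{thm:mdlitv} it gives the structure of a compact manifold-with-corners (equivalently a compact stratified space carrying a fundamental chain).

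The main obstacle I expect is Step 2 together with the bookkeeping of anchoring in Step 3: one must be careful that the ``same direction'' property is genuinely preserved in every Gromov limit — a priori a limit configuration could have a component whose boundary numbering degenerates to a constant, and one has to check that such a component would have to be either a trivial strip (hence absorbed) or a disk with forbidden asymptotics (hence nonexistent). This is exactly the phenomenon handled in \cite[Section 2]{Ersft} and the argument transfers verbatim, but it is the only place where a nontrivial case analysis is needed; everything else is a routine transcription of SFT compactness to our setup. The anchoring subtlety — that interior Reeb-orbit breaking is dealt with by the chosen abstract perturbation scheme and does not change the boundary-puncture combinatorics — is handled by the standing assumption on perturbations recorded in the remark following the definition of anchored disks, so I would simply invoke it rather than re-derive it.
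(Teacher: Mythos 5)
Your proposal is correct and follows essentially the same route as the paper: the paper's proof is exactly the observation that the monotone boundary numbering forces any degenerating arc separating positive punctures to connect boundary components with distinct labels, so no boundary splitting can occur, after which SFT compactness \cite{BEHWZ} yields the multi-level building compactification. The only small remark is that your Step 1 is more easily handled by exactness of the symplectic form and of the Lagrangians (non-constant closed spheres and unpunctured disks carry zero energy, hence do not exist) rather than by an index/monotonicity argument; the rest of your elaboration, including the placement of levels and the anchoring bookkeeping, is consistent with the paper's (much terser) argument.
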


\begin{proof}
Note that the boundary condition on our punctured disks have the following property: any arc in a disks with more than one positive puncture that subdivides the source into two components with a positive puncture in each must connect boundary components numbered with distinct numbers. This shows that there can be no boundary splitting. The theorem then follows from SFT compactness \cite{BEHWZ}.
\end{proof}

We next discuss orientations of moduli spaces following \cite{EESori}. We fix capping operators at
all Reeb chords and Lagrangian intersection points so that the two capping operators there glue to a
disk with the Fukaya orientation, see \cite{FOOO}. Recall the relative spin structure on the Lagrangian submanifold induces an orientation on the determinant bundle over the space of disks with boundary condition in the Lagrangian, see. As in \cite{EESori} we see that these choices then induces a system of coherent orientations on the moduli spaces.

We will use one more property of the moduli spaces which says that they are effectively independent of the increasing or decreasing boundary labeling $\kappa$.

\begin{thm}\label{thm:mdlicopies}
Let $\kappa$ and $\kappa'$ be two increasing (decreasing) boundary numberings. Then there are canonical orientation preserving diffeomorphisms 
\begin{align*}
\mathcal{M}^{\fl}(\mathbf{a};\kappa)&\approx \mathcal{M}^{\fl}(\mathbf{a};\kappa'),\\
\mathcal{M}^{\sy}_{\parallel}(\mathbf{c};\kappa)&\approx \mathcal{M}^{\sy}_{\parallel}(\mathbf{c};\kappa'),\\
\mathcal{M}^{\co}(\mathbf{c};\kappa)&\approx \mathcal{M}^{\co}(\mathbf{c};\kappa')
\end{align*}
\end{thm}

\begin{proof}
Let $\mathcal{M}(\kappa)$ denote either one of the above moduli spaces. This moduli space is the transverse zero set of a Fredholm section in a Banach bundle. Changing the numbering from $\kappa$ to $\kappa'$ corresponds to an arbitrarily small isotopy which induces an arbitrarily small deformation of the section. The theorem follows. 
\end{proof}

\section{Wrapped Floer cohomology and Legendrian surgery}
In this section we present the argument that establishes the isomorphism between
$CE^{\ast}(\Lambda)$ and $CW^*(C)$ where $C$ is the co-core disk of the surgery. Our proof is a
generalization of the corresponding result under Lagrangian handle attachment exposed in \cite{BEE} and uses the technical results on relevant moduli spaces in \cite{Esurgerycurves}.

We first define a version of wrapped Floer cohomology using only purely holomorphic disks and show that the
resulting theory agrees with the usual version defined in terms of holomorphic disks with a
Hamiltonian term. Second we discuss the surgery isomorphism in \cite{BEE}, and third we discuss how
to generalize that argument to partially wrapped Floer cohomology calculations.

\subsection{Wrapped Floer cohomology  without Hamiltonian}\label{sec:CWnoHam}
Let $X$ be a Weinstein manifold and $L$ be an exact Lagrangian. Fix a system of shifting Morse functions that are positive at infinity and let $\bar L=\{L_{j}\}_{j=0}^{\infty}$ be the corresponding family of parallel
Lagrangian submanifolds. Define $CW^*(L)$ to be the chain complex generated by Reeb chords of $L$
and intersection points $L_{0}\cap L_{1}$. We define operations $\m_{i}$ on $CW^*(L)$ using what we call \emph{partial holomorphic buildings}.

We start in the simplest case when the output of $\m_{i}$ is an intersection point $c_{0}$. Consider
$i$ generators $c_{i}\dots c_{1}$ and consider a disk $D_{i+1}$ with a decreasing boundary numbering
$\kappa$, distinguished negative (output) puncture and remaining punctures positive (inputs). Let $\mathbf{c}'=c_{i}\dots c_{1}$ and $\mathbf{c}=c_{0}c_{i}\dots c_{1}$. Define
\[ 
\m_{i}'(\mathbf{c}') = \sum_{|c_{0}|=|\mathbf{c}'|+(2-i)}|\mathcal{M}^{\fl}(\mathbf{c}; \kappa)|c_{0}.
\]
Here we use the temporary notion $\m_{i}'$ to denote the summand of the full operation $\m_{i}$ that takes values in intersection points. We next turn to the more complicated definition of the part $\m_{i}''$ of the operation that takes values in Reeb chord generators and to this end we introduce the notion of a partial holomorphic building.

The domain of a partial holomorphic building is a possibly broken disk $D_{i+1}$ with decreasing
boundary numbering $\kappa$. The partial holomorphic buildings we consider always have exactly one
disk in the symplectization. We call it the primary disk of the building. We require that the
distinguished puncture is increasing and is a negative puncture of this primary disk. If the
distinguished puncture is the only negative puncture of the primary disk  then the partial building
consists only of its primary component. If on the other hand the primary disk has additional negative
punctures then we require that at each additional negative puncture (which is decreasing or constant) there is a
disk in the filling with decreasing boundary condition is attached at its distinguished increasing
or constant puncture to the additional negative puncture. We call these disks the secondary disks of
the partial building. The resulting partial holomorphic building is then a disk with domain a broken
$D_{i+1}$, with distinguished puncture a negative puncture at a Reeb chord and with remaining $i$
punctures either Reeb chords or intersection points. See Figure \ref{partial}.

\begin{figure}[h!]
    \centering
    \begin{tikzpicture}[scale=1.1]
    \tikzset{->-/.style={decoration={ markings,
                mark=at position #1 with {\arrow{>}}},postaction={decorate}}}
    
    \draw [black, thick=1.5] (2,3) to[in=90,out=270] (0.5,0); 
    \draw [black, thick=1.5] (2.5,3) to[in=90,out=270] (4,0);
 
    \draw [black, thick=1.5] (2,0) to[in=90,out=90] (2.5,0);         
    \draw [black, thick=1.5] (3,0) to[in=90,out=90] (3.5,0);         
    \draw [black, thick=1.5] (1,0) to[in=90,out=90] (1.5,0);         
    
    \draw [black, thick=1.5] (0.5,-0.2) to[in=270,out=270] (0,-0.2);         
  \draw [black, thick=1.5] (-0.5,-0.2) to[in=270,out=270] (-1,-0.2);         
  \draw [black, thick=1.5] (-1.5,-0.2) to[in=270,out=270] (-2,-0.2);         
 
  \draw [black, thick=1.5] (5,-0.2) to[in=270,out=270] (5.5,-0.2);         
  \draw [black, thick=1.5] (4,-0.2) to[in=270,out=270] (4.5,-0.2);

    \draw [black, thick=1.5] (1,-0.2) to[in=270,out=270] (-2.5,-0.2);         
 
    \draw [black, thick=1.5] (3.5,-0.2) to[in=270,out=270] (6,-0.2);         
     \draw [black, thick=1.5] (3,-0.2) to[in=270,out=270] (6.5,-0.2);

    \draw [black, thick=1.5] (2.5,-0.2) to[in=270,out=270] (7,-0.2);

    \draw [black, thick=1,->-=.5 ] (-2,-0.2) to (-2.5,-0.2); 
    \draw [black, thick=1,->-=.5 ] (-1,-0.2) to (-1.5,-0.2); 
    \draw [black, thick=1,->-=.5 ] (0,-0.2) to (-0.5,-0.2); 
    \draw [black, thick=1,->-=.5 ] (1,-0.2) to (0.5,-0.2); 
    \draw [black, thick=1, ->-=.5] (1,0) to (0.5,0); 
    \draw [black, thick=1, ->-=.5 ] (2,0) to (1.5,0); 
    \draw [black, thick=1, ->-=.5 ] (3,0) to (2.5,0); 
    \draw [black, thick=1, ->-=.5 ] (4,0) to (3.5,0); 
    \draw [black, thick=1, ->-=.5 ] (3,-0.2) to (2.5,-0.2); 
    \draw [black, thick=1, ->-=.5 ] (4,-0.2) to (3.5,-0.2); 
     \draw [black, thick=1, ->-=.5 ] (5,-0.2) to (4.5,-0.2); 
    \draw [black, thick=1,->-=.5 ] (6,-0.2) to (5.5,-0.2); 
    \draw [black, thick=1,->-=.5 ] (7,-0.2) to (6.5,-0.2); 
    \draw [black, thick=1,->-=.5] (2.5,3) to (2,3);

 \node at (1.5,2.5) {\small $L_3$}; 
    \node at (3,2.5) {\small $L_4$}; 
    \node at (1.3, 0.4) {\small $L_0$}; 
    \node at (2.3, 0.4) {\small $L_7$}; 
    \node at (3.3, 0.4) {\small $L_6$}; 
     \node at (4.3, -0.6) {\small $L_4$}; 
     \node at (5.3, -0.6) {\small $L_5$};         
    \node at (6.1, -0.6) {\small $L_6$};         
    \node at (6.6, -0.6) {\small $L_6$};         
    \node at (7.2, -0.6) {\small $L_7$};         
    \node at (0.3, -0.6) {\small $L_3$}; 
  \node at (-0.7, -0.6) {\small $L_2$}; 
  \node at (-1.7, -0.6) {\small $L_1$}; 
  \node at (-2.7, -0.6) {\small $L_0$};

\end{tikzpicture}
    \caption{The domain of a partial building contributing to the operation $\m_7$ of $CW^*(L)$
    with a possible decoration. The map sends the tensor product of chords $L_0 \leftarrow L_1,
    L_1 \leftarrow L_2, \ldots, L_6 \leftarrow L_7$ to a chord $L_0 \leftarrow
    L_7$.} \label{partial}
\end{figure}

\begin{rem}
At additional negative punctures there may be holomorphic disks with one positive puncture and boundary on $L$ attached. These are the usual augmentation disks, or disks on $L$ used as anchoring disks in the definition of $\mathcal{M}^{\sy}_{\parallel}(\mathbf{c},\kappa)$.   
\end{rem}
We write the punctures of the partial holomorphic disk building as $\mathbf{c}=c_{0}c_{i}\dots c_{1}$, where $c_{0}$ is the distinguished puncture. Write 
\[ 
\mathcal{M}^{\pb}(\mathbf{c};\kappa)
\]
for the moduli space of partial holomorphic disk buildings with boundary condition on $\bar L$ according to $\kappa$. Using this we define for generators $\mathbf{c}'=c_{i}\dots c_{1}$ the operation
\[ 
\m_{i}''(\mathbf{c}')=\sum_{|c_{0}|=|\mathbf{c}'|+(2-i)}|\mathcal{M}^{\pb}(\mathbf{c};\kappa)|c_{0},
\]
where the sum ranges over Reeb chords $c_{0}$ with grading as indicated. Finally we define the total operation $\m_{i}$ as the sum
\[ 
\m_{i}(\mathbf{c}')=\m_{i}'(\mathbf{c}') + \m_{i}''(\mathbf{c}').
\]

\begin{lem}
The $A_{\infty}$-relations hold for the operations $\m_i$.
\end{lem}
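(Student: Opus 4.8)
The plan is to argue as one does for the $A_\infty$-relations in Legendrian contact homology and wrapped Floer theory. First I would fix a generic compatible almost complex structure so that, by Theorem~\ref{thm:mdlitv}, all the moduli spaces entering the construction — the spaces $\mathcal{M}^{\fl}$, $\mathcal{M}^{\sy}$, $\mathcal{M}^{\co}$ and the spaces $\mathcal{M}^{\pb}$ of partial holomorphic buildings — are transversely cut out manifolds of the expected dimension, and I would equip them with coherent orientations following \cite{EESori}. Then I would fix a word of generators $\mathbf{c}'=c_d\dots c_1$ together with an output generator $c_0$ whose grading is chosen so that the relevant moduli space is one-dimensional, and identify the $A_\infty$-relation with this input and output with the vanishing of the signed count of the boundary points of that one-manifold (as usual the relevant sums are finite by grading and action constraints). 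There are two cases, according to whether $c_0$ is a Lagrangian intersection point in $L_0\cap L_1$ or a Reeb chord; since $\m_i=\m_i'+\m_i''$ with $\m_i'$ landing in intersection points and $\m_i''$ landing in Reeb chords, these two cases isolate exactly the two components of the relations $\sum \m_{d-k+1}(\dots,\m_k(\dots),\dots)=0$.

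In the first case the one-dimensional moduli space is $\mathcal{M}^{\co}(c_0\mathbf{c}';\kappa)$. Because $\kappa$ is decreasing there is no disk bubbling — any arc separating two positive punctures joins boundary components with distinct labels, as in the proof of Theorem~\ref{thm:mdlicmpct} — so by SFT compactness its ends are two-level configurations: a disk in $X$ to which either a second disk in $X$ is attached at an intermediate intersection point, or a rigid symplectization disk is attached at one of its positive (Reeb chord) punctures. After re-indexing boundary numberings with the canonical identifications of Theorem~\ref{thm:mdlicopies}, these strata are in orientation-preserving bijection with the terms $\m'_{d-k+1}(\dots,\m'_k(\dots),\dots)$ and $\m'_{d-k+1}(\dots,\m''_k(\dots),\dots)$, which is precisely the intersection-point component of the $A_\infty$-relations.

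In the second case I would work with the one-dimensional space $\mathcal{M}^{\pb}(c_0\mathbf{c}';\kappa)$ of partial buildings. Its ends come from: the primary symplectization disk bubbling off a rigid symplectization disk at one of its positive punctures; a secondary disk in $X$ splitting at an intermediate intersection point; or a secondary disk in $X$ bubbling off a symplectization disk at one of its positive punctures, which forces a reorganization of the building. A further class of ends involves the anchoring planes in $X$ (and the augmentation disks on $L$) appearing at the extra negative punctures; as in the bookkeeping of \cite{BEE} these are controlled by the chain-map property of the count of such disks. Summing up, and once more using Theorem~\ref{thm:mdlicopies}, the endpoint count reproduces the Reeb-chord component of the $A_\infty$-relations, $\sum\m''_{d-k+1}(\dots,\m'_k(\dots),\dots)+\sum\m''_{d-k+1}(\dots,\m''_k(\dots),\dots)=0$.

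The hard part will be the combinatorial organization of the compactified partial-building moduli spaces so that every boundary stratum is matched with exactly one term of the $A_\infty$-relations. Concretely, when a symplectization disk bubbles off above the primary disk or below a secondary disk one must canonically renormalize the resulting (a priori multi-level) configuration to a partial building of the expected type, respecting the convention that it carry exactly one symplectization level, and then check that the coherent orientations of \cite{EESori} reproduce the signs in \eqref{ainf} term by term. A subordinate technical point is that partial buildings involve anchored disks, so the required gluing and transversality for the holomorphic planes in $X$ must be carried out within the fixed abstract perturbation scheme mentioned in the text; as noted there, the conclusion is independent of that choice.
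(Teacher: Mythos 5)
Your proof follows the same strategy as the paper's: the relations with intersection-point output are read off from the boundary of the one-dimensional moduli space of cobordism disks (whose ends are breakings at intersection points and breakings into a partial building capped by a rigid disk), the remaining relations from a one-dimensional space of partial buildings, with Theorem \ref{thm:mdlicopies} guaranteeing that the operations compose and the decreasing boundary numbering excluding boundary bubbling. The only difference is one of packaging in the second case: the paper takes the boundary of the two-dimensional symplectization moduli space of the primary disk and then caps the non-distinguished negative punctures with rigid cobordism disks, whereas you take the boundary of $\mathcal{M}^{\pb}$ directly and also enumerate the degenerations of the secondary disks --- which is, if anything, slightly more explicit about where the $\m''\circ\m'$ terms and the reorganized $\m''\circ\m''$ terms come from.
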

\begin{proof}
First, Theorem \ref{thm:mdlicopies} shows that the operations compose and that they are independent of the choice of decreasing boundary numbering. To see that the relations hold we will as usual identify the terms contributing to them with the boundary of an oriented 1-dimensional compact manifold. 

To this end we first consider 1-dimensional moduli spaces $\mathcal{M}'$ of the form    	
$\mathcal{M}'=\mathcal{M}^{\fl}(\mathbf{c};\kappa)$, where the distinguished puncture $c_{0}$ is an intersection point. As usual, the boundary numbering precludes boundary bubbling and we find that the boundary consists of broken disk that either break at an intersection point in which case the holomorphic parts both have dimension zero, or break into a partial holomorphic building with a rigid disk attached at its negative puncture, in which case the primary component of the partial building has dimension one. We find the boundary points of $\mathcal{M}'$ are in 1-1 correspondence with disks contributing to compositions of $\m_{i}'$ and $\m_{j}'$ (disks breaking at intersection points) and
disks contributing to $\m_{i}''$ and $\m_{j}'$. 

Remaining contributions to the $A_{\infty}$-relations correspond to compositions of $\m_{i}''$ and $\m_{j}''$. We show that all contributions to this composition constitute the boundary of an oriented 1-manifold. The contributions are of two forms: either the output puncture of the first operation (which lies in the primary disk of the corresponding partially broken configuration) is glued to an input puncture of the primary disk in the partially broken configuration of the second operation or it is glued to an input puncture in a secondary disk. 

The configurations of the former type correspond to a part of the boundary of the moduli space $\mathcal{M}^{\sy}_{\parallel}(\mathbf{b};\kappa)$ of dimension two with distinguished negative puncture and decreasing boundary numbering (after we divide out the natural $\R$-action this is a 1-dimensional space) capped off at by rigid disks in $\mathcal{M}^{\co}(\mathbf{a})$ at all non-distinguished negative punctures. This is the part of the boundary where the distinguished puncture belongs to the lower level disk.  

The configurations of the second type correspond to the part of the boundary of the $1$-dimensional moduli space $\mathcal{M}^{\co}(\mathbf{a};\kappa)$, with a distinguished increasing positive puncture where a negative puncture in the primary disk of the second operation is attached (other negative punctures in the primary disk of the second operation is capped off as usual). The part of the boundary where the distinguished positive puncture lies in the rigid disk in the cobordism.  

Finally, the remaining part of the boundary in the first case is two level buildings in $\mathcal{M}^{\sy}_{\parallel}(\mathbf{b};\kappa)$ where the distinguished negative puncture belongs to the top level curves. This is exactly the configurations that we get from the remaining parts of the boundary in the second case (i.e., configurations where the distinguished puncture belong to the component in the symplectization) when we glue to it the primary disk of the second operation.  
 
We conclude that also the composition of $\m_{i}''$ and $\m_{j}''$ cancels. The lemma follows.
\end{proof}

\subsubsection{Isomorphism with the Hamiltonian version}\label{sec:wrappediso}

In this section we show that the above definition of wrapped Floer cohomology agrees with the standard theory. Similar results can be found in, e.g., \cite{EO, EKH}. Here we will give a sketch.
We keep the geometric setting as above and write $CW^{\ast}_{\mathrm{Ham}}(L)$ for the usual version of Hamiltonian wrapped Floer cohomology. We give a brief recollection of the definition.

We define the wrapped Floer cohomology complex $CW^{\ast}_{\mathrm{Ham}}(L)$ of $L$ as follows. Write $X=\overline{X}\cup [0,\infty)\times Y$, where $\overline{X}$ is a compact domain and $[0,\infty)\times Y$ the positive end of the Weinstein manifold $X$. Consider time dependent Hamiltonians $H_{a}\colon X\times [0,1]\to\R$ which are perturbations of functions that equal 0 on $\overline{X}$ and are linear of the form
\[ 
(t,y)\mapsto a e^{t} + b,
\] 
on $[0,\infty)\times Y$, where $a$ is not in the chord and orbit spectrum or the contact form on $\Lambda$. We choose these Hamiltonians in such a way that if $a_{0}< a_{1}$ then $H_{a_{0}}< H_{a_{1}}$ on $X$. After small perturbation, Hamiltonian time 1 chords and Hamiltonian time 1 orbits are non-degenerate.

Define the chain complex $CW^{\ast}_{\mathrm{Ham}}(L,H_{a})$ to be generated by Hamiltonian chords $\gamma\colon[0,1]\to X$ of $C$ of action 
\[ 
\mathfrak{a}(\gamma)=\int_{0}^{1} (\lambda(\dot\gamma(t)) - H_{a}(\gamma(t)))\,dt< a
\]
The differential on $CW^{\ast}(L,H_{a})$ is defined by counting solutions of the perturbed Cauchy-Riemann equation over the strip with coordinates $s+it\in\R\times[0,1]$:
\[ 
(du + X_{H_{a}}\otimes dt)^{0,1}=0.
\] 

Choosing an increasing interpolation between $H_{a_{0}}$ and $H_{a_{1}}$ we get continuation maps 
\[ 
CW^{\ast}_{\mathrm{Ham}}(L,H_{a_{0}})\mapsto CW^{\ast}_{\mathrm{Ham}}(L,H_{a_{1}})
\]  
and we define the wrapped Floer cohomology complex as the direct limit
\[ 
CW^{\ast}_{\mathrm{Ham}}(L)=\underrightarrow{\lim}_{a} \,CW^{\ast}_{\mathrm{Ham}}(C,H_{a}).
\]

The wrapped Floer cohomology $HW^{\ast}_{\mathrm{Ham}}(C)$ is the homology of this complex. Writing $HW^{\ast}_{\mathrm{Ham}}(L,H_{a})$ for the homology of $CW^{\ast}_{\mathrm{Ham}}(L,H_{a})$ we then have
\[ 
HW^{\ast}_{\mathrm{Ham}}(L)=\underrightarrow{\lim}_{a}\, HW^{\ast}_{\mathrm{Ham}}(L,H_{a}),
\] 
by exactness of direct limits.

A well known argument \cite{EKH} shows that that $CW^{\ast}(L)$ with differential $\m_{1}$ is quasi-isomorphic to the wrapped Floer cohomology by a geometrically defined chain map. We recall the argument:

The filling $L$ of $\Lambda$ gives an augmentation of $CE^{\ast}(\Lambda)$ and we define $CW^{\ast}(L)$ (as a chain complex disregarding higher product operations) without Hamiltonian as the 'Morse extended linearized Chekanov-Eliashberg complex' with respect to this augmentation as generated by Reeb chords and the critical point of a Morse function on $L$ with a unique minimum, and take the differential to count unperturbed augmented and anchored holomorphic strips. We also introduce the subcomplexes $CW^{\ast}(L,a)$ generated by chords of action $<a$. Then by definition
\[ 
CW^{\ast}(L)=\underrightarrow{\lim}_{a}\, CW^{\ast}(L,a).
\] 

The isomorphism $CW^{\ast}(L)\to CW^{\ast}_{\mathrm{Ham}}(L)$ is now constructed by interpolating exactly as in the continuation maps above from the zero Hamiltonian (ordinary Cauchy-Riemann equation) to the Hamiltonians $H_{a}$ above. Choosing the interpolations compatibly, we get the following commutative diagram
\[ 
\begin{CD}
CW^{\ast}(L,a_{0}) @>>> CW^{\ast}(L,a_{1}) @>>> \dots @>>> CW^{\ast}(L,a_{j}) @>>> \dots\\
@VVV @VVV  \dots @. @VVV\\
CW^{\ast}_{\mathrm{Ham}}(L,H_{a_{0}}) @>>> CW^{\ast}_{\mathrm{Ham}}(L,H_{a_{0}})  @>>> \dots @>>> CW^{\ast}_{\mathrm{Ham}}(L,H_{a_{j}}) @>>> \dots
\end{CD}
\]
Here each vertical arrow are chain isomorphisms by the standard argument, see e.g., \cite[Section 6]{EO}, and taking limits we find a chain isomorphism
\[ 
CW^{\ast}(L)\to CW^{\ast}_{\mathrm{Ham}}(C).
\]

We extend this chain map to an $A_{\infty}$-map, then the standard spectral sequence argument establishes the desired $A_{\infty}$ quasi-isomorphism. 

We follow the approach in \cite{EO} where similar isomorphisms between contact and
symplectic differential graded algebras were constructed. More precisely, as there we
construct a splitting compatible non-negative field of $1$-forms with values in Hamiltonian vector
fields, and further a $1$-parameter family of such forms interpolating between the zero Hamiltonian
at the positive end and the Hamiltonian used to define wrapped Floer cohomology at the negative end, see \cite[Section 2]{EO}. We then define the corresponding moduli spaces over the deformation interval. Keeping the notation from \cite{EO} we denote them
\[ 
\mathcal{F}_{\R}(\mathbf{a},b).
\]

In order for the asymptotics at infinity of these maps to make sense we need to include the
parallel copies of the Lagrangians according to boundary numbering, and in particular also to
incorporate this in the description of wrapped Floer cohomology. More precisely, as in the case above we will have moduli spaces of Floer holomorphic disks with boundary in distinct Lagrangians that are arbitrarily close. The analogue of Theorem \ref{thm:mdlicopies} holds by the same argument and the corresponding moduli spaces are canonically isomorphic for sufficiently small perturbations. Using these observations we then define $\Phi\colon CW^{\ast}(L)\to CW^{\ast}_{\mathrm{Ham}}(L)$ by
\[ 
\Phi(\mathbf{a})=\sum_{\dim(\mathcal{F}_{\R}(\mathbf{a},b))=0}|\mathcal{F}_{\R}(\mathbf{a},b)|b.
\]
\begin{lem}
The map $\Phi$ is an $A_{\infty}$ homomorphism.
\end{lem}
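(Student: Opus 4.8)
The plan is to prove that $\Phi$ satisfies the $A_\infty$-functor equations
\[
\sum_{s}\Phi\bigl(a_{i},\ldots,\m_{s}(a_{k+s},\ldots,a_{k+1}),\ldots,a_{1}\bigr)
=\sum_{r}\m_{r}^{\mathrm{Ham}}\bigl(\Phi(\ldots),\ldots,\Phi(\ldots)\bigr),
\]
including the linear relation $\m_{1}^{\mathrm{Ham}}\circ\Phi-\Phi\circ\m_{1}=0$ as the $i=1$ case, by the usual mechanism: each side is the signed count of one of the two types of endpoints of the one-dimensional components of the interpolation moduli spaces $\mathcal{F}_{\R}(\mathbf{a},b)$. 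Here $\mathbf{a}=a_{i}\dots a_{1}$ is a word of Reeb chords and intersection points of $L_{0}\cap L_{1}$, $b$ is a generator of $CW^{\ast}_{\mathrm{Ham}}(L)$, and $\mathcal{F}_{\R}$ is the space of Floer disks with boundary on the system $\bar L$ of parallel copies (with a decreasing boundary numbering, as in Section \ref{sec:parallel}) for the splitting-compatible non-negative field of one-forms with Hamiltonian values of \cite[Section 2]{EO}, taken over the full deformation parameter $\lambda$; by construction this field interpolates between the zero Hamiltonian at the positive end and the wrapping Hamiltonian at the negative end.

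First I would record transversality and compactness. Because every puncture of our disks switches parallel copies ``in the same direction'', they carry no nonconstant multiple covers and no arc splitting off a closed boundary component, so the argument of Theorem \ref{thm:mdlitv} (now perturbing both $J$ and the deformation data) shows that $\mathcal{F}_{\R}(\mathbf{a},b)$ is transversely cut out of the expected dimension for generic choices, and the analogue of Theorem \ref{thm:mdlicopies} shows it is canonically independent of the chosen boundary numbering. Non-negativity of the one-form rules out wrong-sign energy and furnishes the a priori energy bound for SFT compactness \cite{BEHWZ}, so a one-dimensional $\mathcal{F}_{\R}$ has a compactification by several-level buildings joined at Reeb chords, intersection points and Hamiltonian chords, with exactly one interpolation level; the remaining levels are symplectization disks and cobordism-type disks $\mathcal{M}^{\co}$, $\mathcal{M}^{\pb}$ towards the positive end and Hamiltonian Floer disks towards the negative end, all anchored at their negative interior punctures by rigid holomorphic planes in $X$ (which is where the abstract perturbation scheme discussed in the Remark of Appendix \ref{sec:mdlispaces} enters).

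Next I would identify the boundary strata with the two sides of the functor equation. The codimension-one boundary of a one-dimensional $\mathcal{F}_{\R}(\mathbf{a},b)$ consists of: (i) the degeneration in which the interpolation part is pushed to the positive end, leaving a partial holomorphic building contributing to some $\m_{s}$ of $CW^{\ast}(L)$ above a rigid interpolation disk --- these are exactly the terms $\Phi(\ldots,\m_{s}(\ldots),\ldots)$; (ii) the degeneration in which it is pushed to the negative end, leaving $r$ rigid interpolation disks above a Hamiltonian Floer building contributing to $\m_{r}^{\mathrm{Ham}}$ --- these are exactly the terms $\m_{r}^{\mathrm{Ham}}(\Phi(\ldots),\ldots,\Phi(\ldots))$; and (iii) breakings at finite parameter at an interior Reeb chord or intersection point, which, after anchoring, are absorbed into the bookkeeping of (i)--(ii) precisely as in the proofs that $\m_{i}$ on $CW^{\ast}(L)$ and $\m_{i}^{\mathrm{Ham}}$ are $A_\infty$-structures (using Theorems \ref{thm:mdlicmpct} and \ref{thm:mdlicopies}). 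Fixing coherent orientations via capping operators as in \cite{EESori} and \cite{FOOO} --- the same choices orienting $\mathcal{M}^{\co}$, $\mathcal{M}^{\pb}$ and the Hamiltonian moduli spaces --- makes the signs in (i)--(ii) match the $A_\infty$-functor equation, exactly as in \cite{EO}, which completes the proof of the lemma.

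The main obstacle will be the compactness and gluing analysis for the interpolation moduli spaces in the presence of anchoring, and in particular surjectivity of the gluing maps producing the terms of type (ii), where a connected interpolation curve degenerates into $r$ interpolation pieces sitting above an $r$-input Hamiltonian disk, together with keeping orientations consistent across the purely holomorphic regime (specializing to $CW^{\ast}(L)$) and the Hamiltonian-perturbed regime (specializing to $CW^{\ast}_{\mathrm{Ham}}(L)$). Once the lemma holds, filtering both sides by action --- equivalently by the number of high-energy (Reeb-chord) inputs --- makes $\Phi$ a filtered $A_\infty$-map whose associated graded is the quasi-isomorphism of \cite{EKH}, so a standard spectral sequence comparison shows $\Phi$ is a quasi-isomorphism, as asserted just before the lemma.
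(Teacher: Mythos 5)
Your proposal is correct and takes the same route as the paper: the paper's proof is a single sentence asserting that the terms of the $A_\infty$-functor relations correspond exactly to the ends of the one-dimensional interpolation moduli spaces $\mathcal{F}_{\R}(\mathbf{a},b)$, which is precisely the mechanism you spell out. Your additional discussion of transversality, the two types of end degenerations, anchoring, and orientations fills in details the paper leaves implicit but does not change the argument.
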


\begin{proof}
To see this we note again that the disks which contributes to the $A_{\infty}$ relations correspond exactly to the ends of 1-dimensional moduli space.
\end{proof}

\begin{lem}\label{l:CW=CWHam}
The map $\Phi$ is a quasi-isomorphism.
\end{lem}

\begin{proof}
The map respects the word length filtration and is the standard isomorphism from the linearized Legendrian cohomology to the wrapped Floer cohomology, discussed above, on the $E_2$-page.
\end{proof}

\subsection{Wrapped Floer cohomology and Lagrangian handle attachment}\label{ssec:CWBEE}
In this subsection we prove the results in \cite{BEE} which gives a Legendrian surgery description of the wrapped Floer cohomology of a co-core disk in a Weinstein manifold obtained by Lagrangian handle attachment along a Legendrian sphere, referring to \cite{Esurgerycurves} for the results on holomorphic curves missing in \cite{BEE}. To state this result we first introduce notation. 

Let $X_0$ be a Weinstein $2n$-manifold with ideal boundary the contact $(2n-1)$-manifold $Y_0$. Let $\Lambda=\Lambda_1\cup\dots\cup\Lambda_m$ be a Legendrian submanifold such that all of its components $\Lambda_j$ are parameterized $(n-1)$-spheres. Let $X$ be the Weinstein manifold that results from attaching Lagrangian handles $H$ to $\Lambda$. Here $H=H_{1}\cup\dots\cup H_{m}$, where each component $H_j$ is a disk sub-bundle of the cotangent bundle $T^{\ast}D$ of the $n$-disk $D$, and where $H_j$ is attached to $\Lambda_j$. Then $X$ contains $m$ co-core disks corresponding to the cotangent fibers at the center of the disk in each $H_j$. We let $C_j\subset X$ denote co-core disk in $H_j$, $\Gamma_j\subset Y$ denote its Legendrian boundary inside the contact boundary $Y$ of $X$, and write $\Gamma=\Gamma_1\cup\dots\cup\Gamma_m$. 

As a first step in the calculation of the wrapped Floer cohomology of $C$ we describe the generators of the underlying chain complex. By definition, see Section \ref{sec:CWnoHam}, generators of $CW^{\ast}(C)$ are of two kinds Lagrangian intersection points and Reeb chords. Here the Lagrangian intersection points are easily understood: pick the shifting Morse function so that it has one minimum on each component of $C$ and no other critical points, then there is exactly one intersection point for each component of $C$. We denote the intersection point of $C_{j}$ by $m_{j}$ and we denote the subcomplex generated by the $m_{j}$ by $CW^{\ast}_{0}(\Gamma)$. Remaining generators are Reeb chords of $\Gamma$ we write $CW^{\ast}_{+}(\Gamma)$ for the quotient complex $CW^{\ast}(\Gamma)/CW^{\ast}_{0}(\Gamma)$ and note that $CW^{\ast}_{+}$ is generated by Reeb chords.
 
Consider the link $\Lambda$ and let all components be decorated by minus, $\Lambda^{-}=\Lambda$.
Consider $CE^{\ast}(\Lambda)$ as a chain complex, generated by composable words
of Reeb chords with differential $d$ and with product $\cdot$ given by concatenation if the words are composable and zero otherwise. Let $\epsilon>0$ denote the size of the attaching region (i.e., the size of the tubular neighborhood of $\Lambda$ where $H$ is attached), we then have the following.

\begin{lem}\label{l:BEEchords=words}
For any $A>0$ there exists $\epsilon_0>0$ such that if $\epsilon<\epsilon_0$ then there is a natural
    one to one correspondence between the generators of $CW_{+}^{\ast}(\Gamma)$ (Reeb chords of $\Gamma$) of action $<A$ and the generators of $CE^{\ast}(\Lambda)$ (words of Reeb chords of $\Lambda$) of action $<A$. 
\end{lem}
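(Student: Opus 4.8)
\textbf{Proof strategy for Lemma \ref{l:BEEchords=words}.}

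The plan is to match Reeb chords of $\Gamma$ in the surgered manifold $X$ with composable words of Reeb chords of $\Lambda$ in $X_0$, using a neck-stretching / action argument localized near the attaching region. First I would recall the local model: after attaching the Lagrangian handles $H=H_1\cup\dots\cup H_m$ along the Legendrian spheres $\Lambda_j$, the contact boundary $Y$ of $X$ is obtained from $Y_0$ by a contact surgery in an $\epsilon$-neighborhood $N_\epsilon(\Lambda)$ of $\Lambda$. In this neighborhood, modeled on (a piece of) the $1$-jet space $J^1(\Lambda)$ together with the surgery handle, the Reeb flow of the surgered contact form behaves as follows: a Reeb trajectory either stays outside $N_\epsilon(\Lambda)$ — where it agrees with the old Reeb flow of $Y_0$ — or it enters the surgery region, where it may ``jump'' from near the top sheet of a handle $H_j$ back to near $\Lambda_j$, at a cost of roughly a fixed amount of action $\sim c\epsilon$ (the handle width) but gaining the opportunity to re-enter $X_0\setminus N_\epsilon(\Lambda)$ and run along another honest Reeb chord of $\Lambda$. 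The co-core disk $C_j$ has its Legendrian boundary $\Gamma_j$ sitting as a small meridian sphere in $N_\epsilon(\Lambda_j)$, so a Reeb chord of $\Gamma$ with endpoints on $\Gamma_j$ and $\Gamma_k$ is forced to begin and end in the surgery region.

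The core of the argument is the bijection. Given a Reeb chord $\gamma$ of $\Gamma$ of action $<A$, its trajectory decomposes into a sequence of ``long'' pieces $c_1,\dots,c_i$ that run through $X_0\setminus N_\epsilon(\Lambda)$ — each of which, for $\epsilon$ small, is $C^0$-close to and hence in canonical bijection with an honest Reeb chord of $\Lambda$ — interspersed with ``short'' pieces inside the handles whose action is $O(\epsilon)$. The composability condition on the word $c_ic_{i-1}\cdots c_1$ is exactly the condition that the endpoint of $c_{j}$ lies on the same component $\Lambda_{v}$ as the startpoint of $c_{j+1}$, which is automatic from how the surgery handle reconnects the top sheet to the core. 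The number of long pieces is bounded: each one costs at least the length of the shortest Reeb chord of $\Lambda$, a positive constant independent of $\epsilon$, so for action $<A$ there are at most $\lfloor A/\ell_{\min}\rfloor$ of them, and conversely for $\epsilon$ small enough every composable word of total action $<A$ arises this way. This is precisely the generating set of $CE^*(\Lambda)$ in the $\Lambda=\Lambda^-$ case, namely composable words in $\mathcal{R}$, with the grading matching because the Conley--Zehnder index of $\gamma$ is the sum of the $\mathrm{CZ}$-indices of the $c_j$ plus a correction from the short handle pieces that, with the conventions of the handle, contributes the shift built into the tensor algebra $CE^*(\Lambda)=\Omega LC_*$ (this is the $[-1]$-shift on each letter). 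I would make the action estimates quantitative by choosing $\epsilon_0$ depending on $A$ and on the Reeb dynamics of $(Y_0,\Lambda)$, which is non-degenerate, so that no trajectory of action $<A$ can wind more than the allowed number of times.

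The main obstacle I expect is controlling trajectories that spend a long time \emph{inside} $N_\epsilon(\Lambda)$ without being close to a standard short chord — i.e.\ ruling out ``exotic'' Reeb chords of $\Gamma$ localized in the surgery region that do not correspond to composable words, and conversely ensuring no composable word is missed. This requires a careful analysis of the Reeb flow in the explicit handle model (as in \cite[Section 7]{BEE}), showing that the only bounded-action trajectories are the ``staircase'' ones described above; the key quantitative input is that the return map near $\Lambda_j$ expands action by a definite amount each time it crosses the handle. I would handle this exactly as in \cite{BEE}, adapting their analysis of the single-handle case to the link $\Lambda=\Lambda_0\cup\dots\cup\Lambda_m$, where the only new feature is bookkeeping of which component each long piece runs over — which is what produces words in the path algebra of the quiver with vertex set $\{0,\dots,m\}$ rather than in a free algebra on one generator set. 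The grading verification and the orientation/coefficient matching are then routine given the setup in Appendix \ref{sec:mdlispaces} and \cite{EESori}.
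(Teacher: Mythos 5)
Your proposal is correct and follows essentially the same route as the paper's (deliberately brief) sketch: decompose a chord of $\Gamma$ into long pieces that converge to Reeb chords of $\Lambda$ as $\epsilon\to 0$ plus short handle pieces, bound the word length by action, and establish existence and uniqueness of a chord for each composable word via the explicit handle model. The only difference is that the paper pins the existence/uniqueness step on the Reeb flow in the handle being the solution of a linear ODE combined with an elementary finite-dimensional fixed point theorem (with full details deferred to \cite{BEEfuture}), whereas you phrase the same quantitative input in terms of the action-expanding return map; these are the same analysis.
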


\begin{proof}
	This is \cite[Theorem 1.2]{Esurgerycurves}.
\end{proof}	


We will next define the surgery map which is an $A_{\infty}$-morphism
\[ 
\Phi\colon CW^{\ast}(\Gamma)\to CE^{\ast}(\Lambda),
\]
that counts certain holomorphic disks. See Figure \ref{surgerymap}.

\begin{figure}[h!]
    \centering
    \begin{tikzpicture}[scale=1]
    \tikzset{->-/.style={decoration={ markings,
                mark=at position #1 with {\arrow{>}}},postaction={decorate}}}

        \draw[thick, red] (-1,0) arc (0:90:1);
        \draw[thick, red] (-2,1) arc (90:180:1);
        \draw[thick, blue] (-3,0) arc (180:240:1);
        \draw[thick, blue] (-2.5,{-sin(60)} ) arc (240:300:1);
        \draw[thick, blue] (-1.5,{-sin(60)} ) arc (300:360:1);

           \draw[thick, fill=white] (-2,1) circle(.05);
        \draw[thick, fill=white] (-1.5, {-sin(60)} ) circle(.05); 
        \draw[thick, fill=white] (-2.5, {-sin(60)} ) circle(.05);

        \draw[thick, fill=black] (-1,0) circle(.05);
        \draw[thick, fill=black] (-3,0) circle(.05);

     \begin{scope}[xshift=30, yshift=92]
        \draw [black, thick=1.5, dashed, scale=5] (0.5,-0.25) to[in=90,out=90] (1,-0.25); 
        \draw [black, thick=1.5, scale=5] (0.5,-0.25) to[in=270,out=270] (1,-0.25); 

        \draw [black, thick=1.5, scale=5] (0.5,-0.25) to[in=180,out=270] (0.75,-0.7); 
        \draw [black, thick=1.5, scale=5] (1,-0.25) to[in=0,out=270] (0.75,-0.7); 
       
       \draw[thick, fill=black, scale=5] (0.64,-0.67) circle(.01);

     \end{scope}

    \begin{scope}[yshift=100]
     
          \draw[red, thick=1, scale=5] (0.5,-0.25) to[in=120,out=270] (0.75, -0.7);

       \draw[blue, thick=1, scale=5] (0.5,-1.15) to (0.7, -1.3); 
      \draw[blue, thick=1, scale=5] (0.8,-1.3) to (1, -1.15); 
     
        \draw[blue, thick=1, scale=5] (1,-1.15) to[in=260,out=100] (0.85, -0.72); 
        \draw[blue, thick=1, scale=5] (0.5,-1.15) to[in=260,out=80] (0.75, -0.7); 
      \draw[blue, thick=1, scale=5] (0.7,-1.3) to[in=90,out=90] (0.8, -1.3);

 \draw [black, thick=1.5, scale=5] (0.5,-0.25) to[in=180,out=270] (0.75,-0.7); 
        \draw [black, thick=1.5, scale=5] (1,-0.25) to[in=0,out=270] (0.75,-0.7); 
        
         \draw[thick, fill=black, scale=5] (0.75,-0.7) circle(.01);

        \draw[red, thick=1, scale=5] (0.73,-0.37) to[in=120,out=270] (0.85, -0.72);

            \draw [black, thick=1.5, dashed, scale=5] (0.5,-0.25) to[in=90,out=90] (1,-0.25); 
        \draw [black, thick=1.5, scale=5] (0.5,-0.25) to[in=270,out=270] (1,-0.25); 

             \draw[red, thick=1, scale=5] (0.5,-0.25) to (0.73, -0.37); 
         
    \end{scope}

        \begin{scope}[yscale=-1, yshift=100] 
            \draw [black, thick=1.5, scale=5] (0.5,-0.25) to[in=90,out=90] (1,-0.25); 
        \draw [black, thick=1.5, dashed, scale=5] (0.5,-0.25) to[in=270,out=270] (1,-0.25); 

        \draw [black, thick=1.5, scale=5] (0.5,-0.25) to[in=180,out=270] (0.75,-0.7); 
        \draw [black, thick=1.5, scale=5] (1,-0.25) to[in=0,out=270] (0.75,-0.7);

        \end{scope}

    \draw[-stealth,decorate,decoration={snake,amplitude=3pt,pre length=2pt,post length=3pt}]
    (0,0.0) -- ++(1.5,0);

    \end{tikzpicture}
    \caption{A picture illustrating a curve contributing to $\Phi^1$ of
    the $A_\infty$-functor $\Phi$}
    \label{surgerymap}
\end{figure}

As in Section \ref{sec:mdlispaces}, consider the disk $D_{i+j+2}$ with two special punctures subdividing the boundary into an upper and a lower arc with $i$ and $j$ punctures respectively, and with a boundary numbering in the upper arc.  
Let $\mathbf{c}_{0}=c_{0;1}\dots c_{0;j}$ be a composable word of Reeb chords connecting
$\Lambda_{v}$ to $\Lambda_{w}$, and let $c_{i}\dots c_{1}$ be a word of generators of $CW^{\ast}(C)$. Consider the word of Reeb chords and intersection points 
\[ 
\mathbf{c}=c_{0;1}\dots c_{0;j}z^{w}c_{i}\dots c_{1}z^{v}.
\]

Define $\Phi_{i}\colon CW^{\ast}(C)^{\otimes_{i}}\to CE^{\ast}(\Lambda)$, 
\[ 
\Phi_i(\mathbf{c}')=\sum_{|\mathbf{c}_{0}|=|\mathbf{c}'|+i(n-2)}
|\mathcal{M}^{\co}(\mathbf{c})|\mathbf{c}_{0}.
\]
\begin{rem}\label{r:surgeryminimum}
Note that if $m^{v}$ is the minimum of the Morse function on $C^{v}$ as above then 
\[ 
\Phi_{1}(m^{v})=e_{v},
\]
because of the unique holomorphic disk corresponding to the flow line from the minimum to the intersection point between $C^{v}\cap L$, for the parallel copies this gives a triangle with corners at $m^{v}=C^{v}_{0}\cap C^{v}_{1}$, at $C^{v}_{0}\cap L$, and at $C^{v}_{1}\cap L$, and since there are no negative punctures the output is $e_{v}$. Also,
and if a word $\mathbf{c}'$ of generators of $CW^{\ast}(C)$ contains a generator $m^{v}$ and has length $i>1$ then 
\[ 
\Phi_{i}(\mathbf{c}')=0,
\]
as this corresponds, see Lemma \ref{l:diskquantumtreecorr}, to a holomorphic disk with a flow line from the minimum attached and such a configuration cannot be rigid unless the disk is constant.
\end{rem}

\begin{thm}\label{l:BEEdiagonal}
The maps $\Phi_i$ gives an $A_{\infty}$-map $CW^{\ast}(C)\to CE^{\ast}(\Lambda)$ which is an $A_{\infty}$ quasi-isomorphism.	
\end{thm}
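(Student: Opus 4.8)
The plan is to establish that the $A_\infty$-map $\Phi = (\Phi_i)_{i\ge1}$ is a quasi-isomorphism by a word-length spectral sequence argument, reducing the claim to the already-known identification of the \emph{linearized} invariants. First I would verify that each $\Phi_i$ is well-defined: the relevant moduli spaces $\mathcal{M}^{\co}(\mathbf{c})$ are transversely cut out $0$-manifolds by Theorem \ref{thm:mdlitv}, they admit the compactification of Theorem \ref{thm:mdlicmpct}, and the $A_\infty$-relations for $\Phi$ follow in the usual way by identifying the terms with the boundary points of an oriented compact $1$-manifold obtained from the $1$-dimensional moduli spaces $\mathcal{M}^{\co}(\mathbf{c})$; here one uses Theorem \ref{thm:mdlicmpct} (breaking into a cobordism level and symplectization levels) and Theorem \ref{thm:mdlicopies} (independence of the increasing numbering, so that the operations compose). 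The sign bookkeeping follows \cite{EESori} via the fixed capping operators; I would not grind through it. The two remarks after the statement (namely $\Phi_1(m^v) = e_v$ and vanishing of $\Phi_i$ on words containing a minimum generator when $i>1$) are established by the explicit flow-line/triangle picture near the intersection point $C^v \cap L$ and a dimension count ruling out rigid configurations with a non-constant disk carrying an attached minimum flow line.

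The heart of the argument is showing $\Phi_1$ induces an isomorphism on cohomology. Here I would use the splitting of $CW^*(C)$ into the subcomplex $CW^*_0(\Gamma)$ generated by the minima $m_v$ and the quotient $CW^*_+(\Gamma)$ generated by Reeb chords of $\Gamma$, as recalled in Section \ref{ssec:CWBEE}. Dually, $CE^*(\Lambda)$ carries the word-length (equivalently action) filtration, with associated graded controlled by the \emph{linearized} Legendrian differential. By Lemma \ref{l:BEEchords=words}, for sufficiently small handle size $\epsilon$ the Reeb chords of $\Gamma$ of bounded action are in natural bijection with the composable words of Reeb chords of $\Lambda$ of bounded action, i.e.\ with the generators of $CE^*(\Lambda)$; under this correspondence $\Phi_1$ restricted to $CW^*_+(\Gamma)$ becomes (in the $\epsilon\to 0$ limit) an isomorphism of generating sets, and the first nontrivial page of the spectral sequence associated to the action filtration identifies the two sides. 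On the $CW^*_0$ part, the remark $\Phi_1(m_v) = e_v$ matches the idempotents, so $\Phi_1$ is a filtered chain map inducing an isomorphism on the $E_1$- (or $E_2$-) page. Since the action of the Reeb chords at the negative end of a contributing disk is bounded above by that at the positive end, the filtration is exhaustive and bounded below on each action window, so the spectral sequences converge and $\Phi_1$ is a quasi-isomorphism; a standard five-lemma/comparison argument (cf.\ the use of \cite[Thm.~7.4]{EM} elsewhere in the paper, or Zeeman's comparison theorem as in Theorem \ref{thm:Adams}) upgrades this to the statement that $\Phi$ is an $A_\infty$-quasi-isomorphism.

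The main obstacle I expect is the careful handling of the $\epsilon\to 0$ degeneration underlying Lemma \ref{l:BEEchords=words} and, correspondingly, the gluing/surjectivity statement that for small $\epsilon$ the map $\Phi_1$ on the Reeb-chord generators is \emph{exactly} a bijection at the chain level (not merely a quasi-isomorphism onto the associated graded). This requires the explicit handle model in standard symplectic $\R^{2n}$ where the Reeb flow linearizes, together with a finite-dimensional fixed-point argument to produce a unique chord for each composable word — the analysis promised to appear in \cite{BEEfuture}. A secondary subtlety is that all cobordism disks here are \emph{anchored}, so transversality and gluing for the rigid holomorphic planes in $X$ at negative interior punctures rely on the abstract perturbation scheme; as stated in the paper, I would assume such a scheme has been fixed and that the results are independent of the choice. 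Everything else — the $A_\infty$-relation verification, the compatibility of compositions, and the spectral sequence comparison — is routine given Theorems \ref{thm:mdlitv}, \ref{thm:mdlicmpct}, \ref{thm:mdlicopies} and the algebraic machinery of Section \ref{algebra}.
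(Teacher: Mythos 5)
Your proposal follows essentially the same route as the paper: the $A_\infty$-relations are read off from the boundaries of the one-dimensional moduli spaces $\mathcal{M}^{\co}(\mathbf{c})$ (two-level breaking in a symplectization end giving $d\circ\Phi$, splitting at an intersection point of $L\cap C$ giving the concatenation product, with Theorems \ref{thm:mdlitv}, \ref{thm:mdlicmpct}, \ref{thm:mdlicopies} supplying transversality, compactness and composability), and the quasi-isomorphism statement is reduced to $\Phi_1$ via the action filtration together with the generator bijection of Lemma \ref{l:BEEchords=words} and the matching of idempotents $\Phi_1(m^v)=e_v$.

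There is, however, one under-specified step in your write-up, and it is precisely the step carrying the geometric content. A bijection of generating sets does not by itself identify the first pages of the action-filtration spectral sequences: one must show that $\Phi_1$ sends each Reeb chord of $\Gamma$ to $\pm 1$ times the \emph{corresponding} word of chords of $\Lambda$ plus terms of strictly smaller action, i.e.\ that the matrix of $\Phi_1$ is triangular with units on the diagonal (whence $\Phi_1$ is in fact a chain isomorphism in each action window, not merely a quasi-isomorphism). This requires producing, for every chord/word pair, algebraically one rigid interpolating disk in $\mathcal{M}^{\co}$. In the paper this is done by first constructing unique, uniformly transversely cut out such disks for single-chord words by an explicit local computation, and then \emph{gluing} these at their Lagrangian intersection punctures in $L\cap C$, with a small-action argument excluding all other breakings of the relevant one-dimensional moduli space. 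Your obstacles paragraph attributes the needed chain-level statement to the finite-dimensional fixed-point argument in the thin-handle model; that argument only yields the bijection of chords and words (Lemma \ref{l:BEEchords=words}) and says nothing about the disk count. The gluing construction of the interpolating disks is the missing ingredient, and it (together with the anchoring/perturbation issues you correctly flag) is the part deferred to \cite{BEEfuture}.
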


\begin{proof} In order to see the $A_{\infty}$-relations we study the boundary of the moduli space $\mathcal{M}^{\co}(\mathbf{c})$. As usual the boundary numbering guarantees that there is no boundary splitting on $C$. The fact that there is no boundary splitting on $L$ follows from Stokes theorem: such a splitting would give a disk without positive puncture. The boundary of the moduli space thus consists of the following configurations. 
\begin{itemize}
\item[$(i)$] Two level curves with one level in the cobordism and one in either symplectization end.
\item[$(ii)$] Curves which split at the intersection point $C\cap L$.
\end{itemize}
Splitting $(i)$ corresponds to the map followed by the operation $d$ in
$CE^{\ast}(\Lambda)$ when the symplectization disk lies in the negative end and to an operation in $CW^{\ast}(C)$ followed by the map when the symplectization disk lies in the positive end. Splitting $(ii)$ corresponds to the tensor product of the map followed by the product operation $\cdot$ in $CE^{\ast}(\Lambda)$. The $A_{\infty}$-relations follow.  

To see that $\Phi$ is a quasi-isomorphism we argue as follows. We first fix an action cut off $A>0$ and show that $\Phi_{1}$ induces an isomorphism on homology below action $A$ by constructing algebraically one holomorphic disk interpolating between a Reeb chord of $\Gamma$ and the corresponding word of Reeb chords of $\Lambda$. For a complete proof we refer to \cite[Theorem 1.3]{Esurgerycurves}, the argument is roughly as follows. One starts from unique and uniformly transversely cut out such disks for single chord words obtained by a straightforward explicit geometric construction. Gluing such disks at their Lagrangian intersection punctures in $L\cap C$ and using small action to rule out all breakings except one, we find that there is algebraically one disk interpolating between a chord on $\Gamma$ and the corresponding word of chords of $\Lambda$. Together with Remark \ref{r:surgeryminimum} which shows $\Phi_{1}(m^{v})=e^{v}$, the existence of such disks implies that the map $\Phi_{1}$ has a triangular matrix with respect to the action filtration and hence is a chain isomorphism (compare \cite[Section 6.2]{BEE}): 
since $\Phi_{1}$ is an isomorphism on the subquotients of the action filtation (and the isomorphism on generators is given by the bijection given in Lemma \ref{l:BEEchords=words}), $\Phi_{1}$ is an isomorphism below action $A$, for any $A$. The $A_{\infty}$-isomorphism below action $A>0$ then follows from the usual spectral sequence argument.

To see that we get an isomorphism on the full complex we show that the isomorphisms discussed are compatible with action limits. More precisely, in order to increase the action limit $A>0$ for the 1-1 correspondence between Reeb chords of $\Gamma$ of action $<A$ and words of Reeb chords of $\Lambda$ of action $<A$ we must shrink the size $\delta>0$ of the handle attached. Consider attaching a handle of size $\delta>0$ to $\Lambda$ and denote the resulting new Weinstein manifold $X_{\delta}$ and the co-core disk $C_{\delta}\subset X_{\delta}$. 

If $\delta_{0}>\delta_{1}$ then by the isomorphism in Lemma \ref{l:CW=CWHam} and standard results for wrapped Floer cohomology, see e.g. \cite[Section 5.5]{EO}, there is a cobordism map
\[ 
CW^{\ast}_{+}(C_{\delta_{0}})\to CW^{\ast}_{+}(C_{\delta_{1}})
\]   
which is a quasi-isomorphism. Moreover, by the surgery description of chords for any $A>0$ there exists $\delta_{1}>0$ such that the above map has $\pm 1$ on the diagonal (with respect to the identification of generators in Lemma \ref{l:BEEchords=words}) for all chords of $\Gamma$ and words of chords of $\Lambda$ of action $<A$. 

Consider the directed system
\begin{equation}\label{eq:handlesyst}
CW^{\ast}_{+}(C_{\delta_{0}})\to CW^{\ast}_{+}(C_{\delta_{1}})\to \dots \to CW^{\ast}_{+}(C_{\delta_{j}})\to\dots,
\end{equation}
where $\delta_{j}\to 0$, and let
\[ 
\overline{CW}^{\ast}_{+}(C)=\underrightarrow{\lim}_{\delta}\,CW^{\ast}_{+}(C_{\delta}).
\] 
Then the homology $\overline{HW}^{\ast}_{+}(C)$ of $\overline{CW}^{\ast}_{+}(C)$ satisfies
\[ 
\overline{HW}^{\ast}_{+}(C) \ = \ \underrightarrow{\lim}_{\delta} \,HW^{\ast}_{+}(C_{\delta}) \quad = \quad HW^{\ast}_{+}(C_{\delta_{j}}),\text{ for any fixed }j.
\]
Here the last equality follows since all the arrows in the directed system of homology groups of \eqref{eq:handlesyst} are isomorphisms.

Consider next the Chekanov-Eliashberg algebra $CE^{\ast}(\Lambda)$ of $\Lambda$. We define the action truncated subcomplex $CE^{\ast}(\Lambda,a)$ generated by words of chords of total action $<a$. Viewing $CE^{\ast}(\Lambda)$ as chain complex generated by words of chords we then have
\[ 
CE^{\ast}(\Lambda)=\underrightarrow{\lim}_{a}\,CE^{\ast}(\Lambda,a).
\]

The surgery map gives for each $a_{j}$, $\delta_{j}>0$ such that the map
\[ 
\overline{CW}^{\ast}_{+}(C_{\delta_{j}})\to CE^{\ast}(\Lambda,a_{j})
\] 
is a chain isomorphism with $\pm 1$ on the diagonal below action $a_{j}$. By definition of surgery and cobordism maps the following diagram commutes
\begin{equation}\label{eq:surgerysequence} 
\begin{CD}
\overline{CW}^{\ast}_{+}(C_{\delta_{0}}) @>>> \overline{CW}^{\ast}_{+}(C_{\delta_{1}}) 
@>>> \dots @>>> \overline{CW}^{\ast}_{+}(C_{\delta_{j}}) @>>> \dots\\
@VVV @VVV  \dots @. @VVV\\
CE^{\ast}(\Lambda,a_{0}) @>>> CE^{\ast}(\Lambda,a_{1})
@>>> \dots @>>> CE^{\ast}(\Lambda,a_{j}) @>>> \dots,
\end{CD}
\end{equation}
where $\delta_{j+1}<\delta_{j}$ and $a_{j}<a_{j+1}$. Taking limits of the sequences we get a chain map
\begin{equation}\label{eq:chainsurgery}
\overline{CW}^{\ast}_{+}(C)\to CE^{\ast}(\Lambda).
\end{equation}
Taking the limits of the sequence \eqref{eq:surgerysequence} on the homology level and using that all vertical arrows are homology isomorphisms then gives homology isomorphisms in the limit and \eqref{eq:chainsurgery} is a quasi-isomorphism inducing an isomorphism
\[ 
\overline{HW}^{\ast}(C)\approx HCE^{\ast}(\Lambda).
\]

The above gives a homology isomorphism of chain complexes. To consider also products one uses the exact same argument. The product operation on $\overline{CW}^{\ast}(C)$ is induced from the action truncated version
\[ 
\overline{CW}^{\ast}(C,a_{1})\otimes\dots\otimes \overline{CW}^{\ast}(C,a_{m})\to
\overline{CW}^{\ast}(C,a_{1}+\dots+a_{m})
\]
and similarly on $CE^{\ast}$:
\[ 
CE^{\ast}(C,a_{1})\otimes CE^{\ast}(C,a_{2})\to
CE^{\ast}(C,a_{1}+a_{2}).
\]

\end{proof}

\begin{rem}
In the above proof we obtain the isomorphism by taking smaller and smaller handles. To see that such a procedure is necessary note that the correspondence between words of chords and chords is true only below an action limit determined by the size of the handle. For actions larger there are Reeb flows before the surgery that hits the neighborhood of the Legendrian without being close to a chord and which could give chords after the surgery. 
\end{rem}

\begin{rem}
There is also an ``upside-down'' perspective on the surgery just described. Namely, one can start from
    the contact manifold $Y$ and produce the contact manifold $Y_{0}$ by doing so called
    $+1$-surgery on $\Gamma$. In complete analogy with the above one shows that Reeb chords on
    $\Lambda$ are in natural one to one correspondence with words of Reeb chords on $\Gamma$ and one
    can construct an upside down surgery map of $A_\infty$-coalgebras:
\[ 
\Bar CW^{\ast}(C) \to LC_{\ast}(\Lambda).
\]
A similar argument also shows that this map is a quasi-isomorphism. Alternatively, one can prove this from the original surgery map using only algebra as follows. First write $CE^{\ast}(\Lambda)=\Omega LC_{\ast}(\Lambda)$ then
\[ 
\Bar CW^{\ast}(C) \simeq \Bar \Omega LC_{\ast}(\Lambda) \simeq  LC_{\ast}(\Lambda),
\]
since $LC_{\ast}(\Lambda)$ is co-nilpotent, see Section \ref{ssec:barcobaradj}.
\end{rem}

\subsection{Legendrian surgery and stopped wrapping}\label{ssec:CWpwrap}
In this section we outline a surgery approach to the computation of wrapped Floer cohomology in a
Weinstein manifold $X$ with wrapping stopped by a Legendrian $\Lambda$ in its boundary. We will use
the following model for the ambient manifold. Fix a tubular neighborhood of $\Lambda$ in the contact
boundary $Y$ of $X$ and attach a disk-bundle neighborhood of the $0$-section in
$T^{\ast}([0,\infty)\times\Lambda)$ along the boundary
$T^{\ast}([0,\infty)\times\Lambda)|_{0\times\Lambda}$ just like in Lagrangian handle attachment. We
use a Liouville vector field on this domain that agrees with the standard Liouville vector field
pointing outwards along fibers in the cotangent bundle over $[T,\infty)\times\Lambda$ for some
$T>0$. Let the components of $\Lambda$ be denoted $\Lambda_{v}$, $v\in Q_{0}$. Fix a base point
$p_v \in\Lambda_v$ for each $v$. Let $C^{v;\tau}$ denote the cotangent fiber $T^{\ast}_{(p_{v},\tau)}([0,\infty)\times\Lambda)$. We will compute the wrapped Floer cohomology of $C^{\tau}=\bigcup_{v\in Q_{0}}C^{v;\tau}$ for sufficiently large $\tau$ using a surgery approach. A straightforward monotonicity argument shows that the non-compactness of the cotangent bundle $T^{\ast}\Lambda\times[0,\infty)$ does not interfer with the compactness results for holomorphic curves used in the definition of wrapped Floer cohomology. 

We first consider the surgery map into the Chekanov-Eliashberg algebra with loop space coefficients.
Consider all components of $\Lambda$ decorated by a positive sign $\Lambda=\Lambda^{+}$ and consider $CE^{\ast}(\Lambda)$ which now involves, except for Reeb chords, also chains $C_{-\ast}(\Omega\Lambda)$ on the based loop space. We define an $A_{\infty}$- map
\[ 
\Phi\colon CW^{\ast}(C)\to CE^{\ast}(\Lambda),
\]
where $A_{\infty}$-structure on the right hand side is the standard DG-algebra structure induced by concatenation and the Pontryagin product (as defined in this paper).  
As in Section \ref{sec:mdlispaces}, consider a disk $D_{i+j+2}$ with two dividing punctures that subdivides the boundary into two arcs, lower and upper. Let the upper arc contain $i$ boundary punctures and is equipped with a decreasing boundary numbering $\kappa$ and the lower arc $j$ boundary punctures. Let $\mathbf{c}'=c_{i}\dots c_{1}$ be Reeb chords of $C$ and let $\mathbf{c}_{0}=c_{0;1}\dots c_{0;j}$ be Reeb chords of $\Lambda$. Let
\[ 
\mathbf{c}= c_{0;1}\dots c_{0;j} z^{v} c_{i}\dots c_{1}\dots z^{w}
\]  
and consider $\mathcal{M}^{\co}(\mathbf{c};\kappa)$, again as in Figure \ref{surgerymap}.

Theorems \ref{thm:mdlitv} and \ref{thm:mdlicmpct} show that this moduli space carries a fundamental
chain. We view this chain as parametrizing chains of paths in $\Lambda$ connecting the Reeb chord
endpoints in $\mathbf{c}_{0}$. We write $[\mathcal{M}^{\co}(\mathbf{c})]$ for the alternating word
of chains of loops and Reeb chords and view it as an element in $CE^{\ast}(\Lambda)$. Define the maps 
\[ 
\Psi_{i}\colon CW^{\ast}(C)^{\otimes_\k i}\to CE^{\ast}(\Lambda)
\]
as 
\[ 
\Psi_{i}(\mathbf{c}')=\sum_{\mathbf{c}_{0}}[\mathcal{M}^{\co}(\mathbf{c})].
\]

\begin{thm}\label{t:newsurgeryAinfty}
The map $\Psi\colon CW^{\ast}(C)\to CE^{\ast}(\Lambda)$ is an $A_{\infty}$-map.
\end{thm}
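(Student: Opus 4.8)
\textbf{Proof plan for Theorem \ref{t:newsurgeryAinfty}.}

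The plan is to follow the same template used throughout the paper: identify the terms in the $A_\infty$-relations for $\Psi$ with the algebraic count of boundary points of an oriented compact $1$-manifold, namely the boundary of the $1$-dimensional components of the moduli spaces $\mathcal{M}^{\co}(\mathbf{c};\kappa)$ (completed by rigid anchoring planes). First I would fix, for each boundary component $\mathbf{c}'$ of a $1$-dimensional moduli space $\mathcal{M}^{\co}(\mathbf{c};\kappa)$, a description of the codimension-one strata of its compactification. By Theorem \ref{thm:mdlicmpct} these strata are broken configurations consisting of one level of cobordism disks and the remaining levels in the symplectization ends of $X$ (on the $\Lambda$ side) or in the symplectization of $C$'s boundary (on the $CW^\ast(C)$ side), possibly together with degenerations where a factor of $[0,\infty)\times\Lambda$-paths splits off. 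The boundary-numbering hypothesis (the upper arc carries a strictly decreasing numbering $\kappa$) rules out boundary bubbling on $C$, exactly as in the proofs of Theorem \ref{l:BEEdiagonal} and the $A_\infty$-relations for $CW^\ast(L)$ in Section \ref{sec:CWnoHam}.

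Next I would sort these boundary contributions into three types and match each with a term in the $A_\infty$-relation $\sum \pm \Psi(\ldots \m_k^{CW} \ldots) + \sum \pm d^{CE}_{\bullet}\circ(\Psi\otimes\cdots\otimes\Psi)\circ(\text{splitting}) = 0$. Type (a): a symplectization level on the $C$-end breaks off, contributing $\Psi_{i-k+1}(c_i,\ldots,\m_k^{CW}(c_{j+k},\ldots,c_{j+1}),\ldots,c_1)$, i.e.\ precomposition with the $A_\infty$-operations of $CW^\ast(C)$. Type (b): a symplectization level on the $\Lambda$-end breaks off — either a single disk in $\mathcal{M}^{\sy}$ splitting off a composable sub-word, which reassembles into the differential $d$ on $CE^\ast(\Lambda)$ (the Reeb-chord part), or the singular differential / Pontryagin-product degeneration of the chain-of-loops part, which is precisely the chain part of $d$ on $CE^\ast(\Lambda)$ after applying the Alexander--Whitney map $\eta$ as in the proof that $d^2=0$ (Proposition after \eqref{eq:only-}). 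Type (c): a breaking at an intersection point $z^v$ (or $z^w$), which separates the cobordism disk into two cobordism disks glued at the fiber intersection point and contributes a product $[\mathcal{M}^{\co}(\mathbf{c}')]\cdot[\mathcal{M}^{\co}(\mathbf{c}'')]$ in $CE^\ast(\Lambda)$; this reproduces the multiplicativity of $\Psi$ together with the concatenation product. Assembling (a), (b), (c) with the coherent orientations of \cite{EESori} (and Theorem \ref{thm:mdlicopies} to see the operations compose and are independent of the increasing/decreasing numberings) yields exactly the $A_\infty$-functor equations.

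The main obstacle I expect is bookkeeping the chain-level structure on the $\Lambda^+$ side: unlike the purely Reeb-chord case of Theorem \ref{l:BEEdiagonal}, here $[\mathcal{M}^{\co}(\mathbf{c})]$ is an alternating word of singular (cubical) chains of loops separated by Reeb chords, so the boundary operator on the $1$-dimensional moduli space produces both geometric degenerations (breaking of holomorphic buildings) and the topological boundary $\partial$ of the chain of loops. One must check, as in the proof that $d^2=0$ for $CE^\ast(\Lambda)$, that $\eta$ is a chain map compatible with the Pontryagin product so that $\partial\circ\eta[\mathcal{M}^{\co}(\mathbf{c})] = \eta\circ\partial[\mathcal{M}^{\co}(\mathbf{c})]$ and the chain-boundary contributions are exactly absorbed into the $CE^\ast$-differential; this, together with keeping the signs consistent with the Koszul conventions of \cite{seidelbook}, is the delicate part. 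A secondary technical point, which I would handle by invoking the standing assumption in the Remark of Section \ref{ssec:connections}, is that the anchoring planes in $X$ require the fixed abstract perturbation scheme; as stated there, the result is independent of the choice.
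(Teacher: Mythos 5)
Your proposal is correct and follows essentially the same route as the paper: the paper's proof also identifies the codimension-one boundary of $\mathcal{M}^{\co}(\mathbf{c})$ (of arbitrary dimension $d$, since the fundamental chain is valued in loop spaces) with exactly your three types — breaking at the positive (cobordism/$C$) end giving precomposition with the $CW^{\ast}(C)$ operations, breaking at the negative ($\Lambda$) end giving postcomposition with $\mu_1$ on $CE^{\ast}(\Lambda)$, and breaking at an intersection point $z^{v}$ giving postcomposition with the product $\mu_2$. Your extra care about the Alexander–Whitney map and the chain-level boundary on the $\Lambda^{+}$ side is handled in the paper only implicitly, by the same mechanism as in the earlier proof that $d^{2}=0$, so it is a welcome but not divergent elaboration.
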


\begin{proof}
To see that the $A_{\infty}$-relation holds we look at the boundary of the moduli space $\mathcal{M}^{\co}(\mathbf{c})$ of dimension $d$. The codimension one boundary consists of three splittings:
\begin{itemize}
	\item[$(i)$] A one-dimensional curve splits off in the positive symplectization end.
	\item[$(ii)$] A curve splits off at the negative end.
	\item[$(iii)$] Splitting at one of the intersection points $z^{v}$. 
\end{itemize}
In order for splittings of the form $(i)$ to contribute to the codimension one boundary of the moduli space the part of the holomorphic building in $W$ consists of rigid disks with only positive punctures attached at one puncture to a negative puncture of the disk in the positive end and a disk of dimension $d-1$ in $\mathcal{M}^{\co}(\mathbf{b})$ attached at the remaining negative puncture. (Splittings where the dimension of the components of the holomorphic building are distributed differently have higher dimension along the disk $C$ and correspond to `hidden faces' from the point of view of $C_{\ast}(\Omega\Lambda)$.) Assembling the rigid disks and the one-dimensional disk we get a partial holomorphic disk building that contributes to the $A_{\infty}$-operations in $CW^{\ast}(L)$ followed by the map $\Psi$. Splittings of type $(ii)$ corresponds to the map $\Psi$ followed by the differential $\mu_{1}$ in $CE^{\ast}(\Lambda)$. Finally splittings of type $(iii)$ correspond to the map $\Psi$ followed by the product $\mu_{2}$ on $CE^{\ast}(\Lambda)$. We conclude that the terms contributing to $A_{\infty}$-relations express the codimension one boundary of $[\mathcal{M}^{\co}(\mathbf{c})]$ in two different ways and hence $\Psi$ is an $A_{\infty}$-map.
\end{proof}

\begin{rem}
In the boundary of the moduli space $\mathcal{M}^{\co}(\mathbf{c})$ considered in the proof of Theorem \ref{t:newsurgeryAinfty} there are also higher-dimensional curves splitting off in the positive symplectization end. Such splittings does neither contribute to the codimension one boundary of the chains of loops nor to the operations in the wrapped Floer cohomology and hence plays no role in the $A_{\infty}$-chain map equation.
\end{rem}

We will use slight generalizations of the map $\Psi$ below. More precisely, if $p_{j}$, $j=1,\dots,m$ are points in $[0,\infty)\times\Lambda$ and if $F_{j}$ is the cotangent fiber at $p_{j}$ then we have a similar surgery map
\[ 
\Psi^{p_ip_j}\colon CW^{\ast}(F_{i},F_{j})\to  CE^{\ast}_{p_ip_j}(\Lambda),
\]
which counts holomorphic disks with a positive Reeb chord connecting $F_{i}$ to $F_{j}$, two
Lagrangian intersection punctures at $p_{i}$ and at $p_{j}$, and a word of chains of loops in
$\Lambda$ and Reeb chords of $\Lambda$ as output, and where $CE^{\ast}_{ij}$ is directly analogous
to $CE^{\ast}$ but where the first chain of loops is in a word is replaced by a chain of paths from
$p_{i}$ to the base point and the last is replaced by a chain of paths from the base point to $p_{j}$.
In this setup the counterpart of the second component $\Psi_{2}$ is
\begin{equation}\label{eq:morefibersmap}
\Psi^{p_ip_jp_k}\colon CW^{\ast}(F_{j},F_{k})\otimes CW^{\ast}(F_{i},F_{j})\to CE^{\ast}_{p_ip_k}(\Lambda),
\end{equation}
and counts disks with two positive punctures at Reeb chords and two Lagrangian intersection punctures at $p_{i}$ and $p_{k}$. The counterpart of the $A_{\infty}$-equations in this setup is then
\begin{equation}\label{eq:morefibersprod} 
d\circ\Psi^{p_ip_k}+ \Psi^{p_jp_k}\cdot\Psi^{p_i p_j} + \Psi^{p_ip_jp_k}\circ(1\otimes\mu_{1}+\mu_{1}\otimes 1) + \Psi^{p_ip_k}\circ\mu_{2}=0, 
\end{equation}
where $d$ is the differential on $CE^{\ast}_{ik}$ and where $\cdot$ is the (Pontryagin) product
$CE^{\ast}_{p_j p_k}\otimes CE^{\ast}_{p_i p_j}\to CE^{\ast}_{p_ip_k}$. The proofs of these statements are word by word repetitions of the proof of Theorem \ref{t:newsurgeryAinfty}.

We next sketch a proof that the map $\Psi$ in Theorem \ref{t:newsurgeryAinfty} is in fact a
quasi-isomorphism, or, in other words, that its first component $\Psi_{1}$ induces an isomorphism on
homology. We filter $CW^{\ast}(C)$ by
action of its Reeb chord generators. To get a corresponding filtration on $CE^{\ast}(\Lambda)$ we
use action on Reeb chords in combination with the energy on the loops. We start with a discussion of the energy of loops, following \cite{Milnor}.

Equip $\Lambda$ with a Riemannian metric and  $\Omega=\Omega(\Lambda)$ denote the space of based loops in $\Lambda$ with the supremum norm: for two loops $\gamma,\beta\colon[0,1]\to \Lambda$,
\[
d^{\ast}(\gamma,\beta)=\sup_{t\in[0,1]} \rho(\gamma(t),\beta(t)),
\] 
where $\rho$ is the metric on $\Lambda$ induced by the Riemannian structure. Then the metric topology on $\Omega$ agrees with the standard compact open topology. 

Let $\Omega'=\Omega'(\Lambda)$ denote the space of piecewise smooth paths with metric
\[
d(\gamma,\beta) = d^{\ast}(\gamma,\beta)+ \int_{0}^{1}\left(|\dot\gamma|-|\dot{\beta}|\right)^{2}dt,
\]
where $\dot{\gamma}$ denotes the derivative of $\gamma$. The natural inclusion $\Omega'\to\Omega$ is
a homotopy equivalence, \cite[Theorem 17.1]{Milnor}. We will use finite dimensional approximations
to study $\Omega'$. The energy of a piece-wise smooth loop in $\Lambda$ is
\[
E(\gamma)= \int_0^{1}| \dot\gamma|^{2} dt.
\]
For $c>0$, let $\Omega^{c}\subset\Omega'$ denote the subset of loops of energy $E<c$. The space
$\Omega^{c}$ can be approximated by piece-wise geodesic loops. More precisely, fixing a subdivision
$0=t_{0}< t_{1}<\dots< t_{m}=1$ of $[0,1]$ we consider the space $B^{c}$ of loops of energy $E<c$
that are geodesic on each interval $[t_{i},t_{i+1}]$. Then \cite[Lemma 16.1]{Milnor} shows that for
all sufficiently fine subdivisions $B^{c}$ is a finite dimensional manifold (a submanifold of the
product $\Lambda^{\times m}$ in a natural way). Moreover \cite[Theorem 16.2]{Milnor}, all critical points of
$E|_{\Omega^c}$ lie in $B^{c}$ which is a deformation retract of $\Omega^{c}$ and for generic metric $E|_{B^{c}}$ is a Morse function. 

With these preliminaries established we turn to the actual proof. 
The first step will be to describe the Reeb chords of $C^{\tau}$. Let $g$ be a Riemannian metric on $\Lambda$ as above and let $f\colon [0,\infty)\to \R$ be a positive function with $f(0)=1$, $f'(0)=-1$, and $f'(t)<0$ monotone increasing. Define the metric $h$ on $\Lambda\times\R$ by
\[ 
h= dt^{2} + f(t) g.
\]

Then if $x$ and $y$ are points in $\Lambda$ and $\gamma\colon[0,s]\to\Lambda$ is a geodesic with $\gamma(0)=x$ and $\gamma(s)=y$ then there is a unique geodesic $(\gamma(t),r(t))\in\Lambda\times[0,\infty)$ such that
\begin{itemize}
	\item $(\gamma(0),0)=(x,0)$ and $(\gamma(s),r(s))=(y,0)$;
	\item $r(t)$ is Morse with has a unique maximum at an interior point $t=t_{0}$. 
\end{itemize}  

Note that the Reeb flow in the unit disk bundle is the natural lift of the geodesic flow. Assume next as above that the metric $g$ on $\Lambda$ is generic in the sense that the length functional for curves connecting any two Reeb chord endpoints in $\Lambda$ has only Morse critical points. Concretely, this means that the index form of any geodesic connecting two Reeb chord endpoints is non-degenerate. As in Lemma \ref{l:BEEchords=words}, this allows us to control the Reeb chords of $C^{\tau}$ below a given action for all sufficiently thin handles. More precisely, let $\epsilon$ denote the size of the tubular neighborhood of $\Lambda$ in $Y$ where we attach $T^{\ast}(\Lambda\times [0,\infty))$ and we introduce the following notion of a geodesic-Reeb chord word. A \emph{geodesic-Reeb chord word} is a word
\[ 
\gamma_{1}c_{1}\gamma_{2}c_{2}\dots c_{m}\gamma_{m},
\] 
where $\gamma_{1}$ is a geodesic from one of the base points $p_{v}$ to the start point of the
Reeb chord $c_{1}$, where $\gamma_{2}$ is a geodesic from the endpoint of $c_{1}$ to the start point
of $c_{2}$, etc, until finally $\gamma_{m}$ is a geodesic from the endpoint of the Reeb chord
$c_{m}$ to one of the base points $p_w$. We define the action of a geodesic-Reeb chord word to be the sum of actions of its Reeb chords and the energies of its geodesics.

\begin{lem}\label{l:newchords=words}
	For any $A>0$ there exists $\epsilon_{0}>0$ and $\tau_{0}>0$ such that for any $\epsilon<\epsilon_{0}$ and any $\tau>\tau_{0}$ there is a natural one to one correspondence between Reeb chords of $C^{\tau}$ of action $<A$ and geodesic-Reeb chord words of $\Lambda$ of action $<A$.
\end{lem}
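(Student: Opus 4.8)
The plan is to prove Lemma \ref{l:newchords=words} by combining two neck-stretching arguments: one that degenerates the handle $T^{\ast}(\Lambda\times[0,\infty))$ as its attaching size $\epsilon\to 0$ (as in Lemma \ref{l:BEEchords=words}), and one that reconstructs Reeb chords of $C^{\tau}$ from the geodesic flow data via an explicit finite-dimensional fixed-point problem, using the energy/action filtration to keep everything finite. The key geometric input is the identification of the Reeb flow on the unit cotangent bundle of $([0,\infty)\times\Lambda,h)$ with the lift of the geodesic flow of the warped product metric $h=dt^{2}+f(t)g$, together with the observation — already spelled out in the excerpt — that for generic $g$ the length functional on paths connecting Reeb chord endpoints in $\Lambda$ is Morse, so geodesics between such endpoints are non-degenerate.

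First I would set up the degeneration. Fix $A>0$. A Reeb chord of $C^{\tau}$ that starts on $C^{v;\tau}$ and ends on $C^{w;\tau}$ must, by construction of the Liouville structure, leave the cotangent region $T^{\ast}([T,\infty)\times\Lambda)$, travel through the attaching region of size $\epsilon$ in $Y$ (possibly executing several Reeb chords of $\Lambda$ there), and return; this is the analogue of the statement in Lemma \ref{l:BEEchords=words} that chords of $\Gamma$ converge to composable words of chords of $\Lambda$. Inside the cotangent cone the chord projects, after reparametrisation, to a broken geodesic of the warped metric $h$, and each geodesic arc in $\Lambda\times[0,\infty)$ with both endpoints on $0\times\Lambda$ is — by the warping hypotheses $f(0)=1$, $f'(0)=-1$, $f'<0$ increasing — uniquely determined by its underlying geodesic $\gamma\colon[0,s]\to\Lambda$ together with a unique Morse bump $r(t)$ in the radial direction. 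So in the limit $\epsilon\to 0$, $\tau\to\infty$, a Reeb chord of $C^{\tau}$ of action $<A$ degenerates to a geodesic-Reeb chord word $\gamma_{1}c_{1}\gamma_{2}\cdots c_{m}\gamma_{m}$, whose action is the sum of Reeb chord actions and geodesic energies; and only finitely many such words have action $<A$ because there are finitely many Reeb chords of $\Lambda$ below any action and, by the piecewise-geodesic finite-dimensional approximation of $\Omega^{c}(\Lambda)$ recalled from \cite{Milnor}, the critical geodesics below a given energy that connect the (finitely many) Reeb-chord endpoints and base points also form a finite set once $g$ is generic.

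Next I would run the reconstruction in the other direction. Given a geodesic-Reeb chord word $w$ of action $<A$, I want to produce, for all sufficiently small $\epsilon$ and large $\tau$, a unique genuine Reeb chord of $C^{\tau}$ near it. As in \cite{BEE} and the promised \cite{BEEfuture}, one works in an explicit model of the handle region where the Reeb flow solves a linear ODE, concatenates the explicit short chords in the attaching region with the warped-geodesic arcs coming from the $\gamma_{j}$'s, and closes up the resulting broken trajectory to an honest Reeb chord by a contraction-mapping/implicit-function argument; non-degeneracy of each $\gamma_{j}$ (its index form is non-degenerate, equivalently $g$ is generic) and non-degeneracy of the Reeb chords $c_{j}$ of $\Lambda$ make the linearised gluing operator invertible with bounds independent of the word (uniformly over the finite set of words of action $<A$), which is exactly the input needed to upgrade the formal limit into a bijection. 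Pushing $\tau\to\infty$ ensures the radial bump lives in the region where the Liouville field is the standard fibrewise one, so the model applies.

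The main obstacle I expect is the uniformity and transversality of this fixed-point argument in the presence of \emph{multiple} breakings — i.e. controlling the gluing when the word $w$ has many factors $c_{j},\gamma_{j}$ — so that the correspondence is genuinely one-to-one and not merely a surjection-up-to-action. This is precisely the point at which the excerpt defers to \cite{BEEfuture}; in the write-up I would state the needed uniform gluing estimate as the analogue of the fixed-point argument in the proof of Lemma \ref{l:BEEchords=words}, note that the warped-product structure makes the radial and $\Lambda$-directions split to leading order so the estimate decouples into the handle model (linear ODE, as in \cite{BEE}) and the geodesic gluing (standard Riemannian index-form theory, \cite{Milnor}), and thereby reduce Lemma \ref{l:newchords=words} to these two established pieces. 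A secondary technical point worth a remark is choosing the metric $g$ generically among those for which both the Reeb chords of $\Lambda$ and the connecting geodesics are non-degenerate simultaneously, which holds for a residual set of metrics by a standard Sard–Smale argument.
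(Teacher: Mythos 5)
Your proposal is correct and follows essentially the same route as the paper's (very brief) sketch: both rest on the non-degeneracy of the Reeb chords of $\Lambda$ and of the connecting geodesics for the warped metric, the matching at the corner between the handle/attaching region and the cotangent cone (the paper phrases this as the normal-fiber exit point determining the direction of the geodesic in $\Lambda\times[0,\infty)$), and a finite-dimensional inverse-function/fixed-point argument to turn each broken configuration into a unique genuine Reeb chord of $C^{\tau}$, with the action bound $A$ giving finiteness. Your added discussion of the action/energy filtration and of uniformity over words of action $<A$ fills in exactly the details the paper defers (to smoothing of corners and to \cite{BEEfuture}), so there is nothing to flag.
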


{\bf Sketch of proof:} The proof uses the transversality of the Reeb chords and of the geodesics. The basic observation is that the point in the normal fiber of $\Lambda$ where the Reeb flow hits determines the direction of the geodesic in $\Lambda\times[0,\infty)$. After introducing a concrete smoothing of corners the lemma then follows from the finite dimensional inverse function theorem.

To show that $\Psi_{1}$ is a quasi-isomorphism we will show that it is represented by a triangular
matrix with ones on the diagonal with respect to the action/energy filtration. To this end we will
use the Morse theoretic finite dimensional model for the chain complex underlying the homology of the based loop space described above. In order to have $[\mathcal{M}^{\co}(\mathbf{c})]$ defined as a chain in this model we need to assure that the paths on the boundary of the holomorphic disk are sufficiently well behaved. We only sketch the construction. On holomorphic disk with unstable domains, we fix gauge using small spheres surrounding Reeb chord endpoint, compare \cite[Section A.2]{ES}. As in \cite[Section A.1]{ES} we use a configuration space for holomorphic curves consisting of maps with two derivatives in $L^{2}$. This means that the restriction to the boundary has $3/2$ derivatives in $L^{2}$ and in particular the projection to $\Lambda$ has bounded energy. Since the action of the positive puncture in a holomorphic disk contributing to the differential controls the norm of the solution it follows that we can use configuration spaces of bounded energy to study the disks in the differential: we approximate the boundary curves uniformly by a piecewise geodesic curve by introducing a uniformly bounded number of subdivision points and straight line homotopies in small charts. 

\begin{conj}\label{appconj}
The chain-map $\Psi_{1}\colon CW^{\ast}(C)\to CE^{\ast}(\Lambda)$ induces an isomorphism on homology. 
\end{conj}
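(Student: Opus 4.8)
The plan is to prove that $\Psi_1$ is a quasi-isomorphism by exhibiting it as a filtered map which is, on the associated graded, a bijection on generators up to sign — exactly the triangular-matrix argument already used for the surgery map in Lemma~\ref{l:BEEdiagonal} and for the comparison map $\Phi$ in Section~\ref{sec:wrappediso}. Note first that $\Psi_1$ is a chain map by Theorem~\ref{t:newsurgeryAinfty} (it is the first component of the $A_{\infty}$-map $\Psi$), so it suffices to establish the filtered statement.

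First I would fix the two compatible filtrations. On $CW^{\ast}(C^{\tau})$ we filter by the action of Reeb chord generators, with the intersection points $z^{v}$ (the minima $m^{v}$) spanning the minimal piece. On $CE^{\ast}(\Lambda)$, realized through the finite-dimensional piecewise-geodesic model $B^{c}\subset\Lambda^{\times m}$ for $C_{-\ast}(\Omega\Lambda)$ recalled before the conjecture, we filter a monomial by the sum of the actions of the Reeb chords occurring in it and the energies of its loop factors. The essential preliminary is that $\Psi_1$ respects these filtrations: by Stokes' theorem and the SFT energy estimates of Appendix~\ref{sec:mdlispaces}, the action of the positive puncture of any disk in $\mathcal{M}^{\co}(\mathbf c)$ bounds the total action of its negative Reeb chords, and the Sobolev set-up sketched after Lemma~\ref{l:newchords=words} bounds the energy of the boundary paths in $\Lambda$ in terms of the same quantity; hence the a priori infinite sum defining $\Psi_1$ on a given generator is finite, its loop factors lie in $B^{c}$ for a controlled $c$, and it is filtration non-increasing.

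Next comes the identification of the associated graded. By Lemma~\ref{l:newchords=words}, for every $A>0$ and all sufficiently thin handles and large $\tau$, the Reeb chords of $C^{\tau}$ of action $<A$ are in natural bijection with geodesic--Reeb chord words $\gamma_{1}c_{1}\gamma_{2}\cdots c_{m}\gamma_{m}$ of $\Lambda$ of action $<A$, and such a word is precisely a monomial generator of $CE^{\ast}(\Lambda)$ in the Morse model: the $c_{i}$ are Reeb chords of $\Lambda$, each interior $\gamma_{i}$ is a critical point of the energy functional on $B^{c}$ and hence a generator of the based-loop-space complex, while the outer $\gamma_{1},\gamma_{m}$ are geodesic paths to the base points $p_{v},p_{w}$, so the target is the fiber-to-fiber variant $CE^{\ast}_{p_{v}p_{w}}(\Lambda)$ of \eqref{eq:morefibersmap}. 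One then shows that $\Psi_1$, followed by projection to the associated graded, sends a Reeb chord $c$ of $C^{\tau}$ to $\pm$ the corresponding geodesic--Reeb chord word. This is the loop-space analogue of the diagonal computation in Lemma~\ref{l:BEEdiagonal}: starting from the explicit, uniformly transversely cut out interpolating disks for single-chord words produced from the handle model and the geodesic flow on $\Lambda\times[0,\infty)$ (the very construction underlying Lemma~\ref{l:newchords=words}), one glues these at the Lagrangian intersection punctures $z^{v}$ and uses small action to exclude every other breaking, obtaining algebraically exactly one disk interpolating between $c$ and its geodesic--Reeb chord word; all sub-leading breakings strictly drop the filtration. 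Orientation signs are controlled by the coherent orientation scheme of Appendix~\ref{sec:mdlispaces}.

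Finally, $\Psi_1$ is a filtered chain map which on the associated graded is the identity on generators up to sign. Since the filtration is exhaustive, bounded below, and has only finitely many generators below any given action level (so there are no infinite descending chains), the induced map of the associated spectral sequences is an isomorphism on the $E_{0}$-page, hence on every page, and the standard convergence/comparison theorem (e.g.\ \cite[Theorem~5.5.1]{weibel}, as used repeatedly above) yields that $\Psi_1$ is a quasi-isomorphism. I expect the main obstacle to be precisely the third step: the construction, transversality, and \emph{uniqueness up to sign} of the interpolating disks in the presence of loop-space coefficients — i.e.\ the loop-space analogue of the gluing and surjectivity arguments deferred to \cite{BEEfuture} in Section~\ref{ssec:CWBEE} — together with the requirement that the boundary paths stay inside the finite-dimensional Morse model $B^{c}$ so that the diagonal term is literally a generator, and the fact that the anchored disks entering $\mathcal{M}^{\co}(\mathbf c)$ need the abstract perturbation scheme rather than classical transversality. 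Once this ``one interpolating disk'' statement is in hand, the remainder is the routine filtered-acyclicity machinery already in use throughout the paper.
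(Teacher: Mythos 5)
Your overall skeleton coincides with the paper's: filter both sides by action (with loop-space energy folded into the filtration on $CE^{\ast}(\Lambda)$ via the piecewise-geodesic model $B^{c}$), use Lemma \ref{l:newchords=words} to match generators, and conclude via a triangular matrix with $\pm1$ on the diagonal. Where you diverge is in the one step that carries all the content, namely producing the diagonal entries. You propose to transplant the direct method of Lemma \ref{l:BEEdiagonal}: build explicit interpolating disks for single-chord words and glue them at the Lagrangian intersection punctures, excluding other breakings by action. The paper instead runs an inductive, essentially algebraic argument: it introduces auxiliary cotangent fibers $F_{c^{+}},F_{c^{-}}$ at points $(c^{\pm},\epsilon)$ near every Reeb chord endpoint of $\Lambda$, establishes only the trivial base cases (the strip realizing a single chord $c$ as a generator of $CW^{\ast}(F_{c^{-}},F_{c^{+}})$, and the disk realizing a single geodesic as a generator of $CW^{\ast}(F_{c^{+}},C)$), and then extracts the count of the interpolating disk for a longer word from the structural identity \eqref{eq:morefibersprod} restricted to a small action window: there the terms involving $d$ and $\m_{1}$ drop filtration by a definite amount $\delta\gg\epsilon$ and disappear, leaving $\Psi^{p_{v}c^{-}}(\m_{2}(b,a))=-\Psi^{p_{v}c^{-}}(b)\cdot\Psi^{c^{+}c^{-}}(a)$, whose right-hand side is computed by rounding a corner in the Pontryagin product; uniqueness of the Reeb chord in the action window then pins the coefficient to $\pm1$. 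The payoff of the paper's route is exactly the avoidance of the obstacle you correctly flag at the end of your proposal: it never has to prove a gluing-plus-surjectivity theorem for the long configurations with loop-space boundary chains, nor to verify directly that the evaluation of the glued family hits the piecewise-geodesic generator of $B^{c}$ transversely once; all of that is replaced by the already-established $A_{\infty}$ relations for the auxiliary fibers together with action estimates. Your route is not wrong in principle, but it front-loads precisely the hard analytic input (the loop-space analogue of the gluing deferred to \cite{BEEfuture}), whereas the paper's induction reduces the diagonal computation to zero-dimensional counts between honest Lagrangian fibers. Since the statement is a conjecture and both arguments are sketches, neither is complete, but you should be aware that the paper's detour through $F_{c^{\pm}}$ is not cosmetic: it is the device that makes the ``one interpolating disk'' claim tractable.
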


{\bf Sketch of proof:} 
Consider a word of the form
\[ 
\gamma_{0}c_{1}\gamma_{1} c_{2}\dots c_{m}\gamma_{m},
\]
where $\gamma_{j}$ are geodesics in $\Lambda\times[0,\infty)$ and where $c_{j}$ are Reeb chords. We aim to construct algebraically one disk connecting the Reeb chord $a$ of $C$, corresponding to this word (see Lemma \ref{l:newchords=words}), to the word itself. We use an inductive argument and energy filtration. To start the argument we pick additional fiber disks $F_{c^{+}}$ and $F_{c_{-}}$ in $T^{\ast}(\Lambda\times[0,\infty))$ at $(c^{+},\epsilon)$ and $(c_{-},\epsilon)$ for very small $\epsilon>0$ near all Reeb chord end points $c_{+}$ and $c_{-}$ in $\Lambda$. We use the natural counterparts of the correspondence between mixed words of geodesics and Reeb chords before surgery and Reeb chords after,for mixed wrapped Floer cohomologies. For example, there is a straightforward analogue of Lemma \ref{l:newchords=words}: Reeb chord generators of $CW^{\ast}(F_{c^{+}},C)$ correspond before the surgery words of the form
\[
\gamma_{1}c_{1}\gamma_{2}\dots c_{m}\gamma_{m},
\]
where $\gamma$ is a geodesic connecting the base point of $F_{c^{+}}$ to the initial point of $c_{1}$,
$\gamma_{2}$ from the endpoint of $c_{1}$ to the start point of $c_{2}$, etc. To start the argument
we note that it is straightforward to construct holomorphic strips corresponding to the short
geodesics starting at $F_{c^{-}}$ followed by the chord $c$ and then the short geodesic to
$F_{c^{+}}$ and to show that they are unique. This corresponds to a generator of
$CW^*(F_{c^{-}},F_{c^{+}})$. Likewise, it is immediate to construct the holomorphic disk connecting a
Reeb chord generator of $CW^*(F_{c^{+}},C)$ corresponding to a geodesic, and show that it is unique, compare Lemma \ref{l:BEEdiagonal}. 

We now use these two to construct algebraically one disk from the Reeb chord generator of $CW^{\ast}(F_{c_{-}},C)$ corresponding to the short geodesic, the chord $c$, and a geodesic connecting the endpoint of $c$ to the base point of $C$. To this end we consider the natural map
\[
    \Psi^{p_v c^{+}c^{-}}\colon CW^*(F_{c^{+}},C)\otimes CW^*(F_{c^{-}},F_{c^{+}})\to CE^{\ast}(\Lambda),
\] 
see \eqref{eq:morefibersprod}. For the two Reeb chords $a$ connecting $F_{c^{-}}$ to $F_{c^{+}}$
corresponding to the chord $c$ of $\Lambda$, and $b$ connecting $F_{c^{+}}$ to $C$ corresponding to
the geodesic we then have (with $p_{v}$ denoting the base point):
\begin{align*}
    &d(\Psi^{p_{v} c^+ c^-}(b,a))+ (\Psi^{p_{v}c^-}(b))\cdot (\Psi^{c^{+}c^{-}}(a))  \\ 
    &+  \Psi^{p_{v} c^-}(\m_{2}(b,a)) + (-1^{|a|-1} \Psi^{p_v c^{+} c^-}(\m_{1}(b),a)
    +\Psi^{p_v c^{+}c^{-} }(b,\m_{1}(a)) =0. 
\end{align*}
Here we know that the terms containing $\m_{1}$ and $d$ involves non-trivial holomorphic disks or Morse flows in the finite dimensional approximation and hence lowers action/energy by an amount bounded below by some $\delta>0$ which we assume is much larger than $\epsilon>0$ above. Therefore, if we restrict attention to a small action window we find
\[ (\Psi^{p_{v}c^- }(b) \cdot \Psi^{c^{+}c^{-}}(a))+\Psi^{p_{v} c^-}(\m_{2}(b,a))=0.
\]
Here the first term is simply the Pontryagin product at the common endpoint of the curves, which is
homologous to the word $\epsilon'c\gamma$ of the small geodesic, the Reeb chord and then the longer
geodesic, by rounding the corner at $c^{+}$. It follows that $\m_{2}(a,b)=r$, where $r$ is a Reeb
chord with action between the sum of the actions of $a$ and $b$ and the action of $\epsilon'c\gamma$
and that $\Psi^{c^{-}p_{v}}(r)$ contains this word with coefficient $\pm 1$. Noting that there
is only one Reeb chord in the action window studied we find that the desired coefficient equals $\pm
1$. It is now clear how to continue the induction, in each step we add one more geodesic or Reeb
chord to any word. Using already constructed curves and \eqref{eq:morefibersprod} in a small action
window we find that the map $\Psi_{1}$ has a triangular action matrix with $\pm 1$ on the diagonal,
hence it is a quasi-isomorphism.

\end{document}